\NeedsTeXFormat{LaTeX2e}

\documentclass[12pt]{amsart}
\usepackage{cases}
\usepackage{amsthm}
\usepackage{amsmath}
\usepackage{amscd}
\usepackage{graphicx}
\usepackage{float}
\usepackage[mathscr]{eucal}
\usepackage[colorlinks,linkcolor=blue,citecolor=blue, pdfstartview=FitH]{hyperref}

\setcounter{MaxMatrixCols}{10}

\input xy
\xyoption{all} \numberwithin{equation}{section}
\setlength{\headheight}{8pt} \setlength{\textheight}{22.5cm}
\setlength{\textwidth}{16cm} \setlength{\oddsidemargin}{0cm}
\setlength{\evensidemargin}{0cm} \setlength{\topmargin}{0cm}

\begin{document}
\title[On the asymptotic expansions I]{On the asymptotic expansions of various quantum invariants I: the colored Jones polynomial of twist knots at the root of unity $e^{\frac{2\pi\sqrt{-1}}{N+\frac{1}{2}}}$}

\author[Qingtao Chen and Shengmao Zhu]{Qingtao Chen and
Shengmao Zhu}

\address{Department of Pure Mathematics \\
Xi'an Jiaotong-Liverpool University \\
Suzhou Jiangsu \\
China}
\email{Qingtao.Chen@xjtlu.edu.cn}

\address{Department of Mathematics \\
Zhejiang Normal University,  \\
Jinhua Zhejiang,  321004, China }
\email{szhu@zju.edu.cn}

\begin{abstract}
This is the first article in a series devoted to the study of the asymptotic expansions of various quantum invariants related to the twist knots. In this paper,  by using the saddle point method developed by Ohtsuki, we obtain an asymptotic expansion formula for the colored Jones polynomial of twist knots $\mathcal{K}_p$ with $p\geq 6$ at the root of unity $e^{\frac{2\pi\sqrt{-1}}{N+\frac{1}{2}}}$. 
\end{abstract}
\maketitle

\theoremstyle{plain} \newtheorem{thm}{Theorem}[section] \newtheorem{theorem}[%
thm]{Theorem} \newtheorem{lemma}[thm]{Lemma} \newtheorem{corollary}[thm]{%
Corollary} \newtheorem{proposition}[thm]{Proposition} \newtheorem{conjecture}%
[thm]{Conjecture} \theoremstyle{definition}
\newtheorem{remark}[thm]{Remark}
\newtheorem{remarks}[thm]{Remarks} \newtheorem{definition}[thm]{Definition}
\newtheorem{example}[thm]{Example}
\tableofcontents
\newpage

\section{Introduction}
In this series of articles, we study the asymptotic expansions of various quantum invariants at different roots of unit and make a connection between them. 

This work is motivated by the volume conjectures, let us briefly review the background.   In \cite{Kash95}, by using the quantum dilogarithm function, R. Kashaev defined a link invariant $\langle \mathcal{L} \rangle_{N}$  for a link $\mathcal{L}$, which depends on a positive integer $N$. Furthermore, in \cite{Kash97}, he conjectured that for any hyperbolic link $\mathcal{L}$, the asymptotics at $N\rightarrow \infty$ of $|\langle \mathcal{L}\rangle_N|$ gives its volume, i.e. 
 \begin{align}
     2\pi \lim_{N\rightarrow \infty}\frac{\log |\langle\mathcal{L}\rangle_N|}{N}=vol(S^3\setminus \mathcal{L})
 \end{align}
where $vol(S^3\setminus \mathcal{L})$ denotes the hyperbolic volume of the complement of $\mathcal{L}$ in $S^3$, and gave evidence for the conjecture. In  \cite{MuMu01}, H. Murakami and J. Murakami proved that for any link $\mathcal{L}$, Kashaev's invariant 
$\langle \mathcal{L}\rangle_N$ is equal to $N$-th normalized colored Jones polynomial  evaluated at the root of unity $e^{\frac{2\pi \sqrt{-1}}{N}}$, which is written as $J_{N}(\mathcal{L};e^{\frac{2\pi \sqrt{-1}}{N}})$. Further, as an extension of  Kashaev's conjecture, they conjectured that, for any knot $\mathcal{K}$,  
\begin{align} \label{formula-original-volume}
    2\pi \lim_{N\rightarrow \infty} \frac{\log |J_N(\mathcal{K}; e^{\frac{2\pi \sqrt{-1}}{N}})|}{N}=vol(S^3 \setminus \mathcal{K}),
\end{align}
where $vol(S^3\setminus \mathcal{K})$ denotes the simplicial volume (normalized by multiplying by the hyperbolic volume of the regular ideal tetrahedron) of the complement of $\mathcal{K}$ in $S^3$. This is usually called the (Kashaev-Murakami-Murakami) volume conjecture. As a complexification of the volume conjecture, it is conjectured in \cite{MMOTY02} that, for a hyperbolic link $\mathcal{L}$, 
    \begin{align}
    2\pi \lim_{N\rightarrow \infty} \frac{\log J_N(\mathcal{L};e^{\frac{2\pi \sqrt{-1}}{N}})}{N}=vol(S^3\setminus \mathcal{L})+\sqrt{-1}cs(S^3\setminus \mathcal{L}),
\end{align}
for an appropriate choice of a branch of the logarithm, where $``cs"$ denotes the Chern-Simons invariant \cite{Mey86}. 
From the viewpoint of the $SL(2,\mathbb{C})$ Chern-Simons theory, S. Gukov conjectured in \cite{Guk05} that the asymptotic expansion of $J_{N}(\mathcal{K};e^{\frac{2\pi\sqrt{-1}}{k}})$ of a hyperbolic knot $\mathcal{K}$ as $N,k\rightarrow \infty$ fixing $u=\frac{N}{k}$ is presented by the following form, 
\begin{align} \label{formula-generalvolume}
    J_{N}(\mathcal{K};e^{\frac{2\pi\sqrt{-1}}{k}})\sim e^{N\zeta}N^{\frac{3}{2}}\omega\left(1+\sum_{i=1}^{\infty}\kappa_i\left(\frac{2\pi\sqrt{-1}}{N}\right)^i\right)
\end{align}
for some scalars $\zeta,\omega,\kappa_i$ depending on $\mathcal{K}$ and $u$, also see \cite{DGLZ09,GH08}. Moreover, T. Ohtuski showed when $\mathcal{K}$ is a hyperbolic knot with up to 7 crossings \cite{Oht16,OhtYok18,Oht17},  the asymptotic expansion of the Kashaev invariant is presented by the following form 
\begin{align} \label{formula-Kashaevexpansion}
    \langle \mathcal{K}\rangle_{N}=e^{N\zeta}N^{\frac{3}{2}}\omega(\mathcal{K})\left(1+\sum_{i=1}^d\kappa_i(\mathcal{K})\left(\frac{2\pi\sqrt{-1}}{N}\right)^i+O\left(\frac{1}{N^{d+1}}\right)\right),
\end{align}
for any $d$, where $\omega(\mathcal{K})$ and $\kappa_i(\mathcal{K})$'s are some scalars. 

The volume conjecture has been rigorously proved for some particular knots and links such as torus knots \cite{KT00,DKash07}, the figure-eight knot \cite{AndHan06}, Whitehead doubles of $(2,p)$-torus knots \cite{Zheng07}, positive iterated torus knots \cite{Van08}, the $5_2$ knot \cite{Oht16}, the knots with 6 crossings \cite{OhtYok18}, the knots with 7 crossings \cite{Oht17} and some links \cite{GL05,Van08,Van08-2,YY10,Zheng07}, see also \cite{Mur10} for a review. 

On the other hand, it is known that the quantum invariant of a closed 3-manifold at $q=e^{\frac{2\pi\sqrt{-1}}{N}}$ is of polynomial order as $N\rightarrow \infty$. However, the first author and T. Yang  \cite{CY18} observed that  Reshetikhin-Turaev invariants and Turaev-Viro invariants at $q=e^{\frac{4\pi\sqrt{-1}}{r}}$, for $r\geq 3$ an odd, are of exponential order as $r\rightarrow \infty$. Furthermore,  they proposed the volume conjecture for Reshetikhin-Turaev invariants and Turaev-Viro invariants. 

In \cite{DKY18},  Detcherry, Kalfagianni and Yang  gave a formula relating the Turaev-Viro invariants of the
complement of a link $\mathcal{L}$ in $S^3$ to the values of the colored Jones polynomials of $\mathcal{L}$. By using this formula, they proved Chen-Yang's volume conjecture \cite{CY18} for the figure-eight knot and Borromean rings.
In addition, they proposed the following 
\begin{conjecture}[\cite{DKY18}, Question 1.7] \label{conjecture-DKY}
    For a hyperbolic link $\mathcal{L}$ in $S^3$, we have
   \begin{align}
       \lim_{N\rightarrow \infty}\frac{2\pi}{N} \log|J_{N}(\mathcal{L}; e^{\frac{2\pi\sqrt{-1}}{N+\frac{1}{2}}})|=vol(S^3\setminus \mathcal{L}).
   \end{align} 
\end{conjecture}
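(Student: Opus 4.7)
The plan is to establish Conjecture 1.1 for the twist knots $\mathcal{K}_p$ with $p\geq 6$ by adapting the saddle point method of Ohtsuki, which was originally used for the Kashaev invariants of hyperbolic knots with few crossings at $q=e^{2\pi\sqrt{-1}/N}$.

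First, I would start from an explicit finite-sum formula for $J_N(\mathcal{K}_p;q)$ --- for instance Masbaum's double-sum formula, or its cyclotomic reformulation --- specialize $q=e^{2\pi\sqrt{-1}/(N+\frac{1}{2})}$, and rewrite each $q$-Pochhammer factor using the asymptotic expansion of Faddeev's quantum dilogarithm. The goal is to bring $J_N$ into the form
\begin{equation*}
J_N(\mathcal{K}_p; e^{2\pi\sqrt{-1}/(N+\frac{1}{2})}) = \sum_{k} e^{(N+\frac{1}{2})\,V(k/(N+\frac{1}{2}))}\bigl(1 + O(1/N)\bigr),
\end{equation*}
where $V$ is an explicit holomorphic potential function of one or several complex variables (one per summation index), defined on a domain containing the relevant real intervals.

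Next, I would convert the finite sum into a contour integral via Poisson summation, checking that the nonzero-frequency terms decay exponentially compared to the zero-frequency contribution. Steepest descent is then applied to deform the contour through the dominant critical point $z^{\ast}$ of $V$. The critical point equations should reproduce the gluing and completeness equations for an ideal triangulation of $S^3\setminus\mathcal{K}_p$, so that $V(z^{\ast})$ can be identified with $\frac{1}{2\pi}\bigl(\mathrm{vol}(S^3\setminus\mathcal{K}_p) + \sqrt{-1}\,\mathrm{cs}(S^3\setminus\mathcal{K}_p)\bigr)$ via the Bloch--Wigner dilogarithm identity. The leading asymptotic then takes the form $|J_N| \sim C\cdot N^{3/2}\cdot e^{(N+\frac{1}{2})\,\mathrm{Re}\,V(z^{\ast})}$, from which $\lim_{N\to\infty}\frac{2\pi}{N}\log|J_N| = \mathrm{vol}(S^3\setminus\mathcal{K}_p)$ follows directly, and moreover the full asymptotic expansion promised in the abstract can be read off from higher-order Gaussian corrections at $z^{\ast}$.

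The hardest step will be the saddle point analysis itself: locating all critical points of $V$, verifying that $z^{\ast}$ is the unique dominant saddle inside the deformation domain, and checking that the steepest-descent contour can actually be moved through $z^{\ast}$ without crossing poles of the integrand or picking up residues. The hypothesis $p\geq 6$ almost certainly enters at this stage: for small $|p|$ the geometrically relevant saddle may approach a singular locus of $V$, or competing critical points coming from parabolic/non-geometric representations may become comparable in size, forcing case-by-case treatment (consistent with the fact that $\mathcal{K}_{-1},\ldots,\mathcal{K}_5$ have few crossings and have already been handled individually by Ohtsuki and collaborators). A secondary technical difficulty is propagating the half-integer shift $N\mapsto N+\frac{1}{2}$ through Ohtsuki's scheme, since it alters the quantum dilogarithm asymptotics at subleading order; these corrections must be tracked carefully so that the saddle value still matches the hyperbolic volume on the nose.
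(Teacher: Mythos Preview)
Your proposal is essentially correct and follows the same route as the paper: Masbaum's double-sum formula, quantum dilogarithm rewriting to a two-variable potential $V(p,t,s)$, Poisson summation, elimination of all but finitely many Fourier coefficients, and two-dimensional saddle point analysis at the unique geometric critical point. One refinement worth noting: it is not quite true that only the zero-frequency term survives---by a symmetry $\hat h_N(m,-n-2)=(-1)^n\hat h_N(m,n)$ the coefficients $\hat h_N(0,0)$ and $\hat h_N(0,-2)$ are equal and both contribute (while $\hat h_N(m,-1)$ vanishes identically), so the final answer is $2\hat h_N(0,0)$ rather than a single term; this ``big cancellation'' is what makes the bookkeeping of the remaining finitely many coefficients tractable.
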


The asymptotic behavior of  $J_{N}(\mathcal{L}; e^{\frac{2\pi\sqrt{-1}}{N+\frac{1}{2}}})$  is not predicted
either by the original volume conjecture (\ref{formula-original-volume})  or by its generalizations (\ref{formula-generalvolume}).  Moreover, Conjecture \ref{conjecture-DKY}  seems somewhat surprising, since a result in \cite{GL11,CLZ15} has stated that for any positive integer $k$, $J_{N}(\mathcal{L};e^{\frac{2\pi\sqrt{-1}}{N+k}})$ grows only polynomially in $N$. Conjecture \ref{conjecture-DKY} has been proved for figure-eight knot and Borromean ring in \cite{DKY18},  we also refer to \cite{Wong19} for an extended version of Conjecture \ref{conjecture-DKY}. 

The purpose of this paper is to study Conjecture \ref{conjecture-DKY} for the twist knot $\mathcal{K}_p$. We investigate the asymptotic expansion for the normalized $N$-th colored Jones polynomial  $J_N(\mathcal{K}_p;e^{\frac{2\pi \sqrt{-1}}{N+\frac{1}{2}}})$ instead. Furthermore, in a subsequent paper \cite{CZ23-2}, we present two asymptotic expansions  for the normalized $N$-th colored Jones polynomials of twist knots at the more general root unity $e^{\frac{2\pi \sqrt{-1}}{N+\frac{1}{M}}}$ with $M\geq 2$, and at the root of unity  $e^{\frac{2\pi \sqrt{-1}}{N}}$ respectively. Moreover, the asymptotic expansion for  Reshetikhin-Turaev invariants of closed hyperbolic 3-manifolds obtained by integral surgery along the twist knot at the root of unity   $e^{\frac{4\pi\sqrt{-1}}{r}}$ will be given in \cite{CZ23-3}. Finally, the last article \cite{CZ23-3} in this series is devoted to the asymptotic expansion for the Turaev-Viro invariants of the complements of the twist knots in $S^3$.

Let $V(p,t,s)$ be the potential function of the colored Jones polynomial for the twist knot $\mathcal{K}_p$ given by formula (\ref{formula-potentialfunction00}). 
By Proposition \ref{prop-critical}, there exists a unique critical point $(t_0,s_0)$ of $V(p,t,s)$. Let $x_0=e^{2\pi\sqrt{-1}t_0}$ and $y_0=e^{2\pi\sqrt{-1}s_0}$, we put
\begin{align}
  \zeta(p)&=V(p,t_0,s_0)\\\nonumber
  &=\pi \sqrt{-1}\left((2p+1)s_0^2-(2p+3)s_0-2t_0\right)\\\nonumber
    &+\frac{1}{2\pi\sqrt{-1}}\left(\text{Li}_2(x_0y_0)+\text{Li}_2(x_0/y_0)-3\text{Li}_2(x_0)+\frac{\pi^2}{6}\right)  
\end{align}
and 
\begin{align} \label{formula-omegap}
    \omega(p)&=\frac{\sin (2\pi s_0)e^{2\pi\sqrt{-1}t_0}}{(1-e^{2\pi\sqrt{-1}t_0})^{\frac{3}{2}}\sqrt{\det Hess(V)(t_0,s_0)}}\\\nonumber
    &=\frac{(y_0-y_0^{-1})x_0}{-4\pi (1-x_0)^\frac{3}{2}\sqrt{H(p,x_0,y_0)}}
\end{align}
with
\begin{align}
    H(p,x_0,y_0)&=\left(\frac{-3(2p+1)}{\frac{1}{x_0}-1}+\frac{2p+1}{\frac{1}{x_0y_0}-1}+\frac{2p+1}{\frac{1}{x_0/y_0}-1}-\frac{3}{(\frac{1}{x_0}-1)(\frac{1}{x_0y_0}-1)}\right.\\\nonumber
    &\left.-\frac{3}{(\frac{1}{x_0}-1)(\frac{1}{x_0/y_0}-1)}+\frac{4}{(\frac{1}{x_0y_0}-1)(\frac{1}{x_0/y_0}-1)}\right).
\end{align}

Then, we have
\begin{theorem}  \label{theorem-main}
  For $p\geq 6$, the asymptotic expansion of the colored Jones polynomial $J_N(\mathcal{K}_p;e^{\frac{2\pi \sqrt{-1}}{N+\frac{1}{2}}})$ is given by the following form
    \begin{align}
       J_N(\mathcal{K}_p;e^{\frac{2\pi \sqrt{-1}}{N+\frac{1}{2}}})&=(-1)^{p+1}\frac{4\pi e^{\frac{1}{4}\pi\sqrt{-1}}(N+\frac{1}{2})^{\frac{1}{2}}}{\sin\frac{\pi}{2N+1}}\omega(p)e^{(N+\frac{1}{2})\zeta(p)}\\\nonumber
       &\cdot\left(1+\sum_{i=1}^d\kappa_i(p)\left(\frac{2\pi\sqrt{-1}}{N+\frac{1}{2}}\right)^i+O\left(\frac{1}{(N+\frac{1}{2})^{d+1}}\right)\right),
    \end{align}
    for $d\geq 1$, where $\kappa_i(p)$ are constants determined by $\mathcal{K}_p$.
\end{theorem}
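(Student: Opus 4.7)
I would prove the theorem by adapting Ohtsuki's saddle point method \cite{Oht16,OhtYok18,Oht17} to the present root of unity $e^{2\pi\sqrt{-1}/(N+\frac12)}$. Starting from the standard double-sum formula for $J_N(\mathcal{K}_p;q)$ in terms of $q$-factorials, I would evaluate at $q=e^{2\pi\sqrt{-1}/(N+\frac12)}$ and rewrite each $q$-factorial via the quantum dilogarithm so that the general summand takes the form $\exp\!\bigl((N+\tfrac12)V(p,t,s)+O(1)\bigr)$ with $t=k/(N+\tfrac12)$, $s=j/(N+\tfrac12)$ and $V$ the potential function of (\ref{formula-potentialfunction00}). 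A Poisson summation in the two indices then converts the double sum into a sum of oscillating integrals indexed by $(m,n)\in\mathbb{Z}^2$, and one argues that every Fourier mode with $(m,n)\neq(0,0)$ is exponentially smaller than the $(0,0)$ mode.

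For the $(0,0)$ Fourier mode I would deform the $(t,s)$-contour in $\mathbb{C}^2$ to a two-cycle of steepest descent through the unique critical point $(t_0,s_0)$ furnished by Proposition \ref{prop-critical}, using a concrete description of the zero set of $\nabla V$ together with a verification that any other critical points carry strictly smaller $\operatorname{Re}V$ and that the poles of $V$ do not obstruct the deformation. A two-dimensional saddle point expansion then gives, after multiplication by the elementary prefactor coming from the quantum-integer normalization, exactly the leading factor $\frac{4\pi e^{\pi\sqrt{-1}/4}(N+\frac12)^{1/2}}{\sin\frac{\pi}{2N+1}}\omega(p)e^{(N+\frac12)\zeta(p)}$; the appearance of $\omega(p)$ follows from the algebraic simplification of $\sin(2\pi s_0)\,e^{2\pi\sqrt{-1}t_0}/\bigl((1-e^{2\pi\sqrt{-1}t_0})^{3/2}\sqrt{\det\operatorname{Hess}V(t_0,s_0)}\bigr)$ recorded in the definition of $\omega(p)$. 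The higher-order coefficients $\kappa_i(p)$ are read off term by term from the formal saddle point expansion of $V$ around $(t_0,s_0)$, while a quantitative Laplace-type bound supplies the $O\bigl((N+\tfrac12)^{-d-1}\bigr)$ remainder for each $d\geq 1$.

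The main technical obstacle lies in the Poisson estimate. One must identify every point on the closure of the summation region where $\operatorname{Re}V(p,\cdot,\cdot)$ approaches its maximum, verify that only $(t_0,s_0)$ attains it, and produce an effective positive gap between that maximum and the values over the remainder of the region, uniform in $N$. This is the place where the hypothesis $p\geq 6$ is genuinely needed: for smaller $p$, competing near-critical loci (and possible collisions of saddles with the boundary of the summation region) appear, and the required uniform gap either fails or demands a substantially more delicate region-by-region analysis, which is presumably why the theorem is restricted to $p\geq 6$.
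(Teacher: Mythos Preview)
Your overall strategy matches the paper's, but there is a genuine gap in your Poisson-summation step. It is \emph{not} true that every Fourier mode with $(m,n)\neq(0,0)$ is exponentially smaller than the $(0,0)$ mode. In the paper's normalization the potential satisfies the exact symmetry
\[
V_N(p,t,1-s;m,n)=V_N(p,t,s;m,-n-2)-2\pi\sqrt{-1}(n+1),
\]
which, after the change of variable $s\mapsto 1-s$ in the integral, yields $\hat h_N(m,-n-2)=(-1)^n\hat h_N(m,n)$. In particular $\hat h_N(m,-1)=0$ (a ``big cancellation'') and $\hat h_N(0,-2)=\hat h_N(0,0)$. Thus \emph{two} Fourier modes contribute equally to the leading asymptotics, and the final answer is $J_N(\mathcal K_p;\xi_N)=2\,\hat h_N(0,0)+O(e^{(N+\frac12)(\zeta_{\mathbb R}(p)-\epsilon)})$; this is precisely where the coefficient $4\pi$ (rather than $2\pi$) in the statement comes from. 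Your argument as written would produce half the correct constant.

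A second, smaller gap concerns the ``other'' Fourier modes. After the easy reductions one is left not with a single surviving mode but with a finite family $\hat h_N(-1,n)$ and $\hat h_N(0,n)$ for $-p\le n\le p-2$, and showing these are negligible is not a matter of a uniform gap on the original real domain. The paper handles them by a separate one-dimensional saddle point analysis on slices $\{s=c\}$: for $m=-1$ one shows directly that $\operatorname{Re}V(p,t+X\sqrt{-1},c;-1,n)\to 0$ as $X\to+\infty$, and for $m=0$, $n\neq 0,-2$ one proves that the relevant region $U_n$ misses a carefully chosen subdomain $D_0''$ so that the steepest-descent flow escapes to $-\infty$. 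The hypothesis $p\ge 6$ enters through these geometric containments (e.g.\ $U_0\subset D_0''\subset D_H$) and through the lower bound $2\pi\zeta_{\mathbb R}(p)\ge v_8-\frac{49\pi^2}{64p^2}$, rather than through ``competing near-critical loci'' as you suggest.
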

By Theorem  \ref{theorem-zeta=volume}, we know that
\begin{align}
     2\pi \zeta(p)= vol(S^3\setminus \mathcal{K}_p)+\sqrt{-1}cs(S^3\setminus \mathcal{K}_p) \mod \pi^2\sqrt{-1}\mathbb{Z}.
\end{align}
It implies that
\begin{corollary}
    For $p\geq 6$, we have
    \begin{align}
        \lim_{N\rightarrow \infty}\frac{2\pi}{N}\log J_N(\mathcal{K}_p;e^{\frac{2\pi \sqrt{-1}}{N+\frac{1}{2}}})=vol(S^3\setminus \mathcal{K}_p)+\sqrt{-1}cs(S^3\setminus \mathcal{K}_p) \mod \pi^2\sqrt{-1}\mathbb{Z}.
    \end{align}
\end{corollary}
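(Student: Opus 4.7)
The plan is to deduce the corollary as an immediate consequence of Theorem \ref{theorem-main} combined with Lemma \ref{lemma-volume}. First I would invoke Theorem \ref{theorem-main} with, say, $d=1$, which gives
\begin{align*}
J_N(\mathcal{K}_p;e^{\frac{2\pi \sqrt{-1}}{N+\frac{1}{2}}})
= C(N,p)\, e^{(N+\frac{1}{2})\zeta(p)}\bigl(1+O(1/N)\bigr),
\end{align*}
where $C(N,p)=(-1)^{p+1}\frac{4\pi e^{\frac{1}{4}\pi\sqrt{-1}}(N+\frac{1}{2})^{\frac{1}{2}}}{\sin\frac{\pi}{2N+1}}\omega(p)$. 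Since $\sin\frac{\pi}{2N+1}\sim\frac{\pi}{2N+1}$ as $N\to\infty$, and $\omega(p)$ is a nonzero constant depending only on $p$, the prefactor $C(N,p)$ has polynomial growth, namely $|C(N,p)|=O(N^{3/2})$.

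Next I would take the logarithm of both sides, choosing any branch (the ambiguity of $2\pi\sqrt{-1}\mathbb{Z}$ is absorbed when we divide by $N$ and pass to the limit, and is subsumed by the $\mathrm{mod}\ \pi^2\sqrt{-1}\mathbb{Z}$ in the statement). This yields
\begin{align*}
\frac{2\pi}{N}\log J_N(\mathcal{K}_p;e^{\frac{2\pi\sqrt{-1}}{N+\frac{1}{2}}})
= \frac{2\pi}{N}\log C(N,p) + \frac{2\pi(N+\frac{1}{2})}{N}\zeta(p) + \frac{2\pi}{N}\log\bigl(1+O(1/N)\bigr).
\end{align*}
The first term on the right is $O(\log N/N)\to 0$, and the last term is $O(1/N^2)\to 0$. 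The middle term tends to $2\pi\zeta(p)$ since $(N+\frac{1}{2})/N\to 1$. Consequently,
\begin{align*}
\lim_{N\to\infty}\frac{2\pi}{N}\log J_N(\mathcal{K}_p;e^{\frac{2\pi\sqrt{-1}}{N+\frac{1}{2}}}) = 2\pi\zeta(p).
\end{align*}

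Finally I would apply Lemma \ref{lemma-volume}, which identifies $2\pi\zeta(p)$ with $vol(S^3\setminus\mathcal{K}_p)+cs(S^3\setminus\mathcal{K}_p)$ modulo $\pi^2\sqrt{-1}\mathbb{Z}$, giving the corollary. There is no serious obstacle at this step: all the analytic content, in particular the saddle point analysis producing the leading exponential and the polynomial prefactor, is contained in Theorem \ref{theorem-main}, and the geometric identification of the critical value $\zeta(p)$ with the complex volume is handled separately by Lemma \ref{lemma-volume}. The deduction presented here is purely a limit-of-logarithms bookkeeping argument.
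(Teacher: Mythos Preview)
Your proposal is correct and matches the paper's approach exactly: the paper treats the corollary as an immediate consequence of Theorem~\ref{theorem-main} together with Lemma~\ref{lemma-volume}, without spelling out the limit-of-logarithms computation you provide. Your version is simply a more detailed write-up of the same deduction.
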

Hence we proved Conjecture \ref{conjecture-DKY} for twist knot $\mathcal{K}_p$ with $p\geq 6$.  

Moreover, by Theorem \ref{theorem-omega=Rtorsion}, we know that $\omega(p)$ is related to the Reidemeister torsion of $\mathcal{K}_p$.

\begin{example}
    For $p=100$, we compute that 
    \begin{align}
           t_0&=0.8237997818-0.1280592525\sqrt{-1}, \\\nonumber
           s_0&=0.5050124998-0.00001256317546\sqrt{-1}, \\\nonumber
           x_0&=1.000001243-1.999752031\sqrt{-1}, \\\nonumber
           y_0&=-0.9995829910-0.03149174478\sqrt{-1}, \\\nonumber
           2\pi \zeta(100)&=3.6636144-1043.809608\sqrt{-1}.
    \end{align}
\end{example}
\begin{remark}
    We need the condition $p\geq 6$ in Theorem \ref{theorem-main} which makes sure the volume $vol(S^3\setminus \mathcal{K}_p)$ is not too small, so we can construct the required homotopy and verify the assumptions of the saddle point method successfully. We remark that our method can also work for the cases of $p\leq -1$ with some exceptions.     
\end{remark}

We use the saddle point method developed by Ohtsuki in a series papers  \cite{Oht16,  Oht17, Oht18, OhtYok18}  to prove Theorem \ref{theorem-main}.   An outline of the proof is follows.  First, we write the colored Jones polynomial of the twist knot $J_N(\mathcal{K}_p;e^{\frac{2\pi\sqrt{-1}}{N+\frac{1}{2}}})$  as a summation of Fourier coefficients with the help of  quantum dilogarithm function and the Poisson summation formula. Next, we will show that infinite terms of these Fourier coefficients can be neglected in the sense that they can be sufficiently small order at $N\rightarrow \infty$, we obtain formula (\ref{formula-Poission-after}). Then we estimate the remained finite terms of Fourier coefficients by using the saddle point method, and we find that only two main Fourier coefficients will contribute to the asymptotic expansion formula. Finally, we use the Neumann-Zagier's theory to prove that the critical value of the potential function gives the complex volume of the twist knot.  Hence we finish the proof Theorem \ref{theorem-main}.

The paper is organized as follows. In Section \ref{section-preliminaries}, we fix the notations and review the related materials that will be used in this paper. In Section \ref{section-potentialfunction}, we compute the potential function for the colored Jones polynomials of the twist knot $\mathcal{K}_p$ and obtain Proposition \ref{prop-coloredJonespotential}. In Section  \ref{Section-poissonsummation}, we prove Proposition \ref{prop-fouriercoeff}  which expresses the colored Jones polynomial of the twist knot 
$J_N(\mathcal{K}_p;e^{\frac{2\pi\sqrt{-1}}{N+\frac{1}{2}}})$  as a summation of Fourier coefficients by Poisson summation formula.
In Section \ref{Section-Asympoticexpansion}, 
we first show that infinite terms of the Fourier coefficients can be neglected. Then we estimate the remained finite terms of Fourier coefficients by using the saddle point method, we obtain that only two main Fourier coefficients will contribute to the final form of the  asymptotic expansion.  Hence we finish the proof Theorem  \ref{theorem-main}.  In Section \ref{Section-geometry}, we establish the equivalence of the critical equation from the potential function and the geometric equation from Thurstion's hyperbolic geometry. Then we prove that the potential function and determinant of Hessian matrix at the critical point will give the complex volume and adjoint twisted Reidemeister torsion of hyperbolic 3-manifold respectively.   The final  Section \ref{Section-App} is devoted to the proof of several lemmas which will be used in previous sections.

\textbf{Acknowledgements.} 

The first author would like to thank Nicolai Reshetikhin, Kefeng Liu and Weiping Zhang for bringing him to this area and a lot of discussions during his career, thank Francis Bonahon,   Giovanni Felder and Shing-Tung Yau for their continuous encouragement, support and discussions, and thank Jun Murakami and Tomotada Ohtsuki for their helpful discussions and support. He also want to thank Jørgen Ellegaard Andersen, Sergei Gukov, Thang Le, Gregor Masbaum,  Rinat Kashaev, Vladimir Turaev and Hiraku Nakajima for their support, discussions and interests, and thank Yunlong Yao who built him solid analysis foundation twenty years ago.

The second author would like to thank Kefeng Liu and Hao Xu for bringing him to this area when he was a graduate student at CMS of Zhejiang University, and for their constant encouragement and helpful discussions since then.

\section{Preliminaries} \label{section-preliminaries}
\subsection{Colored Jones polynomials}
In this subsection, we review the definition of the colored Jones polynomials and fix the notations.  
 Let $M$ be an oriented 3-manifold, the Kauffman bracket skeim module $\mathcal{K}(M)$ is the free $\mathbb{Z}[A^{\pm 1}]$-module generated by isotopy classes of framed links in $M$ modulo the submodule generated by  the Kauffman bracket skein relation:

(1) Kauffman bracket skein relation: 
\begin{figure}[!htb] 
\begin{align}
\raisebox{-15pt}{
\includegraphics[width=65 pt]{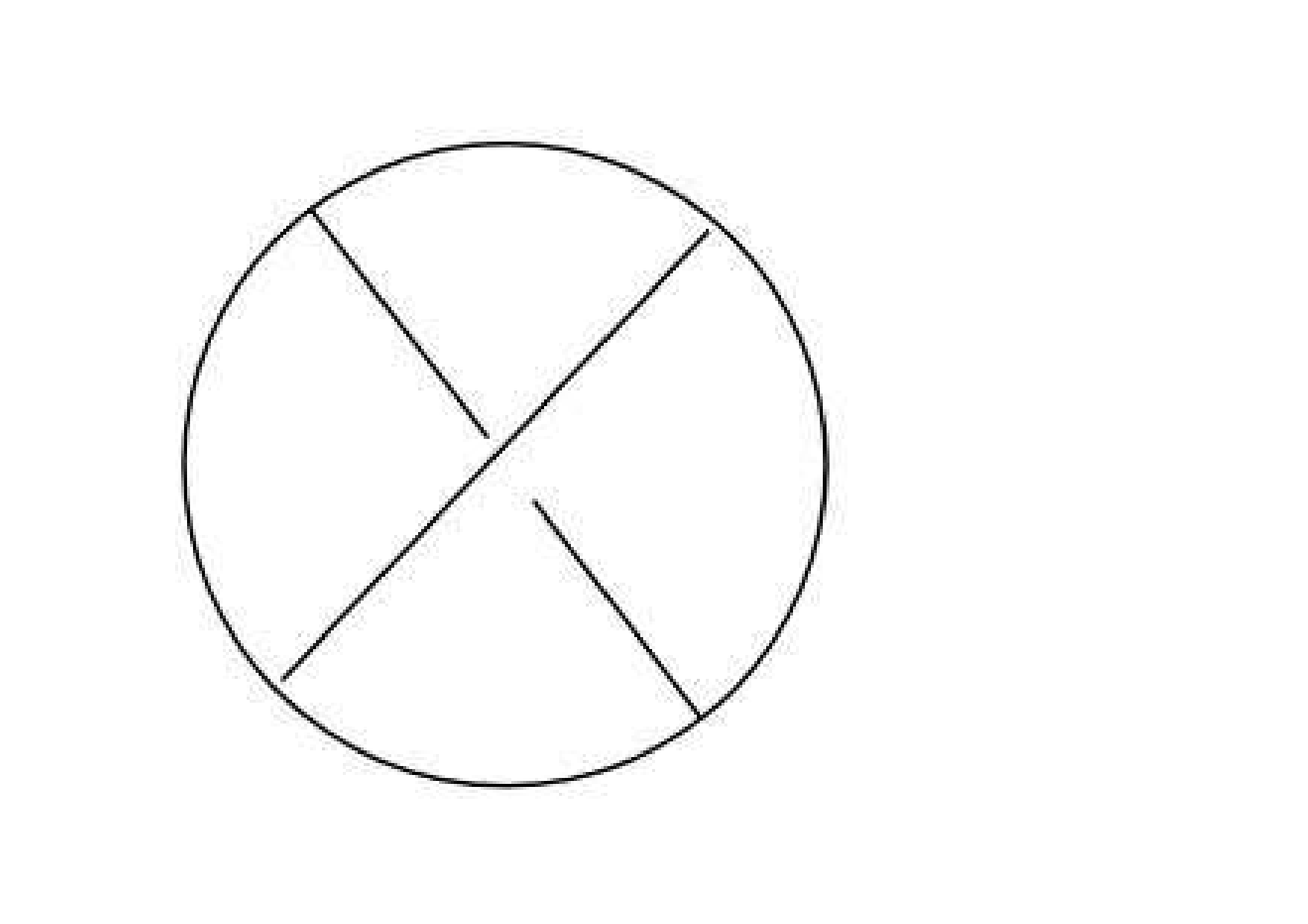}}=A\raisebox{-13pt}{ \includegraphics[width=60 pt]{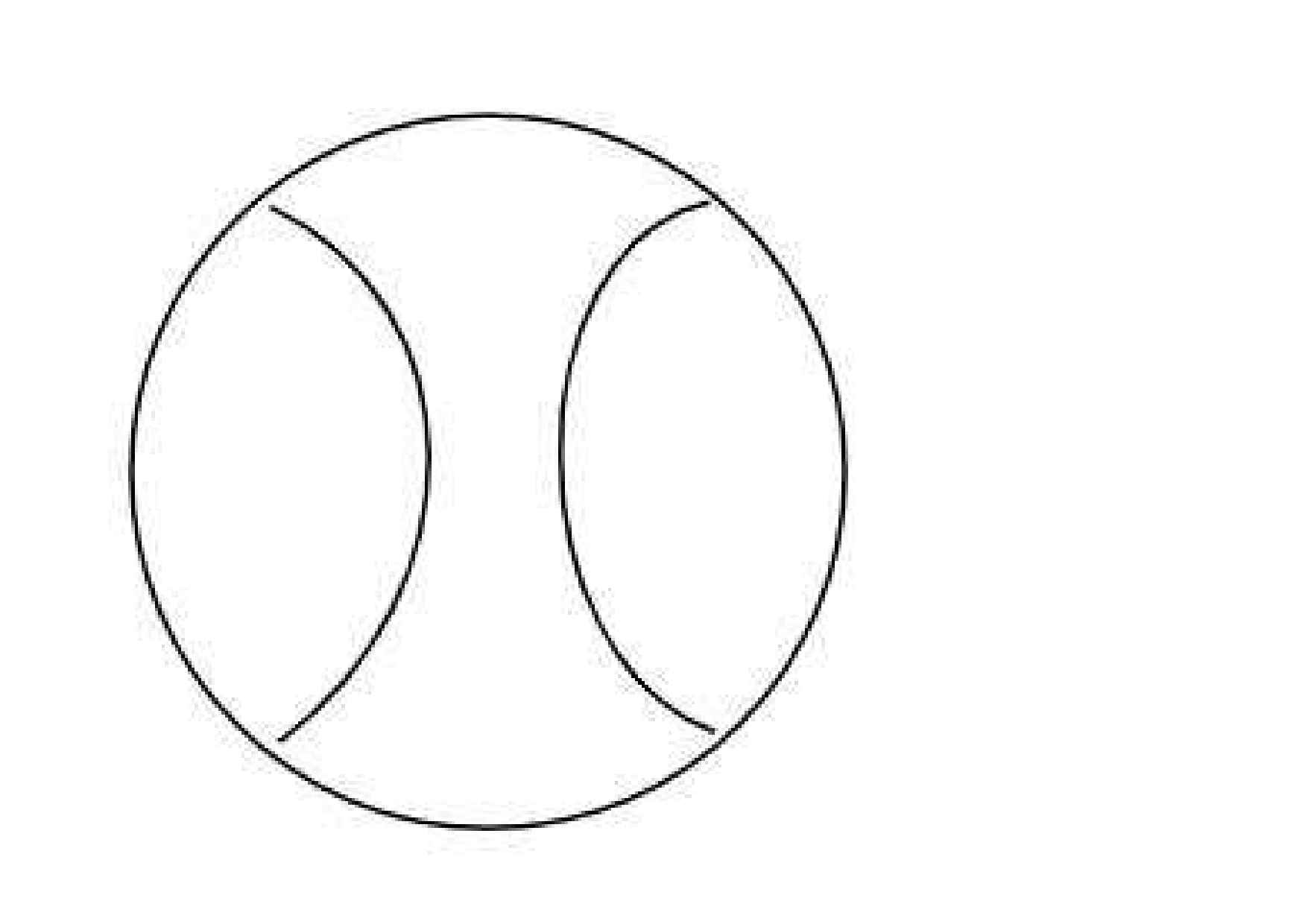}}+A^{-1}\raisebox{-15pt}{ \includegraphics[width=60 pt]{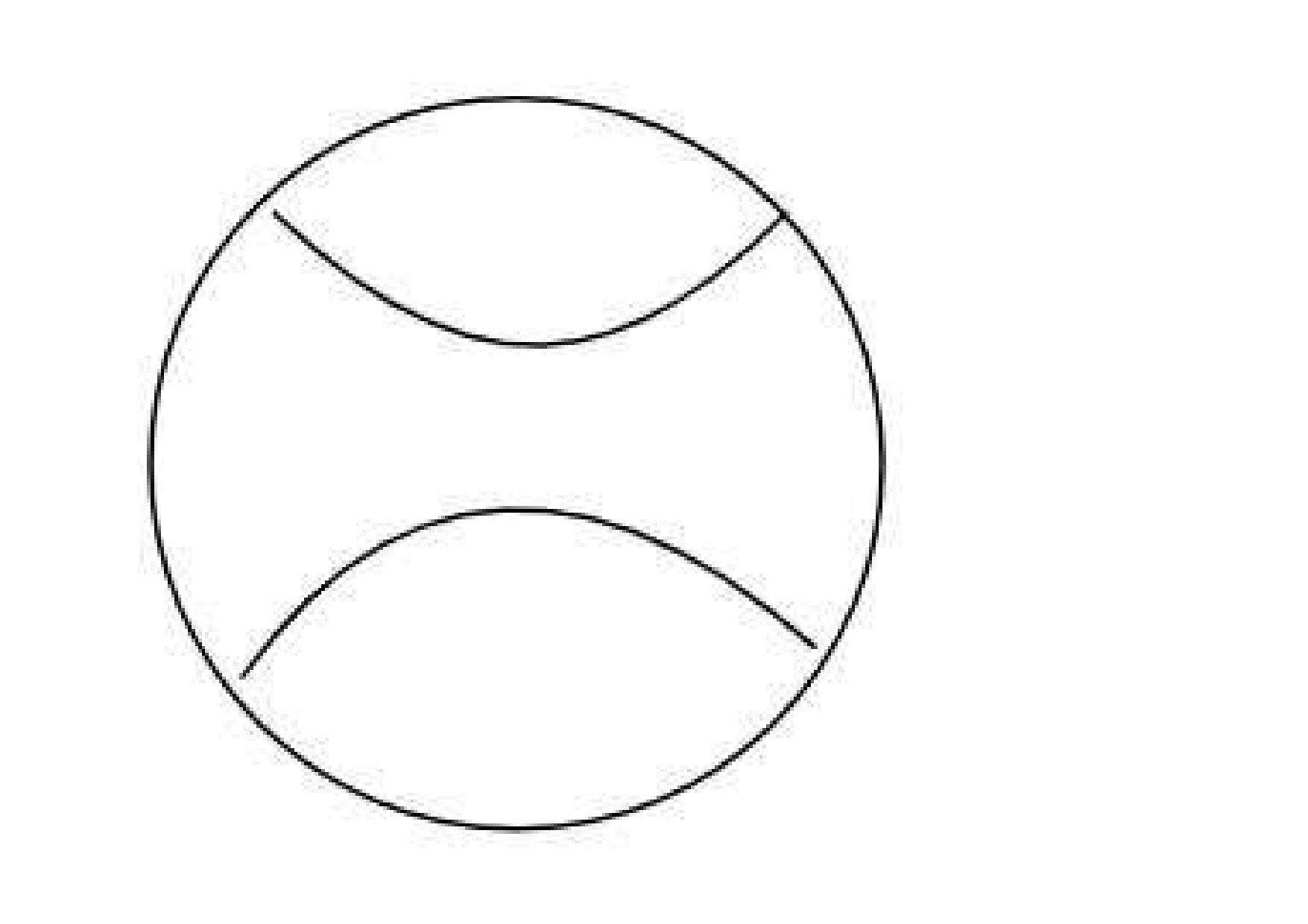}}  
\end{align}
\end{figure}

(2) Framing relation: 
\begin{figure}[!htb] 
\begin{align}
\mathcal{L}\cup \raisebox{-15pt}{
\includegraphics[width=60 pt]{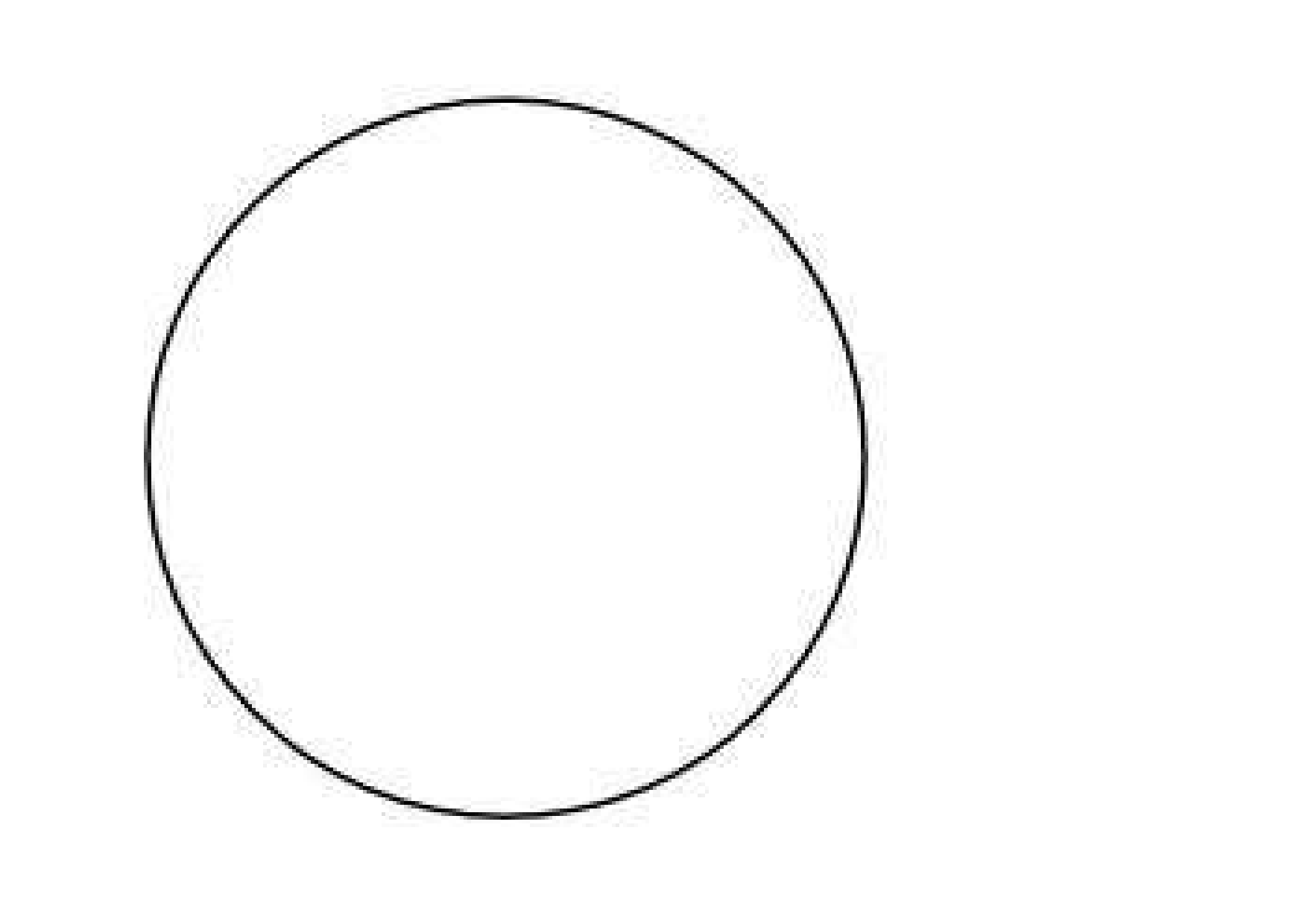}}=(-A^2-A^{-2})\mathcal{L}.
\end{align}
\end{figure}

 The Kauffman bracket $\langle \mathcal{L}\ \rangle$ of a framed link $\mathcal{L}$ in $S^3$ gives a map from $\mathcal{K}(S^3)$ to $\mathbb{Z}[A^{\pm 1}]$. We use the normalization that the bracket of the empty link is 1. 

The Kauffman bracket skein module of the solid torus $S^1\times D^2$ is given by $\mathbb{Z}[A^{\pm 1}][z]$. Usually, we denote this skein module by $\mathcal{B}$. Here $z$ is given by the framed link $S^1\times J$, where $J$ is a small arc lies in the interior of $D^2$, and $z^n$ means $n$-parallel copies of $z$. 

The twist map $t: \mathcal{B}\rightarrow \mathcal{B}$ is a map induced by a full right handed twist on the solid torus. There exists a basis $\{e_i\}_{i\geq 0}$ of $\mathcal{B}$, which are eigenvectors of the twist map $t$ (see e.g \cite{BHMV92}).  $\{e_i\}_{i\geq 0}$ can be defined recursively by 
\begin{align}
    e_0=1, \ e_1=z,  \ e_i=ze_{i-1}-e_{i-2}. 
\end{align}
Moreover, the $e_i$ satisfies
\begin{align}
\langle e_i \rangle&=(-1)^i\frac{A^{2(i+1)}-A^{-2(i+1)}}{A^2-A^{-2}}  \\
t(e_i)&=\mu_i e_i
\end{align}
where $\mu_i=(-1)^iA^{i^2+2i}$ is also called the framing factor. Throughout this paper, we make the  convention  
\begin{align} \label{formula-qconvention}
    q=A^{4}, \ \{n\}=q^{\frac{n}{2}}-q^{-\frac{n}{2}} \ \text{for an integer } \ n. 
\end{align}

\begin{definition}
Given a knot $\mathcal{K}$ with zero framing, the {\em $N$-th colored Jones polynomial} $J_{N}(\mathcal{K};q)$ of $\mathcal{K}$ is defined to be the Kauffman bracket of $\mathcal{K}$ cabled by $(-1)^{N-1}e_{N-1}$, i.e.
\begin{align}
    \bar{J}_{N}(\mathcal{K};q)=(-1)^{N-1}\langle\mathcal{K}(e_{N-1}) \rangle
\end{align}
where the factor of $(-1)^N$ is included such that for the unknot $U$, $\bar{J}_{N}(U;q)=[N]$. Furthermore, the {\em normalized $N$-th colored Jones polynomial} of $\mathcal{K}$ is defined as
\begin{align}
    J_{N}(\mathcal{K};q)=\frac{\langle \mathcal{K}(e_{N-1})\rangle}{\langle e_{N-1}\rangle}.
\end{align}
\end{definition}

By using the Kauffman bracket skein theory \cite{BHMV92,MV94}, Masbaum \cite{Mas03} rederived the cyclotomic expansion formula for the colored Jones polynomial of the twist knot $\mathcal{K}_p$ due to Habiro \cite{Hab08}.    

\begin{proposition}
The normalized $N$-th colored Jones polynomial of the twist knot $\mathcal{K}_p$ is given by 
\begin{align} \label{formula-coloredJonestwist}
J_N(\mathcal{K}_{p};q)=\sum_{k=0}^{N-1}\sum_{l=0}^{k}(-1)^lq^{\frac{k(k+3)}{4}+pl(l+1)}\frac{\{k\}!\{2l+1\}}{\{k+l+1\}!\{k-l\}!}\prod_{i=1}^k(\{N+i\}
\{N-i\}).
\end{align}
\end{proposition}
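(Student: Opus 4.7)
The plan is to follow the Ohtsuki saddle point machinery applied to the cyclotomic expansion (\ref{formula-coloredJonestwist}) at $q=e^{2\pi\sqrt{-1}/(N+1/2)}$, as summarized in the introduction. First I would rewrite every factor of the double sum in terms of a quantum dilogarithm. The products $\prod_{i=1}^k(\{N+i\}\{N-i\})$, the quantum factorials $\{k\}!$, $\{k+l+1\}!$, $\{k-l\}!$ and the quantum integer $\{2l+1\}$ each become ratios of values of a function closely related to $\mathrm{Li}_2$ once we set $t=k/(N+\tfrac12)$ and $s=(l+\tfrac12)/(N+\tfrac12)$. Collecting these pieces, together with the prefactors $q^{k(k+3)/4+pl(l+1)}$ and the sign $(-1)^l$, I would produce a uniform representation of the summand of the form $C_N\cdot g(N,t,s)\,\exp\bigl((N+\tfrac12)\,V(p,t,s)\bigr)$, where $V(p,t,s)$ is exactly the potential function of Section \ref{section-potentialfunction} and $g$ carries the subleading factors. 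The output at this stage is Proposition \ref{prop-coloredJonespotential}.

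Next I would apply the Poisson summation formula in both variables $(k,l)$, so that the double sum is rewritten as an infinite sum of Fourier coefficients $\hat f_{m,n}$, each being an integral of $g\cdot e^{(N+\tfrac12)V_{m,n}}$ over an appropriate rectangle, where $V_{m,n}(p,t,s)=V(p,t,s)-2\pi\sqrt{-1}(mt+ns)$. This is the content of Proposition \ref{prop-fouriercoeff}. Then I would show that all but finitely many $(m,n)$ give Fourier coefficients whose modulus is $o\bigl(e^{(N+\tfrac12)\,\mathrm{Re}\,\zeta(p)}\bigr)$; the standard way to do this is to bound the integrand by its value at a real saddle-like point and exploit the imaginary translations produced by $m,n$, which force $\mathrm{Re}\,V_{m,n}$ well below $\mathrm{Re}\,\zeta(p)$ once $|m|$ or $|n|$ is large. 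This reduces the problem to a finite collection of integrals.

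For the remaining finitely many $(m,n)$, I would invoke Ohtsuki's saddle point method: deform the contour of each integral, through a carefully chosen homotopy inside the holomorphic domain of $V$, to a contour passing through the critical point $(t_0,s_0)$ along the direction of steepest descent. The fact that $(t_0,s_0)$ is a unique nondegenerate critical point of $V(p,t,s)$ comes from Proposition \ref{prop-critical}, and the nondegeneracy is witnessed by $\det\mathrm{Hess}(V)(t_0,s_0)$, which is proportional to $H(p,x_0,y_0)$. Standard Gaussian integration then gives a leading term of shape
\begin{align*}
\hat f_{m,n}\;\sim\;\frac{2\pi\,g(N,t_0,s_0)\,e^{(N+\tfrac12)V(p,t_0,s_0)}}{(N+\tfrac12)\sqrt{\det\mathrm{Hess}(V)(t_0,s_0)}}
\end{align*}
with a full asymptotic expansion in powers of $1/(N+\tfrac12)$ whose coefficients $\kappa_i(p)$ are obtained from higher derivatives of $V$ and $g$ at the critical point. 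Examining each surviving $(m,n)$, I would show that only two Fourier coefficients (complex conjugates, coming from the two branches picked out by the $(-1)^l$ sign and by the symmetry of $V$) produce the dominant exponential $e^{(N+\tfrac12)\zeta(p)}$; all others are strictly subdominant. Adding these two equal contributions and assembling the prefactor $g(N,t_0,s_0)$ against the denominator $\sin\tfrac{\pi}{2N+1}$ coming from $\langle e_{N-1}\rangle$ yields the stated formula with the $(-1)^{p+1}\,4\pi e^{\pi\sqrt{-1}/4}(N+\tfrac12)^{1/2}/\sin\tfrac{\pi}{2N+1}$ prefactor.

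The main obstacle I expect is the homotopy/deformation step: one must exhibit an explicit contour deformation from the real rectangle of integration to a steepest descent contour through $(t_0,s_0)$, verifying along the whole homotopy that (i) one does not cross a singularity of the integrand or a branch cut of the dilogarithms, and (ii) on the new contour $\mathrm{Re}\,V(p,t,s)\le\mathrm{Re}\,\zeta(p)$ with equality only at the critical point. This is precisely where the hypothesis $p\ge 6$ enters: only when $\mathrm{vol}(S^3\setminus\mathcal{K}_p)$ is large enough does the geometry of the critical point lie comfortably inside the region where such a homotopy can be constructed uniformly, as flagged in the remark after the theorem. Verifying the hypotheses of Ohtsuki's saddle point lemma in this setting (and thus controlling the error $O(1/(N+\tfrac12)^{d+1})$) will be the technical heart of the proof.
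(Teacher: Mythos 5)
Your proposal does not prove the stated proposition; it proves something else entirely. The proposition to be established is the finite $q$-series (cyclotomic) expansion of $J_N(\mathcal{K}_p;q)$ at a formal variable $q$, valid for every $N$. Your plan begins by \emph{taking this expansion as given} ("follow the Ohtsuki saddle point machinery applied to the cyclotomic expansion at $q=e^{2\pi\sqrt{-1}/(N+1/2)}$ \dots") and then derives the asymptotic behavior as $N\to\infty$, which is the content of Theorem \ref{theorem-main}, not of the proposition. In other words, you have assumed the conclusion and set out to establish a different (later) result. Nothing in your argument — quantum dilogarithms, Poisson summation, Fourier coefficients, saddle point deformations — is relevant to establishing a polynomial identity in $q$.

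What is actually needed is a combinatorial or skein-theoretic derivation, independent of any specialization of $q$. The paper itself does not reprove this statement; it cites Masbaum, who rederived Habiro's cyclotomic expansion using Kauffman bracket skein theory. A genuine proof along those lines proceeds by (i) decomposing the twist knot diagram into the clasp tangle and the $2p$-half-twist region, (ii) expanding the cable $(-1)^{N-1}e_{N-1}$ of the unknot through the twist region using the eigenbasis $\{e_i\}$ and the twist eigenvalues $\mu_i=(-1)^iA^{i^2+2i}$, which produces the factor $q^{pl(l+1)}$, (iii) evaluating the resulting sum of Kauffman brackets of the clasp cabled by $e_i$'s via known closed formulas (Masbaum's lemmas), which yields the factors $\{k\}!\{2l+1\}/(\{k+l+1\}!\{k-l\}!)$ and $\prod_{i=1}^k\{N+i\}\{N-i\}$, and (iv) normalizing by $\langle e_{N-1}\rangle$. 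None of these steps involve asymptotics or a choice of root of unity, and the inequality $p\ge 6$ plays no role; the formula holds for all $p\in\mathbb{Z}$.
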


\subsection{Dilogarithm and Lobachevsky functions}
Let $\log: \mathbb{C}\setminus (-\infty,0]\rightarrow \mathbb{C}$ be the standard logarithm function defined by 
\begin{align}
    \log z=\log |z|+\sqrt{-1}\arg z
\end{align}
with $-\pi <\arg z<\pi$. 

The dilogarithm function $\text{Li}_2: \mathbb{C}\setminus (1,\infty)\rightarrow \mathbb{C}$ is defined by 
\begin{align}
    \text{Li}_2(z)=-\int_0^{z}\frac{\log(1-x)}{x}dx
\end{align}
where the integral is along any path in $\mathbb{C}\setminus (1,\infty)$ connecting $0$ and $z$, which is holomorphic in $\mathbb{C}\setminus [1,\infty)$ and continuous in $\mathbb{C}\setminus (1,\infty)$. 

The dilogarithm function satisfies the following properties 
\begin{align}
    \text{Li}_2\left(\frac{1}{z}\right)=-\text{Li}_2(z)-\frac{\pi^2}{6}-\frac{1}{2}(\log(-z) )^2.
\end{align}
In the unit disk $\{z\in \mathbb{C}| |z|<1\}$,  $\text{Li}_2(z)=\sum_{n=1}^{\infty}\frac{z^n}{n^2}$, and on the unit circle 
\begin{align}
 \{z=e^{2\pi \sqrt{-1}t}|0 \leq t\leq 1\},    
\end{align}
we have
\begin{align}
    \text{Li}_2(e^{2\pi\sqrt{-1} t})=\frac{\pi^2}{6}+\pi^2t(t-1)+2\pi \sqrt{-1}\Lambda(t)
\end{align}
where 
\begin{align} \label{formula-Lambda(t)}
\Lambda(t)=\text{Re}\left(\frac{\text{Li}_2(e^{2\pi \sqrt{-1}t})}{2\pi \sqrt{-1}}\right)=-\int_{0}^{t}\log|2 \sin \pi t|d t 
\end{align}
for $t\in \mathbb{R}$. The function $\Lambda(t)$ is an odd function which has period $1$ and satisfies 
$
\Lambda(1)=\Lambda(\frac{1}{2})=0.
$

Furthermore, we have the following estimation for the function $$\text{Re}\left(\frac{1}{2\pi\sqrt{-1}}\text{Li}_2\left(e^{2\pi\sqrt{-1}(t+X\sqrt{-1})}\right)\right)$$ with $t,X\in \mathbb{R}$.   
\begin{lemma} (see Lemma 2.2 in \cite{OhtYok18}) \label{lemma-Li2}
    Let $t$ be a real number with $0<t<1$, and let
 \begin{align}
    F(t,X)=\left\{ \begin{aligned}
         &0  &  \ (\text{if} \ X\geq 0) \\
         &2\pi \left(t-\frac{1}{2}\right)X & \ (\text{if} \ X<0)
                          \end{aligned}\right.
                      \end{align}
    Then there exists a constant $C>0$ such that 
\begin{align}
    F(t,X)-C<\text{Re}\left(\frac{1}{2\pi\sqrt{-1}}\text{Li}_2\left(e^{2\pi\sqrt{-1}(t+X\sqrt{-1})}\right)\right)<
   F(t,X)+C.
\end{align}
Moreover, 
\begin{align}
\lim_{|X|\rightarrow +\infty}|\text{Re}\left(\frac{1}{2\pi\sqrt{-1}}\text{Li}_2\left(e^{2\pi\sqrt{-1}(t+X\sqrt{-1})}\right)\right)-F(t,X)|=0.    
\end{align}
\end{lemma}

\subsection{Quantum dilogrithm functions}
For a positive integer $N$, we set $\xi_N=e^{\frac{2\pi\sqrt{-1}}{N+\frac{1}{2}}}$. We introduce the holomorphic function $\varphi_N(t)$ for $\{t\in
\mathbb{C}| 0< \text{Re}(t) < 1\}$, by the following integral
\begin{align}
\varphi_N(t)=\int_{-\infty}^{+\infty}\frac{e^{(2t-1)x}dx}{4x \sinh x
\sinh\frac{x}{N+\frac{1}{2}}}.
\end{align}
Noting that this integrand has poles at $n\pi \sqrt{-1} (n\in
\mathbb{Z})$, where, to avoid the poles at $0$, we choose the
following contour of the integral
\begin{align}
\gamma=(-\infty,-1]\cup \{z\in \mathbb{C}||z|=1, \text{Im} z\geq 0\}
\cup [1,\infty).
\end{align}

\begin{lemma}  \label{lemma-varphixi}
The function $\varphi_N(t)$ satisfies 
\begin{align}
    (\xi_{N})_n&=\exp \left(\varphi_N\left(\frac{1}{2N+1}\right)-\varphi_N\left(\frac{2n+1}{2N+1}\right)\right)   \  \   \left(0\leq n\leq N\right), \\
    (\xi_{N})_n&=\exp \left(\varphi_N\left(\frac{1}{2N+1}\right)-\varphi_N\left(\frac{2n+1}{2N+1}-1\right)+\log 2\right)   \  \   \left(N< n\leq 2N\right).
\end{align}
\end{lemma}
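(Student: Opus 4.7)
The plan is to establish the one-step identity
\[
\varphi_N\!\left(\tfrac{2n-1}{2N+1}\right) - \varphi_N\!\left(\tfrac{2n+1}{2N+1}\right) = \log(1 - \xi_N^n),
\]
valid for $1\le n\le N$, and then to telescope it. To prove the identity, I would combine the two integrals defining the left-hand side and simplify the numerator using
\[
e^{(2\alpha_n-1)x} - e^{(2\alpha_{n+1}-1)x} = -2\,e^{\left(2\alpha_n - 1 + \tfrac{1}{N+1/2}\right)x}\,\sinh\tfrac{x}{N+1/2},
\]
where $\alpha_n = \tfrac{2n-1}{2N+1}$. The $\sinh\tfrac{x}{N+1/2}$ cancels the matching factor in the denominator of the integrand of $\varphi_N$, and the difference collapses to the single integral
\[
-\int_\gamma \frac{e^{\beta x}}{2x\sinh x}\,dx, \qquad \beta = \tfrac{4n-2N-1}{2N+1} \in (-1,1).
\]

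Next I would evaluate this integral by closing the contour $\gamma$ in the upper half-plane. Since $|\beta|<1$, the large semicircular arc contributes $0$; since $\gamma$ detours over the origin, the pole at $x=0$ lies outside the closed region; and the only poles enclosed are $x=k\pi i$ for $k\ge 1$, at each of which $\tfrac{1}{\sinh x}$ has the simple-pole residue $(-1)^k/(x-k\pi i)$. The residue theorem therefore yields
\[
\int_\gamma \frac{e^{\beta x}}{2x\sinh x}\,dx = \sum_{k=1}^\infty \frac{(-e^{\beta\pi i})^k}{k} = -\log\bigl(1+e^{\beta\pi i}\bigr).
\]
A short check using $\beta\pi i = \tfrac{2n\pi i}{N+1/2} - \pi i$ shows that $e^{\beta\pi i}=-\xi_N^n$, so $1+e^{\beta\pi i}=1-\xi_N^n$, completing the one-step identity. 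Telescoping over $n=1,\dots,m$ produces the first formula on the range $0\le m\le N$; the boundary case $m=N$ is allowed because the defining integral for $\varphi_N(t)$ continues to converge at $t=1$.

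For the second formula I would run the same argument with the shifted arguments $\widetilde\alpha_n = \tfrac{2n-1}{2N+1} - 1$. The algebraic manipulation is identical; the new exponent is $\widetilde\beta = \tfrac{4n-6N-3}{2N+1}$, which again lies in $(-1,1)$ for $N<n\le 2N$, and the residue calculation outputs $\log(1-\xi_N^n)$ once more, since the shift only contributes an extra $e^{-2\pi i}=1$. Telescoping from $n=N+1$ yields
\[
\frac{(\xi_N)_n}{(\xi_N)_N} = \exp\Bigl(\varphi_N(0) - \varphi_N\bigl(\tfrac{2n+1}{2N+1}-1\bigr)\Bigr),
\]
and combining with the first formula at $m=N$ turns this into the desired second formula, provided one knows the boundary identity $\varphi_N(0) - \varphi_N(1) = \log 2$. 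This last piece I would prove by the same technique: $\varphi_N(0)-\varphi_N(1)$ simplifies, after the rescaling $u=x/(N+\tfrac12)$, to $-\int_\gamma\tfrac{du}{2u\sinh u}$, which is the $\beta=0$ instance of the integral above and hence equals $\log 2$.

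The main obstacle is bookkeeping rather than conceptual: one must verify that the integrals defining $\varphi_N(t)$ converge at the boundary values $t=0$ and $t=1$ so the endpoint evaluations make sense, keep the orientation of $\gamma$ straight so that the pole at $x=0$ is excluded while only the upper-half-plane poles $k\pi i$ contribute, and follow the correct branch of the logarithm so that $-\log(1+e^{\beta\pi i})=-\log(1-\xi_N^n)$ holds on the nose rather than only modulo $2\pi i$.
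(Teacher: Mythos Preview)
Your argument is correct and is precisely the standard proof of this lemma. The paper itself does not supply a proof; it simply refers the reader to the literature (Ohtsuki \cite{Oht16}, Chen--Murakami \cite{CJ17}, Wong--Yang \cite{WongYang20-1}), where exactly this residue-calculus derivation appears. The key steps---cancelling the $\sinh\frac{x}{N+1/2}$ factor in the difference, closing the contour in the upper half-plane through radii that avoid the poles of $1/\sinh x$, summing the residues at $k\pi i$ to recover $-\log(1+e^{\beta\pi i})$, and then telescoping---are all as you describe. The boundary evaluations $\varphi_N(0)$ and $\varphi_N(1)$ are indeed convergent (the decay from $1/\sinh\frac{x}{N+1/2}$ compensates for the loss of decay in $e^{(2t-1)x}/\sinh x$ at the endpoints), and your computation of $\varphi_N(0)-\varphi_N(1)=\log 2$ via the rescaling $u=x/(N+\tfrac12)$ is the usual way to obtain that constant.
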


\begin{lemma} \label{lemma-varphixi2}
    We have the following identities:
\begin{align}
    \varphi_N(t)+\varphi_N(1-t)&=2\pi \sqrt{-1}\left(-\frac{2N+1}{4}\left(t^2-t+\frac{1}{6}\right)+\frac{1}{12(2N+1)}\right),\\ 
    \varphi_N\left(\frac{1}{2N+1}\right)&=\frac{2N+1}{4\pi\sqrt{-1}}\frac{\pi^2}{6}+\frac{1}{2}\log \left(\frac{2N+1}{2}\right)+\frac{\pi \sqrt{-1}}{4}-\frac{\pi \sqrt{-1}}{6(2N+1)},\\
    \varphi_N\left(1-\frac{1}{2N+1}\right)&=\frac{2N+1}{4\pi\sqrt{-1}}\frac{\pi^2}{6}-\frac{1}{2}\log \left(\frac{2N+1}{2}\right)+\frac{\pi \sqrt{-1}}{4}-\frac{\pi \sqrt{-1}}{6(2N+1)}.
\end{align}
\end{lemma}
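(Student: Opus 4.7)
The plan is to prove the three identities in order, with the individual boundary-value evaluations reduced to the functional equation plus a single input from Lemma \ref{lemma-varphixi}.

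For the functional equation $\varphi_N(t)+\varphi_N(1-t)=\cdots$, I would substitute $x=-y$ in the integral defining $\varphi_N(1-t)$. Since the three factors $x$, $\sinh x$, $\sinh(x/(N+\tfrac12))$ in the denominator are all odd and $dx=-dy$, a direct computation gives $\varphi_N(1-t)=\int_{-\gamma}\frac{e^{(2t-1)y}}{4y\sinh y\sinh(y/(N+1/2))}\,dy$, where $-\gamma$ is $\gamma$ reflected through the origin, running from $+\infty$ to $-\infty$ with a small detour below $0$. Adding this to $\varphi_N(t)=\int_\gamma(\cdots)$, the real-line portions cancel and what remains is the integral of the same integrand around a small clockwise loop enclosing only $y=0$. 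By the residue theorem this equals $-2\pi\sqrt{-1}\cdot\mathrm{Res}_{y=0}$, and extracting the $y^{-1}$-coefficient from
\begin{align*}
\frac{N+1/2}{4y^3}\Bigl(1+(2t-1)y+\tfrac{(2t-1)^2}{2}y^2+\cdots\Bigr)\Bigl(1-\tfrac{y^2}{6}-\tfrac{y^2}{6(N+1/2)^2}+\cdots\Bigr)
\end{align*}
and simplifying delivers the claimed polynomial in $t$ on the right-hand side.

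For the difference $\varphi_N(1/(2N+1))-\varphi_N(1-1/(2N+1))$, I would appeal to Lemma \ref{lemma-varphixi}. Since $\xi_N=e^{4\pi\sqrt{-1}/(2N+1)}$ and $\gcd(2,2N+1)=1$, $\xi_N$ is a primitive $(2N+1)$-th root of unity and the powers $\xi_N^j$ for $j=1,\ldots,2N$ run through every nontrivial $(2N+1)$-th root of unity, so
\begin{align*}
(\xi_N)_{2N}=\prod_{j=1}^{2N}(1-\xi_N^j)=\prod_{k=1}^{2N}\bigl(1-e^{2\pi\sqrt{-1}k/(2N+1)}\bigr)=2N+1.
\end{align*}
Applying the second case of Lemma \ref{lemma-varphixi} with $n=2N$, for which $(2n+1)/(2N+1)-1=1-1/(2N+1)$, and taking logarithms gives $\varphi_N(1/(2N+1))-\varphi_N(1-1/(2N+1))=\log((2N+1)/2)$. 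Specializing the first identity to $t=1/(2N+1)$ then produces $\varphi_N(1/(2N+1))+\varphi_N(1-1/(2N+1))$ in closed form, and adding and subtracting these two relations (dividing by two) yields the individual formulas asserted in the lemma after a short arithmetic check that the constants match.

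The one delicate point will be the contour bookkeeping in the functional-equation step: verifying that $\gamma$ concatenated with $-\gamma$ really traces a single clockwise loop around $0$, that no other poles of the integrand (located at $n\pi\sqrt{-1}$ with $n\ne 0$) are enclosed, and that the pieces at $\pm\infty$ contribute nothing thanks to the exponential decay of the $\sinh$ factors. Once these analytic checks are in place, the residue extraction and all remaining arithmetic are mechanical.
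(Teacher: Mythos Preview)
Your approach is correct and self-contained, whereas the paper itself gives no proof and simply cites the literature (Ohtsuki, Chen--Murakami, Wong--Yang). The residue argument you outline for the functional equation is exactly the standard one found in those references, and your contour bookkeeping is right: the concatenation of $\gamma$ with the reflected contour is a single clockwise loop about $0$, enclosing no other poles, and the residue extraction gives precisely the stated polynomial.

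There is one small gap you should close in the second half. When you ``take logarithms'' of Lemma~\ref{lemma-varphixi} at $n=2N$, you only obtain
\[
\varphi_N\!\left(\tfrac{1}{2N+1}\right)-\varphi_N\!\left(1-\tfrac{1}{2N+1}\right)\in \log\!\left(\tfrac{2N+1}{2}\right)+2\pi\sqrt{-1}\,\mathbb{Z},
\]
and you need the ambiguity to vanish. This follows from the same contour manipulation you already use: for real $t\in(0,1)$ the integrand has real coefficients, so conjugating the defining integral sends $\gamma$ to its reflection $\bar\gamma$ below the real axis, while the substitution $x\mapsto -x$ you performed shows $\varphi_N(1-t)$ is the integral along $\bar\gamma$ with reversed orientation. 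Together these give $\varphi_N(1-t)=-\overline{\varphi_N(t)}$, hence $\varphi_N(t)-\varphi_N(1-t)=2\,\mathrm{Re}\,\varphi_N(t)$ is real, forcing the integer to be zero. Once you insert this one line, the sum-and-difference argument goes through exactly as you describe.
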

The function $\varphi_N (t)$ is closely related to the dilogarithm function as follows.
\begin{lemma} \label{lemma-varphixi3}
    (1)For every $t$ with $0<\text{Re}(t)<1$, 
    \begin{align}
        \varphi_N(t)=\frac{N+\frac{1}{2}}{2\pi \sqrt{-1}}\text{Li}_2(e^{2\pi\sqrt{-1}t})
 -\frac{\pi \sqrt{-1}e^{2\pi\sqrt{-1}t}}{6(1-e^{2\pi\sqrt{-1}t})}\frac{1}{2N+1}+O\left(\frac{1}{(N+\frac{1}{2})^3}\right).
    \end{align}
    (2) For every $t$ with $0<\text{Re}(t)<1$, 
    \begin{align}
        \varphi_N'(t)=-\frac{2N+1}{2}\log(1-e^{2\pi\sqrt{-1}t})+O\left(\frac{1}{N+\frac{1}{2}}\right)
    \end{align}
    (3) As $N\rightarrow \infty$, $\frac{1}{N+\frac{1}{2}}\varphi_N(t)$ uniformly converges to $\frac{1}{2\pi\sqrt{-1}}\text{Li}_2(e^{2\pi\sqrt{-1}t})$ and $\frac{1}{N+\frac{1}{2}}\varphi'_N(t)$ uniformly converges to $-\log(1-e^{2\pi\sqrt{-1}t})$ on any compact subset of $\{t\in \mathbb{C}|0<\text{Re}(t)<1\}$. 
\end{lemma}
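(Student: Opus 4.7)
The plan is to prove parts (1), (2), (3) simultaneously by expanding the one $N$-dependent factor in the integrand of $\varphi_N(t)$, namely $1/\sinh(x/(N+\frac{1}{2}))$, as a Taylor series in $1/(N+\frac{1}{2})$, and evaluating the resulting $N$-independent contour integrals by residue calculus. Using $y/\sinh y = 1 - y^{2}/6 + O(y^{4})$ for $|y|<\pi$ with $y = x/(N+\frac{1}{2})$, the definition of $\varphi_N(t)$ yields
\begin{align*}
\varphi_N(t) = \frac{N+\frac{1}{2}}{4}\int_{\gamma}\frac{e^{(2t-1)x}}{x^{2}\sinh x}\,dx \;-\; \frac{1}{24(N+\frac{1}{2})}\int_{\gamma}\frac{e^{(2t-1)x}}{\sinh x}\,dx \;+\; R_N(t),
\end{align*}
and differentiating under the integral sign (justified by uniform absolute convergence on compact subsets of $t$) gives the analogous expansion for $\varphi_N'(t) = \int_{\gamma} e^{(2t-1)x}/(2\sinh x\,\sinh(x/(N+\frac{1}{2})))\,dx$ with leading factor $\int_{\gamma} e^{(2t-1)x}/(x\sinh x)\,dx$.

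The three $N$-independent integrals are then evaluated by closing $\gamma$ with a rectangular contour of half-width $R$ and height $(M+\frac{1}{2})\pi$ and letting $R\to\infty$, then $M\to\infty$. The vertical segments vanish because $|e^{(2t-1)x}/\sinh x|$ decays like $e^{-(1-|2\text{Re}\,t-1|)R}$ for $0<\text{Re}\,t<1$, and the top segment vanishes because $|\sinh(a+i(M+\frac{1}{2})\pi)|=\cosh a\ge 1$ and $|\sinh((a+i(M+\frac{1}{2})\pi)/(N+\frac{1}{2}))|$ is bounded below by a positive constant (for suitable $M$). The semicircular detour in $\gamma$ excludes the pole at $x=0$, so only the residues at $x=n\pi\sqrt{-1}$ for $n\ge 1$ survive. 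Using $\sinh'(n\pi\sqrt{-1})=(-1)^n$ and $e^{(2t-1)n\pi\sqrt{-1}}=(-1)^n e^{2n\pi\sqrt{-1}t}$, summing the residues gives
\begin{align*}
\int_{\gamma}\frac{e^{(2t-1)x}}{x^{2}\sinh x}\,dx &= \frac{2}{\pi\sqrt{-1}}\,\text{Li}_{2}(e^{2\pi\sqrt{-1}t}),\\
\int_{\gamma}\frac{e^{(2t-1)x}}{x\sinh x}\,dx &= -2\log(1-e^{2\pi\sqrt{-1}t}),\\
\int_{\gamma}\frac{e^{(2t-1)x}}{\sinh x}\,dx &= \frac{2\pi\sqrt{-1}\,e^{2\pi\sqrt{-1}t}}{1-e^{2\pi\sqrt{-1}t}}.
\end{align*}
Substituting these and using $N+\frac{1}{2}=(2N+1)/2$ reproduces the explicit formulas in parts~(1) and~(2).

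The main obstacle is establishing the claimed remainder bound $R_N(t)=O(1/(N+\frac{1}{2})^{3})$ (and its derivative analogue $O(1/(N+\frac{1}{2}))$) uniformly for $t$ in compact subsets of $\{0<\text{Re}\,z<1\}$, since the Taylor series of $y/\sinh y$ only converges for $|y|<\pi$ whereas $\gamma$ extends to infinity. I would handle this by splitting $\gamma=\gamma_{\mathrm{in}}\cup\gamma_{\mathrm{out}}$ with $\gamma_{\mathrm{in}}=\gamma\cap\{|x|\le c(N+\frac{1}{2})\}$ for some fixed $0<c<\pi$. On $\gamma_{\mathrm{in}}$ the Taylor-with-remainder inequality $|y/\sinh y - 1 + y^{2}/6|\le C|y|^{4}$ integrated against the $L^{1}$-density $|e^{(2t-1)x}/(x\sinh x)|$ yields the desired order. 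On $\gamma_{\mathrm{out}}$ the lower bound $|\sinh(x/(N+\frac{1}{2}))|\ge c_{0}e^{|x|/(N+\frac{1}{2})}$ combined with the exponential decay of the remaining factors gives a contribution that is super-polynomially small in $N$, and the corresponding tails of $I_{1}$, $I_{2}$ over $\gamma_{\mathrm{out}}$ are equally negligible. Part~(3) then follows immediately from (1) and (2), because the implicit constants in the error terms are uniform for $t$ in compact subsets of the strip, which yields the uniform convergence of $\varphi_N(t)/(N+\frac{1}{2})$ to $\text{Li}_{2}(e^{2\pi\sqrt{-1}t})/(2\pi\sqrt{-1})$ and of $\varphi_N'(t)/(N+\frac{1}{2})$ to $-\log(1-e^{2\pi\sqrt{-1}t})$.
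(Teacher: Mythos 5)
The paper does not give its own proof of this lemma; it cites the literature ([Oht16], [CJ17], [WongYang20-1]) where such estimates for the quantum dilogarithm $\varphi_N$ are established. Your argument is correct and is essentially the standard proof in those references: expand the small-argument factor $1/\sinh(x/(N+\tfrac{1}{2}))$ via the Taylor series of $y/\sinh y$, evaluate the resulting $N$-independent contour integrals by residues at $x=n\pi\sqrt{-1}$, and control the remainder by splitting $\gamma$ at radius $c(N+\tfrac{1}{2})$. Your residue evaluations and the bookkeeping of the $(N+\tfrac{1}{2})$ powers (recall $2N+1=2(N+\tfrac{1}{2})$) all check out, as does the observation that the contribution from $\gamma_{\mathrm{out}}$ is super-polynomially small. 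The one place the written argument needs a small supplement: closing upward makes the top segment vanish (and the residue sums converge absolutely) only for $\mathrm{Im}\,t\ge 0$, because $|e^{(2t-1)x}|$ grows like $e^{-2\,\mathrm{Im}(t)\cdot\mathrm{Im}(x)}$ along the vertical direction. For $\mathrm{Im}\,t<0$ you should close downward; the contour then encircles the pole at $x=0$ (a triple pole for the $I_1$ integral, simple for the others), and using the dilogarithm inversion formula $\mathrm{Li}_2(z)+\mathrm{Li}_2(1/z)=-\tfrac{\pi^2}{6}-\tfrac{1}{2}\log^2(-z)$ together with $\log(-e^{2\pi\sqrt{-1}t})=\pi\sqrt{-1}(2t-1)$ for $0<\mathrm{Re}\,t<1$ one recovers exactly the same closed forms, so the identities hold throughout the strip. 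With that remark included, the proof is complete.
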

See the literature, such as \cite{Oht16,CJ17,WongYang20-1} for the proof of Lemma \ref{lemma-varphixi}, \ref{lemma-varphixi2}, \ref{lemma-varphixi3}.

 \subsection{Saddle point method}
We need to use the following version of saddle point method as illustrated in \cite{Oht18}.
\begin{proposition}[\cite{Oht18}, Proposition 3.1]  \label{proposition-saddlemethod}
   Let $A$ be a non-singular symmetric complex $2\times 2$ matrix, and let $\Psi(z_1,z_2)$ and $r(z_1,z_2)$ be holomorphic functions of the forms, 
   \begin{align}
    \Psi(z_1,z_2)&=\mathbf{z}^{T}A\mathbf{z}+r(z_1,z_2), \\\nonumber
    r(z_1,z_2)&=\sum_{i,j,k}b_{ijk}z_iz_jz_k+\sum_{i,j,k,l}c_{ijkl}z_iz_jz_kz_l+\cdots
   \end{align}
   defined in a neighborhood of $\mathbf{0}\in \mathbb{C}^2$. The restriction of the domain 
   \begin{align} \label{formula-domain0}
       \{(z_1,z_2)\in \mathbb{C}^2| \text{Re} \Psi(z_1,z_2)<0\}  
   \end{align}
   to a neighborhood of $\mathbf{0}\in \mathbb{C}^2$ is homotopy equivalent to $S^1$. Let $D$ be an oriented disk embeded in $\mathbb{C}^2$ such that $\partial D$ is included in the domain (\ref{formula-domain0}) whose inclusion is homotopic to a homotopy equivalence to the above $S^1$ in the domain (\ref{formula-domain0}). Then we have the following asymptotic expansion
\begin{align}
    \int_{D}e^{N\psi(z_1,z_2)}dz_1dz_2=\frac{\pi}{N\sqrt{\det(-A)}}\left(1+\sum_{i=1}^d\frac{\lambda_i}{N^i}+O(\frac{1}{N^{d+1}})\right),
\end{align}
   for any $d$, where we choose the sign of $\sqrt{\det{(-A)}}$ as explained in Proposition \cite{Oht16}, and $\lambda_i$'s are constants presented by using coefficients of the expansion $\Psi(z_1,z_2)$, such presentations are obtained by formally expanding the following formula, 
\begin{align}
    1+\sum_{i=1}^{\infty}\frac{\lambda_i}{N^i}=\exp\left(Nr\left(\frac{\partial }{\partial w_1},\frac{\partial }{\partial w_2}\right)\right)\exp\left(-\frac{1}{4N}(w_1,w_2)A^{-1}\binom{w_1}{w_2}\right)|_{w_1=w_2=0}.
\end{align}
\end{proposition}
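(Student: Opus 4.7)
The plan is to diagonalize the quadratic part of $\Psi$, justify deforming $D$ to a standard real descent disk via Stokes' theorem, and then extract the asymptotic expansion from a Gaussian integral by the exponential-differential-operator identity. First, by Autonne--Takagi factorization for the symmetric non-singular matrix $A$, choose an invertible complex matrix $M$ with $M^T A M = -I_2$, with the sign of $\det M$ fixed by the chosen branch of $\sqrt{\det(-A)}$ so that $\det M = 1/\sqrt{\det(-A)}$. Substituting $z = Mw$ and writing $w_j = u_j + \sqrt{-1}\, v_j$, one has
$$\Psi(Mw) = -(w_1^2 + w_2^2) + \tilde{r}(w), \qquad \tilde{r}(w) := r(Mw) = O(|w|^3),$$
and $\text{Re}(-w_1^2 - w_2^2) = -(u_1^2 + u_2^2) + (v_1^2 + v_2^2)$. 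Hence near $\mathbf{0}$ the domain $\{\text{Re}\,\Psi < 0\}$ is a smooth perturbation of $\{u_1^2 + u_2^2 > v_1^2 + v_2^2\}$; a radial deformation retract first in $v$ and then in $u$ shows this neighborhood is homotopy equivalent to $\{v = 0,\ u_1^2 + u_2^2 = \epsilon^2\} \cong S^1$, establishing the topological claim.

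Second, I would invoke Stokes' theorem for the closed holomorphic $2$-form $e^{N\Psi}\, dz_1 \wedge dz_2$ on $\mathbb{C}^2$ to replace $D$ by the standard real descent disk $D_0 = \{v_1 = v_2 = 0,\ u_1^2 + u_2^2 \leq \epsilon^2\}$. By hypothesis $\partial D$ represents a generator of $H_1(\{\text{Re}\,\Psi < 0\}) \cong \mathbb{Z}$, and so does $\partial D_0$ by the previous step, so $\partial D$ and $\partial D_0$ cobound a $2$-chain in that domain; gluing it to $D - D_0$ gives a closed $3$-chain in $\mathbb{C}^2$ over which the closed $2$-form integrates to zero, yielding
$$\int_D e^{N\Psi}\, dz_1\, dz_2 = \det(M) \int_{D_0} e^{N(-w^T w + \tilde{r}(w))}\, dw_1\, dw_2.$$

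Third, I would evaluate the right-hand side using the auxiliary Gaussian $I(W) := \int_{\mathbb{R}^2} e^{-N(u_1^2 + u_2^2) + W \cdot u}\, du = (\pi/N)\exp((W_1^2 + W_2^2)/(4N))$. Since $\tilde{r}(\partial_W) e^{W \cdot u} = \tilde{r}(u) e^{W \cdot u}$, one has formally
$$\int_{\mathbb{R}^2} e^{N(-u^T u + \tilde{r}(u))}\, du = \exp\!\bigl(N\tilde{r}(\partial_W)\bigr) I(W)\big|_{W=0}.$$
Transporting this back through $M$, so that $\det M$ combines with $\pi/N$ to give $\pi/(N\sqrt{\det(-A)})$ while the identity quadratic in $W$ pulls back to the $-A^{-1}$ quadratic, reproduces verbatim the generating-function formula for the $\lambda_i$ stated in the proposition.

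The main obstacle is converting the formal manipulation above into a genuine asymptotic expansion with remainder $O(1/N^{d+1})$. I would split $D_0$ at scale $|u| = N^{-1/2+\delta}$ for small $\delta > 0$: the outer annulus contributes an exponentially small error because $\text{Re}\,\Psi \leq -c|w|^2$ there; on the inner disk, Taylor-expand $e^{N\tilde{r}(u)}$ through order $K$ and control the remainder by $|N\tilde{r}(u)|^K/K! = O(N^{-K(1/2 - 3\delta)})$, then extend the inner integration from that small disk to all of $\mathbb{R}^2$ at the cost of another exponentially small tail. The vanishing of odd Gaussian moments in each polynomial term pairs off half of the contributions, giving a series in integer powers of $1/N$ with the claimed remainder. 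A subtle auxiliary point, already flagged in the statement, is fixing the sign of $\sqrt{\det(-A)}$ consistently with the orientation of $D_0$ inherited from $D$ through the homotopy, which is in turn dictated by the orientation of the eigenbasis chosen in the Autonne--Takagi factorization.
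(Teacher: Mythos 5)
The paper does not actually prove this proposition: the text explicitly defers to Ohtsuki, ``For a proof of the Proposition \ref{proposition-saddlemethod}, see \cite{Oht16}.'' So there is no internal proof to compare against. That said, your blind sketch does reconstruct the standard two-variable saddle-point argument underlying Ohtsuki's proof: reduce the non-degenerate complex symmetric form to $-I_2$ (any linear reduction of a complex symmetric bilinear form suffices here, you do not really need the unitary Autonne--Takagi version), identify the $S^1$ by the signature of $\mathrm{Re}(-w_1^2-w_2^2)$ in real coordinates, deform $D$ to the real descent disk $D_0$ through $\{\mathrm{Re}\,\Psi<0\}$ by Stokes' theorem for the closed holomorphic $(2,0)$-form, then Taylor-expand and read off the coefficients from Gaussian moments, cutting at $|u|\sim N^{-1/2+\delta}$ to control the remainder. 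Two points need tightening. First, gluing the cobounding $2$-chain $\Sigma$ to $D-D_0$ produces a closed $2$-\emph{cycle}, not ``a closed $3$-chain'' as you wrote; it is this $2$-cycle that, being null-homologous in $\mathbb{C}^2$, bounds a $3$-chain to which Stokes applies. Second, that Stokes identity leaves the extra term $\int_{\Sigma}e^{N\Psi}\,dz_1\wedge dz_2$, which you drop without comment; you must observe that compactness of $\partial D$ and $\partial D_0$ inside the open set $\{\mathrm{Re}\,\Psi<0\}$ allows one to place $\Sigma$ inside $\{\mathrm{Re}\,\Psi<-\delta\}$ for a fixed $\delta>0$, so the dropped term is $O(e^{-N\delta})$ and can be absorbed into the $O(N^{-d-1})$ remainder. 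With those repairs the outline is sound, and your pull-back of the Gaussian generating function (under $W=M^{T}w$, using $A^{-1}=-MM^{T}$) does reproduce the stated formula for the $\lambda_i$.
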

For a proof of the Proposition \ref{proposition-saddlemethod},  see \cite{Oht16}. 
\begin{remark}[\cite{Oht18}, Remark 3.2]  \label{remark-saddle}
    As mentioned in Remark 3.6 of \cite{Oht16}, we can extend Proposition \ref{proposition-saddlemethod} to the case where $\Psi(z_1,z_2)$ depends on $N$ in such a way that $\Psi(z_1,z_2)$ is of the form 
    \begin{align}
        \Psi(z_1,z_2)=\Psi_0(z_1,z_2)+\Psi_1(z_1,z_2)\frac{1}{N}+R(z_1,z_2)\frac{1}{N^2}. 
    \end{align}
    where $\Psi_i(z_1,z_2)$'s are holomorphic functions independent of $N$, and we assume that $\Psi_0(z_1,z_2)$ satisfies the assumption of the Proposition and $|R(z_1,z_2)|$ is bounded by a constant which is independent of $N$.  
\end{remark}

\section{Computation of the potential function} \label{section-potentialfunction}
This section is devoted to the computation of potential function for the colored 
Jones polynomial $J_{N}(\mathcal{K}_p;q)$ at the root of unity $\xi_N$. 

We introduce the following $q$-Pochhammer symbol
\begin{align}
    (q)_n=\prod_{i=1}^{n}(1-q^i).
\end{align}
Then we have
\begin{align}
\ \{n\}!=(-1)^nq^{\frac{-n(n+1)}{4}}(q)_n.
\end{align}
By formula (\ref{formula-coloredJonestwist}),  we obtain
\begin{align}
J_{N}(\mathcal{K}_p;q)
=&\sum_{k=0}^{N-1}\sum_{l=0}^k(-1)^{k+l}q^{pl(l+1)+\frac{l(l-1)}{2}-Nk+\frac{k(k+1)}{2}+k}\\\nonumber
&\cdot\frac{(1-q^{2l+1})}{(1-q^N)}\frac{(q)_k(q)_{N+k}}{(q)_{k+l+1}(q)_{k-l}(q)_{N-k-1}}. 
\end{align}
Computing at the root of unity $\xi_N$, we get   
\begin{align}  \label{formula-sumkl}
 J_{N}(\mathcal{K}_p;\xi_N)=&\sum_{k=0}^{N-1}\sum_{l=0}^k\frac{(-1)^{k+l+1}\sin \frac{2\pi(2l+1)}{2N+1}}{\sin \frac{\pi }{2N+1}}\\\nonumber
 &\cdot q^{(p+\frac{1}{2})l^2+(p+\frac{1}{2})l+\frac{k^2}{2}+2k+\frac{3}{4}}\frac{(q)_k(q)_{N+k}}{(q)_{k+l+1}(q)_{k-l}(q)_{N-k-1}}|_{q=\xi_N}. 
\end{align}

Now, we study the following term
\begin{align}
   &(-1)^{l-k-1}q^{(p+\frac{1}{2})l^2+(p+\frac{1}{2})l+\frac{k^2}{2}+2k+\frac{3}{4}}\frac{(q)_k(q)_{N+k}}{(q)_{k+l+1}(q)_{k-l}(q)_{N-k-1}}|_{q=\xi_N} .
\end{align}

By using Lemma \ref{lemma-varphixi},  we obtain 
\begin{align}
    &\frac{(\xi_N)_k(\xi_N)_{N+k}}{(\xi_N)_{k+l+1}(\xi_N)_{k-l}(\xi_N)_{N-k-1}}\\\nonumber
    &=\exp\left(\varphi_N\left(\frac{2(k+l+1)+1}{2N+1}\right)+\varphi_N\left(\frac{2(k-l)+1}{2N+1}\right)+\varphi_N\left(1-\frac{2(k+1)}{2N+1}\right)\right.\\\nonumber
    &\left.-\varphi_N\left(\frac{2k+1}{2N+1}\right)-\varphi_N\left(\frac{2k}{2N+1}\right)-\varphi_N\left(\frac{1}{2N+1}\right)+\log 2\right) \\\nonumber
    &=\exp\left(\varphi_N\left(\frac{2k+2l+3}{2N+1}\right)+\varphi_N\left(\frac{2k-2l+1}{2N+1}\right)-\varphi_N\left(\frac{2k+1}{2N+1}\right)\right.\\\nonumber
    &\left.-\varphi_N\left(\frac{2k}{2N+1}\right)-\varphi_N\left(\frac{2k+2}{2N+1}\right)+\frac{(2N+1)\pi\sqrt{-1}}{24 }-\frac{\pi \sqrt{-1}}{4}-\frac{1}{2}\log \frac{2N+1}{2}\right.\\\nonumber
    &\left.+\pi \sqrt{-1}\left(\frac{1}{3(2N+1)}-\frac{2(k+1)^2}{2N+1}+(k+1)-\frac{2N+1}{12}\right)+\log 2\right),
\end{align}
for $0<k+l+1\leq N$, and
\begin{align}
    &\frac{(\xi_N)_k(\xi_N)_{N+k}}{(\xi_N)_{k+l+1}(\xi_N)_{k-l}(\xi_N)_{N-k-1}}\\\nonumber
    &=\exp\left(\varphi_N\left(\frac{2(k+l+1)+1}{2N+1}-1\right)+\varphi_N\left(\frac{2(k-l)+1}{2N+1}\right)\right.\\\nonumber
    &\left.+\varphi_N\left(1-\frac{2(k+1)}{2N+1}\right)-\varphi_N\left(\frac{2k+1}{2N+1}\right)-\varphi_N\left(\frac{2k}{2N+1}\right)-\varphi_N\left(\frac{1}{2N+1}\right)\right)\\\nonumber
    &=\exp\left(\varphi_N\left(\frac{2k+2l+3}{2N+1}-1\right)+\varphi_N\left(\frac{2k-2l+1}{2N+1}\right)-\varphi_N\left(\frac{2k+1}{2N+1}\right)\right.\\\nonumber
    &\left.-\varphi_N\left(\frac{2k}{2N+1}\right)-\varphi_N\left(\frac{2k+2}{2N+1}\right)+\frac{(2N+1)\pi\sqrt{-1}}{24 }-\frac{\pi \sqrt{-1}}{4}-\frac{1}{2}\log \frac{2N+1}{2} \right.\\\nonumber
    &\left.+\pi \sqrt{-1}\left(\frac{1}{3(2N+1)}-\frac{2(k+1)^2}{2N+1}+(k+1)-\frac{2N+1}{12}\right)\right),
\end{align}
 for $N<k+l+1\leq 2N$. 

Therefore, we obtain 
\begin{align}
    &(-1)^{l-k-1}\xi_N^{(p+\frac{1}{2})(l^2+l)+\frac{k^2}{2}+2k+\frac{3}{4}}\frac{(\xi_N)_k(\xi_N)_{N+k}}{(\xi_N)_{k+l+1}(\xi_N)_{k-l}(\xi_N)_{N-k-1}}\\\nonumber
    &=2\exp (N+\frac{1}{2})\left(\pi \sqrt{-1}\left((2p+1)\left(\frac{2l+1}{2N+1}\right)^2+\frac{4(2k+1)}{(2N+1)^2}-\frac{6p+7}{3(2N+1)^2}\right.\right.\\\nonumber&\left.\left.+\frac{2l+1}{2N+1}-\frac{3}{2(2N+1)}\right)+\frac{1}{N+\frac{1}{2}}\varphi_N\left(\frac{2k+2l+3}{2N+1}\right)+\frac{1}{N+\frac{1}{2}}\varphi_N\left(\frac{2k-2l+1}{2N+1}\right)\right.\\\nonumber
    &\left.-\frac{1}{N+\frac{1}{2}}\varphi_N\left(\frac{2k+1}{2N+1}\right)-\frac{1}{N+\frac{1}{2}}\varphi_N\left(\frac{2k}{2N+1}\right)-\frac{1}{N+\frac{1}{2}}\varphi_N\left(\frac{2k+2}{2N+1}\right)\right.\\\nonumber
    &\left.-\frac{\pi\sqrt{-1}}{12 }-\frac{1}{2N+1}\log \frac{2N+1}{2} \right)
\end{align}
for $0<k+l+1\leq N$, and
\begin{align}
    &(-1)^{l-k-1}\xi_N^{(p+\frac{1}{2})(l^2+l)+\frac{k^2}{2}+2k+\frac{3}{4}}\frac{(\xi_N)_k(\xi_N)_{N+k}}{(\xi_N)_{k+l+1}(\xi_N)_{k-l}(\xi_N)_{N-k-1}}\\\nonumber
    &=\exp (N+\frac{1}{2})\left(\pi \sqrt{-1}\left((2p+1)\left(\frac{2l+1}{2N+1}\right)^2+\frac{4(2k+1)}{(2N+1)^2}-\frac{6p+7}{3(2N+1)^2}\right.\right.\\\nonumber&\left.\left.+\frac{2l+1}{2N+1}-\frac{3}{2(2N+1)}\right)+\frac{1}{N+\frac{1}{2}}\varphi_N\left(\frac{2k+2l+3}{2N+1}-1\right)\right.\\\nonumber
    &\left.+\frac{1}{N+\frac{1}{2}}\varphi_N\left(\frac{2k-2l+1}{2N+1}\right)-\frac{1}{N+\frac{1}{2}}\varphi_N\left(\frac{2k+1}{2N+1}\right)-\frac{1}{N+\frac{1}{2}}\varphi_N\left(\frac{2k}{2N+1}\right)\right.\\\nonumber
    &\left.-\frac{1}{N+\frac{1}{2}}\varphi_N\left(\frac{2k+2}{2N+1}\right)-\frac{\pi\sqrt{-1}}{12 }-\frac{1}{2N+1}\log \frac{2N+1}{2} \right)
\end{align}
for $N<k+l+1\leq 2N$. 

Now we set
\begin{align}
    t=\frac{2k+1}{2N+1}, s=\frac{2l+1}{2N+1}
\end{align}
and define the functions $\tilde{V}_N(p,t,s)$ and $\delta(t,s)$ as follows. 

(1) If $0<s<1$, $0<t\pm s<1$, then 
\begin{align*}
    \tilde{V}_N(p,t,s)&=\pi \sqrt{-1}\left((2p+1)s^2+s+\frac{4}{2N+1}t-\frac{6p+7}{3(2N+1)^2}-\frac{3}{2(2N+1)}\right)\\\nonumber
    &+\frac{1}{N+\frac{1}{2}}\varphi_N\left(t+s+\frac{1}{2N+1}\right)+\frac{1}{N+\frac{1}{2}}\varphi_N\left(t-s+\frac{1}{2N+1}\right)\\\nonumber
    &-\frac{1}{N+\frac{1}{2}}\varphi_N\left(t\right)-\frac{1}{N+\frac{1}{2}}\varphi_N\left(t-\frac{1}{2N+1}\right)-\frac{1}{N+\frac{1}{2}}\varphi_N\left(t+\frac{1}{2N+1}\right)\\\nonumber
    &-\frac{\pi\sqrt{-1}}{12 }-\frac{1}{2N+1}\log \frac{2N+1}{2},
\end{align*}
and $\delta(t,s)=2$. 

(2) If $0<t<1$, $0<t-s<1$ and $1<t+s<2$,  then 
\begin{align*}
    \tilde{V}_N(p,t,s)&=\pi \sqrt{-1}\left((2p+1)s^2+s+\frac{4}{2N+1}t-\frac{6p+7}{3(2N+1)^2}-\frac{3}{2(2N+1)}\right)\\\nonumber
    &+\frac{1}{N+\frac{1}{2}}\varphi_N\left(t+s+\frac{1}{2N+1}-1\right)+\frac{1}{N+\frac{1}{2}}\varphi_N\left(t-s+\frac{1}{2N+1}\right)\\\nonumber
    &-\frac{1}{N+\frac{1}{2}}\varphi_N\left(t\right)-\frac{1}{N+\frac{1}{2}}\varphi_N\left(t-\frac{1}{2N+1}\right)-\frac{1}{N+\frac{1}{2}}\varphi_N\left(t+\frac{1}{2N+1}\right)\\\nonumber
    &-\frac{\pi\sqrt{-1}}{12 }-\frac{1}{2N+1}\log \frac{2N+1}{2},
\end{align*}
and $\delta(t,s)=1$. 

Based on the above calculations, we obtain
\begin{align} \label{formula-coloredJonesPotential1}
    J_{N}(\mathcal{K}_p;\xi_N)&=\sum_{k=0}^{N-1}\sum_{l=0}^k\frac{\sin \frac{2\pi (2l+1)}{2N+1}}{\sin\frac{\pi}{2N+1}}\delta\left(\frac{2k+1}{2N+1},\frac{2l+1}{2N+1}\right)e^{(N+\frac{1}{2})\tilde{V}_N\left(\frac{2k+1}{2N+1},\frac{2l+1}{2N+1}\right)}\\\nonumber
    &=\sum_{k=0}^{N-1}\sum_{l=0}^k\frac{\sin \frac{2\pi (2l+1)}{2N+1}}{\sin\frac{\pi}{2N+1}}\delta\left(\frac{2k+1}{2N+1},\frac{2l+1}{2N+1}\right)\\\nonumber
    &\cdot e^{(N+\frac{1}{2})\left(\tilde{V}_N\left(\frac{2k+1}{2N+1},\frac{2l+1}{2N+1}\right)-2\pi\sqrt{-1}\frac{2k}{2N+1}-2(p+2)\pi\sqrt{-1}\frac{2l}{2N+1}\right)}. 
\end{align}
For convenience, we introduce the function $V_N(p,t,s)$ which is determined by the following formula  
\begin{align}
    &\tilde{V}_N(p,t,s)-2\pi\sqrt{-1}(t-\frac{1}{2N+1})-2(p+2)\pi\sqrt{-1}(s-\frac{1}{2N+1})\\\nonumber
    &=V_N(p,t,s)+\pi\sqrt{-1}\frac{4p+9}{2(2N+1)}-\frac{1}{2N+1}\log \frac{2N+1}{2}. 
\end{align}
Note that the functions $\tilde{V}_N(p,t,s)$ and $V_N(p,t,s)$ are defined on the region 
    \begin{align}
    D=\{(t,s)\in \mathbb{R}^2| 0<t<1, 0<s<1,  0< t-s<1\}.
\end{align}

From formula (\ref{formula-coloredJonesPotential1}),  we finally obtain
\begin{proposition}   \label{prop-coloredJonespotential}
The normalized $N$-th colored Jones polynomial of the twist $\mathcal{K}_p$ at the root of unity $\xi_N$ can be computed as
\begin{align} \label{formula-coloredJonespotential2}
     J_N(\mathcal{K}_{p};\xi_N)=\sum_{k=0}^{N-1}\sum_{l=0}^k g_N(k,l)
\end{align}
with
\begin{align}
    g_N(k,l)&=(-1)^pe^{\frac{\pi\sqrt{-1}}{4}}\frac{1}{\sqrt{(N+\frac{1}{2})}\sin\frac{\pi}{2N+1}}\sin \frac{2\pi (2l+1)}{2N+1}\\\nonumber
    &\cdot \delta\left(\frac{2k+1}{2N+1},\frac{2l+1}{2N+1}\right)e^{(N+\frac{1}{2})V_N\left(p,\frac{2k+1}{2N+1},\frac{2l+1}{2N+1}\right)},
\end{align}
where the function $\delta(t,s)$ and the function $V_N(p,t,s)$ are  given by  

(1) If $0<t<1$, $0<t\pm s<1$, then $\delta(t,s)=2$ and
\begin{align}
    V_N(p,t,s)&=\pi \sqrt{-1}\left((2p+1)s^2-(2p+3)s+\left(\frac{4}{2N+1}-2\right)t-\frac{6p+7}{3(2N+1)^2}\right)\\\nonumber
    &+\frac{1}{N+\frac{1}{2}}\varphi_N\left(t+s+\frac{1}{2N+1}\right)+\frac{1}{N+\frac{1}{2}}\varphi_N\left(t-s+\frac{1}{2N+1}\right)\\\nonumber
    &-\frac{1}{N+\frac{1}{2}}\varphi_N\left(t\right)-\frac{1}{N+\frac{1}{2}}\varphi_N\left(t-\frac{1}{2N+1}\right)\\\nonumber
    &-\frac{1}{N+\frac{1}{2}}\varphi_N\left(t+\frac{1}{2N+1}\right)-\frac{\pi\sqrt{-1}}{12 }.
\end{align}
(2)  If $0<t<1$, $0<t-s<1$ and $1<t+s<2$, then $\delta(t,s)=1$ and 
\begin{align}
    V_N(p,t,s)&=\pi \sqrt{-1}\left((2p+1)s^2-(2p+3)s+\left(\frac{4}{2N+1}-2\right)t-\frac{6p+7}{3(2N+1)^2}\right)\\\nonumber
    &+\frac{1}{N+\frac{1}{2}}\varphi_N\left(t+s+\frac{1}{2N+1}-1\right)+\frac{1}{N+\frac{1}{2}}\varphi_N\left(t-s+\frac{1}{2N+1}\right)\\\nonumber
    &-\frac{1}{N+\frac{1}{2}}\varphi_N\left(t\right)-\frac{1}{N+\frac{1}{2}}\varphi_N\left(t-\frac{1}{2N+1}\right)\\\nonumber
    &-\frac{1}{N+\frac{1}{2}}\varphi_N\left(t+\frac{1}{2N+1}\right)-\frac{\pi\sqrt{-1}}{12 }.
\end{align}
\end{proposition}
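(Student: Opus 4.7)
The plan is to prove Proposition \ref{prop-coloredJonespotential} as a bookkeeping calculation that organizes the chain of identities already sketched in Section \ref{section-potentialfunction} into a clean derivation. Apart from the specialization $q=\xi_N$, the only analytic inputs are Lemmas \ref{lemma-varphixi} and \ref{lemma-varphixi2}; the rest is tracking constants and phases.

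First I would start from Masbaum's cyclotomic expansion (\ref{formula-coloredJonestwist}), rewrite each factorial via $\{n\}! = (-1)^n q^{-n(n+1)/4}(q)_n$, and expand $\prod_{i=1}^{k}\{N+i\}\{N-i\}$ in terms of $(q)_{N+k}$ and $(q)_{N-k-1}$. Specializing to $q=\xi_N$ produces the double sum (\ref{formula-sumkl}), whose summand is a monomial in $\xi_N$ times the ratio $(\xi_N)_k(\xi_N)_{N+k}/((\xi_N)_{k+l+1}(\xi_N)_{k-l}(\xi_N)_{N-k-1})$. Next I would apply Lemma \ref{lemma-varphixi} to convert every Pochhammer symbol into an exponential of a $\varphi_N$ value. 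The index $k+l+1$ lies either in $[1,N]$ or in $(N,2N]$, and the two cases of that lemma produce the two branches of $V_N$: the first gives $\delta(t,s)=2$ with argument $(2k+2l+3)/(2N+1)$, the extra factor $2$ coming from $e^{\log 2}$; the second gives $\delta(t,s)=1$ with argument $(2k+2l+3)/(2N+1)-1$. The factor $(\xi_N)_{N+k}$ contributes $\varphi_N(1-2(k+1)/(2N+1))$, which via Lemma \ref{lemma-varphixi2} is converted to $\varphi_N(2(k+1)/(2N+1))$ plus explicit polynomial-in-$N$ corrections and the boundary constants $\pi\sqrt{-1}/4$ and $\tfrac{1}{2}\log((2N+1)/2)$ arising from $\varphi_N(1/(2N+1))$.

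After substituting $t=(2k+1)/(2N+1)$ and $s=(2l+1)/(2N+1)$ and collecting exponents, the monomial contributes the polynomial part $\pi\sqrt{-1}((2p+1)s^2+s+4t/(2N+1)+\cdots)$ of $\tilde V_N$, the six $\varphi_N$ terms give the dilogarithm part, and the residual constants combine to $-\pi\sqrt{-1}/12 - \log((2N+1)/2)/(2N+1)$. Passing from $\tilde V_N$ to $V_N$ via the prescribed linear shift converts the linear coefficient of $s$ from $+1$ to $-(2p+3)$ and that of $t$ from $4/(2N+1)$ to $4/(2N+1) - 2$, at the cost of an overall factor $\exp((N+\tfrac12)(\pi\sqrt{-1}(4p+9)/(2(2N+1)) - \log((2N+1)/2)/(2N+1)))$. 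Combining this with $(-1)^{l-k-1}$, the quotient $\sin(2\pi(2l+1)/(2N+1))/\sin(\pi/(2N+1))$, and the identity $\sqrt{(2N+1)/2}=\sqrt{N+1/2}$ yields precisely the prefactor $(-1)^p e^{\pi\sqrt{-1}/4}/(\sqrt{N+1/2}\sin(\pi/(2N+1)))$ in $g_N(k,l)$.

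The only real obstacle here is combinatorial: tracking the sign $(-1)^p$ after accumulating contributions from $\{n\}!$, $\{2l+1\}$, and the reflection step, and matching the square root $\sqrt{N+\tfrac12}$ and the phase $e^{\pi\sqrt{-1}/4}$, both of which originate from the explicit boundary values supplied by Lemma \ref{lemma-varphixi2}. Once these constants line up, the two cases of $(V_N,\delta)$ match the two branches identified above and the proposition follows by reindexing the double sum.
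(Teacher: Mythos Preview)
Your proposal is correct and follows exactly the paper's own derivation in Section \ref{section-potentialfunction}: rewrite (\ref{formula-coloredJonestwist}) via $(q)_n$, specialize to $\xi_N$, apply Lemma \ref{lemma-varphixi} casewise on $k+l+1$, use the reflection and boundary identities of Lemma \ref{lemma-varphixi2}, and then pass from $\tilde V_N$ to $V_N$ by the linear shift. One small bookkeeping slip to fix when you write it out: it is $(\xi_N)_{N-k-1}$ (not $(\xi_N)_{N+k}$) that produces the term $\varphi_N\!\left(1-\tfrac{2(k+1)}{2N+1}\right)$ to which the reflection formula is applied, while $(\xi_N)_{N+k}$ falls in the range $N<n\le 2N$ and directly yields $\varphi_N\!\left(\tfrac{2k}{2N+1}\right)$ together with a $\log 2$.
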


We introduce the potential function for the twist knot $\mathcal{K}_p$ as follows
\begin{align} \label{formula-potentialfunction00}
    &V(p,t,s)=\lim_{N\rightarrow\infty}V_{N}(p,t,s)=\pi \sqrt{-1}\left((2p+1)s^2-(2p+3)s-2t\right)\\\nonumber
    &+\frac{1}{2\pi\sqrt{-1}}\left(\text{Li}_2(e^{2\pi\sqrt{-1}(t+s)})+\text{Li}_2(e^{2\pi\sqrt{-1}(t-s)})-3\text{Li}_2(e^{2\pi\sqrt{-1}t})+\frac{\pi^2}{6}\right). 
\end{align}

\section{Poisson summation formula} \label{Section-poissonsummation}
In this section, with the help of Poisson summation formula, we write the formula (\ref{formula-coloredJonespotential2}) as a sum of integrals.  First, according to formulas (\ref{formula-coloredJonestwist}) and (\ref{formula-coloredJonespotential2}), we have
\begin{align}
g_{N}(k,l)&=(-1)^lq^{\frac{k(k+3)}{4}+pl(l+1)}\frac{\{k\}!\{2l+1\}}{\{k+l+1\}!\{k-l\}!}\prod_{i=1}^k(\{N+i\}
\{N-i\})\\\nonumber
&=(-1)^lq^{\frac{k(k+3)}{4}+pl(l+1)}\frac{\{2l+1\}}{\{N\}}\frac{\{k\}!\{N+k\}!}{\{k+l+1\}!\{k-l\}!\{N-k-1\}!}. 
\end{align}

By Lemmas \ref{lemma-varphixi}, \ref{lemma-varphixi2}, \ref{lemma-varphixi3} and formula (\ref{formula-Lambda(t)}), we obtain 
\begin{align}
    \log \left|\{n\}!\right|=-(N+\frac{1}{2})\Lambda\left(\frac{2n+1}{2N+1}\right)+O(\log (2N+1))
\end{align}  
for any integer $0<n<2N+1$ and at $q=\xi_N=e^{\frac{2\pi \sqrt{-1}}{N+\frac{1}{2}}}$.
So we have 
\begin{align}
    &\log|g_N(k,l)|\\\nonumber
    &=-(N+\frac{1}{2})\Lambda\left(\frac{2k+1}{2N+1}\right)-(N+\frac{1}{2})\Lambda\left(\frac{2N+2k+1}{2N+1}\right)+(N+\frac{1}{2})\Lambda\left(\frac{2(k+l+1)+1}{2N+1}\right)\\\nonumber
    &+(N+\frac{1}{2})\Lambda\left(\frac{2(k-l)+1}{2N+1}\right)+(N+\frac{1}{2})\Lambda\left(\frac{2(N-k-1)+1}{2N+1}\right)+O(\log (2N+1)) \\\nonumber
    &=-(N+\frac{1}{2})\Lambda\left(\frac{2k+1}{2N+1}\right)-(N+\frac{1}{2})\Lambda\left(\frac{2k}{2N+1}\right)+(N+\frac{1}{2})\Lambda\left(\frac{2k+2l+3}{2N+1}\right)\\\nonumber
    &+(N+\frac{1}{2})\Lambda\left(\frac{2k-2l+1}{2N+1}\right)-(N+\frac{1}{2})\Lambda\left(\frac{2k+2}{2N+1}\right)+O(\log (2N+1)), 
\end{align}
where in the second ``=" we have used the properties of the function $\Lambda(t)$. 
We put
\begin{align}
    v_N(t,s)&=\Lambda\left(t+s+\frac{1}{2N+1}\right)+\Lambda\left(t-s+\frac{1}{2N+1}\right)\\\nonumber
    &-\Lambda\left(t-\frac{1}{2N+1}\right)-\Lambda\left(t\right)-\Lambda\left(t+\frac{1}{2N+1}\right), 
\end{align}
then we obtain 
\begin{align}
    |g_N(k,l)|=e^{(N+\frac{1}{2})v_N\left(\frac{2k+1}{2N+1},\frac{2l+1}{2N+1}\right)+O(\log (2N+1))}. 
\end{align}

We define the function
\begin{align} \label{formula-vts}
    v(t,s)=\Lambda(t+s)+\Lambda(t-s)-3\Lambda\left(t\right).
\end{align}
Note that $\left(\frac{2k+1}{2N+1},\frac{2l+1}{2N+1}\right)\in D=\{(t,s)\in \mathbb{R}^2| 1< t+s< 2, 0< t-s<1, \frac{1}{2}< t<1\}$ for $0\leq k,l\leq N-1$. So we may assume that
the function $v(t,s)$ is defined in the region $D$. Set
\begin{align}
    D_0=\{0.02< t-s, 1.02 < t+s, 0.25 < s< 0.75, 0.5\leq t\leq 0.909\}.
\end{align}
See Figure \ref{figure-D0} for the pictures of the regions $D$ and $D_0$, note that $D_0$ lies in $D$.

\begin{figure}[!htb] 
\begin{align*} 
\raisebox{-15pt}{
\includegraphics[width=230 pt]{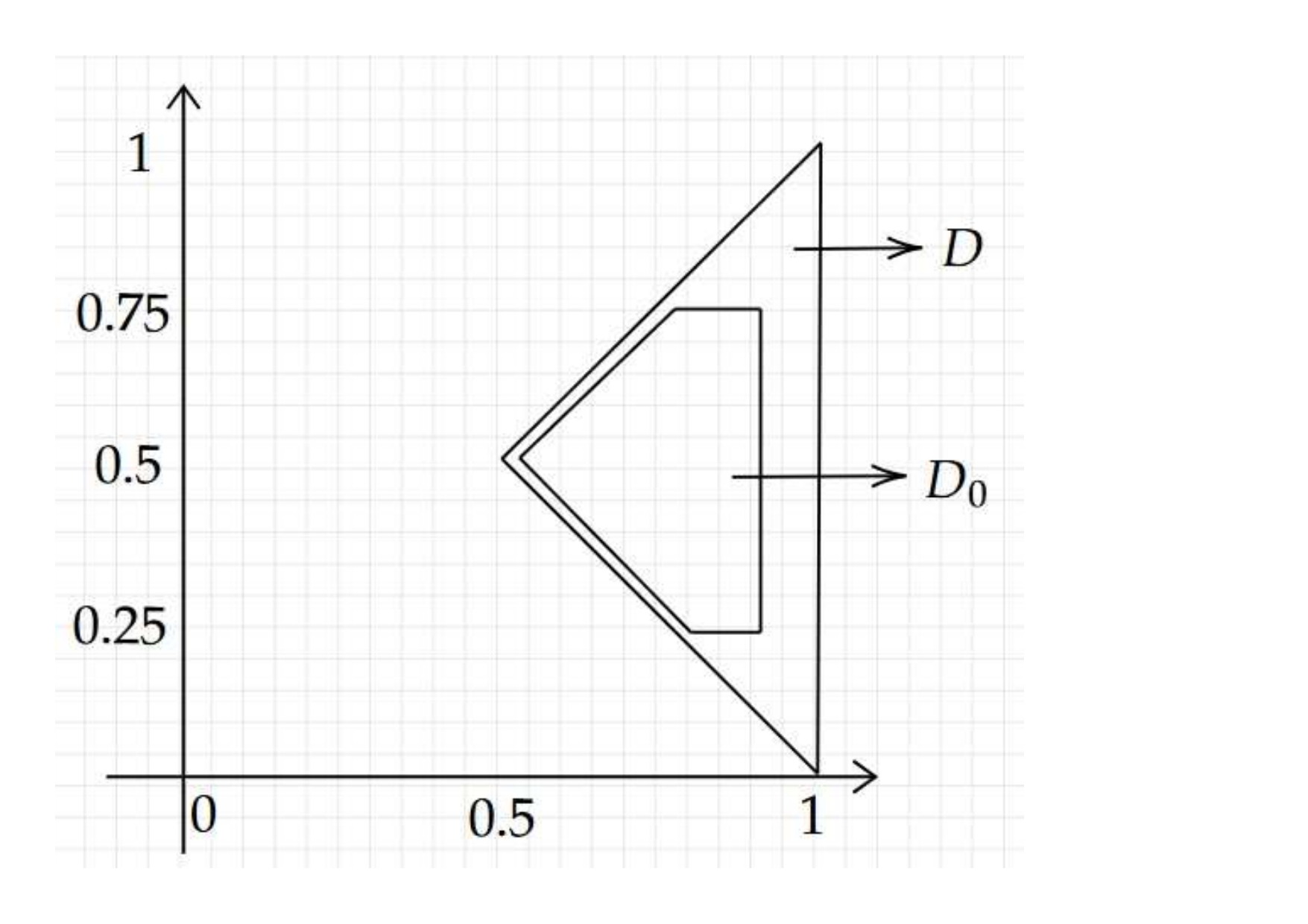}}.
\end{align*}
\caption{The region $D_0\subset D$}
\label{figure-D0} 
\end{figure}

Let $\zeta_{\mathbb{R}}(p)$ be the real part of the critical value $V(p,t_0,s_0)$, see formula (\ref{formula-zetaR(p)}) for its precise definition.  

Then we have
\begin{lemma} \label{lemma-regionD'0}
The following domain
    \begin{align} \label{formula-domain}
        \left\{(t,s)\in D| v(t,s)> \frac{3.56337}{2\pi }\right\}
    \end{align}
    is included in the region $D_0$.
\end{lemma}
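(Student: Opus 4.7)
The plan is to establish the contrapositive: $v(t,s)\leq 3.509/(2\pi)$ whenever $(t,s)\in D\setminus D'_0$. Using $\Lambda'(x)=-\log|2\sin\pi x|$, the partial derivatives of $v$ are
\begin{align*}
\partial_t v &= 3\log|2\sin\pi t|-\log|2\sin\pi(t+s)|-\log|2\sin\pi(t-s)|,\\
\partial_s v &= \log|2\sin\pi(t-s)|-\log|2\sin\pi(t+s)|.
\end{align*}
The equation $\partial_s v=0$ reduces to $|\sin\pi(t+s)|=|\sin\pi(t-s)|$, which on $D$ (where $\pi(t+s)-\pi$ and $\pi(t-s)$ both lie in $(0,\pi)$) forces $s=1/2$. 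Along this line $\partial_t v=0$ becomes $(2\sin\pi t)^3=(2\cos\pi t)^2$, whose unique root in $(1/2,1)$ is $t_*\approx 0.772$. Thus $v$ has a unique interior critical point $(t_*,1/2)$ on $D$, and one checks directly that it lies inside $D'_0$.

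Since $D\setminus D'_0$ contains no critical point of $v$, the supremum of $v$ on $D\setminus D'_0$ is attained on its boundary, which consists of the seven faces
\[
\{t-s=0.02\},\ \{t-s=0.7\},\ \{t+s=1.02\},\ \{t+s=1.7\},\ \{s=0.2\},\ \{s=0.8\},\ \{t=0.909\}
\]
(the condition $t\geq 0.5$ defining $D'_0$ is automatic on $D$), together with portions of $\partial D$. Along $\partial D$ the function $v$ extends continuously to $v=0$ on $\{t=1\}$ and to the single-variable function $g(t)=\Lambda(2t-1)-3\Lambda(t)$ on both $\{t+s=1\}$ and $\{t=s\}$. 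A direct calculation shows that $g'(t)=0$ amounts to $2\sin^2\pi t+\sin\pi t-2=0$, whose unique root $t\approx 0.714$ yields $g\approx 0.459<3.509/(2\pi)$, so the boundary contribution never violates the bound.

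It remains to maximize $v$ along each of the seven faces of $D'_0$. On a face of the form $\{t-s=c\}$ (respectively $\{t+s=c\}$, $\{s=c\}$, or $\{t=c\}$) I would parametrize by the remaining variable and locate the critical points of the one-variable restriction; the resulting equation is always of the form $|2\sin\pi\alpha|^a=|2\sin\pi\beta|^b|2\sin\pi\gamma|^d$ for explicit linear combinations $\alpha,\beta,\gamma$ of $t,s$, which admits only a handful of solutions and can be resolved numerically. At the critical points of each restriction, and at the endpoints of the face, one then verifies the strict inequality $v\leq 3.509/(2\pi)$. The main obstacle is purely computational: although no single face is intrinsically difficult, one must use sufficient numerical precision across all seven faces to confirm the strict bound $3.509/(2\pi)\approx 0.5584$, a task carried out in the same spirit as the boundary estimates in \cite{Oht16, OhtYok18}.
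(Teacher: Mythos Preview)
Your argument is correct and follows the same outline as the paper's proof. The one simplification in the paper worth noting is that, rather than locating the global critical point and then optimizing along each of the seven faces, the paper first observes that for every fixed $t\in(\tfrac12,1)$ the function $s\mapsto v(t,s)$ attains its unique maximum at $s=\tfrac12$ (this is your equation $\partial_s v=0$ together with $v_{ss}(t,\tfrac12)<0$). This collapses the two-variable problem to the single profile $v(t,\tfrac12)=2\Lambda(t+\tfrac12)-3\Lambda(t)$, which is unimodal with its peak at your $t_*\approx 0.77$. The face $t=0.909$ is then handled in one line: $2\pi\,v(0.909,\tfrac12)\approx 3.459<3.509$, and since $v(t,s)\le v(t,\tfrac12)$ with $v(\cdot,\tfrac12)$ decreasing past $t_*$, the bound follows for all $s$. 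The paper then says ``the other bounds can be derived similarly,'' which in practice means exactly the face-by-face one-dimensional optimization you outline. So your framework is more systematic but covers the same ground; the paper's reduction $v(t,s)\le v(t,\tfrac12)$ simply lets it dispatch the $t$-constraint without explicitly parametrizing that face.
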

\begin{proof}
    See Appendix \ref{appendix-0} for a proof. 
\end{proof}

\begin{remark}
We can take $\delta>0$ small enough (such as $\delta=0.00001$), and set 
\begin{align}
    D_\delta&=\left\{0.02+\delta < t-s, 1.02+\delta < t+s, \right.\\\nonumber &\left.   0.25+\delta < s< 0.75-\delta, 
 0.5+\delta< t< 0.909-\delta\right\},
\end{align}
then the region (\ref{formula-domain}) can also be included in the region $D_{\delta}$. 
\end{remark}

\begin{proposition} \label{prop-gkl}
For $p\geq 6$ and $N$ large enough, if  $(\frac{2k+1}{2N+1},\frac{2l+1}{2N+1})\in D\setminus D_0$,  we have
\begin{align}
    |g_{N}(k,l)|<O\left(e^{(N+\frac{1}{2})\left(\zeta_{\mathbb{R}}(p)-\epsilon\right)}\right)
\end{align}
for some sufficiently small $\epsilon>0$.
\end{proposition}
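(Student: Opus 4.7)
The plan is to reduce the estimate of $|g_N(k,l)|$ to an estimate for the real-valued function $v(t,s) = \Lambda(t+s) + \Lambda(t-s) - 3\Lambda(t)$, and then combine Lemma \ref{lemma-regionD'0} with a numerical check that $\zeta_{\mathbb{R}}(p) > \tfrac{3.509}{2\pi}$ for all $p\geq 6$.

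First, I would recall the identity already derived in the text,
\[
  |g_{N}(k,l)| \;=\; \exp\!\Big((N+\tfrac12)\,v_N\!\big(\tfrac{2k+1}{2N+1},\tfrac{2l+1}{2N+1}\big) + O(\log(2N+1))\Big),
\]
so the problem reduces to estimating $v_N$ at lattice points of $D\setminus D'_0$. The key analytic input is that $v_N(t,s)\to v(t,s)$ as $N\to\infty$, uniformly on any compact subset of the interior of $D$; this follows directly from Lemma \ref{lemma-varphixi3}(3) (which gives uniform convergence of the $\varphi_N$'s on compact subsets of $\{0<\operatorname{Re} z<1\}$) together with continuity of $\Lambda$. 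Points of $D\setminus D'_0$ that lie in the interior can therefore be handled uniformly.

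Next I would apply Lemma \ref{lemma-regionD'0} in its contrapositive form: on $D\setminus D'_0$ we have $v(t,s)\leq \tfrac{3.509}{2\pi}$. Combined with the uniform convergence $v_N\to v$, this gives, for any prescribed $\epsilon_1>0$ and all sufficiently large $N$,
\[
  v_N\!\big(\tfrac{2k+1}{2N+1},\tfrac{2l+1}{2N+1}\big) \;\leq\; \tfrac{3.509}{2\pi} + \epsilon_1
\]
uniformly in $(k,l)$ with $\big(\tfrac{2k+1}{2N+1},\tfrac{2l+1}{2N+1}\big)\in D\setminus D'_0$. Hence
\[
  |g_N(k,l)| \;\leq\; \exp\!\Big((N+\tfrac12)\big(\tfrac{3.509}{2\pi}+\epsilon_1\big) + O(\log(2N+1))\Big).
\]

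To convert this into the stated bound it remains to verify that $\zeta_{\mathbb{R}}(p) > \tfrac{3.509}{2\pi}$ with a uniform gap for all $p\geq 6$. By the relation $2\pi\zeta(p) = \mathrm{vol}(S^3\setminus\mathcal{K}_p)+\sqrt{-1}\,cs(S^3\setminus\mathcal{K}_p)\pmod{\pi^2\sqrt{-1}\mathbb{Z}}$ this is equivalent to $\mathrm{vol}(S^3\setminus\mathcal{K}_p)>3.509$, which can be checked directly from the explicit critical point data (as in the worked example for $p=100$) and, for the uniformity in $p$, from the fact that $\mathrm{vol}(S^3\setminus\mathcal{K}_p)$ is increasing in $p$ for $p\geq 2$ and approaches $2\,\mathrm{vol}(4_1)\approx 4.0597$ as $p\to\infty$. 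Already at $p=6$ the twist knot has hyperbolic volume exceeding $3.66$, and for all larger $p$ this only grows. Setting $\epsilon := \tfrac12(\zeta_{\mathbb{R}}(6)-\tfrac{3.509}{2\pi})>0$ and choosing $\epsilon_1 < \epsilon$ above then yields the inequality
\[
  |g_N(k,l)| \;<\; O\!\Big(e^{(N+\frac12)(\zeta_{\mathbb{R}}(p)-\epsilon)}\Big),
\]
since the $O(\log(2N+1))$ factor is absorbed into the extra $\epsilon/2$ slack.

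The main obstacle I anticipate is two-fold. On one hand, the uniform convergence $v_N\to v$ is only automatic on compact subsets of the interior of $D$, so one must separately check points near $\partial D$ (where $\Lambda$ may vanish but the shifts $\pm\tfrac{1}{2N+1}$ in the definition of $v_N$ matter); this is easy because near such boundary points $v(t,s)$ is bounded well above by $\tfrac{3.509}{2\pi}$, so a crude bound suffices. On the other hand, the uniform-in-$p$ lower bound $\zeta_{\mathbb{R}}(p)>\tfrac{3.509}{2\pi}$ for $p\geq 6$ relies on controlling the critical point $(t_0,s_0)$ supplied by Proposition \ref{prop-critical}; the cleanest route is to read off monotonicity of $\zeta_{\mathbb{R}}(p)$ in $p$ directly from the defining system and then verify the base case $p=6$ numerically. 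Once the numerical threshold is established, the rest of the argument is an essentially routine application of the uniform convergence of $v_N$ and Lemma \ref{lemma-regionD'0}.
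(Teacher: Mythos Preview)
Your overall strategy is exactly the paper's: use $|g_N(k,l)|=e^{(N+\frac12)v_N+O(\log N)}$, invoke the contrapositive of Lemma~\ref{lemma-regionD'0} to bound $v\le \tfrac{3.509}{2\pi}$ on $D\setminus D'_0$, pass from $v_N$ to $v$ by uniform convergence, and then check $\zeta_{\mathbb R}(p)>\tfrac{3.509}{2\pi}$ for $p\ge 6$. So the architecture is fine.

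However, the numerical justification you give for the last step is wrong in two places. First, the complements $S^3\setminus\mathcal K_p$ do \emph{not} converge to two copies of the figure-eight complement; they converge to the Whitehead link complement, whose volume is $v_8\approx 3.66386$ (this is exactly why the paper's Lemma~\ref{lemma-volumeestimate} reads $2\pi\zeta_{\mathbb R}(p)\ge v_8-\tfrac{49\pi^2}{64p^2}$, with leading term $v_8$). Second, at $p=6$ the volume is approximately $3.5889$, not ``exceeding $3.66$''; your assertion contradicts the fact that the volumes increase toward $v_8$ from below. Fortunately $3.5889>3.509$, so your definition of $\epsilon$ still makes sense once the number is corrected, but as written the claim is false.

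The paper avoids appealing to monotonicity of $\mathrm{vol}(S^3\setminus\mathcal K_p)$ in $p$ (which you assert but do not prove) by instead using the explicit lower bound of Lemma~\ref{lemma-volumeestimate} for $p\ge 7$ and a direct computation $\zeta_{\mathbb R}(6)=\tfrac{3.5889}{2\pi}$ for the base case. If you want to keep the monotonicity route you must either prove it from the critical-point system or cite a reference; otherwise, simply replace your paragraph on the $p$-uniform gap by an invocation of Lemma~\ref{lemma-volumeestimate} together with the single numerical check at $p=6$, and the argument goes through.
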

\begin{proof}
 Since $v_{N}(t,s)$ converges uniformly to $v(t,s)$, for $(\frac{2k+1}{2N+1},\frac{2l+1}{2N+1})\in D\setminus D_0$ and $N$ large enough, we have
\begin{align} \label{formula-gkl}
    |v_N\left(\frac{2k+1}{2N+1},\frac{2l+1}{2N+1}\right)|\leq \frac{3.56337}{2\pi}
    \end{align}
    by Lemma \ref{lemma-regionD'0}.
Then for $p\geq 9$, we have
  $
    |v_N\left(\frac{2k+1}{2N+1},\frac{2l+1}{2N+1}\right)| <\zeta_{\mathbb{R}}(p)-\epsilon
$
    for some small $\epsilon>0$ by Lemma \ref{lemma-volumeestimate}. 
    
In particular, for $p=6,7,8$ we also have 
$|v_N\left(\frac{2k+1}{2N+1},\frac{2l+1}{2N+1}\right)|\leq \frac{3.56337}{2\pi}<\zeta_{\mathbb{R}}(p)-\epsilon$
    for some small $\epsilon>0$, since straightforward computations give $\zeta_{\mathbb{R}}(6)=\frac{3.5889}{2\pi }$, $\zeta_{\mathbb{R}}(7)=\frac{3.6095}{2\pi }$ and $\zeta_{\mathbb{R}}(8)=\frac{3.6227}{2\pi }$. 
\end{proof}

For a sufficiently small $\delta$, we take a smooth bump function $\psi$ on $\mathbb{R}^2$ such that
$\psi(t,s)=1$ on $(t,s)\in \overline{D_{\delta}}$,  $0<\psi(t,s)<1$ on $(t,s)\in D_0\setminus \overline{D_{\delta}}$, $\psi(t,s)=0$ for $(t,s)\notin D_0$.  
Let 
\begin{align}
    h_N(k,l)=\psi\left(\frac{2k+1}{2N+1},\frac{2l+1}{2N+1}\right)g_N(k,l).
\end{align}
Then  by Proposition \ref{prop-gkl},  for $p\geq 6$, we have 
\begin{align}  \label{formula-JN}
    J_N(\mathcal{K}_p;\xi_N)=\sum_{(k,l)\in \mathbb{Z}^2}h_N(k,l)+O\left(e^{(N+\frac{1}{2})\left(\zeta_{\mathbb{R}}(p)-\epsilon\right)}\right).
\end{align}

  We recall the Poisson summation formula \cite{SS03} in 2-dimensional case which states that for any function $h$ in the Schwartz space on $\mathbb{R}^2$, we have 
\begin{align} \label{formula-Poisson}
    \sum_{(k,l)\in \mathbb{Z}^2}h(k,l)=\sum_{(m,n)\in \mathbb{Z}^2}\hat{h}(m,n)
\end{align}
where 
\begin{align}
    \hat{h}(m,n)=\int_{\mathbb{R}^2}h(u,v)e^{-2\pi \sqrt{-1}mu-2\pi \sqrt{-1}nv}dudv.
\end{align}

Note that $h_N$ is $C^{\infty}$-smooth and equals zero outside $D_0$, it is in the Schwartz space on $\mathbb{R}^2$. The Poisson summation formula (\ref{formula-Poisson}) holds for $h_N$. 

By introducing the variables $t=\frac{2k+1}{2N+1}, s=\frac{2l+1}{2N+1}$, 
we compute the Fourier coefficient $\hat{h}_{N}(m,n)$ as follows
\begin{align}
    &\hat{h}_{N}(m,n)=\int_{\mathbb{R}^2}h_N(k,l)e^{-2\pi \sqrt{-1}mk-2\pi \sqrt{-1}nl}dkdl\\\nonumber
    &=(-1)^{m+n}\left(N+\frac{1}{2}\right)^2\\\nonumber
    &\cdot\int_{\mathbb{R}^2}h_N\left((N+\frac{1}{2})t-\frac{1}{2},(N+\frac{1}{2})s-\frac{1}{2}\right)e^{-2\pi \sqrt{-1}\frac{(2N+1)mt}{2}-2\pi \sqrt{-1}\frac{(2N+1)ns}{2}}dtds\\\nonumber
    &=(-1)^{m+n}\left(N+\frac{1}{2}\right)^2\frac{(-1)^pe^{\frac{\pi\sqrt{-1}}{4}}}{\sqrt{N+\frac{1}{2}}\sin\frac{\pi}{2N+1}}\\\nonumber
    &\cdot\int_{D_0}\psi(t,s)\sin(2\pi s)e^{(N+\frac{1}{2})\left(V_N\left(p,t,s\right)-2\pi\sqrt{-1}mt-2\pi\sqrt{-1}ns\right)}dtds,
\end{align}
where
\begin{align}
    &V_N(p,t,s)\\\nonumber
    &=\pi \sqrt{-1}\left((2p+1)s^2-(2p+3)s+\left(\frac{4}{2N+1}-2\right)t-\frac{6p+7}{3(2N+1)^2}-\frac{1}{12}\right)\\\nonumber
    &+\frac{1}{2N+1}\varphi_N\left(t+s+\frac{1}{2N+1}-1\right)+\frac{1}{2N+1}\varphi_N\left(t-s+\frac{1}{2N+1}\right)\\\nonumber
    &-\frac{1}{2N+1}\varphi_N\left(t\right)-\frac{1}{2N+1}\varphi_N\left(t-\frac{1}{2N+1}\right)-\frac{1}{2N+1}\varphi_N\left(t+\frac{1}{2N+1}\right).
\end{align}

Therefore, applying the Poisson summation formula (\ref{formula-Poisson}) to (\ref{formula-JN}), we obtain
\begin{proposition} \label{prop-fouriercoeff}
For $p\geq 6$, the normalized $N$-th colored Jones polynomial of the twist knot $\mathcal{K}_{p}$ is given by 
\begin{align} \label{formula-fouriercoeff}
      J_N(\mathcal{K}_p;\xi_N)=\sum_{(m,n)\in \mathbb{Z}^2}\hat{h}_N(m,n)+O\left(e^{(N+\frac{1}{2})\left(\zeta_{\mathbb{R}}(p)-\epsilon\right)}\right),
\end{align}
where
  \begin{align}
      \hat{h}_N(m,n)=&(-1)^{p+m+n}e^{\frac{\pi\sqrt{-1}}{4}}\frac{\left(N+\frac{1}{2}\right)^{\frac{3}{2}}}{\sin\frac{\pi}{2N+1}} \int_{D_0}\psi(t,s)\sin(2\pi s)e^{(N+\frac{1}{2})\left(V_N\left(p,t,s;m,n\right)\right)}dtds
\end{align}
with
\begin{align}
V_N\left(p,t,s;m,n\right)=V_N\left(p,t,s\right)-2\pi\sqrt{-1}mt-2\pi\sqrt{-1}ns.
\end{align}
\end{proposition}

\begin{lemma}
The following identity holds
    \begin{align} \label{formula-potientalsym}
          V_{N}(p,t,1-s;m,n)
          =V_{N}(p,t,s;m,-n-2)-2\pi \sqrt{-1}(n+1). 
    \end{align}
\end{lemma}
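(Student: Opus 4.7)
The identity is a direct algebraic check that exploits two structural features of $V_N(p,t,s)$: the $\varphi_N$-part contains $s$ only in the two terms $\varphi_N(t+s+\frac{1}{2N+1}-1)$ and $\varphi_N(t-s+\frac{1}{2N+1})$, and the $s$-dependent polynomial prefactor is a quadratic with a convenient symmetry. My plan is to separate $V_N(p,t,s)$ into these two parts, compute the effect of the substitution $s\mapsto 1-s$ on each, and then reassemble.

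First I would observe that under $s\mapsto 1-s$ the two $\varphi_N$-arguments that actually depend on $s$ swap with each other:
\begin{align*}
t+(1-s)+\tfrac{1}{2N+1}-1 &= t-s+\tfrac{1}{2N+1},\\
t-(1-s)+\tfrac{1}{2N+1} &= t+s+\tfrac{1}{2N+1}-1.
\end{align*}
Since these two $\varphi_N$-terms appear in $V_N(p,t,s)$ with the same coefficient $\tfrac{1}{2N+1}$, their sum is invariant under $s\mapsto 1-s$. All remaining $\varphi_N$-terms, and all constants in the polynomial part not involving $s$, are manifestly unaffected.

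Next I would handle the polynomial part in $s$. Only the piece $\pi\sqrt{-1}\bigl((2p+1)s^2-(2p+3)s\bigr)$ depends on $s$, and expanding
\[
(2p+1)(1-s)^2-(2p+3)(1-s) = (2p+1)s^2-(2p+3)s+4s-2
\]
shows that replacing $s$ by $1-s$ adds precisely $\pi\sqrt{-1}(4s-2)$ to $V_N(p,t,s)$. Combining with the previous step gives
\[
V_N(p,t,1-s)=V_N(p,t,s)+4\pi\sqrt{-1}\,s-2\pi\sqrt{-1}.
\]

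Finally I would insert this into the definition of $V_N(p,t,s;m,n)=V_N(p,t,s)-2\pi\sqrt{-1}mt-2\pi\sqrt{-1}ns$ and use $-2\pi\sqrt{-1}n(1-s)=-2\pi\sqrt{-1}n+2\pi\sqrt{-1}ns$ to regroup:
\begin{align*}
V_N(p,t,1-s;m,n) &= V_N(p,t,s)+4\pi\sqrt{-1}s-2\pi\sqrt{-1}-2\pi\sqrt{-1}mt-2\pi\sqrt{-1}n+2\pi\sqrt{-1}ns\\
&= V_N(p,t,s)-2\pi\sqrt{-1}mt-2\pi\sqrt{-1}(-n-2)s-2\pi\sqrt{-1}(n+1)\\
&= V_N(p,t,s;m,-n-2)-2\pi\sqrt{-1}(n+1),
\end{align*}
which is the desired identity. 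There is no real obstacle here; the only point one must be careful about is verifying that the shift $-1$ inside the argument $t+s+\tfrac{1}{2N+1}-1$ is exactly what is needed to make the swap with $t-s+\tfrac{1}{2N+1}$ work, since otherwise the $\varphi_N$-contribution would not cancel and extra pieces from Lemma~\ref{lemma-varphixi2} would enter.
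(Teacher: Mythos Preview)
Your proof is correct and follows essentially the same approach as the paper: both rely on the swap of the two $s$-dependent $\varphi_N$-arguments under $s\mapsto 1-s$ and on the quadratic identity for the polynomial prefactor. The paper compresses these two observations into a single displayed polynomial identity (with the $m,n$ terms already included) and leaves the $\varphi_N$-swap implicit, whereas you spell out both steps separately; otherwise the arguments are identical.
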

\begin{proof}
    By a straightforward computation, we obtain the following identity
    \begin{align}
        &\pi\sqrt{-1}\left((2p+1)(1-s)^2-(2p+2n+3)(1-s)+\left(\frac{4}{2N+1}-2m-2\right)t-\frac{1}{12}\right)\\\nonumber
        &=\pi\sqrt{-1}\left((2p+1)s^2-(2p+2(-n-2)+3)s)+\left(\frac{4}{2N+1}-2m-2\right)t-\frac{1}{12}\right)\\\nonumber
        &-2\pi \sqrt{-1}(n+1).
    \end{align}
    which immediately gives the formula (\ref{formula-potientalsym}).
\end{proof}

\begin{proposition} \label{prop-hmn}
    For any $m,n\in \mathbb{Z}$, we have
    \begin{align}
        \hat{h}_{N}(m,-n-2)=(-1)^{n}\hat{h}_{N}(m,n).
    \end{align}
\end{proposition}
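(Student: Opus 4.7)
The plan is to prove the identity by combining the functional equation (\ref{formula-potientalsym}) with the substitution $s \mapsto 1-s$ inside the integral defining $\hat{h}_N(m,-n-2)$. Before doing the substitution I would first check that the integration domain $D'_0$ is invariant under $s\mapsto 1-s$: the constraints $0.2\le s\le 0.8$ and $0.5\le t\le 0.909$ are preserved, while the two constraints $0.02\le t-s\le 0.7$ and $1.02\le t+s\le 1.7$ get exchanged with each other (since $t-s \leftrightarrow (t+s)-1$ and vice versa). The inner region $D'_\varepsilon$ has the same symmetry, so the bump function $\psi$ may (and will) be chosen to satisfy $\psi(t,1-s)=\psi(t,s)$.

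Next, I would apply (\ref{formula-potientalsym}), with $n$ replaced by $-n-2$ (an involution), to rewrite the exponent as
\[
(N+\tfrac{1}{2})\,V_N(p,t,s;m,-n-2) \;=\; (N+\tfrac{1}{2})\,V_N(p,t,1-s;m,n) + (2N+1)\pi\sqrt{-1}(n+1).
\]
Exponentiating the extra term contributes the constant factor $e^{(2N+1)\pi\sqrt{-1}(n+1)} = (-1)^{n+1}$, because $2N+1$ is odd. I would then perform the change of variable $s\mapsto 1-s$ in the integral: the symmetries of $D'_0$ and $\psi$ carry through unchanged, and the only further sign comes from $\sin(2\pi(1-s))=-\sin(2\pi s)$. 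Collecting the two signs yields the factor $(-1)^{n+1}\cdot(-1)=(-1)^n$ relating the integral part of $\hat{h}_N(m,-n-2)$ to that of $\hat{h}_N(m,n)$. Finally, observing that the explicit prefactor $(-1)^{p+m-n-2}$ of $\hat{h}_N(m,-n-2)$ equals $(-1)^{p+m+n}$ finishes the identification $\hat{h}_N(m,-n-2)=(-1)^n\hat{h}_N(m,n)$.

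I do not anticipate a serious obstacle, since the proof reduces to a direct substitution once the identity (\ref{formula-potientalsym}) is in hand. The only delicate point is the symmetry of $\psi$, which is compatible with the conditions imposed on it earlier; so Proposition \ref{prop-hmn} holds provided one selects such a symmetric $\psi$, which we may do without loss of generality.
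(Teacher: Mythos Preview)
Your proposal is correct and follows essentially the same approach as the paper: both use the functional equation (\ref{formula-potientalsym}), the symmetry of $D'_0$ (and hence of $\psi$) under $s\mapsto 1-s$, the sign from $\sin(2\pi(1-s))=-\sin(2\pi s)$, and the parity of the prefactor. You are in fact a bit more explicit than the paper in verifying the symmetry of $D'_0$ and in tracking the prefactor $(-1)^{p+m-n-2}=(-1)^{p+m+n}$, but the argument is the same.
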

\begin{proof}
    Since 
    \begin{align}
      &\psi(t,s)\sin(2\pi s)\exp\left((N+\frac{1}{2})V_N\left(p,t,1-s;m,n\right)\right) \\\nonumber
      &=\psi(t,s)\sin(2\pi s)\exp\left((N+\frac{1}{2})\left(V_{N}\left(p,t,s;m,-n-2\right)-2\pi\sqrt{-1}(n+1)\right)\right)\\\nonumber
      &=\psi(t,s)\sin(2\pi s)\exp\left((N+\frac{1}{2})V_N\left(p,t,s;m,-n-2\right)\right)(-1)^{n+1},
    \end{align}
 and   we remark that we can choose the bump function $\psi(t,s)$ satisfying $\psi(t,s)=\psi(t,1-s)$ since the region $D_0$ is symmetric with respect to the line $s=\frac{1}{2}$. 

Then we have
\begin{align}
    &\int_{D_0}\psi(t,s)\sin(2\pi s)\exp\left((N+\frac{1}{2})V_N\left(p,t,s;m,-n-2\right)\right)dtds\\\nonumber
    &=(-1)^{n+1}\int_{D_0}\psi(t,s)\sin(2\pi s)\exp\left((N+\frac{1}{2})V_N\left(p,t,1-s;m,n\right)\right)dtds\\\nonumber
    &=(-1)^{n}\int_{D_0}\psi(t,\tilde{s})\sin(2\pi \tilde{s})\exp\left((N+\frac{1}{2})V_N\left(p,t,\tilde{s};m,n\right)\right)dtd\tilde{s},
\end{align}
where in the third ``=", we have let $\tilde{s}=1-s$.  It follows that 
\begin{align}
        \hat{h}_{N}(m,-n-2)=(-1)^{n}\hat{h}_{N}(m,n).
    \end{align}
\end{proof}
\begin{corollary}
    We have  
    \begin{align} \label{formula-bigcancel}
        \hat{h}_{N}(m,-1)=0,
    \end{align}
    and 
    \begin{align}
    \hat{h}_{N}(m,-2)=\hat{h}_{N}(m,0).    
    \end{align}
\end{corollary}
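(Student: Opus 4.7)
The plan is to obtain both identities as direct specializations of Proposition \ref{prop-hmn}, which states $\hat{h}_N(m,-n-2)=(-1)^n \hat{h}_N(m,n)$ for all integers $m,n$.

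First, to derive $\hat{h}_N(m,-1)=0$, I would set $n=-1$ in the proposition. Then the left-hand index $-n-2$ equals $-1$ as well, so the identity collapses to $\hat{h}_N(m,-1)=(-1)^{-1}\hat{h}_N(m,-1)=-\hat{h}_N(m,-1)$. Moving everything to one side gives $2\hat{h}_N(m,-1)=0$, hence $\hat{h}_N(m,-1)=0$. This is the cancellation that is flagged as important in formula (\ref{formula-bigcancel}); the symmetry $s\mapsto 1-s$ of the potential function combined with the sign $\sin(2\pi s)$ in the integrand is what produces the vanishing.

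Second, to derive $\hat{h}_N(m,-2)=\hat{h}_N(m,0)$, I would set $n=0$ in the proposition. Then $-n-2=-2$ and $(-1)^n=1$, so the identity reads $\hat{h}_N(m,-2)=\hat{h}_N(m,0)$ directly. There is no obstacle here beyond the substitution.

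Since both statements are immediate algebraic consequences of the already-established Proposition \ref{prop-hmn}, there is no technical difficulty to overcome; the whole content of the corollary is that this proposition has fixed points under the involution $n\mapsto -n-2$, namely $n=-1$ (where the sign forces vanishing) and the pairing of $n=0$ with $n=-2$ (where the sign is trivial and yields equality of Fourier coefficients). These two special cases will be used later to reduce the sum over Fourier coefficients in Proposition \ref{prop-fouriercoeff} to the relevant dominant terms.
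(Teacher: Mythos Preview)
Your proposal is correct and matches the paper's approach exactly: the corollary is stated immediately after Proposition~\ref{prop-hmn} with no separate proof precisely because both identities follow by substituting $n=-1$ and $n=0$ into that proposition, just as you describe. Your additional commentary about the underlying $s\mapsto 1-s$ symmetry and the sign of $\sin(2\pi s)$ accurately reflects the mechanism behind Proposition~\ref{prop-hmn} itself.
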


\begin{remark}
The case (\ref{formula-bigcancel}) is called the ``Big Cancellation". The first situation of such a phenomenon of big cancellation happened in quantum invariants was discovered in the volume conjecture of the Turaev-Viro invariants by Chen-Yang \cite{CY18}. 
The hidden reason behind that was found and described as a precise statement of symmetric property of asymptotics of the quantum $6j$-symbol which is on the Poisson summation level by Chen-Murakami which is Conjecture 3 in \cite{CJ17}.
To the best of our knowledge, this is the first time that such a phenomenon of big cancellation on the Poisson summation level (on the case of colored Jones polynomial) is proved.
\end{remark}

\section{Asympototic expansions} \label{Section-Asympoticexpansion}


The goal of this section is to estimate each Fourier coefficient $\hat{h}_N(m,n)$ appearing in Proposition \ref{prop-fouriercoeff}. In Section \ref{subsection-preparation}, we establish some results which will be used in the following subsections.  In Section \ref{subsection-mneq0} 
we estimate the Fourier coefficients $\hat{h}_N(m,n)$ that can be neglected. We remark that this subsection actually is equivalent to the corresponding subsection devoted to the verification of the assumption of the Poisson summation formula in Ohtsuki's original papers such as \cite{Oht16,Oht18}. The final result we obtain in this subsection is the formula (\ref{formula-Poission-after}). In Sections \ref{subsection-m=0np}, we estimate the remaining Fourier coefficients and find that only two terms will contribute to the final form of the asymptotic expansion. Finally, we finish the proof of Theorem \ref{theorem-main} in Section \ref{subsection-final}.

\subsection{Some preparations} \label{subsection-preparation}
The aim of this subsection is to make some preparations that will be used in the following subsections.  
We consider the following potential function for the twist knot $\mathcal{K}_p$
\begin{align}
    &V(p,t,s; m,n)=\pi \sqrt{-1}\left((2p+1)s^2-(2p+3+2n)s-(2m+2)t\right)\\\nonumber
    &+\frac{1}{2\pi\sqrt{-1}}\left(\text{Li}_2(e^{2\pi\sqrt{-1}(t+s)})+\text{Li}_2(e^{2\pi\sqrt{-1}(t-s)})-3\text{Li}_2(e^{2\pi\sqrt{-1}t})+\frac{\pi^2}{6}\right).
\end{align}
Suppose $t=t_R+X\sqrt{-1}$ and $s=s_R+Y\sqrt{-1}$, we introduce the function
\begin{align}
    &f(p,t,X,s,Y;m,n)=\text{Re} V(p,t_R+X\sqrt{-1},s_R+Y\sqrt{-1};m,n),
\end{align}
which will also be denoted by $f(X,Y;m,n)$ for brevity in the following. 

We have
\begin{align} \label{formula-partialfX}
    \frac{\partial f}{\partial X}&=\text{Re}\left(\sqrt{-1}\frac{\partial }{\partial t}V(p,t_R+X\sqrt{-1},s_R+Y\sqrt{-1};m,n)\right)\\\nonumber
    &=-\text{Im}(-(2m+2)\pi \sqrt{-1}+3\log(1-x)-\log(1-xy)-\log(1-x/y))\\\nonumber
    &=-3\text{arg}(1-x)+\text{arg}(1-xy)+\text{arg}(1-x/y)+(2m+2)\pi,
\end{align}
and
\begin{align}
    \frac{\partial f}{\partial Y}&=\text{Re}\left(\sqrt{-1}\frac{\partial }{\partial s}V(p,t_R+X\sqrt{-1},s_R+Y\sqrt{-1};m,n)\right)\\\nonumber
    &=-\text{Im}(-(2p+3+2n)\pi \sqrt{-1}+(4p+2)\pi\sqrt{-1}s\\\nonumber
    &-\log(1-xy)+\log(1-x/y))\\\nonumber
    &=\text{arg}(1-xy)-\text{arg}(1-x/y)+(2p+3+2n)\pi-(4p+2)\pi s,
\end{align}
where we put $x=e^{2\pi\sqrt{-1}(t_R+X\sqrt{-1})}$ and $y=e^{2\pi\sqrt{-1}(s_R+\sqrt{-1}Y)}$.

Since $\frac{dx}{dX}=-2\pi x$, we compute
\begin{align} \label{formula-partialfXX}
    \frac{\partial^2 f}{\partial X^2}&=-\text{Im}\frac{\partial }{\partial X}\left(3\log(1-x)-\log(1-xy)-\log(1-x/y)\right)\\\nonumber
    &=-\text{Im}\left(\left(-\frac{3}{1-x}+\frac{y}{1-xy}+\frac{1/y}{1-x/y}\right)\frac{\partial x}{\partial X}\right)\\\nonumber
    &=2\pi \text{Im}\left(-\frac{3x}{1-x}+\frac{xy}{1-xy}+\frac{x/y}{1-x/y}\right)\\\nonumber
    &=2\pi \text{Im}\left(-\frac{3}{1-x}+\frac{1}{1-xy}+\frac{1}{1-x/y}\right).
\end{align}

Furthermore, we have
\begin{align}
     \frac{\partial^2 f}{\partial X\partial Y}&=2\pi \text{Im}\left(\frac{1}{1-xy}-\frac{1}{1-x/y}\right)
\end{align}
and 
\begin{align}
    \frac{\partial^2 f}{\partial Y^2}&=2\pi \text{Im}\left(\frac{1}{1-xy}+\frac{1}{1-x/y}\right).
\end{align}
Therefore, the Hessian matrix of $f(X,Y;m,n)$ is presented by 

\begin{align}
    2\pi \begin{pmatrix}
     3a+b+c & b-c \\
     b-c & b+c
    \end{pmatrix}
\end{align}

where we put 
\begin{align}
    a=-\text{Im} \frac{1}{1-x}, \ b=\text{Im}\frac{1}{1-xy}, \ c=\text{Im} \frac{1}{1-x/y}. 
\end{align}
More precisely, by direct computations, we obtain
\begin{align}
    3a&=-\frac{3\sin 2\pi t_R}{e^{2\pi X}+e^{-2\pi X}-2\cos(2\pi t_R)}, \\ \nonumber
    b&=\frac{\sin(2\pi (t_R+s_R))}{e^{2\pi(X+Y)}+e^{-2\pi(X+Y)}-2\cos (2\pi(t_R+s_R))}, \\\nonumber
    c&=\frac{\sin(2\pi (t_R-s_R))}{e^{2\pi(X-Y)}+e^{-2\pi(X-Y)}-2\cos (2\pi(t_R-s_R))}.
\end{align}
So if $\frac{1}{2}<t_R<1$, $1<t_R+s_R<\frac{3}{2}$ and $0<t_R-s_R<\frac{1}{2}$, we have that 
\begin{align}
    a>0, \ b>0, c>0,
\end{align}
which implies that the Hessian matrix of $f$ with respect to $X,Y$ is positive definite, i.e. we obtain 
\begin{lemma} \label{lemma-HessXY}
    In the region $D_{H}=\{(t,s)\in \mathbb{R}^2|\frac{1}{2}<t<1, 1<t+s<\frac{3}{2}, 0<t-s<\frac{1}{2}\}$, the Hessian matrix $\begin{pmatrix}
     \frac{\partial^2 f}{\partial X^2} & \frac{\partial^2 f}{\partial X\partial Y} \\
     \frac{\partial^2 f}{\partial X\partial Y} & \frac{\partial^2 f}{\partial Y^2}
    \end{pmatrix}$  is positive definite. 
\end{lemma}

\begin{figure}[!htb] \label{figure1} 
\begin{align*} 
\raisebox{-15pt}{
\includegraphics[width=220 pt]{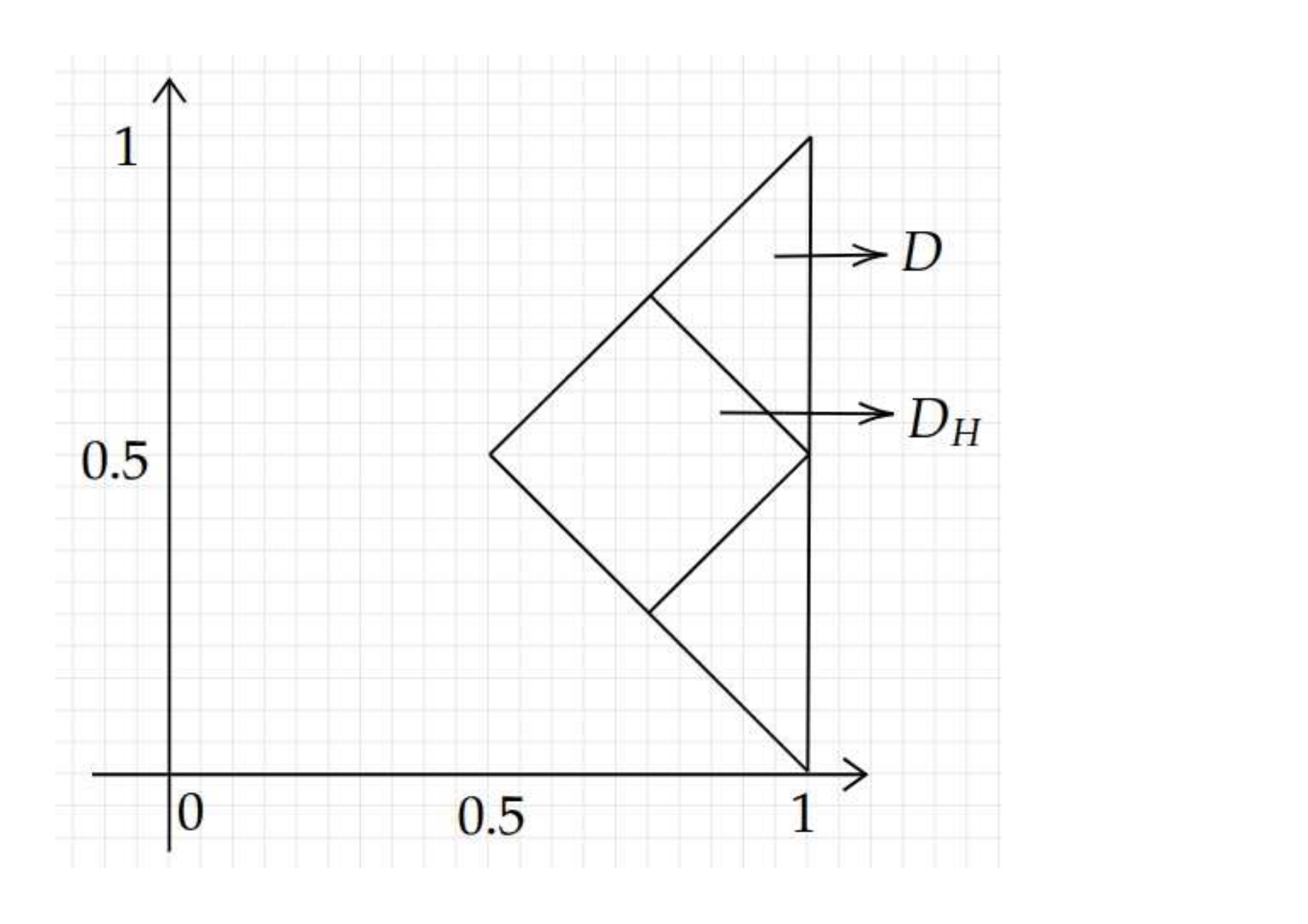}}.
\end{align*}
\caption{The region $D_H\subset D$}
\end{figure}

\begin{lemma} \label{lemma-Vr}
    For any $L>0$, in the region 
    \begin{align}
        \{(t,s)\in  \mathbb{C}^2| (\text{Re}(t),\text{Re}(s))\in D_0, |\text{Im}\  t|<L, |\text{Im} \ s|<L\} 
    \end{align}
    we have
    \begin{align}
        V_N(p,t,s;m,n)
        &=V(p,t,s;m,n)-\frac{1}{2N+1}\left(\log(1-e^{2\pi\sqrt{-1}(t+s)})\right.\\\nonumber
   &\left.+\log(1-e^{2\pi\sqrt{-1}(t-s)})-4\pi\sqrt{-1}t\right)+\frac{w_N(t,s)}{(2N+1)^2},
    \end{align}
    with $|w_N(t,s)|$ bounded from above by a constant independent of $N$. 
\end{lemma}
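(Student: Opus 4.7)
The plan is to compute the difference $V_N(p,t,s;m,n) - V(p,t,s;m,n)$ directly from the definitions, using Lemma \ref{lemma-varphixi3}. The affine terms $-2\pi\sqrt{-1}(mt+ns)$ are identical on both sides, so the problem reduces to expanding $V_N(p,t,s) - V(p,t,s)$. The explicit polynomial and constant pieces match up to controlled errors in $\frac{1}{2N+1}$; in particular the constant $-\frac{\pi\sqrt{-1}}{12}$ in $V_N$ coincides with $\frac{1}{2\pi\sqrt{-1}}\cdot \frac{\pi^2}{6}$ coming from $V$.

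Next, I would expand each $\varphi_N$-term appearing in $V_N(p,t,s)$. For an unshifted argument $u$, Lemma \ref{lemma-varphixi3}(1) gives the leading $\mathrm{Li}_2$ contribution with remainder $O(1/(N+\tfrac{1}{2})^2)$. For a shifted argument $u+\tfrac{a}{2N+1}$ with $a\in\{\pm 1\}$, I would first Taylor expand
\begin{align*}
\varphi_N\!\left(u+\tfrac{a}{2N+1}\right)=\varphi_N(u)+\tfrac{a}{2N+1}\varphi_N'(u)+\tfrac{a^2}{2(2N+1)^2}\varphi_N''(u)+\cdots ,
\end{align*}
and then apply Lemma \ref{lemma-varphixi3}(2) to convert the first-order term into $-\frac{a}{2}\log(1-e^{2\pi\sqrt{-1}u})$; after division by the overall factor $N+\tfrac{1}{2}$ this contributes $-\frac{a}{2N+1}\log(1-e^{2\pi\sqrt{-1}u})$. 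Summing over the five $\varphi_N$-arguments in $V_N$, the leading terms reproduce the combination $\frac{1}{2\pi\sqrt{-1}}\bigl(\mathrm{Li}_2(e^{2\pi\sqrt{-1}(t+s)})+\mathrm{Li}_2(e^{2\pi\sqrt{-1}(t-s)})-3\mathrm{Li}_2(e^{2\pi\sqrt{-1}t})\bigr)$ from $V$. Among the $1/(2N+1)$-corrections, the two logarithms coming from $\varphi_N(t\pm \tfrac{1}{2N+1})$ cancel (opposite shifts $a=\pm 1$), leaving only the logarithms from the arguments $t\pm s$ together with the explicit $\frac{4\pi\sqrt{-1}t}{2N+1}$ from the polynomial part of $V_N$. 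Combining these produces exactly the correction $-\tfrac{1}{2N+1}\bigl(\log(1-e^{2\pi\sqrt{-1}(t+s)})+\log(1-e^{2\pi\sqrt{-1}(t-s)})-4\pi\sqrt{-1}t\bigr)$ stated in the lemma.

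The main technical obstacle will be verifying that the remainder $w_N(t,s)$ is bounded uniformly, independent of $N$, on the stated complex region. The hypothesis $(\mathrm{Re}(t),\mathrm{Re}(s))\in D'_0$ keeps the real parts in a compact rectangle bounded away from the boundary of the strip $\{0<\mathrm{Re}(z)<1\}$, while $|\mathrm{Im}\,t|,|\mathrm{Im}\,s|<L$ keeps the imaginary parts bounded; consequently, for $N$ sufficiently large, every one of the five shifted arguments of $\varphi_N$ lies in a fixed compact subset of $\{0<\mathrm{Re}(z)<1\}$ (the shifts $\pm 1/(2N+1)$ are eventually negligible). The $O$-term in Lemma \ref{lemma-varphixi3}(1) is uniform on such compacta, and Cauchy estimates applied to its holomorphic form on a slightly enlarged subregion give $\varphi_N^{(k)}(u)=O(N+\tfrac{1}{2})$ uniformly for each fixed $k$. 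Hence the second-order Taylor tail $\frac{a^2}{2(2N+1)^2}\varphi_N''(u)$, along with all other Taylor and asymptotic remainders, contributes at most $O(1/(2N+1)^2)$ uniformly in $(t,s)$, which is precisely the boundedness of $w_N(t,s)$ required by the lemma.
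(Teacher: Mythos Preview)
Your proposal is correct and follows essentially the same approach as the paper: Taylor expand each $\varphi_N$ at its unshifted argument, apply Lemma~\ref{lemma-varphixi3} to convert the leading and first-order terms into $\mathrm{Li}_2$ and $\log$ contributions, observe the cancellation of the $\pm\tfrac{1}{2N+1}$ shifts at $t$, and combine the surviving $\tfrac{1}{2N+1}$-terms with the explicit $\tfrac{4\pi\sqrt{-1}t}{2N+1}$ from the polynomial part of $V_N$. The paper carries this out by writing an explicit expression for $w_N(t,s)$, whereas you argue the uniform bound on $w_N$ via compactness of the region and Cauchy estimates on $\varphi_N$; both are valid and amount to the same computation.
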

\begin{proof}
By using Taylor expansion, together with Lemma \ref{lemma-varphixi3}, we have 
\begin{align}
    &\varphi_N\left(t+s-1+\frac{1}{2N+1}\right)\\\nonumber
    &=\varphi_N(t+s-1)+\varphi'_N(t+s-1)\frac{1}{2N+1}\\\nonumber&+\frac{\varphi''_{N}(t+s-1)}{2}\frac{1}{(2N+1)^2}+O\left(\frac{1}{(2N+1)^2}\right)\\\nonumber
    &=\frac{N+\frac{1}{2}}{2\pi\sqrt{-1}}\text{Li}_2(e^{2\pi\sqrt{-1}(t+s)})-\frac{\pi\sqrt{-1}}{6(2N+1)}\frac{e^{2\pi\sqrt{-1}(t+s)}}{1-e^{2\pi\sqrt{-1}(t+s)}}\\\nonumber
    &-\frac{1}{2}\log(1-e^{2\pi\sqrt{-1}(t+s)})+\frac{\pi\sqrt{-1}}{2(2N+1)}\frac{e^{2\pi\sqrt{-1}(t+s)}}{1-e^{2\pi\sqrt{-1}(t+s)}}+O\left(\frac{1}{(2N+1)^2}\right)
    \end{align}

Then, we expand $\varphi_N\left(t-s+\frac{1}{2N+1}\right)$, $\varphi_N\left(t-\frac{1}{2N+1}\right)$ and $\varphi_N\left(t+\frac{1}{2N+1}\right)$ similarly,  we obtain 
\begin{align}
   &V_N(p,t,s;m,n)=V(p,t,s;m,n)\\\nonumber
   &-\frac{1}{2N+1}\left(\log(1-e^{2\pi\sqrt{-1}(t+s)})+\log(1-e^{2\pi\sqrt{-1}(t-s)})-4\pi\sqrt{-1}t\right)\\\nonumber
   &-\frac{\pi\sqrt{-1}}{3(2N+1)^2}\left(-2\frac{e^{2\pi\sqrt{-1}(t+s)}}{1-e^{2\pi\sqrt{-1}(t+s)}}-2\frac{e^{2\pi\sqrt{-1}(t-s)}}{1-e^{2\pi\sqrt{-1}(t-s)}}+3\frac{e^{2\pi\sqrt{-1}t}}{1-e^{2\pi\sqrt{-1}t}}+6p+7\right)\\\nonumber
   &+O\left(\frac{1}{(2N+1)^3}\right). 
\end{align}
Finally, we let 
\begin{align}
    &w_N(t,s)\\\nonumber
    &=-\frac{\pi\sqrt{-1}}{3}\left(-2\frac{e^{2\pi\sqrt{-1}(t+s)}}{1-e^{2\pi\sqrt{-1}(t+s)}}-2\frac{e^{2\pi\sqrt{-1}(t-s)}}{1-e^{2\pi\sqrt{-1}(t-s)}}+3\frac{e^{2\pi\sqrt{-1}t}}{1-e^{2\pi\sqrt{-1}t}}+6p+7\right)\\\nonumber
    &+O\left(\frac{1}{(2N+1)}\right),
\end{align}
then the proof of Lemma \ref{lemma-Vr} is complete. 
\end{proof}
We consider the critical point of $V(p,t,s)$, which is given by the solution of the following equations
\begin{align}  \label{equation-critical1}
    \frac{\partial V(p,t,s)}{\partial t}&=-2\pi\sqrt{-1}+3\log(1-e^{2\pi\sqrt{-1}t})\\\nonumber
    &-\log(1-e^{2\pi\sqrt{-1}(t+s)})-\log(1-e^{2\pi\sqrt{-1}(t-s)})=0,
\end{align}
\begin{align} \label{equation-critical2}
    \frac{\partial V(p,t,s)}{\partial s}&=(4p+2)\pi\sqrt{-1}s-(2p+3)\pi\sqrt{-1}\\\nonumber
    &-\log(1-e^{2\pi\sqrt{-1}(t+s)})+\log(1-e^{2\pi\sqrt{-1}(t-s)})=0. 
\end{align}

\begin{proposition}  \label{prop-critical}
 The critical point equations (\ref{equation-critical1}), (\ref{equation-critical2}) have a unique solution $(t_0,s_0)$ in the region $D_{0\mathbb{C}}=\{(t,s)\in \mathbb{C}^2| (\text{Re}(t),\text{Re}(s))\in D_0\}$.
\end{proposition}
\begin{proof}
See Proposition \ref{prop-critcalequation} for the proof.   
\end{proof}
Now we set 
$\zeta(p)$ to be the critical value of the potential function $V(p,t,s)$, i.e. 
\begin{align}
    \zeta(p)=V(p,t_0,s_0),
\end{align}
and set
\begin{align} \label{formula-zetaR(p)}
    \zeta_\mathbb{R}(p)=\text{Re} \zeta(p)=\text{Re} V(p,t_0,s_0). 
\end{align}

\begin{lemma} \label{lemma-volumeestimate}
When $p\geq 2$, we have the following estimation for $\zeta_{\mathbb{R}}(p)$
\begin{align}
    2\pi \zeta_{\mathbb{R}}(p)\geq v_8-\frac{49\pi^2}{64}\frac{1}{p^2},
\end{align}
where $v_8$ denotes the volume of the ideal regular octahedron, i.e. $v_8\approx 3.66386$.
\end{lemma}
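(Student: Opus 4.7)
The plan is first to reduce the inequality to a purely geometric statement about hyperbolic volume. Lemma \ref{lemma-volume} states
\[
2\pi\zeta(p) = \mathrm{vol}(S^3\setminus \mathcal{K}_p) + \sqrt{-1}\,cs(S^3\setminus \mathcal{K}_p) - (p+5)\pi^2\sqrt{-1},
\]
so taking real parts gives $2\pi\zeta_{\mathbb{R}}(p) = \mathrm{vol}(S^3\setminus \mathcal{K}_p)$. The target becomes
\[
\mathrm{vol}(S^3\setminus \mathcal{K}_p) \geq v_8 - \frac{49\pi^2}{64 p^2} \qquad (p\geq 6).
\]
Since $\mathcal{K}_p$ is obtained by $-\tfrac{1}{p}$ Dehn filling on one cusp of the Whitehead link, whose complement has volume $v_8$, and filling strictly decreases volume (Thurston), $v_8-\mathrm{vol}(S^3\setminus\mathcal{K}_p)$ is a positive deficit that one expects to be of order $1/p^2$.

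Next I would analyze the critical point $(t_0(p),s_0(p))$ of $V(p,t,s)$ as $p\to\infty$ using the system (\ref{equation-critical1})--(\ref{equation-critical2}). Dividing (\ref{equation-critical2}) by $2p$ forces $s_0(p)\to 1/2$; writing $s_0(p) = \tfrac12 + \tfrac{\epsilon}{p}$, substituting back, and using the identity $e^{2\pi\sqrt{-1}(t\pm 1/2)}=-e^{2\pi\sqrt{-1}t}$ (which makes the two $\log(1-e^{2\pi\sqrt{-1}(t\pm s)})$ terms coalesce in the limit), the equation collapses to $\epsilon=\tfrac12+O(1/p)$. Equation (\ref{equation-critical1}) then degenerates to the Whitehead--link equation $3\log(1-x)-2\log(1+x)=2\pi\sqrt{-1}$, whose solution $x_\infty$ determines a limiting point $(t_\infty,\tfrac12)$. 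A direct identification of $V(p,t_\infty,\tfrac12)$ with the volume of the ideal regular octahedron (via its standard decomposition into two tetrahedra and the Bloch--Wigner formula) gives $2\pi\,\mathrm{Re}\,V(p,t_\infty,\tfrac12) = v_8 \pmod{\pi^2\mathbb{Z}}$.

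Then I would Taylor expand: since $\nabla V = 0$ at the critical point, $V(p,t_0,s_0) = V(p,t_\infty,\tfrac12) + \tfrac12(\delta t,\delta s)\,\mathrm{Hess}\,V\,(\delta t,\delta s)^{T} + O(|\delta|^3)$ with $\delta t,\delta s = O(1/p)$, yielding
\[
2\pi\zeta_{\mathbb{R}}(p) = v_8 + \frac{C}{p^2} + O\!\left(\frac{1}{p^3}\right)
\]
for an explicit constant $C<0$ that is computed from the Hessian of $V$ at $(t_\infty,\tfrac12)$ together with the explicit $1/p$ coefficients in the expansions of $t_0(p)$ and $s_0(p)$. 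Combining with the linear-in-$1/p$ terms coming from the $(2p+1)s_0^2-(2p+3)s_0-2t_0$ factor (which contribute only to the imaginary part and thus to $cs$, not to $\mathrm{vol}$), one obtains an effective bound $|v_8 - 2\pi\zeta_{\mathbb{R}}(p)| \leq C'/p^2$ for large $p$.

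The main obstacle will be extracting the sharp constant $49\pi^2/64$ and verifying the bound in the borderline range. I expect to split the argument: for $p\geq p_0$ with $p_0$ chosen large enough that the $O(1/p^3)$ remainder in the Neumann--Zagier-style expansion is dominated by the $C/p^2$ term with enough slack, the inequality follows from the analysis above; for $6\leq p < p_0$, one uses the monotonicity of $\mathrm{vol}(S^3\setminus\mathcal{K}_p)$ in $p$ (from Thurston's Dehn surgery theorem) together with direct numerical evaluations of $\zeta_{\mathbb{R}}(p)$ at a finite set of values, as is already illustrated by the explicit check $2\pi\zeta_{\mathbb{R}}(6)=3.5889$ used in the proof of Proposition \ref{prop-gkl}.
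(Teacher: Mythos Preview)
Your strategy is essentially the same as the paper's: set $\gamma=1/p$, expand the critical point $(t_0(p),s_0(p))$ as a series in $\gamma$, substitute into $V$ to obtain
\[
2\pi\zeta_{\mathbb{R}}(p)=v_8-\tfrac{\pi^2}{4}\gamma^2-\tfrac{\pi^2}{8}\gamma^3-\tfrac{\pi^2(6+\pi^2)}{192}\gamma^4+O(\gamma^5),
\]
and then split into a large-$p$ regime (where the remainder is controlled) and a finite check for small $p$. The paper does exactly this, purely computationally, with the finite check done numerically for $2\le p\le 1000$ and a claimed elementary remainder estimate for $p\ge 1001$. Your extra layer of geometric interpretation---identifying $2\pi\zeta_{\mathbb{R}}(p)=\mathrm{vol}(S^3\setminus\mathcal{K}_p)$ via Lemma~\ref{lemma-volume}, recognising the $p\to\infty$ limit as the Whitehead link complement, and invoking Neumann--Zagier/Thurston for the $1/p^2$ order and monotonicity---is correct and conceptually pleasant, but is not what the paper actually does; the paper never leaves the potential function.

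Two small points. First, the constant $49\pi^2/64$ is not the sharp second-order coefficient (that is $\pi^2/4$); it is deliberately slack so as to absorb the $\gamma^3$ and higher terms, so you should not try to ``extract'' it from the Hessian. Second, your Taylor step
\[
V(p,t_0,s_0)=V(p,t_\infty,\tfrac12)+\tfrac12(\delta t,\delta s)\,\mathrm{Hess}\,V\,(\delta t,\delta s)^T+O(|\delta|^3)
\]
is not quite well-posed as written: $\nabla V$ vanishes at $(t_0,s_0)$, not at $(t_\infty,\tfrac12)$, and $V$ itself carries explicit $p$-dependence through $(2p+1)s^2-(2p+3)s$. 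The cleanest route (and the one the paper takes) is simply to insert the power series for $t(\gamma),s(\gamma)$ directly into $V(p,t,s)$ and collect terms, rather than expanding around a $p$-dependent base point.
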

\begin{proof}
    See Appendix \ref{appendix-2} for a proof. 
\end{proof}

We compute the Hessian matrix of the potential function $V(p,t,s)$ as follows. By formulas (\ref{equation-critical1}) and (\ref{equation-critical2}), we obtain 
\begin{align}
    \frac{\partial^2 V}{\partial t^2}=6\pi \sqrt{-1}\frac{-e^{2\pi\sqrt{-1}t}}{1-e^{2\pi\sqrt{-1}t}}+2\pi \sqrt{-1}\frac{e^{2\pi\sqrt{-1}(t+s)}}{1-e^{2\pi\sqrt{-1}(t+s)}}+2\pi \sqrt{-1}\frac{e^{2\pi\sqrt{-1}(t-s)}}{1-e^{2\pi\sqrt{-1}(t-s)}},
\end{align}
\begin{align}
    \frac{\partial^2 V}{\partial s^2}=(4p+2)\pi \sqrt{-1}+2\pi \sqrt{-1}\frac{e^{2\pi\sqrt{-1}(t+s)}}{1-e^{2\pi\sqrt{-1}(t+s)}}+2\pi \sqrt{-1}\frac{e^{2\pi\sqrt{-1}(t-s)}}{1-e^{2\pi\sqrt{-1}(t-s)}},
\end{align}
\begin{align}
    \frac{\partial^2 V}{\partial t\partial s}=2\pi \sqrt{-1}\frac{e^{2\pi\sqrt{-1}(t+s)}}{1-e^{2\pi\sqrt{-1}(t+s)}}-2\pi \sqrt{-1}\frac{e^{2\pi\sqrt{-1}(t-s)}}{1-e^{2\pi\sqrt{-1}(t-s)}}.
\end{align}

Let $x=e^{2\pi\sqrt{-1}t}$ and $y=e^{2\pi \sqrt{-1}s}$, then we have
\begin{align} \label{formula-hessV}
    &\det(Hess(V))\\\nonumber
    &=\frac{\partial^2 V}{\partial t^2}\frac{\partial^2 V}{\partial s^2}-\left(\frac{\partial^2 V}{\partial t\partial s}\right)^2\\\nonumber
    &=(2\pi\sqrt{-1})^2\left(\frac{-3(2p+1)}{\frac{1}{x}-1}+\frac{2p+1}{\frac{1}{xy}-1}+\frac{2p+1}{\frac{y}{x}-1}-\frac{3}{(\frac{1}{x}-1)(\frac{1}{xy}-1)}\right.\\\nonumber
    &\left.-\frac{3}{(\frac{1}{x}-1)(\frac{y}{x}-1)}+\frac{4}{(\frac{1}{xy}-1)(\frac{y}{x}-1)}\right).
\end{align}

For convenience, we let 
\begin{align}
    H(p,x,y)&=\frac{-3(2p+1)}{\frac{1}{x}-1}+\frac{2p+1}{\frac{1}{xy}-1}+\frac{2p+1}{\frac{y}{x}-1}-\frac{3}{(\frac{1}{x}-1)(\frac{1}{xy}-1)}\\\nonumber
    &-\frac{3}{(\frac{1}{x}-1)(\frac{y}{x}-1)}+\frac{4}{(\frac{1}{xy}-1)(\frac{y}{x}-1)}.
\end{align}

\subsection{Fourier coefficients $\hat{h}_N(m,n)$ with $m\neq 0$} \label{subsection-mneq0}
Motivated by Lemma \ref{lemma-Li2}, we introduce the following  function for $(t,s)\in D_0$.

\begin{equation} \label{eq:2}
F(X,Y;m,n)=\left\{ \begin{aligned}
         &0  &  \ (\text{if} \ X+Y\geq 0) \\
         &\left((t+s)-\frac{3}{2}\right)(X+Y) & \ (\text{if} \ X+Y<0)
                          \end{aligned} \right.
                          \end{equation}
\begin{equation*}
 +\left\{ \begin{aligned}
         &0  &  \ (\text{if} \ X-Y\geq 0) \\
         &\left((t-s)-\frac{1}{2}\right)(X-Y) & \ (\text{if} \ X-Y<0)
                          \end{aligned} \right.
                          \end{equation*}
\begin{equation*}
+\left\{ \begin{aligned}
         &0  &  \ (\text{if} \ X\geq 0) \\
         &\left(\frac{3}{2}-3t\right)X & \ (\text{if} \ X<0)
                          \end{aligned} \right.  +\left(p+\frac{3}{2}+n-(2p+1)s\right)Y+(m+1)X
                          \end{equation*}
where we have used $t+s-\frac{3}{2}$ instead of $t+s-\frac{1}{2}$ in the first summation since in our situation $1<t+s<2$.

  Note that $F(X,Y;m,n)$ is a piecewise linear function, we discuss the asymptotic property of this function.
  
Case 1: $Y=0$ and $X\geq 0$, then
\begin{align}
F(X,0;m,n)&=(m+1)X.
\end{align}
If $m\leq -2$, then $ F(X,0;m,n)\rightarrow -\infty$ as $X\rightarrow +\infty$. 
If $m=-1$,  then $ F(X,0;m,n)=0$ for any $X\geq 0$.

Case 2: $Y=0$ and $X\leq 0$, then 
\begin{align}
    F(X,0;m,n)=\left(\frac{1}{2}-t+m\right)X.
\end{align}
If $m\geq 1$, then $\frac{1}{2}-t+m>\frac{1}{2}$ since $\frac{1}{2}<t<1$, we obtain
$ F(X,0;m,n)\rightarrow -\infty$ when $X\rightarrow -\infty$.

We obtain
\begin{lemma} \label{lemma-m-2m1}
For any $(t,s)\in D_0$, 

(i) when $m\leq -2$,  $f(X,0;m,n)$ is a decreasing  function with respect to $X$, and
\begin{align}
    f(X,0;m,n)\rightarrow -\infty \ \text{as } \ X\rightarrow +\infty.
\end{align}

(ii) when $m\geq 1$,  $f(X,0;m,n)$ is an increasing  function with respect to $X$, and
\begin{align}
    f(X,0;m,n)\rightarrow -\infty \ \text{as} \ X\rightarrow -\infty. 
\end{align}
\end{lemma}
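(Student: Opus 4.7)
The plan is to compute $\partial f/\partial X$ explicitly, show its sign is fixed uniformly in $(X,Y)\in\mathbb{R}^2$ for $(t,s)\in D'_0$, and deduce the divergence from the comparison $f = 2\pi F + O(1)$ furnished by Lemma \ref{lemma-Li2} combined with the piecewise-linear structure of $F$ defined in (\ref{eq:2}).

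Differentiating the formula for $V(p,t,s;m,n)$ exactly as in the computation at the beginning of Section \ref{subsection-preparation}, one obtains
\[
\frac{\partial f}{\partial X}(X,Y;m,n) = 2(m+1)\pi - 3\arg(1-x) + \arg(1-xy) + \arg(1-x/y),
\]
with $x = e^{-2\pi X}e^{2\pi\sqrt{-1}t}$ and $y = e^{-2\pi Y}e^{2\pi\sqrt{-1}s}$. A direct analysis of the curve $r\mapsto 1-re^{\sqrt{-1}\alpha}$ for $r\in(0,\infty)$ shows that, uniformly in $(X,Y)\in\mathbb{R}^2$, $\arg(1-x)$ lies in $[0,2\pi t-\pi]$, while $\arg(1-xy)$ lies in the closed interval with endpoints $0$ and $2\pi(t+s)-3\pi$ (the nonzero endpoint being positive if and only if $t+s>3/2$), and similarly $\arg(1-x/y)$ has endpoints $0$ and $2\pi(t-s)-\pi$.

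Write $S(X,Y) := -3\arg(1-x)+\arg(1-xy)+\arg(1-x/y)$. For case (i), $m\leq -2$, on $D'_0$ we have $-3\arg(1-x)\leq 0$ and each of $\arg(1-xy),\arg(1-x/y)$ is at most $0.4\pi$, so $S\leq 0.8\pi$ and $\partial f/\partial X\leq -2\pi+0.8\pi=-1.2\pi<0$. For case (ii), $m\geq 1$, I partition $D'_0$ into the three nonempty subcases according to the signs of $t+s-3/2$ and $t-s-1/2$ (the fourth combination is empty since it forces $t>1$). In each subcase the sum of the individual infima of the three summands gives one of the lower bounds $-2\pi t-\pi$, $-4\pi t-2\pi s+2\pi$, or $-4\pi t+2\pi s$ for $S$; a short check using $t\leq 0.909$ and $s\in[0.2,0.8]$ shows each is strictly greater than $-4\pi$ on its portion of $D'_0$, so $\partial f/\partial X\geq 4\pi+\inf S>0$.

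For the divergence claims, applying Lemma \ref{lemma-Li2} termwise to the three dilogarithms in $V(p,t,s;m,n)$ gives $f(X,Y;m,n)=2\pi F(X,Y;m,n)+O(1)$ uniformly on $\mathbb{R}^2\times D'_0$. For $m\leq -2$ and $X\to+\infty$ (with $Y$ fixed), $(X,Y)$ eventually lies in region (I) or (VIII), where $F$ is affine in $X$ with slope $m+1\leq -1$, so $F\to -\infty$ linearly and $f\to -\infty$; dually, for $m\geq 1$ and $X\to -\infty$, $(X,Y)$ enters region (IV) or (V), where the slope of $F$ in $X$ is $1/2-t+m\geq 1/2>0$, again forcing $f\to -\infty$. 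The main technical point is the refined lower bound for $S$ in case (ii): the naive sum of the three individual extreme magnitudes gives roughly $-4.374\pi$, exceeding the $4\pi$ margin provided by $2(m+1)\pi$ at $m=1$; the sharpening uses that in each subcase of $D'_0$ exactly one of the off-diagonal arg terms has its one-sided infimum equal to $0$ (rather than a negative endpoint), which tightens the bound past the $-4\pi$ threshold.
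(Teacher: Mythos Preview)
Your proof is correct. For the divergence statements you follow exactly the paper's route: sandwich $f$ between $2\pi F\pm C$ via Lemma~\ref{lemma-Li2} and then read off the behaviour of the piecewise-linear $F$ in the relevant regions (I)/(VIII) for $m\le -2$ and (IV)/(V) for $m\ge 1$.

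The one substantive addition is your treatment of the monotonicity. The paper's own proof of this lemma only writes out the divergence and asserts ``Hence, we obtain (i)''; the strict sign of $\partial f/\partial X$ is left implicit. You supply this directly by bounding the three argument terms (the bounds $\arg(1-x)\in[0,2\pi t-\pi)$ etc.\ are exactly those the paper establishes later in Appendix~\ref{appendix-1}), and your three-case split for $m\ge 1$---observing that $t+s>3/2$ and $t-s>1/2$ cannot both hold on $D'_0$, so in each surviving case one of $\arg(1-xy),\arg(1-x/y)$ has infimum $0$---is the right sharpening to push the naive bound past the $-4\pi$ threshold. The numerics check out: the three lower bounds $-2\pi t-\pi$, $-4\pi t-2\pi s+2\pi$, $-4\pi t+2\pi s$ are each strictly above $-4\pi$ on $D'_0$ with margin at least $0.382\pi$. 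So your argument is the same in spirit as the paper's but fills a gap the paper glosses over.
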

\begin{proof}
By formula (\ref{formula-partialfX}), we have 
\begin{align}
\frac{\partial f}{\partial X}=    -3\text{arg}(1-x)+\text{arg}(1-xy)+\text{arg}(1-x/y)+(2m+2)\pi,
\end{align}
where $x=e^{2\pi\sqrt{-1}(t+\sqrt{-1}X)}$, $y=e^{2\pi\sqrt{-1}s}$. 
For any $(t,s)\in D_0$, by direct computation, we obtain $\frac{\partial f}{\partial X}<0    $ for $m\leq -2$ and $
\frac{\partial f}{\partial X}>0    
$ for $m\geq 1$. 

Moreover, using the above case (1), we obtain that, for $m\leq -2$, then $F(X,Y;m,n)\rightarrow -\infty$ as $X\rightarrow +\infty$. By Lemma \ref{lemma-Li2}, we have 
       \begin{align}
           f(X,0,m,n)\rightarrow -\infty.
       \end{align}
      as $X\rightarrow +\infty$.  Hence, we obtain (i).  
    Similarly, for $m\geq 1$,  using the above (2), we obtain (ii). 
\end{proof}

\begin{proposition} \label{prop-mgeq1-leg-2}
    When $m\leq -2$ or $m\geq 1$, then for any $n\in \mathbb{Z}$, there exists $\epsilon>0$ such that 
    \begin{align}
        \hat{h}_{N}(m,n)&=(-1)^{p+m+n}e^{\frac{\pi\sqrt{-1}}{4}}\frac{\left(N+\frac{1}{2}\right)^{\frac{3}{2}}}{\sin\frac{\pi}{2N+1}}\int_{D_0}\psi(t,s)\sin(2\pi s)e^{(N+\frac{1}{2})V_N(p,t,s;m,n)}dtds\\\nonumber
        &=O\left(e^{(N+\frac{1}{2})(\zeta_{\mathbb{R}}(p)-\epsilon)}\right).
    \end{align}
\end{proposition}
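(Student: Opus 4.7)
The plan is to exploit the holomorphicity of $V_N(p,t,s;m,n)$ in the variable $t$ and shift the $t$-integration contour into the complex plane in the direction where the modulus of the integrand decays. Choose a real shift $\alpha$ whose sign is dictated by $m$: take $\alpha>0$ large when $m\leq -2$, and take $\alpha<0$ with $|\alpha|$ large when $m\geq 1$. Since $\psi$ is not holomorphic, extend it trivially to $\tilde\psi(t,s):=\psi(\text{Re}\,t,\text{Re}\,s)$ and apply Stokes' theorem to the smooth $2$-form $\omega:=\tilde\psi(t,s)\sin(2\pi s)\,e^{(N+\frac{1}{2})V_N(p,t,s;m,n)}\,dt\wedge ds$ on the $3$-chain $C_\alpha:=\{(t+\beta\sqrt{-1},s):(t,s)\in D'_0,\,\beta\in[0,\alpha]\}$. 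A direct computation yields $d\omega=\bar\partial\tilde\psi\wedge\sin(2\pi s)e^{(N+\frac{1}{2})V_N}\,dt\wedge ds$, because the factor multiplying $\tilde\psi$ is holomorphic in $(t,s)$. Since $\psi$ vanishes on $\partial D'_0$, the side contributions on $\partial D'_0\times[0,\alpha]$ drop out, leaving
\[\hat h_N(m,n)=I_1-I_2,\qquad I_1:=\int_{T_\alpha}\omega,\qquad I_2:=\int_{C_\alpha}\bar\partial\tilde\psi\wedge\sin(2\pi s)e^{(N+\frac{1}{2})V_N}\,dt\wedge ds,\]
where $T_\alpha:=\{(t+\alpha\sqrt{-1},s):(t,s)\in D'_0\}$ is the shifted top slice.

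The estimate on $I_1$ is the core of the argument. By Lemma \ref{lemma-Vr}, $\text{Re}\,V_N(p,t+\alpha\sqrt{-1},s;m,n)=f(\alpha,0;m,n)+O(1/N)$ uniformly on $D'_0$. By Lemma \ref{lemma-m-2m1}, $f(\alpha,0;m,n)\to-\infty$ as $\alpha\to+\infty$ when $m\leq-2$ and as $\alpha\to-\infty$ when $m\geq 1$; continuity of $f$ in $(t,s)$ together with compactness of $D'_0$ promotes this to uniform convergence. Fixing $|\alpha|$ sufficiently large yields $f(\alpha,0;m,n)<\zeta_{\mathbb{R}}(p)-2\epsilon$ uniformly on $D'_0$, so $|I_1|\leq C(N+\tfrac{1}{2})^{3/2}\cdot\text{Area}(D'_0)\cdot e^{(N+\frac{1}{2})(\zeta_{\mathbb{R}}(p)-\epsilon)}=O(e^{(N+\frac{1}{2})(\zeta_{\mathbb{R}}(p)-\epsilon)})$ after absorbing the polynomial prefactor $(N+\frac{1}{2})^{3/2}/\sin(\pi/(2N+1))$.

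For $I_2$, observe that $\bar\partial\tilde\psi$ is supported in the transition annulus $D'_0\setminus D'_\varepsilon$, which is bounded away from the critical point $(t_{0R},s_{0R})$ when $\varepsilon$ is small. Using the monotonicity of $f(X,0;m,n)$ in $X$ from Lemma \ref{lemma-m-2m1} over the entire interval $\beta\in[0,\alpha]$, together with the strict inequality $\text{Re}\,V(p,t,s;m,n)<\zeta_{\mathbb{R}}(p)$ already on the real transition annulus (by the choice of $\varepsilon$), one verifies $\text{Re}\,V_N(p,t+\beta\sqrt{-1},s;m,n)<\zeta_{\mathbb{R}}(p)-\epsilon$ uniformly over the support of $\bar\partial\tilde\psi$ inside $C_\alpha$. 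Combined with the boundedness of $|\bar\partial\tilde\psi|$, this gives $|I_2|=O(e^{(N+\frac{1}{2})(\zeta_{\mathbb{R}}(p)-\epsilon)})$.

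The main technical obstacle is securing the uniform estimate on $\text{Re}\,V_N$ over the \emph{entire} tube $C_\alpha$, not merely at its top slice $T_\alpha$. This uses the full force of Lemma \ref{lemma-m-2m1} (the monotonicity in $X$ is valid for every $\beta\in[0,\alpha]$) together with a careful choice of $\varepsilon$ so that the transition annulus $D'_0\setminus D'_\varepsilon$ remains in the region where $\text{Re}\,V$ stays strictly below $\zeta_{\mathbb{R}}(p)$. Once this is in place, the exponential gain from the contour shift dominates the polynomial prefactor and the proposition follows.
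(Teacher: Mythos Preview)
Your argument is correct and is essentially the same as the paper's: both deform the integration surface in the imaginary $t$-direction only, invoking Lemma~\ref{lemma-m-2m1} for the monotone decay of $f(X,0;m,n)$ and the uniform convergence to $-\infty$, and both control the side of the deformation using that $\partial D'_0$ (equivalently, the support of $\nabla\psi$) already lies in $\{\text{Re}\,V<\zeta_{\mathbb R}(p)-\epsilon\}$. The only difference is packaging: the paper moves along the gradient flow $(-\partial f/\partial X,0)$ and phrases the result as a homotopy $D'_\delta$ satisfying \eqref{formula-Ddelta}--\eqref{formula-partialD}, whereas you take a straight vertical shift $t\mapsto t+\alpha\sqrt{-1}$ and make the Stokes' theorem step explicit by introducing $\tilde\psi$ and the $\bar\partial\tilde\psi$ correction term $I_2$. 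Your formulation is arguably cleaner in that it spells out exactly how the non-holomorphic bump function is handled, a point the paper leaves implicit.
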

\begin{proof}
    We note that $V_N(p,t,s;m,n)$ uniformly converges to $V(p,t,s;m,n)$.
we show the existence of a homotopy $S_{h}$ ($0\leq h\leq h_0$) with $S_0=D_0$ such that
\begin{align}
        S_{h_0}\subset \{(t,s)\in \mathbb{C}^2| \text{Re} V(p,t,s;m,n)<\zeta_{\mathbb{R}}(p)-\epsilon\},  \label{formula-Ddelta}\\ 
        \partial S_{h}\subset \{(t,s)\in \mathbb{C}^2| \text{Re} V(p,t,s;m,n)<\zeta_{\mathbb{R}}(p)-\epsilon\}, \label{formula-partialD}
    \end{align}

    For each fixed $(t,s)\in D_0$,  we move $(X,Y)$ from $(0,0)$ along the flow $(-\frac{\partial f}{\partial X},0)$ to construct such a homotopy. When $m\leq -2$, by Lemma \ref{lemma-m-2m1}, the value of $\text{Re} V(p,t+X\sqrt{-1},s;m,n)$ monotonically decreases and it goes to $-\infty$. Therefore, there exists $X_0<0$ such that $\text{Re} V(p,t+X\sqrt{-1},s;m,n)<\zeta_{\mathbb{R}}(p)-\epsilon$ for some $\epsilon>0$. 
    So we can define the end of homotopy by $S_{h_0}=\{(t+X_0\sqrt{-1},s)|(t,s)\in D_0\}$, 
   and hence (\ref{formula-Ddelta}) is fulfilled by this construction. 
    As for (\ref{formula-partialD}), since $\partial D_0\subset \{(t,s)\in \mathbb{C}^2| \text{Re} V(p,t,s)<\zeta_{\mathbb{R}}(p)-\epsilon\}$ and the value of $\text{Re} V$ monotonically decreases. 
\end{proof}

\begin{proposition} \label{prop-m-1}
    When  $m=-1$, then for any $n\in \mathbb{Z}$, there exists $\epsilon>0$, such that  
    \begin{align}
        \hat{h}_{N}(-1,n)&=(-1)^{p-1+n}e^{\frac{\pi\sqrt{-1}}{4}}\frac{\left(N+\frac{1}{2}\right)^{\frac{3}{2}}}{\sin\frac{\pi}{2N+1}}\int_{D_0}\psi(t,s)\sin(2\pi s)e^{(N+\frac{1}{2})V_N(p,t,s;-1,n)}dtds\\\nonumber
        &=O\left(e^{(N+\frac{1}{2})(\zeta_{\mathbb{R}}(p)-\epsilon)}\right).
    \end{align}
\end{proposition}
\begin{proof}

For any $(t,s)\in D_0$, when $m=-1$, 
we have 
    \begin{align}
         f(X,0;-1,n)\rightarrow 0\ \text{as}  \ X\rightarrow  +\infty,
    \end{align}
by Case 1 and Lemma \ref{lemma-Li2}. Moreover, by formula (\ref{formula-partialfXX}), we have
\begin{align}
\frac{\partial^2 f}{\partial X^2}=2\pi \text{Im}\left(-\frac{3}{1-x}+\frac{1}{1-xy}+\frac{1}{1-x/y}\right),
\end{align}
where $x=e^{2\pi\sqrt{-1}t}$ and $y=e^{2\pi\sqrt{-1}s}$. By direct computation, we obtain $\frac{\partial^2 f}{\partial X^2}>0$. Then, similar to the proof of Proposition \ref{prop-mgeq1-leg-2}, we can construct a homotopy and finish the proof of Proposition \ref{prop-m-1}.
\end{proof}

Now, let us consider the Fourier coefficients with $m=0$.  After the discussion of the asymptotic behavior of the function $F(X,Y;0,n)$ shown in Section \ref{appendix-LemmaUn},  we introduce the following region for $n\in \mathbb{Z}$,
\begin{small}
    \begin{align}
    U'_n=\left\{(t,s)\in \mathbb{R}^2|\frac{p+n+1-t}{2p-1}<s<\frac{p+n+t}{2p-1}, \frac{p+n+t}{2p}<s<\frac{p+2+n-t}{2p}\right\}.
\end{align}
\end{small}
Let $U_n=U'_n\cap D_0$.

\begin{figure}[!htb] \label{figure1} 
\begin{align*} 
\raisebox{-15pt}{
\includegraphics[width=270 pt]{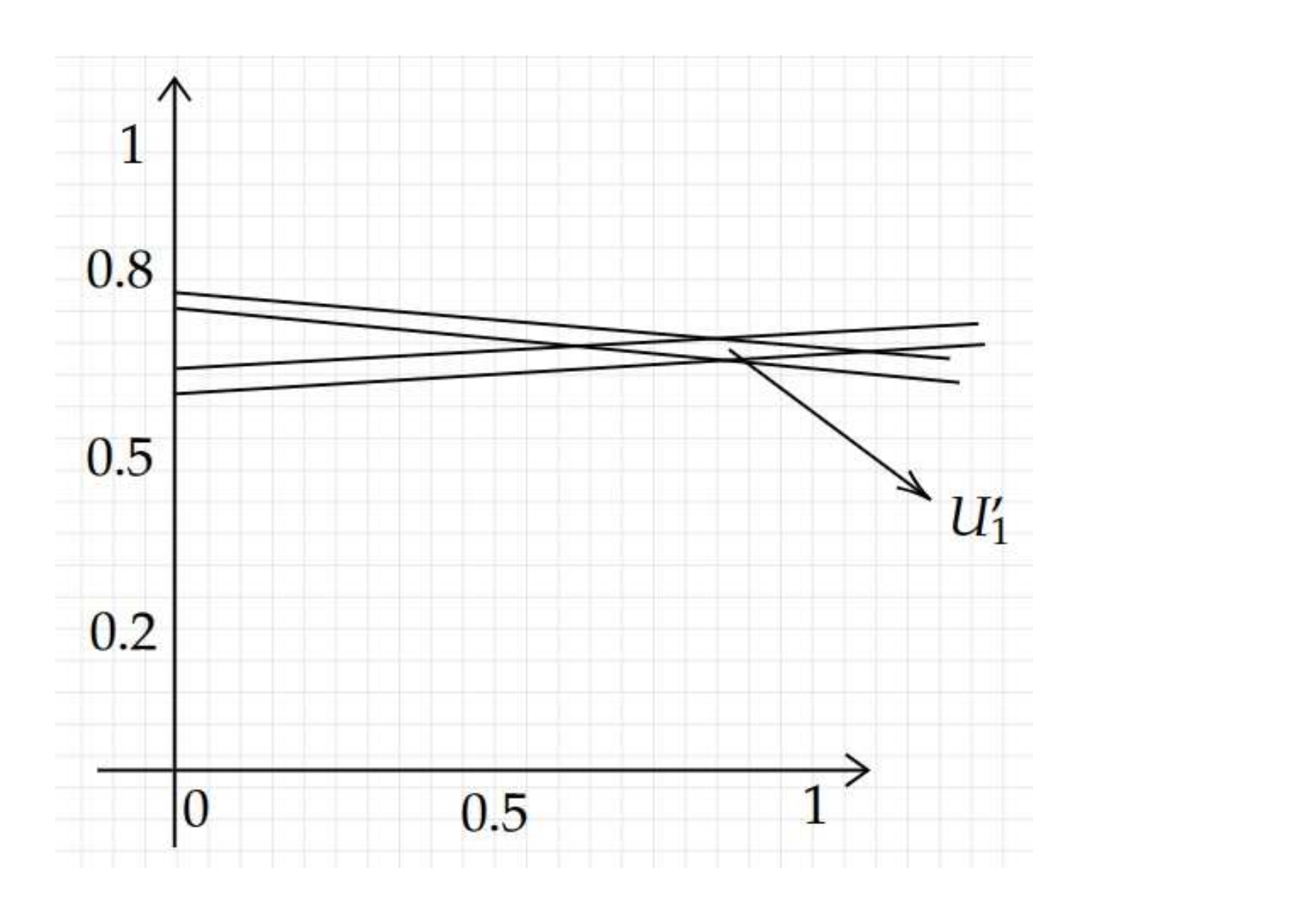}}.
\end{align*}
\caption{The region $U'_1$}
\end{figure}

We have
\begin{lemma} \label{lemma-UnnotUn}
For any $(t,s)\in D_0$, 

(i) if $(t,s)\in U'_n$, then we have 
\begin{align}
    f(X,Y;n)\rightarrow \infty \ \text{as } \ X^2+Y^2\rightarrow +\infty.
\end{align}

(ii) if $(t,s)\notin \overline{U'_n}$, then we have 
\begin{align}
    f(X,Y;n)\rightarrow -\infty \ \text{in some directions of} \  X^2+Y^2\rightarrow +\infty.
\end{align}

(iii) if $(t,s)\in \partial{U'_n}$, then we have 
\begin{align}
    f(X,Y;n)\rightarrow 0 \  \text{in some directions of} \  X^2+Y^2\rightarrow +\infty.
\end{align}
\end{lemma}
\begin{proof}
    See Appendix \ref{appendix-LemmaUn} for a proof. 
\end{proof}

\begin{lemma} \label{lemma-topbottom}
(i) The top point of $U'_n$ is given by $(\frac{3p-2-n}{4p-1},\frac{1}{2}+\frac{5+4n}{2(4p-1)})$, the bottom point of $U'_n$ is given by $(\frac{3p+n}{4p-1},\frac{1}{2}+\frac{3+4n}{2(4p-1)})$,

(ii) For $p\geq 6$, $U_0=U'_0\cap D_0\subset D_{H}$.   
\end{lemma}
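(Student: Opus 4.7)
For part (i), $U_n$ is a convex quadrilateral bounded by four lines: $A\colon (2p-1)s=p+n+1-t$ and $C\colon 2ps=p+n+t$ (giving lower bounds on $s$), and $B\colon (2p-1)s=p+n+t$ and $D\colon 2ps=p+n+2-t$ (giving upper bounds on $s$). The top vertex, where $s$ is maximized, is the intersection $B\cap D$; equating $\tfrac{p+n+t}{2p-1}=\tfrac{p+n+2-t}{2p}$ yields $t=\tfrac{3p-2-n}{4p-1}$, and substitution gives $s=\tfrac{2(p+n+1)}{4p-1}=\tfrac{1}{2}+\tfrac{5+4n}{2(4p-1)}$. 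The bottom vertex is $A\cap C$; solving $\tfrac{p+n+1-t}{2p-1}=\tfrac{p+n+t}{2p}$ gives $t=\tfrac{3p+n}{4p-1}$ and $s=\tfrac{2p+2n+1}{4p-1}=\tfrac{1}{2}+\tfrac{3+4n}{2(4p-1)}$.

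For part (ii), I verify each of the six strict inequalities defining $D_H$ on $U_0$. Three follow immediately from $U_0\subset D'_0$: the $D'_0$-constraints $t\leq 0.909$, $t+s\geq 1.02$, and $t-s\geq 0.02$ yield $t<1$, $t+s>1$, and $t-s>0$ respectively. The bound $t>\tfrac{1}{2}$ holds because at $t=\tfrac{1}{2}$ the lines $A$ and $B$ coincide at $s=\tfrac{p+1/2}{2p-1}$, so the strict defining inequalities of $U_n$ are vacuous there. The bound $t-s<\tfrac{1}{2}$ on the lower boundary reduces, on lines $A$ and $C$ respectively, to $t<1+\tfrac{1}{4p}$ and $t<1+\tfrac{1}{2p-1}$, both automatic since $t\leq 0.909<1$; by linearity of $t-s$ in $(t,s)$ and convexity of $U_0$, the maximum of $t-s$ occurs on this lower boundary, so the bound extends to all of $U_0$.

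The remaining inequality $t+s<\tfrac{3}{2}$ is the main obstacle and the reason the hypothesis $p\geq 6$ is needed. By convexity, it suffices to bound $t+s$ on the upper boundary $s=\min(B,D)$. On line $B$, $t+s=\tfrac{2pt+p}{2p-1}<\tfrac{3}{2}$ iff $t<1-\tfrac{3}{4p}$; since on this segment $t$ is at most the top-vertex value $\tfrac{3p-2}{4p-1}$, a direct check (equivalent to $(2p-1)(2p-3)>0$) shows the inequality holds for all $p\geq 2$. On line $D$, $t+s=\tfrac{(2p-1)t+p+2}{2p}<\tfrac{3}{2}$ iff $t<1-\tfrac{1}{2p-1}$; combined with the $D'_0$-constraint $t\leq 0.909$, this needs $1-\tfrac{1}{2p-1}>0.909$, i.e.\ $\tfrac{1}{2p-1}<0.091$, which forces exactly $p\geq 6$. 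The bound is extremely tight at $p=6$, where $1-\tfrac{1}{11}=\tfrac{10}{11}\approx 0.9091$, only barely exceeding $0.909$; this explains both why the hypothesis appears in this form and why the line-$D$ estimate is the critical step of the argument.
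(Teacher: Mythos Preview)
Your proof is correct and the key step---showing $t+s<\tfrac{3}{2}$ on line $D$ by reducing to $t<1-\tfrac{1}{2p-1}$ and comparing with the $D'_0$-constraint $t\leq 0.909$---is exactly the paper's argument. The paper's proof is terser: for part (ii) it only explicitly verifies $t+s<\tfrac{3}{2}$ via this same intersection computation, tacitly leaving the remaining $D_H$ conditions to the reader, whereas you carefully check all six; your treatment is therefore more complete but not substantively different.
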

\begin{proof}
    Solving the linear equations
\begin{equation} 
\left\{ \begin{aligned}
         s&=\frac{p+n+t}{2p-1} \\
                s&=\frac{p+2+n-t}{2p}
                          \end{aligned} \right.
                          \end{equation}
and 
\begin{equation} \label{eq:2}
\left\{ \begin{aligned}
         s&=\frac{p+n+1-t}{2p-1} \\
                s&=\frac{p+n+t}{2p}
                          \end{aligned} \right.
                          \end{equation}
                          respectively, we obtain (i).

The line $t=0.909$ intersects two lines $s+t=\frac{3}{2}$ and $s=\frac{p+2-t}{2p}$ at the points $(0.909,0.591)$ and $(0.909,\frac{p+1.091}{2p})$ respectively. We have
\begin{align}
    0.591>\frac{p+1.091}{2p}
\end{align}
since $p\geq 6$. Hence $U_0\subset D_{H}$. 
\end{proof}

\begin{figure}[!htb] \label{figure1} 
\begin{align*} 
\raisebox{-15pt}{
\includegraphics[width=270 pt]{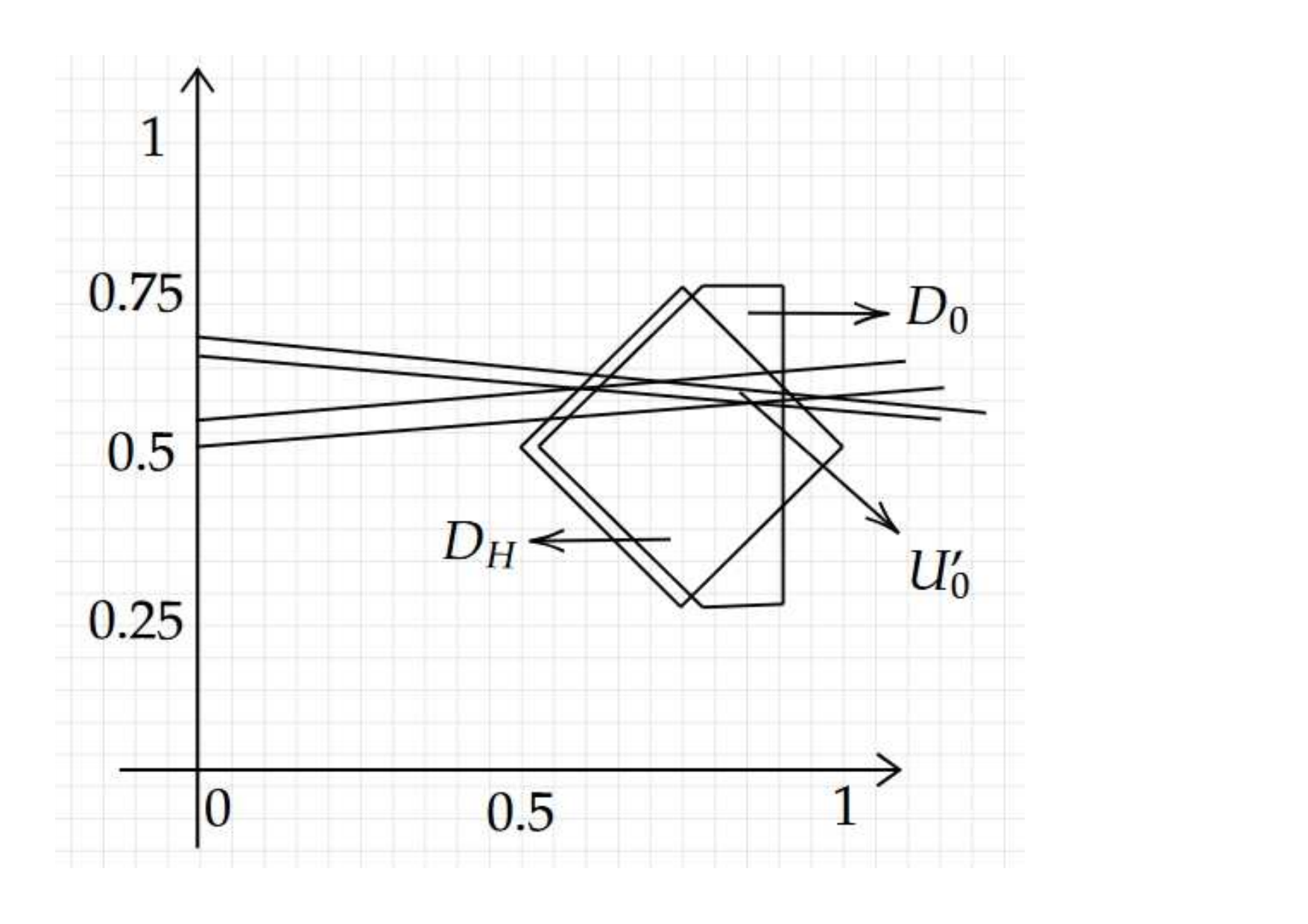}}.
\end{align*}
\caption{The region $U_0=D_0\cap U_0'$}
\end{figure}

\begin{proposition} \label{prop-m-0}
    When  $m=0$, then for any $n\geq p-1$ or $n\leq -(p+1)$, we have 
    \begin{align}
        \hat{h}_{N}(m,n)&=(-1)^{p+m+n}e^{\frac{\pi\sqrt{-1}}{4}}\frac{\left(N+\frac{1}{2}\right)^{\frac{3}{2}}}{\sin\frac{\pi}{2N+1}}\int_{D_0}\psi(t,s)\sin(2\pi s)e^{(N+\frac{1}{2})V_N(p,t,s;m,n)}dtds\\\nonumber
        &=O\left(e^{(N+\frac{1}{2})(\zeta_{\mathbb{R}}(p)-\epsilon)}\right).
    \end{align}
\end{proposition}
\begin{proof}
    Since $(t,s)\in D_0$, we have $0<s<1$. For $n\geq p-1$,  we have 
    \begin{align}
         \frac{1}{2}+\frac{3+4n}{2(4p-1)}\geq 1>s,
    \end{align}
and for $n\leq -(p+1)$, we have 
 \begin{align}
         \frac{1}{2}+\frac{5+4n}{2(4p-1)}\leq 0<s.
    \end{align}
By Lemma \ref{lemma-topbottom}, we know that $\overline{U'_{n}}\cap D_0=\emptyset$ for $n\geq p-1$ and $n\leq -(p+1)$.  Therefore, if  
$(t,s)\in D_0$, then $(t,s)\notin \overline{U'_{n}}$, and  by Lemma \ref{lemma-UnnotUn}, we have
\begin{align}
    f(X,Y;n)\rightarrow -\infty \ \text{in some directions of } \ X^2+Y^2\rightarrow +\infty. 
\end{align}
Now we can finish the proof by constructing a homotopy similar to the proof of Proposition \ref{prop-mgeq1-leg-2}.
\end{proof}

\begin{remark}
    Proposition \ref{prop-mgeq1-leg-2}, Proposition \ref{prop-m-1} and Proposition \ref{prop-m-0} show that the Fourier coefficients $\hat{h}_{N}(m,n)$  with $m,n$ satisfying the conditions in those Propositions can be neglected when we study the asymptotic expansion.  We can also prove Proposition \ref{prop-mgeq1-leg-2} and Proposition \ref{prop-m-1}  by using the method as showed in Ohtsuki's original paper \cite{Oht16,Oht18} (cf. the section of verifying the assumption of the Poisson summation formula for $V$).  
Hence, by formula (\ref{formula-fouriercoeff}), we obtain 
\begin{align} \label{formula-Poission-after}
      J_N(\mathcal{K}_p;\xi_N)=\sum_{n=-p}^{p-2}\hat{h}_{N}(0,n)+O\left(e^{(N+\frac{1}{2})\left(\zeta_{\mathbb{R}}(p)-\epsilon\right)}\right).
\end{align}
So in the following, we only need to investigate the Fourier coefficients $\hat{h}_N(0,n)$ with $-p\leq n\leq p-2$. Note that in Ohtsuki's work \cite{Oht16,Oht17,OhtYok18,Oht18}, after verifying the assumption of the Poisson summation formula,  only one Fourier coefficient (or two Fourier coefficients in \cite{Oht18}) remains to be considered. 
\end{remark}

\subsection{Fourier coefficients $\hat{h}_N(0,n)$ with $0\leq n\leq p-2$} \label{subsection-m=0np}
First, we introduce the quantity
\begin{align} \label{formula-cupperp}
    c_{upper}(p)=\frac{1}{\pi}(v_8-2\pi\zeta_{\mathbb{R}}(p))^{\frac{1}{2}}+\frac{1}{2},
\end{align}
and set
\begin{align}
    c_{0}(p)=\frac{7}{8p}+\frac{1}{2}.
\end{align}
By Lemma \ref{lemma-volumeestimate},  we have $2\pi\zeta_{\mathbb{R}}(p)>v_{8}-\frac{49\pi^2}{64}\frac{1}{p^2}$. Then, by formula (\ref{formula-cupperp}), we obtain 
\begin{align}  \label{formula-cupper<c0}
    c_{upper}(p)<c_0(p).
\end{align} 
For a fix constant $c\in \mathbb{R}$, we define the subset 
\begin{align}
    D_0(c)=\{(t,s)\in D_0| s=c\}.
\end{align}
We have
\begin{proposition} \label{prop-saddleonedim}
  For $n\in \mathbb{Z}$ and for $c_{upper}(p)\leq c\leq 0.75$ or $0.25\leq c\leq 1-c_{upper}(p)$, there exists a constant $C$ independent of $c$, such that
    \begin{align}
        |\int_{D_0(c)}\psi(t,c) \sin(2\pi c)e^{(N+\frac{1}{2})V_N(p,t,c;0,n)}dt|<Ce^{(N+\frac{1}{2})\left(\zeta_{\mathbb{R}}(p)-\epsilon\right)}. 
    \end{align}
\end{proposition}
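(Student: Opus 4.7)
The plan is to run a one-dimensional saddle point argument in $t$ for each fixed $s = c$, in parallel with the proof of Proposition \ref{prop-saddleonedim3}. Since $V_N(p,t,c;0,n)$ converges uniformly to $V(p,t,c;0,n)$ by Lemma \ref{lemma-Vr}, and since the term $-2\pi\sqrt{-1}\,n\,c$ is a $t$-independent constant of modulus one, it suffices to bound $\bigl|\int_{D'_0(c)} e^{(N+\frac{1}{2})V(p,t,c;0,n)}\,dt\bigr|$.

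First, I would study the one-variable critical point equation $\partial_t V(p,t,c) = 0$ obtained from (\ref{equation-critical1}) by specializing $s$ to $c$. Following the analysis in Proposition \ref{prop-critical}, this equation admits a unique solution $t_*(c)$ depending holomorphically on $c$, with $t_*(s_0) = t_0$. Define the one-variable saddle value
\begin{align*}
\tilde\zeta_n(c) = \text{Re}\, V(p, t_*(c), c; 0, n).
\end{align*}
At $c = s_0$ and $n = 0$ one has $\tilde\zeta_0(s_0) = \zeta_{\mathbb{R}}(p)$ by construction. For $n \neq 0$ or $c \neq s_0$, the value $\tilde\zeta_n(c)$ is driven below $\zeta_{\mathbb{R}}(p)$ by the explicit quadratic term $\pi\sqrt{-1}(2p+1)s^2$ and the linear term $-\pi\sqrt{-1}(2p+3+2n)s$ appearing in $V(p,t,s;0,n)$, whose joint effect is to push the real part strictly below $\zeta_{\mathbb{R}}(p)$ once $c$ leaves a neighborhood of $s_0$.

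Second, I would argue that the constant $c_{upper}(p)$ can be chosen so that, uniformly for $n$ in the finite range $0 \leq n \leq p-2$ and uniformly for $c \in [0, 1 - c_{upper}(p)] \cup [c_{upper}(p), 1]$, one has $\tilde\zeta_n(c) \leq \zeta_{\mathbb{R}}(p) - \epsilon$ for some $\epsilon > 0$. The envelope identity $\partial_c \tilde\zeta_n(c) = \text{Re}\,\partial_s V(p, t_*(c), c; 0, n)$ isolates the contribution of the $s$-dependence of $V$, whose dominant behavior is controlled by the quadratic $(2p+1)c^2$ term shifted by $n$; an explicit threshold can thus be extracted. Then, for each such $c$, I deform the horizontal contour $D'_0(c)$ in the $t$-plane to a steepest descent path through $t_*(c)$ on which $\text{Re}\,V(p,t,c;0,n) \leq \tilde\zeta_n(c)$, keeping the endpoints in the region $\text{Re}\,V < \zeta_{\mathbb{R}}(p) - \epsilon$ (which is possible because $\partial D'_0$ has already been shown to satisfy this property in Section \ref{subsection-preparation}). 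The one-dimensional saddle point method as in \cite{Oht16, Oht18} then yields the integral bound $C_1 (N+\tfrac{1}{2})^{-1/2} e^{(N+\frac{1}{2})\tilde\zeta_n(c)} \leq C e^{(N+\frac{1}{2})(\zeta_{\mathbb{R}}(p) - \epsilon)}$.

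The main obstacle is pinning down the explicit value of $c_{upper}(p)$ and proving the uniform inequality $\tilde\zeta_n(c) \leq \zeta_{\mathbb{R}}(p) - \epsilon$. This requires a careful analysis of how the one-variable saddle value varies with $c$ and $n$; in particular one must ensure that $t_*(c)$ stays inside the strip where the logarithmic terms are single-valued, so that the Hessian $\partial_t^2 V(p, t_*(c), c)$ remains nondegenerate with a controlled sign. A secondary technical point is that the steepest-descent contour must be chosen to depend continuously on $c$ (and to remain admissible in the sense of Lemma \ref{lemma-HessXY} for $c$ in a controlled range), so that the constant $C$ emerging from the saddle point method is genuinely independent of both $c$ and $n$.
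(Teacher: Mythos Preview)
Your overall strategy---a one-dimensional saddle point argument in $t$ for each fixed slice $s=c$---matches the paper's approach. However, the mechanism you propose for bounding the saddle value $\tilde\zeta_n(c)$ below $\zeta_{\mathbb{R}}(p)$ is wrong, and this is not a detail but the heart of the argument.

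The terms you single out, $\pi\sqrt{-1}(2p+1)c^2$ and $-\pi\sqrt{-1}(2p+3+2n)c$, are \emph{purely imaginary} for real $c$, so they contribute nothing to $\tilde\zeta_n(c)=\text{Re}\,V(p,t_*(c),c;0,n)$. In fact, since the $t$-critical equation (\ref{equation-critical1}) is independent of $p$ and $n$, so is $t_*(c)$, and consequently $\tilde\zeta_n(c)$ depends neither on $p$ nor on $n$. Your envelope argument based on these terms therefore collapses. Relatedly, the claim $\tilde\zeta_0(s_0)=\zeta_{\mathbb{R}}(p)$ is ill-posed: $s_0$ is complex, while $c$ ranges over reals, and the one-dimensional saddle value at the symmetric point $c=\tfrac12$ is actually $v_8/(2\pi)$, which is \emph{larger} than $\zeta_{\mathbb{R}}(p)$.

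What the paper does instead is observe that the equation $\partial_t V=0$ is a \emph{quadratic} in $x=e^{2\pi\sqrt{-1}t}$, with explicit root $x=1-2\sqrt{-1}\sin(\pi c)$. This allows the saddle value to be computed in closed form,
\[
\text{Re}\,V(p,T_1(c),c;0,n)=2\bigl(\Lambda(\tfrac{c}{2})+\Lambda(\tfrac12-\tfrac{c}{2})\bigr),
\]
which is monotone decreasing on $[\tfrac12,1)$ with value $v_8/(2\pi)$ at $c=\tfrac12$. The constant $c_{upper}(p)=\tfrac{1}{\pi}\sqrt{v_8-2\pi\zeta_{\mathbb{R}}(p)}+\tfrac12$ is then calibrated, via a power-series expansion of this Lobachevsky expression, precisely so that the saddle value drops below $\zeta_{\mathbb{R}}(p)$ once $c\geq c_{upper}(p)$. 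Without this explicit computation (or an equivalent), you have no way to identify $c_{upper}(p)$ or to establish the required uniform bound.
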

\begin{proof}
    See Appendix \ref{appendix-onesaddle} for a proof. 
\end{proof}

We introduce the region
\begin{align} \label{formula-D''0}
    D'_0=\{(t,s)\in D_0|1-c_0(p)\leq s\leq c_0(p)\}.
\end{align}

\begin{figure}[!htb] \label{figure1} 
\begin{align*} 
\raisebox{-15pt}{
\includegraphics[width=270 pt]{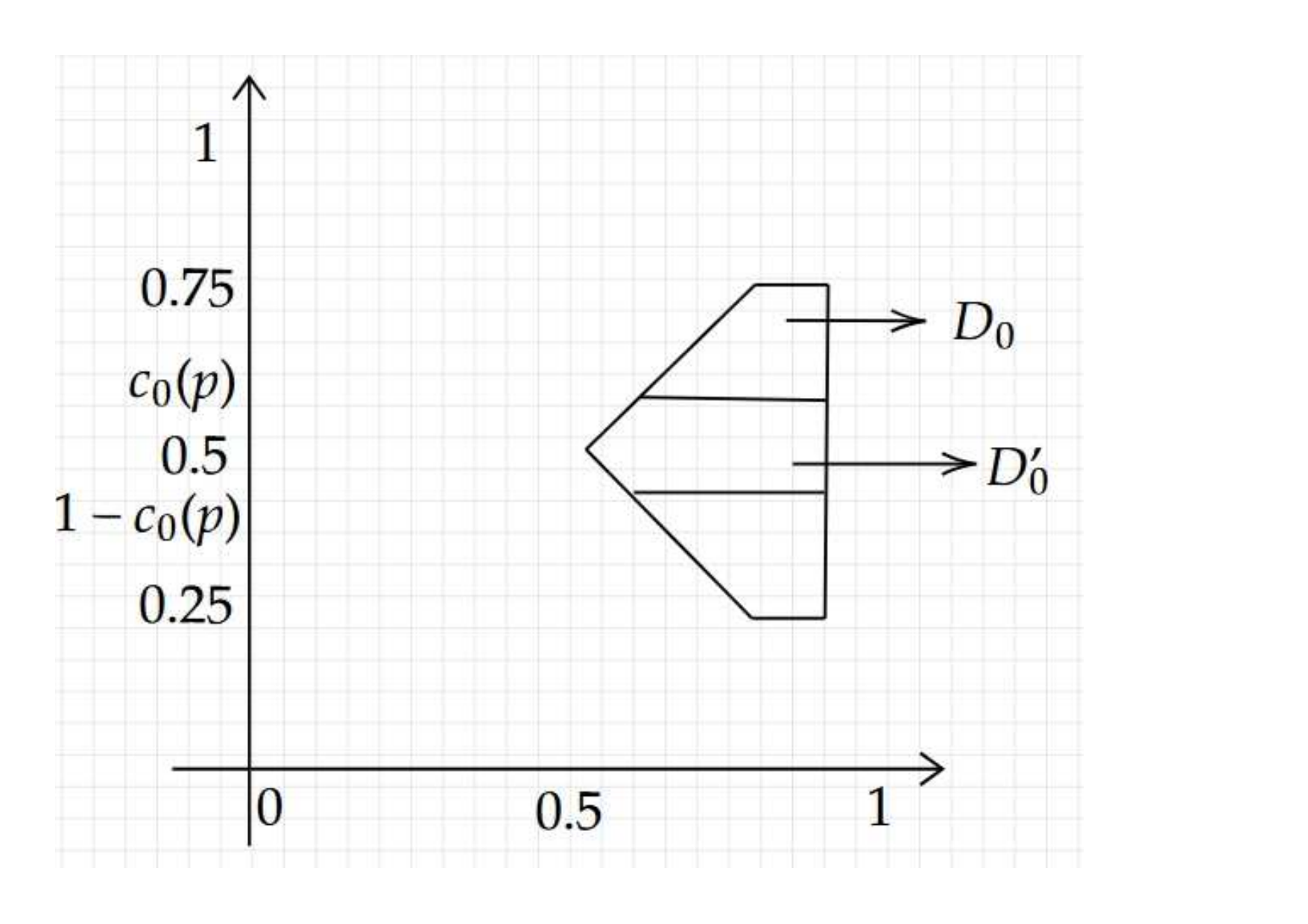}}.
\end{align*}
\caption{The region $D'_0$}
\end{figure}

\subsubsection{Fourier coefficients $\hat{h}_N(0,n)$ with $1\leq n\leq p-2$}
\begin{lemma}  \label{lemma-UnD''}
    For $n\geq 1$, we have
\begin{align}
    \overline{U'_n}\cap D'_0=\emptyset.
\end{align}
\end{lemma}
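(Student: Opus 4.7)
The plan is to reduce the disjointness to a single elementary inequality on the $s$-coordinate. The region $U_n$ is the open convex quadrilateral cut out by the four linear inequalities in its definition. Its lower boundary is formed by the two lines $s=\frac{p+n+1-t}{2p-1}$ (decreasing in $t$) and $s=\frac{p+n+t}{2p}$ (increasing in $t$); these meet precisely at the "bottom" vertex identified in Lemma \ref{lemma-topbottom}(i). Hence the infimum of $s$ over $U_n$ is attained at that vertex and equals
\begin{align*}
s_{\mathrm{bot}}(n)=\frac{1}{2}+\frac{3+4n}{2(4p-1)}.
\end{align*}

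Next, since $D''_0$ requires $s\leq c_0(p)=\frac{1}{2}+\frac{7}{8p}$ by \eqref{formula-D''0}, it suffices to establish $s_{\mathrm{bot}}(n)>c_0(p)$ for every $n\geq 1$. Because $s_{\mathrm{bot}}(n)$ is increasing in $n$, only the case $n=1$ is needed, which amounts to $\frac{7}{2(4p-1)}>\frac{7}{8p}$, equivalently $8p>2(4p-1)$, i.e.\ $0>-2$. This is trivially true, so every point of $U_n$ (for $n\geq 1$) has $s>c_0(p)$ and therefore lies outside $D''_0$.

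The argument is essentially a one-line comparison, so there is no genuine obstacle; the only point requiring care is correctly identifying which of the two vertices from Lemma \ref{lemma-topbottom}(i) realizes the minimum of $s$ over $U_n$, which follows from the slope pattern of the four defining lines. An analogous statement for $n\leq -2$ (should it be needed elsewhere) would follow symmetrically using the top vertex and the lower bound $1-c_0(p)\leq s$.
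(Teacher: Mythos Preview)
Your proof is correct and follows essentially the same route as the paper: both arguments cite the bottom vertex of $U_n$ from Lemma~\ref{lemma-topbottom}(i) and show its $s$-coordinate exceeds $c_0(p)$ for $n\geq 1$. You add a bit more detail (the slope analysis justifying that the bottom vertex gives the minimal $s$, and the monotone reduction to $n=1$), whereas the paper simply asserts the inequality $\frac{1}{2}+\frac{3+4n}{2(4p-1)}>\frac{1}{2}+\frac{7}{8p}$ holds ``clearly'' for $n\geq 1$.
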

\begin{proof}
    By Lemma \ref{lemma-topbottom}, the bottom point of $U_n'$ is given by $(\frac{3p+n}{4p-1},\frac{1}{2}+\frac{3+4n}{2(4p-1)})$, clearly 
    \begin{align}
        \frac{1}{2}+\frac{3+4n}{2(4p-1)}>c_0(p)=\frac{1}{2}+\frac{7}{8p},
    \end{align}
    for $n\geq 1$. Hence $\overline{U'_n}\cap D'_0=\emptyset$ for $n\geq 1$.
\end{proof}

\begin{proposition}
 \label{prop-m0np}
    For $1\leq n\leq p-2$, we have 
    \begin{align}
        |\int_{D_0}\psi(t,s)\sin(2\pi s)e^{(N+\frac{1}{2})V_N(p,t,s;0,n)}dtds|=O\left(e^{(N+\frac{1}{2})(\zeta_{\mathbb{R}}(p)-\epsilon)}\right),
    \end{align}
    for some sufficiently small $\epsilon>0$.
\end{proposition}

\begin{proof}
By formula (\ref{formula-cupper<c0}) and  Proposition \ref{prop-saddleonedim}, we have
\begin{align}
&|\int_{D_0}\psi(t,s)\sin(2\pi s)e^{(N+\frac{1}{2})V_N(p,t,s;0,n)}dtds|\\\nonumber
&=|\int_{D'_0}\psi(t,s)\sin(2\pi s)e^{(N+\frac{1}{2})V_N(p,t,s;0,n)}dtds|+O(e^{(N+\frac{1}{2})(\zeta_{\mathbb{R}}(p)-\epsilon)}).     
\end{align}

So we only need to estimate the following integral 
\begin{align}
 |\int_{D'_0}\psi(t,s)\sin(2\pi s)e^{(N+\frac{1}{2})V_N(p,t,s;0,n)}dtds|.
\end{align}
Note that $V_N(p,t,s;0,n)$ uniformly converges to $V(p,t,s;0,n)$ on $D'_{0}$ by Lemma \ref{lemma-varphixi3}.
We show that there is a homotopy $S'_{h}$ $(0\leq h\leq h_0)$ with $S'_0=D'_0$  such that 
\begin{align}
    &S'_{h_0}\subset \{(t,s)\in \mathbb{C}^2|Re V(p;t,s;0,n)<\zeta_\mathbb{R}(p)-\epsilon\}, \label{formula-Delta'} \\
    &|\int_{\partial S'_{h}}\psi(t,s)\sin(2\pi s)e^{(N+\frac{1}{2})V(p,t,s;0,n)}dtds|=O(e^{(N+\frac{1}{2})(\zeta_\mathbb{R}(p)-\epsilon)}), \label{formula-pDelta'}
\end{align}
for some sufficiently small $\epsilon>0$.

In the fiber of the projection $\mathbb{C}^2\rightarrow \mathbb{R}^2$ at $(t,s)\in D'_0$, we consider the flow from $(X,Y)=(0,0)$ determined by the vector field $\left(-\frac{\partial f}{\partial X},-\frac{\partial f}{\partial Y}\right)$, by Lemma \ref{lemma-UnD''}, we obtain that, for $n\geq 1$, $(t,s)\notin \overline{U'_n} $ since $(t,s)\in D'_0$. Then by Lemma \ref{lemma-UnnotUn}, there exists a direction of $X^2+Y^2\rightarrow +\infty$ such that the value of $\text{Re} V(p;t,s;0,n)$ monotonically decreases and goes to $-\infty$.  So we can construct $S'_{h_0}$ as in the proof of Proposition \ref{prop-mgeq1-leg-2} such that formula (\ref{formula-Delta'}) holds.

As for (\ref{formula-pDelta'}), note that the boundary $\partial D'_0$ consists of $D_0(c_{0}(p))$, $D_0(1-c_{0}(p))$ and the partial boundaries of $D_0$, denoted by $D_{0b}$.  Hence $\partial S'_{h}$ consists of three parts denoted by 
\begin{align}
    \partial S'_{h}=A_1\cup A_2\cup B,
\end{align}
where $A_1$ and $A_2$ comes from the flows start at $(t,s)\in D_0(c_{0}(p))$ and  $(t,s)\in D_0(1-c_{0}(p))$ respectively, while $B$ comes from the flows start at $(t,s)\in D_{0b}$.

By its definition, $D_{0b}\subset \partial D_{0}\subset \{(t,s)\in \mathbb{C}^2|\text{Re} V(p,t,s;0,n)<\zeta_\mathbb{R}(p)-\epsilon\}$, and the function $\text{Re} V(p,t,s;0,n)$ decreases under the flow, so we have 
\begin{align} \label{formula-integralB}
    |\int_{B}\psi(t,s)\sin(2\pi s)e^{(N+\frac{1}{2})V(p,t,s;0,n)}dtds|=O(e^{(N+\frac{1}{2})(\zeta_\mathbb{R}(p)-\epsilon)}).
\end{align}

By Proposition \ref{prop-saddleonedim}, the integral on $D_0(c_{0}(p))$ and $D_0(1-c_{0}(p))$ 
is also of order $O(e^{(N+\frac{1}{2})(\zeta_\mathbb{R}(p)-\epsilon}))$. By applying one-dimensional saddle point method to the slices of the region $A_1\cup A_2$ as shown in Appendix \ref{appendix-onesaddle}, we can show that
\begin{align} \label{formula-integralA1A2}
    |\int_{A_1\cup A_2}\psi(t,s)\sin(2\pi s)e^{(N+\frac{1}{2})V(p,t,s;0,n)}dtds|=O(e^{(N+\frac{1}{2})(\zeta_\mathbb{R}(p)-\epsilon)}).
\end{align}
Combining formulas (\ref{formula-integralB}) and (\ref{formula-integralA1A2}) together, we prove (\ref{formula-pDelta'}).
Hence, the required homotopy exists.  
\end{proof}

\subsubsection{Fourier coefficient $\hat{h}_N(0,0)$}

We set $c_0=0.60084$ and let

\begin{align}
\Delta_1&=\{(t,s)\in D_0| t+s\geq \frac{3}{2},s\leq c_0 \} \\   
\Delta_2&=\{(t,s)\in D_0| t-s\geq \frac{1}{2},s\geq 1-c_0 \} 
\end{align}

We define 
\begin{align} \label{formula-D''0}
    D''_0=\{(t,s)\in D_0| 1-c_0\leq s\leq c_0 \} \setminus (\Delta_1\cup \Delta_2) \subset D_H.
    \end{align}

\begin{figure}[!htb] \label{figure1} 
\begin{align*} 
\raisebox{-15pt}{
\includegraphics[width=270 pt]{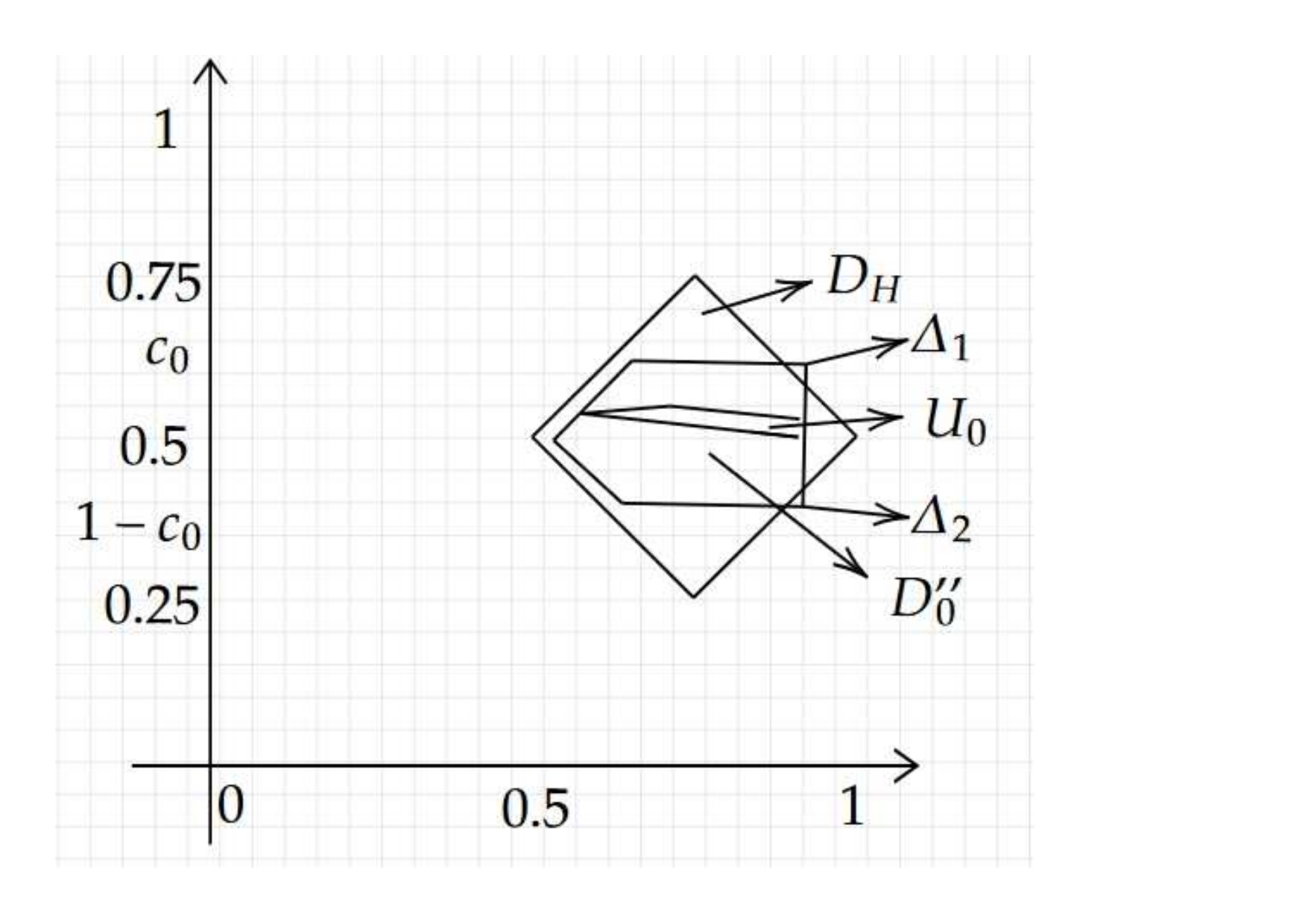}}.
\end{align*}
\caption{The regions $U_0, D''_0, D_H, \Delta_1$ and $\Delta_2$}
\end{figure}

\begin{lemma} \label{lemma-triangleintegral}
\begin{align} \label{formula-triangleintegral}
|\int_{\Delta_i}\psi(t,s)\sin(2\pi s)e^{(N+\frac{1}{2})V_N(p,t,s)}dtds|=O\left(e^{(N+\frac{1}{2})(\zeta_{\mathbb{R}}(p)-\epsilon)}\right)     
\end{align}
for some sufficiently small $\epsilon>0$, where $i=1,2$. 
\end{lemma}
\begin{proof}
We consider the function $\text{Re} V(p,t,s)=v(t,s)$, where $v(t,s)$ is given by the formula (\ref{formula-vts}), it is easy to show that $v(t,s)$ takes the maximal value at the point $(\frac{3}{2}-c_0,c_0)$ on the triangle region $\Delta_{1}$. Hence, on $\Delta_1$,  we have
\begin{align}
    \text{Re}V(p,t,s)\leq \text{Re}V\left(p,\frac{3}{2}-c_0,c_0\right)=\frac{3.56870}{2\pi}<\zeta_{\mathbb{R}}(p)-\epsilon
\end{align}
for $p\geq 6$ and sufficiently small $\epsilon>0$.
So we prove formula (\ref{formula-triangleintegral}) for $\Delta_1$, the proof is similar for $\Delta_2$.  
\end{proof}

\begin{proposition} \label{prop-m0n0} 
For $p\geq 6$,  we have
\begin{align}
    \hat{h}_N(0,0)&=\frac{(-1)^pe^{\frac{1}{4}\pi\sqrt{-1}}(N+\frac{1}{2})^{\frac{3}{2}}}{\sin \frac{\pi}{2N+1}}\int_{D_0}\psi(t,s)\sin(2\pi s)e^{(N+\frac{1}{2})V_N(p,t,s)}dtds\\\nonumber
    &=\frac{(-1)^{p+1}2\pi e^{\frac{1}{4}\pi\sqrt{-1}} (N+\frac{1}{2})^{\frac{1}{2}}}{\sin \frac{\pi}{2N+1}}\omega(p)e^{(N+\frac{1}{2})\zeta(p)}\\\nonumber
&\cdot\left(1+\sum_{i=1}^d\kappa_i(p)\left(\frac{2\pi\sqrt{-1}}{N+\frac{1}{2}}\right)^i+O\left(\frac{1}{(N+\frac{1}{2})^{d+1}}\right)\right),
    \end{align}
    for $d\geq 1$, where $\omega(p)$ and $\kappa_i(p)$ are constants determined by $\mathcal{K}_p$.
\end{proposition}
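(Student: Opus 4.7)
The plan is to apply the two-dimensional saddle-point method (Proposition~\ref{proposition-saddlemethod} with Remark~\ref{remark-saddle}) to the integral defining $\hat{h}_N(0,0)$. First, I truncate the integration domain from $D'_0$ down to $D''_0$ defined in (\ref{formula-D''0}). The complement consists of horizontal slices $D'_0(c)$ with $c\in[0,1-c_0(p)]\cup[c_0(p),1]$; since $c_{upper}(p)\le c_0(p)$ by (\ref{formula-cupper<c0}), Proposition~\ref{prop-saddleonedim} applied slice by slice shows that this piece contributes only $O(e^{(N+\frac{1}{2})(\zeta_{\mathbb R}(p)-\epsilon)})$. On the remaining region $D''_0$, Lemma~\ref{lemma-Vr} rewrites $V_N(p,t,s)=V(p,t,s)+\Psi_1(p,t,s)/(N+\tfrac12)+w_N(t,s)/(2N+1)^2$ with $\Psi_1(p,t,s)=-\tfrac12\bigl(\log(1-e^{2\pi\sqrt{-1}(t+s)})+\log(1-e^{2\pi\sqrt{-1}(t-s)})-4\pi\sqrt{-1}\,t\bigr)$, putting the integrand into the form required by Remark~\ref{remark-saddle}.

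The key geometric step is to construct an oriented disk $D\subset\mathbb{C}^2$ through the critical point $(t_0,s_0)$ together with a homotopy from $D''_0$ to $D$ whose boundary stays in $\{\mathrm{Re}\,V<\zeta_{\mathbb{R}}(p)-\epsilon\}$. By Proposition~\ref{prop-critical} and Lemma~\ref{lemma-U0D''0}, the real part $(t_{0R},s_{0R})$ lies in the interior of $U_0\subset D''_0\subset D_H$, and by Lemma~\ref{lemma-HessXY} the Hessian of $f=\mathrm{Re}\,V$ in the imaginary coordinates $(X,Y)$ is positive definite throughout $D_H$. Lifting each $(t,s)\in D''_0$ along the descending gradient flow $(-\partial f/\partial X,-\partial f/\partial Y)$ then produces a two-parameter family; at the interior point $(t_{0R},s_{0R})$ the flow terminates at the saddle $(t_0,s_0)$, while on the boundary it drives $\mathrm{Re}\,V$ strictly below $\zeta_{\mathbb{R}}(p)-\epsilon$. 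The horizontal sides $D'_0(c_0(p))$ and $D'_0(1-c_0(p))$ are controlled by Proposition~\ref{prop-saddleonedim}, and the remaining boundary pieces by $\partial D'_0\subset\{\mathrm{Re}\,V<\zeta_{\mathbb{R}}(p)-\epsilon\}$, exactly as in the proof of Proposition~\ref{prop-m0np}.

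Finally I apply Proposition~\ref{proposition-saddlemethod} with $\Psi_0=V-V(t_0,s_0)$ and $A=\tfrac12\,\mathrm{Hess}\,V(t_0,s_0)$, treating $\sin(2\pi s)$ as a slowly varying amplitude that produces the factor $\sin(2\pi s_0)$ at leading order. The critical-point equation (\ref{equation-critical1}) gives $\log(1-e^{2\pi\sqrt{-1}(t_0+s_0)})+\log(1-e^{2\pi\sqrt{-1}(t_0-s_0)})=3\log(1-x_0)-2\pi\sqrt{-1}$, so that $e^{\Psi_1(p,t_0,s_0)}=-x_0/(1-x_0)^{3/2}$. From (\ref{formula-hessV}) we get $\sqrt{\det(-A)}=\pi\sqrt{-1}\sqrt{H(p,x_0,y_0)}$ for an appropriate branch, and the saddle-point formula yields a leading contribution
\begin{align*}
\sin(2\pi s_0)\cdot\frac{-x_0}{(1-x_0)^{3/2}}\cdot e^{(N+\frac12)\zeta(p)}\cdot\frac{\pi}{(N+\tfrac12)\,\pi\sqrt{-1}\sqrt{H(p,x_0,y_0)}}=\frac{-2\pi\,\omega(p)}{N+\tfrac12}\,e^{(N+\frac12)\zeta(p)},
\end{align*}
after recognizing $\omega(p)=\sin(2\pi s_0)\,x_0/\bigl((1-x_0)^{3/2}\cdot 2\pi\sqrt{-1}\sqrt{H(p,x_0,y_0)}\bigr)$. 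Multiplying by the prefactor $(-1)^p e^{\pi\sqrt{-1}/4}(N+\tfrac12)^{3/2}/\sin\tfrac{\pi}{2N+1}$ delivers the announced leading term; the higher-order coefficients $\kappa_i(p)$ come from expanding $e^{\Psi_1/(N+1/2)}$ and the remainder $w_N/(2N+1)^2$ via the formula in Remark~\ref{remark-saddle}. The main obstacle is the rigorous construction of the homotopy and the verification that $\mathrm{Re}\,V$ stays strictly below $\zeta_{\mathbb{R}}(p)-\epsilon$ along the entire deformed boundary, since the bump function $\psi$ and the corner geometry of $D''_0$ make controlling the descending flow near the edges of $D'_0$ delicate.
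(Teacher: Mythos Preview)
Your proposal is correct and follows essentially the same route as the paper: truncate from $D'_0$ to $D''_0$ via the one-dimensional slice estimate (Proposition~\ref{prop-saddleonedim}), use Lemma~\ref{lemma-Vr} to put $V_N$ in the form required by Remark~\ref{remark-saddle}, deform $D''_0$ along the fiberwise gradient flow in the imaginary directions to a disk through $(t_0,s_0)$, and then read off $\omega(p)$ from the critical-point equation~(\ref{equation-critical1}) and $\det\mathrm{Hess}(V)=(2\pi\sqrt{-1})^2H(p,x_0,y_0)$. The paper isolates the homotopy construction you flag as ``the main obstacle'' into a separate statement (Proposition~\ref{propostion-checksaddle}), where it distinguishes the two behaviours of the flow---convergence to a fiberwise minimum for $(t,s)\in U_0$ versus escape to infinity for $(t,s)\in D''_0\setminus U_0$ (Lemma~\ref{lemma-UnnotUn})---and then checks that the induced function $h(t,s)=\mathrm{Re}\,V$ on the deformed disk has its unique maximum at $(t_{0R},s_{0R})$; your sketch implicitly relies on exactly these ingredients.
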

\begin{proof}
Since $c_{upper}(p)\leq c_{upper}(6)=0.5877<c_0$, by Proposition \ref{prop-saddleonedim}, we have
\begin{align}
    \hat{h}_N(0,0)&=\frac{(-1)^pe^{\frac{1}{4}\pi\sqrt{-1}}(N+\frac{1}{2})^{\frac{3}{2}}}{\sin \frac{\pi}{2N+1}}\int_{D_0}\psi(t,s)\sin(2\pi s)e^{(N+\frac{1}{2})V_N(p,t,s)}dtds\\\nonumber
    &=\frac{(-1)^pe^{\frac{1}{4}\pi\sqrt{-1}}(N+\frac{1}{2})^{\frac{3}{2}}}{\sin \frac{\pi}{2N+1}}\int_{\{(t,s)\in D_0| 1-c_0\leq s\leq c_0 \}}\psi(t,s)\sin(2\pi s)e^{(N+\frac{1}{2})V_N(p,t,s)}dtds \\\nonumber
    &+O\left(e^{(N+\frac{1}{2})\left(\zeta_{\mathbb{R}}(p)-\epsilon\right)}\right),
\end{align}
Furthermore, by Lemma \ref{lemma-triangleintegral}, we have
\begin{align}\label{formula-integralD''}
    \hat{h}_N(0,0)
    &=\frac{(-1)^pe^{\frac{1}{4}\pi\sqrt{-1}}(N+\frac{1}{2})^{\frac{3}{2}}}{\sin \frac{\pi}{2N+1}}\int_{D''_0}\sin(2\pi s)e^{(N+\frac{1}{2})V_N(p,t,s)}dtds \\\nonumber
    &+O\left(e^{(N+\frac{1}{2})\left(\zeta_{\mathbb{R}}(p)-\epsilon\right)}\right),
\end{align}

We will verify the conditions of Proposition \ref{proposition-saddlemethod} for saddle point method in Proposition \ref{propostion-checksaddle}. By Lemma \ref{lemma-Vr} and  Remark \ref{remark-saddle},   
we can apply the Proposition \ref{proposition-saddlemethod} to the above integral (\ref{formula-integralD''}). Let $(t_0,s_0)$ be the critical point of $V(p,t,s)$, we obtain that

\begin{align}
  &\int_{D''_0}\sin(2\pi s)\exp\left((N+\frac{1}{2})V_N(p,t,s)\right)dtds\\\nonumber
        &=\frac{2\pi}{2N+1}\frac{2\alpha(t_0,s_0)}{\sqrt{\det Hess(V)(t_0,s_0)}}e^{(N+\frac{1}{2})\zeta(p)}\\\nonumber
        &\left(1+\sum_{i=1}^d\kappa_i(p)\left(\frac{2\pi\sqrt{-1}}{N+\frac{1}{2}}\right)^i+O\left(\frac{1}{(N+\frac{1}{2})^{d+1}}\right)\right)   
\end{align}
 where the function  
\begin{align}
    \alpha(t,s)=&\psi(t,s)\sin (2\pi s)\\\nonumber
    &\cdot e^{-\frac{1}{2}\left(\log(1-e^{2\pi\sqrt{-1}(t+s)})+\log(1-e^{2\pi\sqrt{-1}(t-s)})-4\pi\sqrt{-1}t\right)}.
\end{align}
and the determinant of the Hessian matrix at $(t_0,s_0)$ is given by the formula (\ref{formula-hessV})
\begin{align}
    &\det Hess(V)(t_0,s_0)=(2\pi\sqrt{-1})^2H(p,x_0,y_0)
\end{align}
where 
\begin{align}
    H(p,x_0,y_0)&=\left(\frac{-3(2p+1)}{\frac{1}{x_0}-1}+\frac{2p+1}{\frac{1}{x_0y_0}-1}+\frac{2p+1}{\frac{1}{x_0/y_0}-1}-\frac{3}{(\frac{1}{x_0}-1)(\frac{1}{x_0y_0}-1)}\right.\\\nonumber
    &\left.-\frac{3}{(\frac{1}{x_0}-1)(\frac{1}{x_0/y_0}-1)}+\frac{4}{(\frac{1}{x_0y_0}-1)(\frac{1}{x_0/y_0}-1)}\right),
\end{align}
with $x_0=e^{2\pi\sqrt{-1}t_0}$ and $y_0=e^{2\pi\sqrt{-1}s_0}$.

Since $(t_0,s_0)$ is the critical point of $V(p,t,s)$, then it satisfies the identity
\begin{align}
    &-\frac{1}{2}\left(\log(1-e^{2\pi\sqrt{-1}(t_0+s_0)})+\log(1-e^{2\pi\sqrt{-1}(t_0-s_0)})-4\pi\sqrt{-1}t_0\right)\\\nonumber
    &=\pi\sqrt{-1}-\frac{3}{2}\log(1-e^{2\pi\sqrt{-1}t_0})+2\pi\sqrt{-1}t_0,
\end{align}
we obtain
\begin{align}
    \alpha(t_0,s_0)=\frac{\sin (2\pi s_0)e^{2\pi\sqrt{-1}t_0}}{(1-e^{2\pi\sqrt{-1}t_0})^{\frac{3}{2}}}.
\end{align}
Therefore, we have
\begin{align}
        \hat{h}_{N}(0,0)&=\frac{(-1)^{p+1}2\pi e^{\frac{1}{4}\pi\sqrt{-1}} (N+\frac{1}{2})^{\frac{1}{2}}}{\sin \frac{\pi}{2N+1}}\omega(p) e^{(N+\frac{1}{2})V(p,t_0,s_0)}\\\nonumber
        &\left(1+\sum_{i=1}^d\kappa_i(p)\left(\frac{2\pi\sqrt{-1}}{N+\frac{1}{2}}\right)^i+O\left(\frac{1}{(N+\frac{1}{2})^{d+1}}\right)\right),
    \end{align}
    where  
\begin{align}
   \omega(p)&=\frac{\sin (2\pi s_0)e^{2\pi\sqrt{-1}t_0}}{(1-e^{2\pi\sqrt{-1}t_0})^{\frac{3}{2}}\sqrt{\det Hess(V)(t_0,s_0)}}\\\nonumber
   &=\frac{(y_0-y_0^{-1})x_0}{-4\pi (1-x_0)^\frac{3}{2}\sqrt{H(p,x_0,y_0)}}.
\end{align}
\end{proof}
\begin{proposition} \label{propostion-checksaddle}
    When we apply Proposition \ref{proposition-saddlemethod} (saddle point method)  to the integral (\ref{formula-integralD''}), the assumptions of Proposition \ref{proposition-saddlemethod} holds. 
\end{proposition}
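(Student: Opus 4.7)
The plan is to verify the three hypotheses of Proposition~\ref{proposition-saddlemethod}: non-degeneracy of the quadratic form at the critical point, the local homotopy-type $\{\mathrm{Re}\,\Psi<0\}\simeq S^1$, and the existence of an oriented disk whose boundary sits in $\{\mathrm{Re}\,\Psi<0\}$ with the correct homotopy class. I would recenter coordinates at the unique critical point $(t_0,s_0)$ supplied by Proposition~\ref{prop-critical}, setting $\Psi(z_1,z_2)=V(p,z_1+t_0,z_2+s_0)-\zeta(p)$ and $A=\tfrac12\mathrm{Hess}\,V(t_0,s_0)$. By Lemma~\ref{lemma-Vr}, $V_N$ differs from $V$ by a $1/(2N+1)$-correction of precisely the form allowed by Remark~\ref{remark-saddle}, so it suffices to verify the hypotheses for $V$.

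Non-degeneracy of $A$ follows from formula~(\ref{formula-hessV}), which gives $\det\mathrm{Hess}\,V(t_0,s_0)=(2\pi\sqrt{-1})^2 H(p,x_0,y_0)$; the numerical value of $(t_0,s_0)$ at each $p\geq 6$ yields $H(p,x_0,y_0)\neq 0$ (which is already implicit in the definition of $\omega(p)$, where $\sqrt{H(p,x_0,y_0)}$ appears in the denominator). Once $\det A\neq 0$, the standard fact that a non-degenerate complex quadratic form in two variables has its negative-real-part locus homotopy equivalent to $S^1$ in a punctured neighborhood of the origin gives the second hypothesis for free. The substance of the verification is constructing the disk $D$ as the image of $D''_0$ under the time-$\delta_0$ map of the fiberwise downward gradient flow of $f(t,s,X,Y;0,0)=\mathrm{Re}\,V(p,t+X\sqrt{-1},s+Y\sqrt{-1})$ in the variables $(X,Y)$, starting from $(X,Y)=(0,0)$. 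By Lemma~\ref{lemma-U0D''0} we have $D''_0\subset D_H$, and by Lemma~\ref{lemma-HessXY} the fiberwise function $f$ is strictly convex in $(X,Y)$ throughout $D_H$. Hence for $(t,s)\in U_0$ the flow converges to the unique fiber-minimum, which at $(t_{0R},s_{0R})$ is precisely the imaginary part $(X_0,Y_0)$ of $(t_0,s_0)$, so the deformed disk $D$ passes through the critical point; for $(t,s)\in D''_0\setminus U_0$, Lemma~\ref{lemma-UnnotUn} with $n=0$ guarantees that the flow escapes to infinity and $\mathrm{Re}\,V\to-\infty$.

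The main obstacle is the boundary condition $\partial D\subset\{\mathrm{Re}\,V<\zeta_{\mathbb{R}}(p)-\epsilon\}$. The boundary $\partial D''_0$ decomposes as the two horizontal slices $D'_0(c_0(p))$, $D'_0(1-c_0(p))$ together with the outer piece $D'_{0b}\subset\partial D'_0$. On $D'_{0b}$, Lemma~\ref{lemma-regionD'0} already yields $\mathrm{Re}\,V<\zeta_{\mathbb{R}}(p)-\epsilon$ at the initial time $\delta=0$, and since $\mathrm{Re}\,V$ is nonincreasing along the descent flow, this estimate persists for every $\delta\in[0,\delta_0]$. On the two constant-$s$ slices, Proposition~\ref{prop-saddleonedim} delivers the fiberwise sublevel estimate on the deformed slice for every sufficiently large $\delta_0$, so after running the flow long enough the entire $\partial D$ lies in $\{\mathrm{Re}\,V<\zeta_{\mathbb{R}}(p)-\epsilon\}$. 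Finally, because $D$ is obtained by continuous deformation from the topological disk $D''_0$ and passes transversely through the unique saddle $(t_0,s_0)$ of $\Psi$ in the deformation region, the inclusion $\partial D\hookrightarrow\{\mathrm{Re}\,\Psi<0\}$ is homotopic, within this sublevel set, to the generating circle of the local $S^1$-model at the critical point. This completes the verification of all hypotheses of Proposition~\ref{proposition-saddlemethod}.
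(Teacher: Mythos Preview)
Your overall architecture matches the paper's: recentre at the critical point, pass from $V_N$ to $V$ via Lemma~\ref{lemma-Vr} and Remark~\ref{remark-saddle}, deform $D''_0$ by the fiberwise descent flow in $(X,Y)$, and invoke Lemmas~\ref{lemma-HessXY}, \ref{lemma-UnnotUn}, \ref{lemma-U0D''0} to separate the $U_0$-part (flow converges to a unique minimum) from the complement (flow escapes). That part is fine.

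The gap is in the boundary step. You verify only that the \emph{final} boundary $\partial D$ sits in $\{\mathrm{Re}\,V<\zeta_{\mathbb{R}}(p)-\epsilon\}$, which indeed makes Proposition~\ref{proposition-saddlemethod} applicable to $\int_D$. But the object of interest is $\int_{D''_0}$, and Stokes' theorem gives
\[
\int_{D''_0}\omega=\int_{D}\omega+\int_{S}\omega,
\]
where $S$ is the side surface swept out by $\partial D''_\delta$, $0\le\delta\le\delta_0$. On the portion of $S$ over the outer boundary $D'_{0b}$ your monotonicity argument handles this. On the portions $A_1,A_2$ over the slices $D'_0(c_0(p))$, $D'_0(1-c_0(p))$, it does not: at $\delta=0$ one has $\mathrm{Re}\,V(p,t,c_0(p))=v(t,c_0(p))$, which can exceed $\zeta_{\mathbb{R}}(p)$, so no pointwise sublevel bound is available for small $\delta$. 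This is why the paper states condition~(\ref{saddle-3}) as an \emph{integral} estimate on $\partial D''_\delta$ for all $\delta$, and handles $A_1\cup A_2$ by applying the one-dimensional saddle method of Appendix~\ref{appendix-onesaddle} slice-by-slice in $\delta$. Your citation of Proposition~\ref{prop-saddleonedim} is the right ingredient, but it supplies an integral bound, not the ``fiberwise sublevel estimate'' you describe; and it must be applied along each intermediate $\delta$, not only at the endpoint, in order to control $\int_S\omega$.
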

\begin{proof}
We note that, by Lemma \ref{lemma-varphixi3}, $V_N(p,t,s)$ uniformly converges to the $V(p,t,s)$ on $D''_0$ as $N\rightarrow \infty$. Hence,  we only need to verify the assumptions of the saddle point method for $V(p,t,s)$.  
 We show that there exists a homotopy $S''_h$ ($0\leq h\leq h_0 $) with $S''_0=D''_0$  such that 
\begin{align}
 &(t_0,s_0)\in S''_{h_0},  \label{saddle-1} \\ 
    &S''_{h_0}-\{(t_0,s_0)\}\subset \{(t,s)\in \mathbb{C}^2|\text{Re} V(p,t,s)<\zeta_\mathbb{R}(p)\}, \label{saddle-2} \\
    & \int_{\partial S''_{h}} \sin(2\pi s)e^{(N+\frac{1}{2})V(p,t,s)}dtds=O\left(e^{(N+\frac{1}{2})(\zeta_{\mathbb{R}}(p)-\epsilon)}\right). \label{saddle-3}
\end{align}

   In the fiber of the projection $\mathbb{C}^2\rightarrow \mathbb{R}^2$ at $(t,s)\in D_0''$, we consider the flow from $(X,Y)=(0,0)$ determined by the vector field $(-\frac{\partial f}{\partial X},-\frac{\partial f}{\partial Y})$. By the construction of the region $D''_0$ in (\ref{formula-D''0}), together with Lemma \ref{lemma-UnnotUn} and Lemma \ref{lemma-HessXY}, the region $U_0$ of $(t_0,s_0)$ satisfies the following holds. 
 \begin{itemize}
        \item[(1)] If $(t,s)\in U_0$, then $f$ has a unique minimal point, and the flow goes there.
        \item[(2)] If $(t,s)\in D''_0\setminus U_0$, then the flow goes to infinity. 
    \end{itemize}
We put $\mathbf{g}(t,s)=(g_1(t,s),g_2(t,s))$ to be the minimal point of $(1)$. In particular, $|\mathbf{g}(t,s)|\rightarrow \infty$ as $(t,s)$ goes to $\partial U_0$. Further, for a sufficiently large $R>0$, we stop the flow when $|\mathbf{g}(t,s)|=R$. We construct the revised flow $\hat{\mathbf{g}}(t,s)$, by putting $\hat{\mathbf{g}}(t,s)=\mathbf{g}(t,s)$ for $(t,s)\in U_0$ with $|\mathbf{g}(t,s)|<R$, otherwise, by putting $|\hat{\mathbf{g}}(t,s)|=(R,R)$. 

 We define the ending of the homotopy by
\begin{align}
    S''_{h_0}=\{(t,s)+\hat{\mathbf{g}}(t,s)\sqrt{-1}|(t,s)\in D''_0\}. 
\end{align}
Further, we define the internal part of the homotopy by setting it along the flow from $(t,s)$ determined by the vector field $\left(-\frac{\partial f}{\partial X},-\frac{\partial f}{\partial Y}\right)$. 

We show (\ref{saddle-1}) and (\ref{saddle-2}) as follows. We consider the function
\begin{align}
    h(t,s)=\text{Re} V(t,s,\hat{\mathbf{g}}(t,s)). 
\end{align}
If $(t,s)\notin U_0$, by (2), $-h(t,s)$ is sufficiently large (because we let $R$ be sufficiently large), hence (\ref{saddle-2}) holds in this case. Otherwise, $(t,s)\in U_0$, in this case, $\hat{\mathbf{g}}(t,s)=\mathbf{g}(t,s)$. It is shown from the definition of $\hat{\mathbf{g}}(t,s)$ that 
\begin{align}
    \frac{\partial \text{Re} V}{\partial X}=\frac{\partial \text{Re} V}{\partial Y}=0 \ \text{at} \ (X,Y)=\mathbf{g}(t,s),
\end{align}
which implies 
\begin{align}
    \text{Im}\frac{\partial V}{\partial t}=\text{Im} \frac{\partial V}{\partial s}=0 \ \text{at} \ (t,s)+\mathbf{g}(t,s)\sqrt{-1}.
\end{align}
On the other hand, 
    \begin{align}
        \frac{\partial h}{\partial t}=\text{Re}\frac{\partial V}{\partial t}, \ \frac{\partial h}{\partial s}=\text{Re} \frac{\partial V}{\partial s} \ \text{at} \ (t,s)+\mathbf{g}(t,s)\sqrt{-1}.
    \end{align}
Therefore, when  $(t,s)+\mathbf{g}(t,s)\sqrt{-1}$ is a critical point of $V$, $(t,s)$ is a critical point of $h(t,s)$. Hence by Proposition \ref{prop-critical}, $h(t,s)$ has a unique maximal point at $(t_{0R},s_{0R})$ which is equal to $\zeta_{\mathbb{R}}(p)$. Therefore, (\ref{saddle-1}) and (\ref{saddle-2}) holds.

We show (\ref{saddle-3}) as follows. Note that the boundary of $\partial D''_0$ consists of $D'_0(c_{0})$, $D'_0(1-c_{0})$ and the partial boundaries of $D'_0$, $\Delta_1$ and $\Delta_2$, denoted by $D'_{0b}$.  Hence $\partial S''_{h}$ consists of three parts denoted by 
\begin{align}
    \partial S''_{h}=A_1\cup A_2\cup B,
\end{align}
where $A_1$ and $A_2$ comes from the flows start at $(t,s)\in D'_0(c_{0})$ and  $(t,s)\in D'_0(1-c_{0})$ respectively, while $B$ comes from the flows start at $(t,s)\in D'_{0b}$.

By its definition, $D'_{0b}\subset \{(t,s)\in \mathbb{C}^2|\text{Re} V(p,t,s;0,n)<\zeta_\mathbb{R}(p)-\epsilon\}$, and the function $\text{Re} V(p,t,s)$ decreases under the flow, so we have 
\begin{align} \label{formula-integralB2}
    \int_{B}\sin(2\pi s)e^{(N+\frac{1}{2})V(p,t,s)}dtds=O(e^{(N+\frac{1}{2})(\zeta_\mathbb{R}(p)-\epsilon)}).
\end{align}

By Proposition \ref{prop-saddleonedim}, the integral on $D'_0(c_{0}(p))$ and $D'_0(1-c_{0}(p))$ 
is also of order $O(e^{(N+\frac{1}{2})(\zeta_\mathbb{R}(p)-\epsilon}))$. By applying one-dimensional saddle point method to the slices of the region $A_1\cup A_2$ as shown in Appendix \ref{appendix-onesaddle}, we can show that
\begin{align} \label{formula-integralA1A22}
    \int_{A_1\cup A_2}\sin(2\pi s)e^{(N+\frac{1}{2})V(p,t,s)}dtds=O(e^{(N+\frac{1}{2})(\zeta_\mathbb{R}(p)-\epsilon)}).
\end{align}
Combining formulas (\ref{formula-integralB2}) and (\ref{formula-integralA1A22}) together, we prove (\ref{formula-pDelta'}).

By (\ref{saddle-1}) (\ref{saddle-2}) and (\ref{saddle-3}), the required homotopy exists. Hence the assumptions of Proposition \ref{proposition-saddlemethod} holds when we apply the saddle point method to the integral (\ref{formula-integralD''}).  
\end{proof}

\subsection{Final proof} \label{subsection-final}

Now we can finish the proof of Theorem \ref{theorem-main} as follows.
\begin{proof}
Using formula (\ref{formula-Poission-after}),  Proposition \ref{prop-m0np} and Proposition \ref{prop-m0n0} together, we obtain
\begin{align}
        J_{N}(\mathcal{K}_p;\xi_N)&=2\hat{h}_{N}(0,0)+O(e^{(N+\frac{1}{2})(\zeta_{\mathbb{R}}(p)-\epsilon)})
        \\\nonumber
        &=(-1)^{p+1}\frac{4\pi e^{\frac{1}{4}\pi\sqrt{-1}}(N+\frac{1}{2})^{\frac{1}{2}}}{\sin\frac{\frac{\pi}{2}}{N+\frac{1}{2}}}\omega(p)e^{(N+\frac{1}{2})\zeta(p)}\\\nonumber
&\cdot\left(1+\sum_{i=1}^d\kappa_i(p)\left(\frac{2\pi\sqrt{-1}}{N+\frac{1}{2}}\right)^i+O\left(\frac{1}{(N+\frac{1}{2})^{d+1}}\right)\right),
    \end{align}
    for $d\geq 1$, where $\omega(p)$ and $\kappa_i(p)$ are constants determined by $\mathcal{K}_p$.
\end{proof}

\section{Geometry of the critical point} \label{Section-geometry}

\subsection{Critical point equations vs geometric equation}
\subsubsection{Critical point equations}
Recall that the critical point equations of the potential function 
\begin{align}      
&V(p,t,s)=\pi \sqrt{-1}\left((2 p+1)s^2-(2 p+3)s-2 t\right)\\\nonumber      
&+\frac{1}{2\pi\sqrt{-1}}\left(\text{Li}_2(e^{2\pi\sqrt{-1}(t+s)})+\text{Li}_2(e^{2\pi\sqrt{-1}(t-s)})-3\text{Li}_2(e^{2\pi\sqrt{-1}t})+\frac{\pi^2}{6}\right)
\end{align}
are given by 
the equations (\ref{equation-critical1}) and (\ref{equation-critical2}).

Set $x=e^{2\pi\sqrt{-1}t},\ y=e^{2\pi\sqrt{-1}s}$, then the equations (\ref{equation-critical1}) and (\ref{equation-critical2}) are equivalent to the equations
\begin{align}
\left\{\begin{aligned} \label{equation-critical}   
   & p(4\pi \sqrt{-1}s-2\pi\sqrt{-1})=\log\left(1-xy\right)-\log\left({1-xy^{-1}}\right)-2\pi\sqrt{-1}s+3\pi\sqrt{-1} , \\
   &3\log(1-x)-\log(1-xy)-\log(1-xy^{-1})=2\pi\sqrt{-1}.
    \end{aligned}\right.
\end{align}
We exponentiate the equations (\ref{equation-critical}) and obtain
\begin{align} \label{equation-criticalexp}
 \left\{ \begin{aligned}
     &x=\frac{y^{2p+1}+1}{y^{2p}+y}, \\
      &(x-1)^2=y+y^{-1}-2.
                          \end{aligned} \right.   
\end{align}
 The  equations (\ref{equation-criticalexp}) yields a single equation of $y$,  
\begin{align} \label{equation-singley}
 y^{4p}-y^{4p-1}-4y^{2p}-y+1=0
\end{align}
since $y\neq 1$. Let $y=\tilde{y}^{-2}$, then we obtain 
\begin{align}
\tilde{y}^{8p}-\tilde{y}^{8p-2}-4\tilde{y}^{4p}-\tilde{y}^{2}+1=0    
\end{align}
which can be factorized as 
\begin{align}
 (\tilde{y}^{4p}+\tilde{y}^{4p-1}+\tilde{y}-1)(\tilde{y}^{4p}-\tilde{y}^{4p-1}-\tilde{y}-1)=0.     
\end{align}

Note that 
\begin{align}
(-\tilde{y})^{4p}+(-\tilde{y})^{4p-1}+(-\tilde{y})-1=\tilde{y}^{4p}-\tilde{y}^{4p-1}-\tilde{y}-1,   
\end{align}
Hence, if we want to solve the equation (\ref{equation-singley}), we only need to consider the equation 
\begin{align} \label{equation-singley0}
\tilde{y}^{4p}-\tilde{y}^{4p-1}-\tilde{y}-1=0,
\end{align}
or
\begin{align} \label{equation-tildey}
   \tilde{y}^{4p}=\frac{\tilde{y}(\tilde{y}+1)}{(\tilde{y}-1)}. 
\end{align}

\begin{example} \label{example1}
When $p=3$, equation (\ref{equation-singley0}) has $12$ different solutions: two real solutions $r_0=1.229573607$ and $-r_0^{-1}$, two pure imaginary solutions $\pm \sqrt{-1}$ and $8$ complex solutions: $y_1=1.022851097+0.4998786069\sqrt{-1}$, $y_2=0.5656001655+0.8914919570\sqrt{-1}$, $-y_1^{-1}$,$-y_2^{-1}$, $\overline{y}_1,\overline{y}_2,-\overline{y}_1^{-1},-\overline{y}_2^{-1}$.     
See Figure \ref{figure-RootDistribution} for the distribution of these 12 solutions in the complex plane. Note that all the solutions lie inside the two bounded non-convex lunes determined by the circles are $|z|=1$ and $|z-1|=\sqrt{2}$.  
    \begin{figure}[!htb] 
\begin{align*} 
\raisebox{-15pt}{
\includegraphics[width=250 pt]{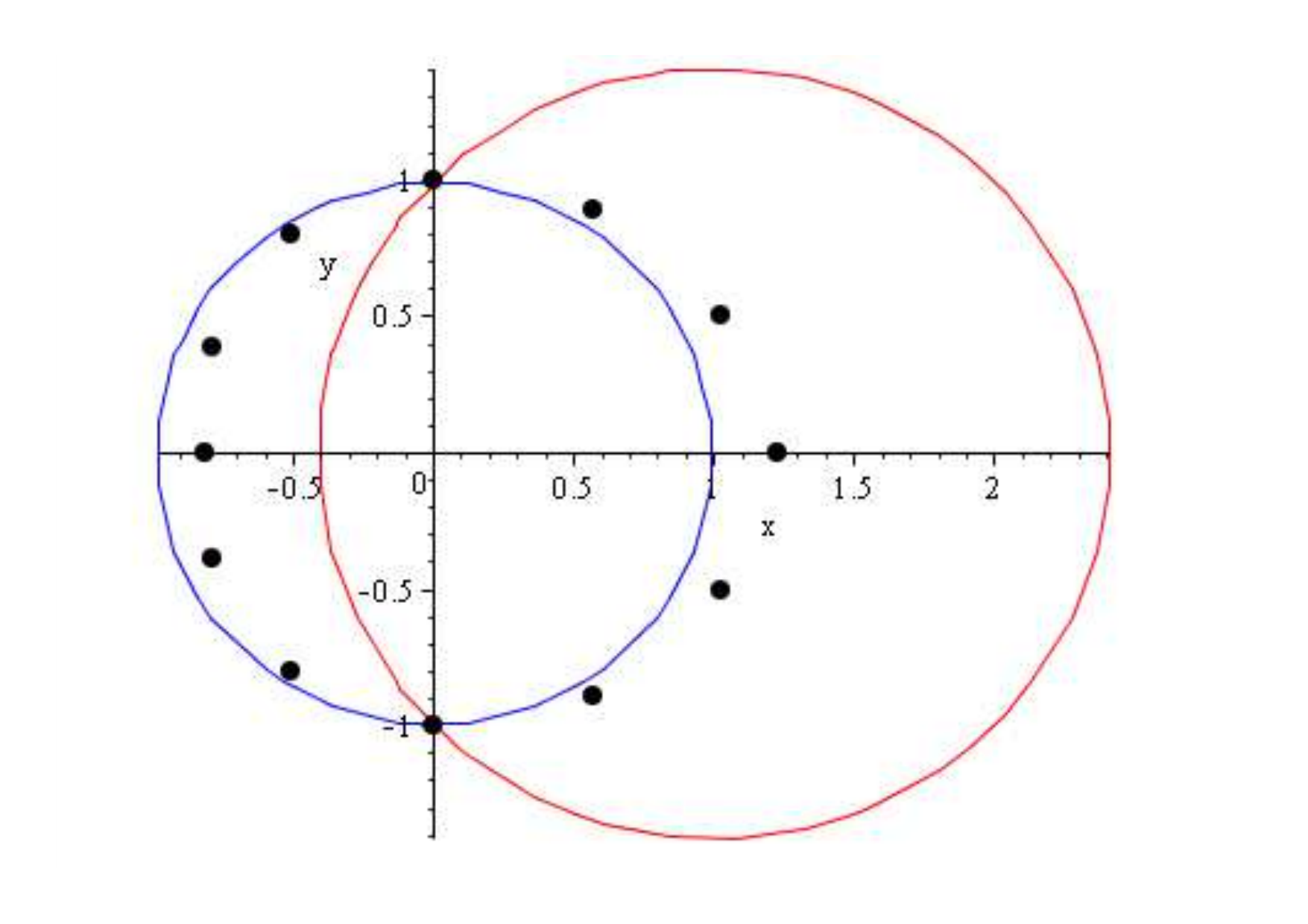}} 
\end{align*}
\caption{Distribution of the solutions of equation (\ref{equation-singley0}) for $p=3$ }\label{figure-RootDistribution} 
\end{figure}

It is easy to check numerically that 
$y_0=(-\bar{y}_2^{-1})^{-2}=-0.4748543622-1.008455997\sqrt{-1}$ and $x_0=1.058181376-1.691279149\sqrt{-1}$ with 

\begin{equation} \label{solution-(t,s)}
\left\{ \begin{aligned}
        t_0&=\frac{\log(x_0)}{2\pi\sqrt{-1}}+1=0.8389804151-0.1099223733\sqrt{-1},\\
    s_0&=\frac{\log(y_0)}{2\pi\sqrt{-1}}+1=0.6799598221-0.01727638566\sqrt{-1}.
                          \end{aligned} \right.
                          \end{equation}
is the unique solution of the critical point equations (\ref{equation-critical}) 
 in the region $D_{0\mathbb{C}}$.
\end{example}

Furthermore, note that
\begin{align}
   \tilde{y}^{4p}-\tilde{y}^{4p-1}-\tilde{y}-1=(\tilde{y}^2+1)w_p(\tilde{y})
\end{align}
where 
\begin{align} 
    w_p(\tilde{y})=\tilde{y}^{4p-2}-\sum_{j=0}^{2p-2}(-1)^j(\tilde{y}^{2j+1}+y^{2j}). 
\end{align}
 So we only need to consider the roots of the polynomial $w_p(\tilde{y})$.  Actually, Hoste and Shanahan \cite{HS01} obtained the following lemma by using the argument principle. 
\begin{lemma}[cf. Proposition 1 in \cite{HS01}] \label{lemma-HS}
For $p>0$, then 

(1) the roots of the polynomial $w_p(\tilde{y})$ lie inside the two bounded non-convex lunes determined by the circles $|x|=1$ and $|x-1|=\sqrt{2}$.  

(2) $w(\tilde{y})$ has $4p-2$ roots given by two real solutions $r_0,-r_0^{-1}$ and $p-1$ distinct solutions in each quadrant. Moreover, if $y_1,...,y_{p-1}$ are the roots in the first quadrant, then 
\begin{align}
    \frac{2k-1}{4p}\pi<\text{arg}(y_{k})<\frac{2k\pi}{4p}\pi. 
\end{align}
In particular, we have
\begin{align}
     -\frac{2p+3}{4p}\pi<\text{arg}(-\bar{y}_{p-1}^{-1})<-\frac{2p+2}{4p}\pi,
\end{align}
\begin{align} \label{formula-ybarp}
   -\frac{2p-2}{4p}\pi<\text{arg}(\bar{y}_{p-1})<-\frac{2p-3}{4p}\pi. 
\end{align}

(3) The roots of $w_p(\tilde{y})$ lie on the curve whose equation, in polar coordinates, is 
\begin{align}
\cos \theta=\left(\frac{r+r^{-1}}{2}\right)\left(\frac{r^{4p-1}-r^{-(4p-1)}}{r^{4p-1}+r^{-(4p-1)}}\right).    
\end{align}
$r$ is an increasing function of $\theta$ on $[-\pi,0]$. If $\tilde{y}_0$ is a root of $w_p(\tilde{y}_0)$, then $|\tilde{y}_0|$ determines $\text{arg}(\tilde{y}_0)$ and vice versa. Finally, as $p\rightarrow \infty$, the roots of $w_p(\tilde{y})$ converge uniformly to the unit circle. 
\end{lemma}

\begin{proposition} \label{prop-critcalequation}
     The critical point equations (\ref{equation-critical}) for twist knot $\mathcal{K}_p$ has a unique solution $(t_0,s_0)\in D_{0\mathbb{C}}$, which is given by
\begin{equation} \label{equation-solution}
\left\{ \begin{aligned}
        y_0&=\bar{y}_{p-1}^2,\\ s_0&=\frac{\log(y_0)}{2\pi\sqrt{-1}}+1,\\ x_0&=1+e^{-\pi\sqrt{-1}s_0}-e^{\pi\sqrt{-1}s_0}, \\ 
        t_0&=\frac{\log(x_0)}{2\pi\sqrt{-1}}+1. 
                          \end{aligned} \right.
                          \end{equation}
\end{proposition}
\begin{proof}
First, we prove that (\ref{equation-solution}) is a solution of  (\ref{equation-critical}). 
By lemma \ref{lemma-HS}, it is easy to see that $(x_0,y_0)$ given by (\ref{equation-solution}) satisfies (\ref{equation-criticalexp}). Taking the logarithm, we obtain  
\begin{align}
\left\{\begin{aligned} \label{equation-criticalargument}   
   & 2p(2\text{Re}(s_0)-1)+2\text{Re}(s_0)-3-\left(\text{arg}(1-x_0y_0)-\text{arg}(1-x_0y_0^{-1})\right)\in 2k_1\pi, \\
   &3\text{arg}(1-x_0)-\text{arg}(1-x_0y_0)-\text{arg}(1-x_0y_0^{-1})-2\pi\in 2k_2\pi.
    \end{aligned}\right.
\end{align}
for some $k_1,k_2\in \mathbb{Z}$. Hence, to prove that (\ref{equation-solution}) is the solution of (\ref{equation-critical}), we only need to show that $k_1=k_2=0$.

By (\ref{formula-ybarp}), we have
\begin{align}
\text{Re}(s_0)\in \left(\frac{1}{2}+\frac{2}{4p},\frac{1}{2}+\frac{3}{4p}\right)    
\end{align}
and by statement (1) in Lemma \ref{lemma-HS}, we have $|\bar{y}_{p-1}|=|e^{\pi\sqrt{-1}s_0}|>1$. 
Through a straightforward computation, we have the following estimates for arguments:
\begin{align}
\text{arg}(1-e^{\pi\sqrt{-1}s_0})\in \left(\left(\frac{1}{2p}-\frac{1}{2}\right)\pi,\left(\frac{3}{8p}-\frac{1}{4}\right)\pi\right),    
\end{align}
\begin{align}
 \text{arg}(1-e^{2\pi\sqrt{-1}s_0})\in \left(\frac{1}{2p}\pi,\frac{3}{2p}\pi\right),    
\end{align}
\begin{align}
\text{arg}(1+e^{-\pi\sqrt{-1}s_0})\in \left(\left(-\frac{1}{4}-\frac{3}{8p}\right)\pi,0\right),    
\end{align}
\begin{align}
    \text{arg}(1-e^{-2\pi\sqrt{-1}s_0})\in \left(-\frac{3}{4p}\pi,0\right).
\end{align}
Since
\begin{align}
1-x_0y_0&=(1-e^{\pi\sqrt{-1}s_0})(1-e^{2\pi\sqrt{-1}s_0}) \\\nonumber
1-x_0y_0^{-1}&=(1+e^{-\pi\sqrt{-1}s_0})(1-e^{-2\pi\sqrt{-1}s_0}),
\end{align}
we have 
\begin{align} \label{formula-x0y0}
\text{arg}(1-x_0y_0)&=\text{arg}(1-e^{\pi\sqrt{-1}s_0})+\text{arg}(1-e^{2\pi\sqrt{-1}s_0})\\\nonumber
&\in \left(\left(\frac{1}{p}-\frac{1}{2}\right)\pi,\left(\frac{15}{8p}-\frac{1}{4}\right)\pi\right),     
\end{align}
and 
\begin{align} \label{formula-x0y0-1}
\text{arg}(1-x_0y_0^{-1})&=\text{arg}(1+e^{-\pi\sqrt{-1}s_0})+\text{arg}(1-e^{-2\pi\sqrt{-1}s_0})\\\nonumber
&\in \left(\left(-\frac{9}{8p}-\frac{1}{4}\right)\pi,0\right).     
\end{align}
Moreover, 
\begin{align}
 \text{arg}(1-x_0)\in \left(\left(\frac{1}{4}+\frac{3}{8p}\right)\pi,\left(\frac{1}{2}+\frac{3}{4p}\right)\pi\right).   
\end{align}

Therefore, 
\begin{align}
 &2p(2\text{Re}(s_0)-1)+2\text{Re}(s_0)-3-\left(\text{arg}(1-x_0y_0)-\text{arg}(1-x_0y_0^{-1})\right)\\\nonumber
 &\in \left(-\frac{3}{4p}\pi,\left(\frac{3}{2}+\frac{1}{2p}\right)\pi\right),   
\end{align}
and 
\begin{align}
    &3\text{arg}(1-x_0)-\text{arg}(1-x_0y_0)-\text{arg}(1-x_0y_0^{-1})-2\pi\\\nonumber
    &\in \left(\left(-1-\frac{3}{4p}\right)\pi,\left(\frac{1}{4}+\frac{19}{8p}\right)\pi\right).
\end{align}
Compared to (\ref{equation-criticalargument}), we obtain $k_1=k_2=0$. Hence we prove that 
(\ref{equation-solution}) gives a solution of (\ref{equation-critical}). 

Next, we should show that any other solutions of the equation (\ref{equation-criticalexp}) given by Lemma \ref{lemma-HS} can not be the solution of (\ref{equation-critical}) by using the analogue argument estimates. Without loss of generality, we consider the solution $y'_0=\bar{y}_{p-2}^{2}$ of (\ref{equation-criticalexp}). By Lemma \ref{lemma-HS}, we have 
\begin{align}
\text{arg}(y'_0)\in \left(-\frac{2p-4}{2p}\pi,-\frac{2p-5}{2p}\pi\right).   
\end{align}
If $y'_0=e^{2\pi\sqrt{-1}s'_0}$ can be a solution of (\ref{equation-critical}) with $s'_0$ lying in $D_{0\mathbb{C}}$, then
\begin{align}
s'_0=\frac{\log(y'_0)}{2\pi\sqrt{-1}}+1   
\end{align}
since $\text{Re}(s'_0)\in \left(\left(\frac{1}{2}+\frac{4}{4p}\right)\pi,\left(\frac{1}{2}+\frac{5}{4p}\right)\pi\right)$. But in this case, we have
\begin{align}
   2p(2\text{Re}(s'_0)-1)+2\text{Re}(s'_0)-3\in \left(\left(2+\frac{4}{2p}\right)\pi,\left(3+\frac{5}{2p}\right)\pi\right), 
\end{align}
and for any $x'_0$
\begin{align}
 -\left(\text{arg}(1-x'_0y'_0)-\text{arg}(1-x'_0(y'_0)^{-1})\right)\in (-2\pi,2\pi).   
\end{align}
Hence
    \begin{align}
 2p(2\text{Re}(s'_0)-1)+2\text{Re}(s'_0)-3-\left(\text{arg}(1-x'_0y'_0)-\text{arg}(1-x'_0(y'_0)^{-1})\right)>\frac{4}{2p}.   
\end{align}
It implies that $y'_0, s'_0$ with any $x'_0$ can not be a solution of (\ref{equation-critical}). 

\end{proof}

\subsubsection{Geometric equation}
Let $W$ be the complement of Whitehead link shown in Figure \ref{figure-WL}.

\begin{figure}[!htb] 
\begin{align*} 
\raisebox{-15pt}{
\includegraphics[width=140 pt]{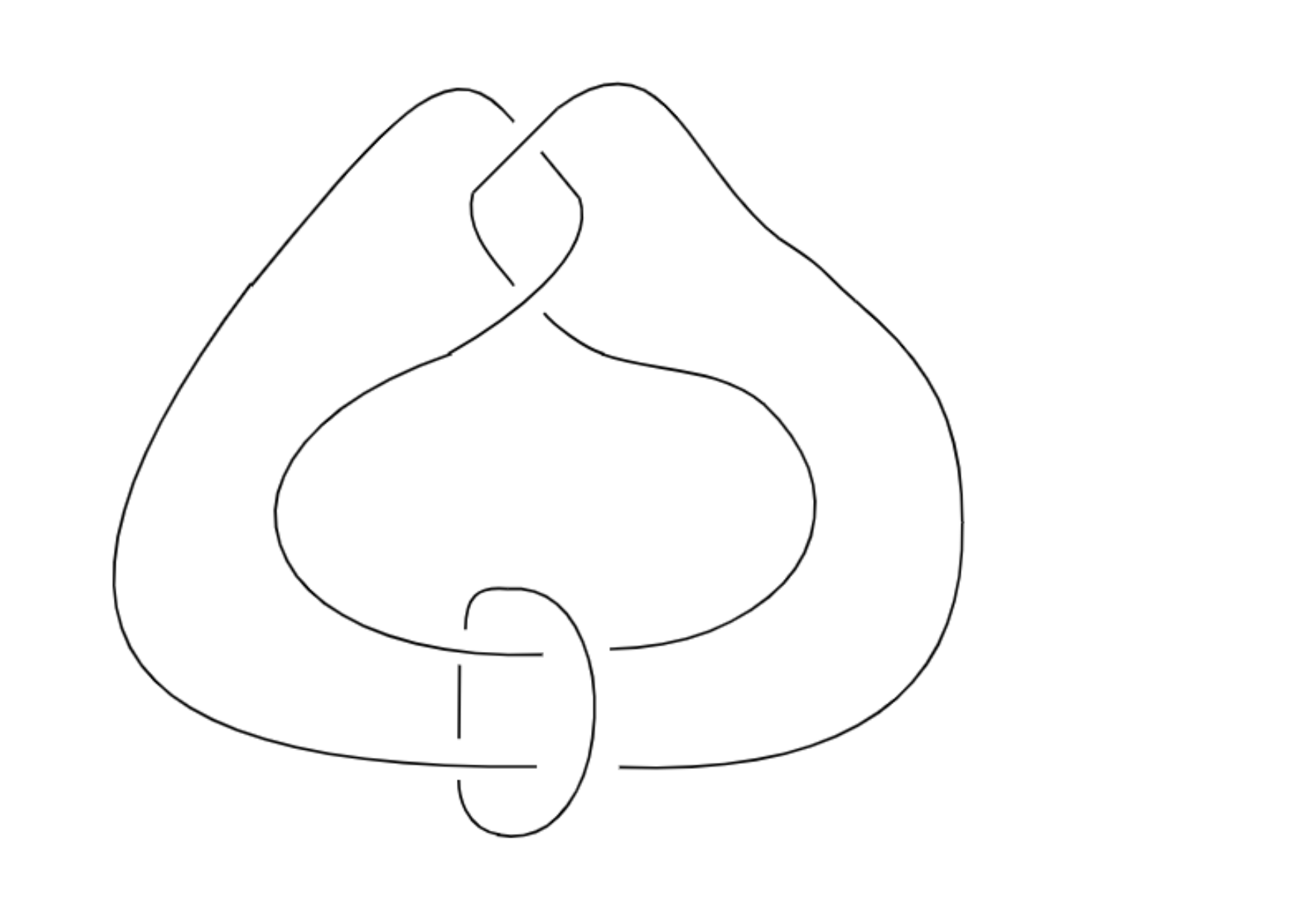}} 
\end{align*}
\caption{Whitehead link}\label{figure-WL} 
\end{figure}

In \cite{Th77}, Thurston shows that the Whitehead link complement $W$ can be obtained by identifying pairs of faces of an octahedron with its vertices deleted. The identification matches face $A$ with $A'$, $B$ with $B'$, etc., in Figure \ref{figure-idealOct}, so as to respect the labeling of the edges.   

\begin{figure}[!htb] 
\includegraphics[width=200 pt]{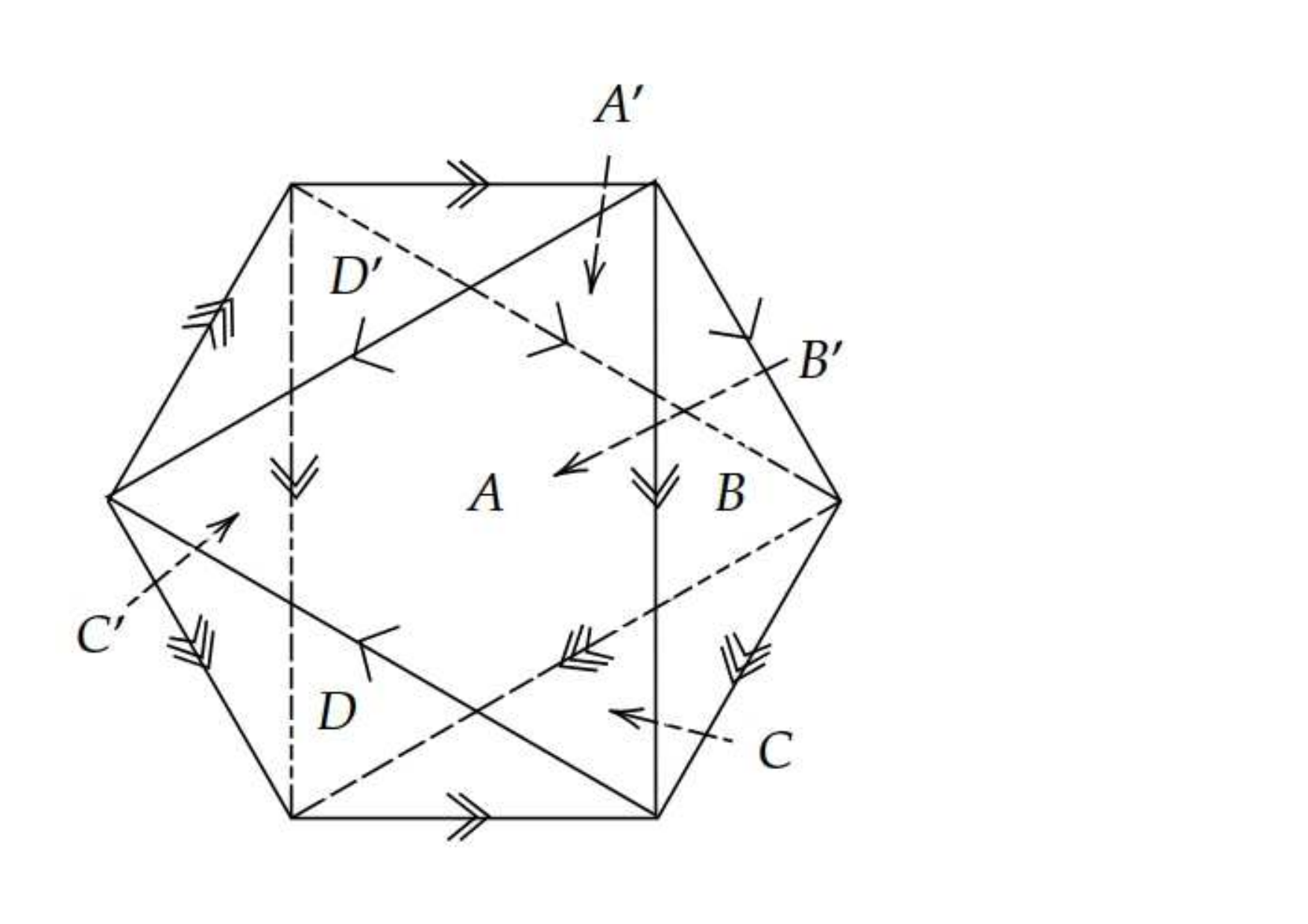}
\caption{Octahedron} \label{figure-idealOct}
\end{figure}

Placing the octahedron in the hyperbolic 3-space $\mathbb{H}^3$ as an ideal octahedron induces a hyperbolic structure on the octahedron. If this is done correctly, then the complement of $W$ will inherit this hyperbolic structure. If the ideal octahedron is regular, then we obtain the complete finite volume hyperbolic structure on $W$, and by deforming the regular octahedron, we obtain incomplete hyperbolic structures on $W$, whose metric completions are hyperbolic Dehn fillings and generalized Dehn fillings on $W$. 

As described in \cite{Th77} and \cite{NZ85}, see also \cite{Mar16} and \cite{Pur20}, a generalized Dehn filling on the Whitehead link is parameterized by a ``generalized Dehn filling invariant" $(p_i,q_i)\in \mathbb{R}^2\cup \{\infty\}$ for each cusp of $W$, denoted by $W(p_1,q_1;p_2,q_2)$, which is defined if the $(p_i,q_i)$ are sufficiently close to $\infty$ by Thurston's hyperbolic Dehn filling theorem. In particular, if $(p_i,q_i)=\infty$ for $i=1, 2$ then the corresponding cusp of $W$ is still a cusp in $W(p_1,q_1;p_2,q_2)$ and if $(p_i,q_i)\in \mathbb{Z}^2$ then the cusp has been filled in by a geodesic along which $W(p_1,q_1;p_2,q_2)$ has the structure of a hyperbolic orbifold, or manifold if $p,q$ are relatively prime.    

It is convenient to use an ideal triangulation of $W$ to study its Dehn fillings, since the shape of an ideal hyperbolic tetrahedron is determined by a single complex parameter.  By subdividing the octahedron of Figure \ref{figure-idealOct} as in Figure \ref{figure-ideal4}, we obtain an ideal triangulation of $W$ with four ideal tetrahedra labeled with complex parameters. We may solve a nonlinear system of equations in these complex parameters to find a hyperbolic structure on a Dehn filling of $W$.   
\begin{figure}[!htb] 
\includegraphics[width=300 pt]{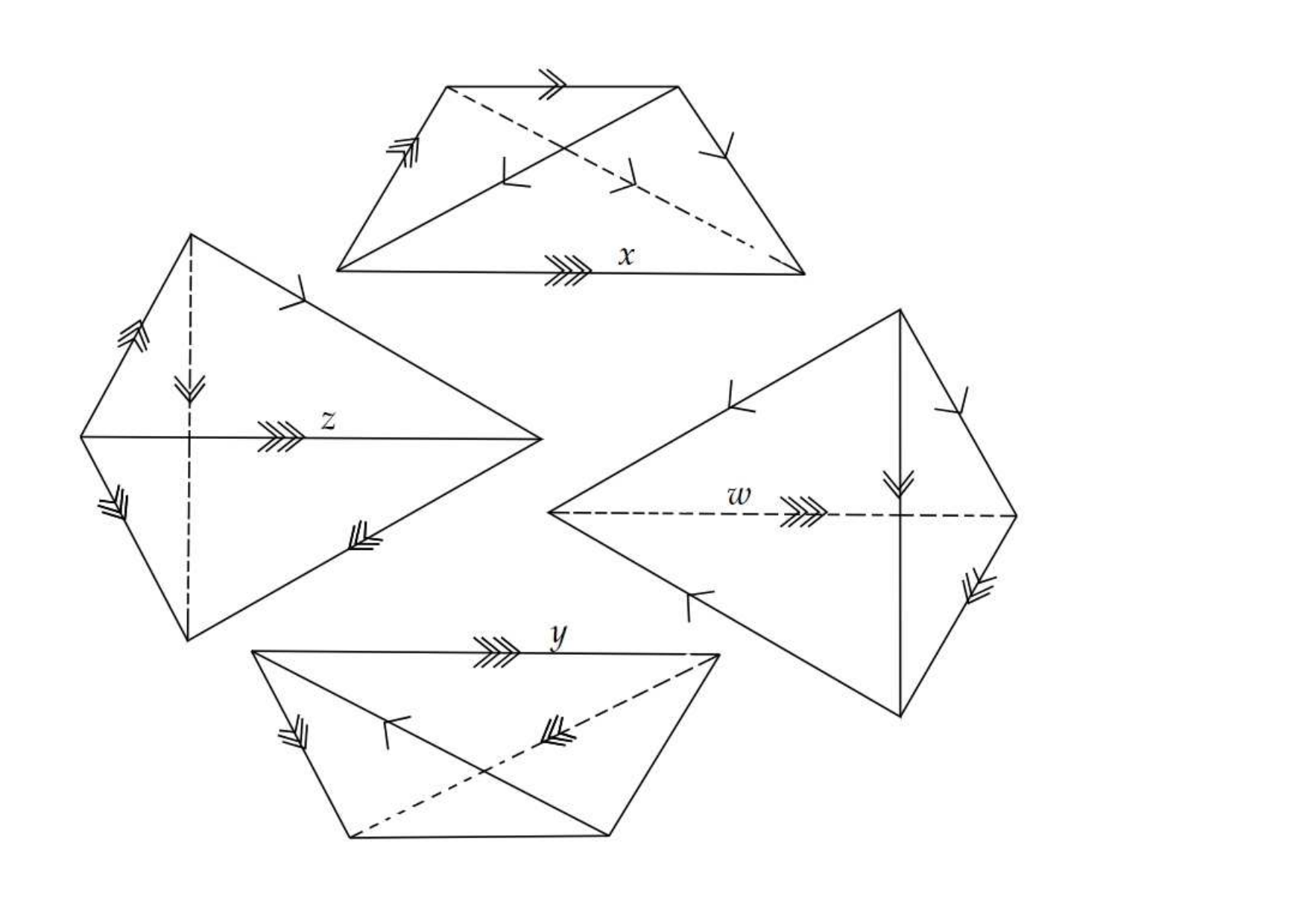}
\caption{An ideal triangulation of $W$} \label{figure-ideal4}
\end{figure}

Following the work \cite{NR90}, the edge gluing equations and Dehn filling equations for $W(p_1,q_1;p_2,q_2)$ (for our purpose, here we use the conjugated version, see Remark \ref{remark-conjugate}) are given by
\begin{equation} \label{equation-gluing} 
\left\{ \begin{aligned}
         &\log w+\log x+\log y+\log z = -2\pi\sqrt{-1} \\
          &        \log(1-w)+\log(1-x)-\log(1-y)-\log(1-z)=0
                          \end{aligned} \right.
                          \end{equation}
and 
\begin{equation}   \label{equation-dehnfilling}
\left\{ \begin{aligned}
         & p_1u_1+q_1v_1 = -2\pi\sqrt{-1} \\
          &  p_2u_2+q_2v_2 = -2\pi\sqrt{-1}      
                          \end{aligned} \right.
                          \end{equation}
respectively, where 
\begin{align}
    u_1&=\log (w-1)+\log x+\log y-\log(y-1)+\pi\sqrt{-1},\\\nonumber
    u_2&=\log (w-1)+\log x+\log z-\log(z-1)+\pi\sqrt{-1},\\\nonumber
    v_1&=2\log x +2\log y+2\pi\sqrt{-1},\\\nonumber
    v_2&=2\log x +2\log z+2\pi\sqrt{-1}.
\end{align}
\begin{remark} \label{remark-conjugate}
    Note that the above equations can be viewed as the conjugated version of the equations (6.1a), (6.1b), (6.3), (6.2a), (6.2b),(6.2c), (6.2d) given in \cite{NR90}. In other words, if $(w_0,x_0,y_0,z_0)$ is a solution to the above equations (\ref{equation-gluing}) and (\ref{equation-dehnfilling}), then $(\bar{w}_0,\bar{x}_0,\bar{y}_0,\bar{z}_0)$ is a solution of those corresponding equations in \cite{NR90}.   
\end{remark}

We are only interested in Dehn filling in one cusp of $W$, so we always have $(p_1,q_1)=\infty$. This requires that the first cusp is complete $u_1=v_1=0$. It is well-known that twist knot complement space $S^3\setminus \mathcal{K}_p$ is homeomorphic to the cusp manifold $W(\infty;1,-p)$.  In this case, the edge gluing equations and Dehn filling equations can be written as follows
\begin{equation} \label{equation-geom1} 
\left\{ \begin{aligned}
         &\log w+\log x+\log y+\log z =- 2\pi\sqrt{-1}, \\
          &        \log(1-w)+\log(1-x)-\log(1-y)-\log(1-z)=0,\\
          &2\log x +2\log y+2\pi\sqrt{-1}=0, \\
          &-u+pv=-2\pi\sqrt{-1},
                          \end{aligned} \right.
                          \end{equation}
where 
\begin{align}
    u&=\log (w-1)+\log x+\log z-\log(z-1)+\pi\sqrt{-1},\\\nonumber
    v&=2\log x +2\log z+2\pi\sqrt{-1}.
\end{align}

From the first three equations of (\ref{equation-geom1}), we obtain 
\begin{align}
\left\{\begin{aligned}
    &y=-\frac{1}{x}, \\
    &w=-\frac{1}{z}, \\
    &z=x.
    \end{aligned}\right.
\end{align}
It turns out that 
\begin{align} \label{equation-uv}
    u&=\log z+\log(z+1)-\log(z-1),\\\nonumber
v&=4\log \left(z\right)+2\pi \sqrt{-1}.
\end{align}
Hence the equations (\ref{equation-geom1}) are reduced to one single equation
\begin{align} \label{equation:geometry-one}
p(4\log z+2\pi\sqrt{-1})=\log z+\log (z+1)-\log(z-1)-2\pi\sqrt{-1},
\end{align}
which is referred as the ``geometric equation" for the twist knot $\mathcal{K}_p$ in the following.

Obviously, taking the exponential of (\ref{equation:geometry-one}), we obtain 
\begin{align} \label{equation-z4p}
   z^{4p}=\frac{z(z+1)}{(z-1)}.
\end{align}
Note that it is equal to equation (\ref{equation-tildey}) which is derived from the critical point equation (\ref{equation-critical}).

As a consequence of Lemma \ref{lemma-HS}, it is easy to show
\begin{proposition} \label{prop-geometricequation}
The geometric equation (\ref{equation:geometry-one}) has a unique solution $z_0=-\bar{y}_{p-1}^{-1}$ given in Lemma \ref{lemma-HS}. Hence 
\begin{align}
     -\frac{2p+3}{4p}\pi<\text{arg}(z_0)<-\frac{2p+2}{4p}\pi.
\end{align}
\end{proposition}
 
\begin{example}
For $p=3$, the geometric equation (\ref{equation:geometry-one}) has a unique solution 
\begin{align}
z_0=-0.5074187881-0.7997871925\sqrt{-1}.
\end{align}
\end{example}
Comparing with computations in Example \ref{example1}, we have 
\begin{align}
z_0=e^{-\pi\sqrt{-1}s_0}=-\bar{y}_2^{-1}, \  y_0=z_0^{-2}.   
\end{align}

\subsubsection{Equivalence of the critical equations and geometric equation}

\begin{proposition} \label{propostion-equiv}
(i) If $(t_0,s_0)\in D_{0\mathbb{C}}$ is the solution of critical point equations (\ref{equation-critical}), then 
\begin{align}
    z_0=e^{-\pi\sqrt{-1} s_0}
\end{align}
gives the solution of geometric equation (\ref{equation:geometry-one}). 

(ii) If $z_0$ is the solution of geometric equation (\ref{equation:geometry-one}), then 
\begin{align}
    (t_0=\frac{\log(1+(z_0-z_0^{-1}))}{2\pi\sqrt{-1}}+1,s_0=-\frac{\log z_0}{\pi \sqrt{-1}})
\end{align}
is the solution of the critical point equations (\ref{equation-critical}). 

\end{proposition}
\begin{proof}
(i) We assume $(t_0,s_0)\in D_{0\mathbb{C}}$ is the solution of (\ref{equation-critical}), then by Proposition \ref{prop-critcalequation}, 
$\text{Re}(s_0)\in \left(\frac{1}{2}+\frac{2}{4p},\frac{1}{2}+\frac{3}{4p}\right)$. 
Using the variable transformation $z_0=e^{-\pi \sqrt{-1}s_0}$, then we have
\begin{align} \label{equation-p}
    p\left(4\log \left(z_0\right)+2\pi \sqrt{-1}\right)=p(-4\pi\sqrt{-1}s_0+2\pi\sqrt{-1}).
\end{align}
So we only need to prove the following identity 
\begin{align} \label{formula-needtoprove}
    &\log\left(1-x_0y_0\right)-\log\left({1-x_0y_0^{-1}}\right)-2\pi\sqrt{-1}s_0+\pi\sqrt{-1}\\\nonumber
    &=-\log z_0-\log (z_0+1)+\log(z_0-1).
\end{align}

By Proposition \ref{prop-critcalequation}, we also have the relations $x_0=1+z_0-z_0^{-1}$ and
$y_0=e^{2\pi \sqrt{-1}s_0}=z_0^{-2}$. 
It follows that
\begin{align} \label{formula-xy=z}
    \frac{1-x_0y_0}{x_0-y_0}=\frac{z_0-1}{z_0(z_0+1)}.
\end{align}
Taking the logarithm of formula (\ref{formula-xy=z}), we conclude that there is an integer $k\in \mathbb{Z}$ such that 
\begin{align} \label{equation-k}
   & \text{arg}\left(1-x_0y_0\right)-\text{arg}(1-x_0y_0^{-1})-2\pi \text{Re}(s_0)+\pi\\\nonumber
   &=-\text{arg} (z_0)-\text{arg}(z_0+1)+\text{arg}(z_0-1)+2k\pi.
\end{align}

By the proof of Proposition \ref{prop-critcalequation} (cf. (\ref{formula-x0y0}) and (\ref{formula-x0y0-1})), it is easy to obtain that 
\begin{align}
     \text{arg}\left(1-x_0y_0\right)-\text{arg}(1-x_0y_0^{-1})-2\pi \text{Re}(s_0)+\pi\in (-\pi,\pi).
\end{align}

On the other hand, since $\text{arg} (z_0)=-\pi \text{Re} (s_0)$, $\text{arg}(z_0+1)\in (-\pi \text{Re}(s_0),0)$ and $\text{arg}(z_0-1)\in (-\pi,-\pi \text{Re}(s_0))$, we obtain
\begin{align}
    -\text{arg}(z_0)-\text{arg}(z_0+1)+\text{arg}(z_0-1)\in (-\pi,\pi).  
\end{align}
Therefore, $k$ is equal to $0$ in formula (\ref{equation-k}) which implies the identity (\ref{formula-needtoprove}). 
Hence, we have shown that if $(t_0,s_0)\in D_{0\mathbb{C}}$ is the solution to the critical point equations (\ref{equation-critical}) given in Proposition \ref{prop-critcalequation}, then $z_0=e^{-\pi\sqrt{-1}s_0}$ is the solution to the geometric equation (\ref{equation:geometry-one}). 

(ii) If $z_0$ is the solution of the geometric equation (\ref{equation:geometry-one}).
By Proposition \ref{prop-geometricequation}, we have
$ -\frac{2p+3}{4p}\pi<\text{arg}(z_0)<-\frac{2p+2}{4p}\pi.$ Let $s_0=-\frac{\log z_0}{\pi \sqrt{-1}}$, $y_0=z_0^{-2}$, $x_0=1+(z_0-z_0^{-1})$ and $t_0=\frac{\log(x_0)}{2\pi\sqrt{-1}}+1$, then $(s_0,x_0,y_0,t_0)$ gives the solution to the critical point equations (\ref{equation-critical})
by the proof of Proposition \ref{prop-critcalequation}. 
\end{proof}

\subsection{Complex volume}
For the solution $(t_0,s_0)\in D_{0\mathbb{C}}$ of the critical point equations (\ref{equation-critical}), let $\zeta(p)=V(p,t_0,s_0)$,
we will prove that 
\begin{theorem} \label{theorem-zeta=volume}
For $p\geq 2$, we have the following formula
    \begin{align}
        2\pi \zeta(p)\equiv vol(S^3\setminus \mathcal{K}_p)+\sqrt{-1}cs(\mathcal{S}^3\setminus \mathcal{K}_p) \mod \pi^2\sqrt{-1}.
    \end{align}
\end{theorem}
We first reformulate a result due to Neumann-Zagier \cite{NZ85} and Yoshida \cite{Yoshida85}. 
\subsubsection{Neumann-Zagier's potential function}
We follow the notation and the statements in \cite{WongYang20-1,WongYang20-2} in our setting.  
Let $M$ be a hyperbolic 3-manifold obtained by doing a hyperbolic $\frac{p}{q}$-Dehn filling from a cusp component $T$ of a hyperbolic link $\mathcal{L}$ in $S^3$. Suppose $m$ and $l$ are respectively the meridian and longitude of the boundary of a tubular neighborhood of this component $T$, and $u,v$  are respectively the holonomy of $m,l$, then a solution to 
\begin{align}
    pu+qv=2\pi\sqrt{-1}
\end{align}
near the complete structure gives a hyperbolic structure on the result manifold $M$. 

Let $\Phi(u)$ be the Neumann-Zagier potential function defined on the deformation space of hyperbolic structures on $S^3\setminus \mathcal{L}$ parametrized by the holonomy of the meridians $u$, which is characterized by the following differential equation
\begin{equation} \label{equation:NZpotentialfunction}
\left\{ \begin{aligned}
         \frac{\partial \Phi(u)}{\partial u} &= \frac{v}{2} \\
                  \Phi(0)&=\sqrt{-1}(vol(S^3\setminus \mathcal{L})+\sqrt{-1}cs(S^3\setminus \mathcal{L})) \ \mod \pi^2\mathbb{Z},
                          \end{aligned} \right.
                          \end{equation}
where $S^3\setminus \mathcal{L}$ is with the complete hyperbolic metric.

We choose a curve $\Gamma$ on the boundary of a tubular neighborhood of $T$, that
is isotopic to the core curve of the filled solid torus. We choose the orientation of $\Gamma$ such that the intersection number $(pm+ql)\cdot \Gamma=1$, and let $\gamma$ be the holonomy of $\Gamma$. Then we have
\begin{align}  \label{equation-vol=NZpotential}
    vol(M)+\sqrt{-1}cs(M)=\frac{\Phi(u)}{\sqrt{-1}}-\frac{uv}{4\sqrt{-1}}+\frac{\pi \gamma}{2}  \mod \sqrt{-1}\pi^2\mathbb{Z}.
\end{align}

\subsubsection{Proof of the Theorem \ref{theorem-zeta=volume}}

\begin{proof}
    Using the transformation $z=e^{-\pi\sqrt{-1}s}$, and by equations (\ref{equation-uv}), 
    we have 
\begin{align} \label{formula-u}
    u&=\log z+\log\left(z+1\right)-\log\left(z-1\right)
\end{align}
and
\begin{align}
    v&=4\log\left(z\right)+2\pi\sqrt{-1}=4\pi\sqrt{-1}\left(-s+\frac{1}{2}\right).
\end{align}

In the following, we construct the Neumann-Zagier potential function $\Phi(u)$ defined on the deformation space of $W$ near the complete structure given by $u=v=0$, i.e. $s=\frac{1}{2}$.     

We set
\begin{align}
    U(p,t,s)&=\frac{1}{2\pi\sqrt{-1}}\left(\text{Li}_2(e^{2\pi\sqrt{-1}(t+s)})+\text{Li}_2(e^{2\pi\sqrt{-1}(t-s)})-3\text{Li}_2(e^{2\pi\sqrt{-1}t})+\frac{\pi^2}{6}\right)\\\nonumber
    &+\frac{1}{4\pi\sqrt{-1}}uv+\pi\sqrt{-1}s^2-\pi\sqrt{-1}s-2\pi\sqrt{-1}t+2\pi\sqrt{-1}
\end{align}
and introduce the function
\begin{align}
    \Phi(u)=2\pi\sqrt{-1}U(p,t(s),s)
\end{align}
where $t(s)=\frac{\log(1+z-z^{-1})}{2\pi\sqrt{-1}}+1$. Note that $u$ is a function of $s$.

We claim that the following two identities holds, i.e.
\begin{align} \label{formula-Vt=0}
    \frac{\partial V(p,t,s)}{\partial t}|_{(t(s),s)}&=-2\pi\sqrt{-1}+3\log(1-e^{2\pi\sqrt{-1}t(s)})\\\nonumber
    &-\log(1-e^{2\pi\sqrt{-1}(t(s)+s)})-\log(1-e^{2\pi\sqrt{-1}(t(s)-s)})=0,
\end{align}
and
\begin{align} \label{formula-Vt=0next}
    -\log(1-e^{2\pi\sqrt{-1}(t(s)+s)})+\log(1-e^{2\pi\sqrt{-1}(t(s)-s)})-u+2\pi\sqrt{-1}s-\pi\sqrt{-1}=0.
\end{align}

First, by exponentiating (\ref{formula-Vt=0}) and (\ref{formula-Vt=0next}), we have
\begin{align}
 (1-e^{2\pi\sqrt{-1}t(s)})^2= e^{2\pi\sqrt{-1}s}+e^{-2\pi\sqrt{-1}s}-2=(z-z^{-1})^2   
\end{align}
and
\begin{align}
 \frac{1-e^{2\pi\sqrt{-1}t(s)}e^{2\pi\sqrt{-1}s}}{e^{2\pi\sqrt{-1}t(s)}-e^{2\pi\sqrt{-1}s}}=\frac{z-1}{z(z+1)}.   
\end{align}
Obviously, $t(s)=\frac{\log(1+z-z^{-1})}{2\pi\sqrt{-1}}+1$ satisfies the above two equations. 

Next, we need to show that both
\begin{align}
3\text{arg}(1-e^{2\pi\sqrt{-1}t(s)})-\text{arg}(1-e^{2\pi\sqrt{-1}(t(s)+s)})-\text{arg}(1-e^{2\pi\sqrt{-1}(t(s)-s)})-2\pi
\end{align}
and 
\begin{align}
-\text{arg}(1-e^{2\pi\sqrt{-1}(t(s)+s)})+\text{arg}(1-e^{2\pi\sqrt{-1}(t(s)-s)})-\text{arg}(u)+2\pi \text{Re}(s)-\pi
\end{align}
lie in $(-\pi,\pi)$. In fact, similar to the proof of Proposition \ref{prop-critcalequation}, one can estimate the ranges of the above arguments using the condition that $s$ is sufficiently close to $\frac{1}{2}$, and finish the proof of (\ref{formula-Vt=0}) and (\ref{formula-Vt=0next}).   

From (\ref{formula-Vt=0}), we have 
\begin{align}
 \frac{\partial U(p,t,s)}{\partial t}|_{(t(s),s)}= \frac{\partial V(p,t,s)}{\partial t}|_{(t(s),s)}=0,  
\end{align}
and together with (\ref{formula-Vt=0next}), we obtain
\begin{align}
     &\frac{d U(p,t,s)}{d s}|_{(t=t(s),s)}\\\nonumber
     &=\frac{\partial U}{\partial t}|_{(t(s),s)}\frac{\partial t}{\partial s}+\frac{\partial U}{\partial s}|_{(t(s),s)}\\\nonumber
     &=-\log(1-e^{2\pi\sqrt{-1}(t(s)+s)})+\log(1-e^{2\pi\sqrt{-1}(t(s)-s)})\\\nonumber
     &+\frac{1}{4\pi\sqrt{-1}}\left(v\frac{d u}{d s}+u\frac{d v}{d s}\right)+2\pi\sqrt{-1}s-\pi\sqrt{-1}\\\nonumber
     &=\frac{1}{4\pi\sqrt{-1}}v\frac{du}{ds}.
\end{align}
Therefore, we get
\begin{align}
    \frac{d \Phi(u)}{d u}&= \frac{d \Phi(u)}{d s}\frac{d s}{d u}=2\pi\sqrt{-1}\frac{dU}{ds}\frac{ds}{du}=\frac{v}{2}.
\end{align}

When $s=\frac{1}{2}$,  we have $z=-\sqrt{-1}$, then
\begin{align}
    t\left(\frac{1}{2}\right)=\frac{\log(1-2\sqrt{-1})}{2\pi\sqrt{-1}}+1.
\end{align}
By Lemma \ref{lemma-U=Vol}, we have 
\begin{align}   \Phi(0)&=2\pi\sqrt{-1}U(p,t\left(\frac{1}{2}\right),\frac{1}{2})\\\nonumber
    &= \sqrt{-1}(vol(W)+\sqrt{-1}cs(W)). 
\end{align}
So we prove that $\Phi(u)$ satisfies the differential equation (\ref{equation:NZpotentialfunction}). Hence $\Phi(u)$ is the Neumann-Zagier potential function.

Moreover, when $(u,v)$ satisfies the geometric equation $u-pv=2\pi\sqrt{-1}$, i.e.(\ref{equation:geometry-one}), then $\gamma=v$ by the definition of holonomy $\gamma$. Since $v=4\pi\sqrt{-1}(-s+\frac{1}{2})$, we compute 
\begin{align}  \label{formula-Psi(u)=V}
     &\frac{\Phi(u)}{\sqrt{-1}}-\frac{uv}{4\sqrt{-1}}+\frac{\pi \gamma}{2}\\\nonumber
     &=\frac{1}{\sqrt{-1}}\left(\text{Li}_2(e^{2\pi\sqrt{-1}(t+s)})+\text{Li}_2(e^{2\pi\sqrt{-1}(t-s)})-3\text{Li}_2(e^{2\pi\sqrt{-1}t})+\frac{\pi^2}{6}\right)\\\nonumber
    &+\frac{1}{2\sqrt{-1}}uv+2\pi(\pi\sqrt{-1}s^2-\pi\sqrt{-1}s-2\pi\sqrt{-1}t+2\pi\sqrt{-1})\\\nonumber
    &-\frac{uv}{4\sqrt{-1}}+\frac{\pi v}{2}\\\nonumber
    &=\frac{1}{\sqrt{-1}}\left(\text{Li}_2(e^{2\pi\sqrt{-1}(t+s)})+\text{Li}_2(e^{2\pi\sqrt{-1}(t-s)})-3\text{Li}_2(e^{2\pi\sqrt{-1}t})+\frac{\pi^2}{6}\right)\\\nonumber
    &+2\pi\left(\pi\sqrt{-1}((2p+1)s^2-(2p+3)s-2t)\right)+(p+6)\pi^2\sqrt{-1}\\\nonumber
    &=2\pi V(p,t,s)+(p+6)\pi^2\sqrt{-1}.
\end{align}
On the other hand, by (\ref{equation-vol=NZpotential}),  we have
\begin{align} \label{formula-Psi(u)=vol}
    \frac{\Phi(u)}{\sqrt{-1}}-\frac{uv}{4\sqrt{-1}}+\frac{\pi \gamma}{2}=vol(S^3\setminus\mathcal{K}_p)+\sqrt{-1}cs(S^3\setminus\mathcal{K}_p) \mod (\pi^2 \sqrt{-1}).
\end{align}

Finally, by Proposition \ref{propostion-equiv}, suppose $(t_0,s_0)$ is the solution of equation (\ref{equation-critical}), then under the variable transformation $z=e^{-\pi\sqrt{-1}s_0}$,  the corresponding $u=\log z+\log(z+1)-\log(z-1),v=4\log z+2\pi\sqrt{-1}$ satisfies the geometric equation (\ref{equation:geometry-one}). Therefore, combining  (\ref{formula-Psi(u)=V}) and (\ref{formula-Psi(u)=vol}) together, we obtain
\begin{align}
    2\pi V(p,t_0,s_0)\equiv vol(S^3\setminus\mathcal{K}_p)+\sqrt{-1}cs(S^3\setminus\mathcal{K}_p) \mod (\pi^2 \sqrt{-1}).
\end{align}
\end{proof}

\subsubsection{A dilogarithm identity}
\begin{lemma} \label{lemma-dilogidentity}
For $x_0=1-2\sqrt{-1}$, we have

\begin{align}
  3\text{Li}_2(x_0)-2\text{Li}_2(-x_0)+4\text{Li}_2(\sqrt{-1})+2\pi\sqrt{-1}\log(x_0)-\frac{5}{6}\pi^2=0.  
\end{align}
\end{lemma}

\begin{proof}
Let $x=2$, $y=1-\sqrt{-1}$ in Hill's formula 
\begin{align}
\text{Li}_2(xy)=\text{Li}_2(x)+\text{Li}_2(y)+\text{Li}_2(\frac{xy-x}{1-x})+\text{Li}_2(\frac{xy-y}{1-y})+\frac{1}{2}\log^2\left(\frac{1-x}{1-y}\right),   
\end{align}
we obtain
\begin{align} \label{formula-Li22}
&\text{Li}_2(2(1-\sqrt{-1}))\\\nonumber
&=\text{Li}_2(2)+\text{Li}_2(1-\sqrt{-1})+\text{Li}_2(2\sqrt{-1})+\text{Li}_2(-1-\sqrt{-1})+\frac{1}{2}\log^2(\sqrt{-1})\\\nonumber
&=\text{Li}_2(2\sqrt{-1})+\text{Li}_2(-1-\sqrt{-1})-\text{Li}_2(\sqrt{-1})+\frac{\pi^2}{6}-\frac{5\log 2}{4}\pi\sqrt{-1},
\end{align}
where in the second ``=" we have used 
\begin{align}
\text{Li}_2(1-\sqrt{-1})=-\text{Li}_2(\sqrt{-1})+\frac{\pi^2}{24}-\frac{\pi\log 2}{4}\sqrt{-1}    
\end{align}
and 
\begin{align}
  \text{Li}_2(2)=\frac{\pi^2}{4}-\sqrt{-1}\pi\log2. 
\end{align}
By using the formula  
\begin{align}
 \text{Li}_2(z)+\text{Li}_2(1-z)=\frac{\pi^2}{6}-\log z\log(1-z),   
\end{align}
we have
\begin{align}
\text{Li}_2(1-2\sqrt{-1})&=\frac{\pi^2}{6}-\text{Li}_2(2\sqrt{-1})-\log(2\sqrt{-1})\log(1-2\sqrt{-1})\\\nonumber
&=\frac{\pi^2}{6}-\text{Li}_2(2\sqrt{-1})-(\log2+\frac{\pi}{2}\sqrt{-1})\log(1-2\sqrt{-1})
\end{align}
and
\begin{align}
&\text{Li}_2(-1+2\sqrt{-1})\\\nonumber
&=\text{Li}_2(1-2(1-\sqrt{-1}))\\\nonumber
&=-\text{Li}_2(2(1-\sqrt{-1}))+\frac{\pi^2}{6}-\log(2(1-\sqrt{-1}))\log(-1+2\sqrt{-1})\\\nonumber
&=-\text{Li}_2(2\sqrt{-1})-\text{Li}_2(-1-\sqrt{-1})+\text{Li}_2(\sqrt{-1})-\frac{\pi^2}{4}-\frac{\log 2}{4}\pi\sqrt{-1}\\\nonumber
&+\left(-\frac{3\log2}{2}+\frac{\pi}{4}\sqrt{-1}\right)\log(1-2\sqrt{-1}),
\end{align}
where in the third ``=" we have used formula (\ref{formula-Li22}). 

Hence
\begin{align}
&3\text{Li}_2(1-2\sqrt{-1})-2\text{Li}_2(-1+2\sqrt{-1})\\\nonumber
&=-\text{Li}_2(2\sqrt{-1})+2\text{Li}_2(-1-\sqrt{-1})-2\text{Li}_2(\sqrt{-1})\\\nonumber
&+\pi^2+\frac{\log 2}{2}\pi\sqrt{-1}-2\pi\sqrt{-1}\log(1-2\sqrt{-1})    
\end{align}
Furthermore, since $2\sqrt{-1}=(\pm 1\pm \sqrt{-1})^2$, we have 
\begin{align}
\text{Li}_2(2\sqrt{-1})=2\text{Li}_2(1+\sqrt{-1})+2\text{Li}_2(-1-\sqrt{-1}).    
\end{align}
Moreover, 
\begin{align}
    \text{Li}_2(1+\sqrt{-1})=\text{Li}_2(\sqrt{-1})+\frac{\pi^2}{12}+\frac{\log 2}{4}\pi\sqrt{-1}
\end{align}

Therefore, we obtain 
\begin{align}
&3\text{Li}_2(1-2\sqrt{-1})-2\text{Li}_2(-1+2\sqrt{-1})\\\nonumber
&=-2\text{Li}_2(1+\sqrt{-1})-2\text{Li}_2(\sqrt{-1})\\\nonumber
&+\pi^2+\frac{\log 2}{2}\pi\sqrt{-1}-2\pi\sqrt{-1}\log(1-2\sqrt{-1})\\\nonumber
&=-4\text{Li}_2(\sqrt{-1})+\frac{5}{6}\pi^2-2\pi\sqrt{-1}\log(1-2\sqrt{-1}).
\end{align}
\end{proof}
\begin{lemma} \label{lemma-U=Vol}
\begin{align}
2\pi\sqrt{-1}U(p,t\left(\frac{1}{2}\right),\frac{1}{2})= \sqrt{-1}(vol(W)+\sqrt{-1}cs(W))
\end{align}    
\end{lemma}
\begin{proof}
Since $x_0=e^{2\pi\sqrt{-1}t\left(\frac{1}{2}\right)}$, and $2\pi\sqrt{-1}t\left(\frac{1}{2}\right)=\log(x_0)-2\pi \sqrt{-1}$, we have 
    \begin{align}
        &2\pi\sqrt{-1}U(p,t\left(\frac{1}{2}\right),\frac{1}{2})\\\nonumber
        &=\left(2\text{Li}_2(-x_0)-3\text{Li}_2(x_0)+\frac{\pi^2}{6}\right)+2\pi\sqrt{-1}\left(-\frac{1}{4}\pi\sqrt{-1}-\log(x_0)\right)\\\nonumber
        &=2\text{Li}_2(-x_0)-3\text{Li}_2(x_0)-2\pi\sqrt{-1}\log(x_0)+\frac{2}{3}\pi^2\ (\text{by Lemma \ref{lemma-dilogidentity}})  \\\nonumber
        &=4\text{Li}_2(\sqrt{-1})-\frac{5}{6}\pi^2+\frac{2}{3}\pi^2  \\\nonumber
        &=\sqrt{-1}(4G+\sqrt{-1}\frac{\pi^2}{4})\\\nonumber
        &=\sqrt{-1}(vol(W)+\sqrt{-1}cs(W)).
    \end{align}
\end{proof}

\begin{remark}
In our first version on arXiv, we use the explicit formula of complex volume for $W(\infty, -1/p)$ obtained by Meyerhoff-Neumann \cite{MeyNeu92} to prove Theorem \ref{theorem-zeta=volume}, by directly comparing with the critical value of the potential function.
\end{remark}

\subsection{Adjoint twisted Reidemeister Torsion}

Recall that $\omega(p)$ is given by formula (\ref{formula-omegap}), let $\mathbb{T}_{\mathcal{K}_p,\mu}$ be the adjoint twisted Reidemeister torsion of $\mathcal{K}_p$ with respect to the meridan $\mu$, this section is devoted to proving the following result.
\begin{theorem} \label{theorem-omega=Rtorsion}
We have the following identity   \begin{align}
   \omega(p)=
\frac{1}{4\pi\sqrt{2}\sqrt{\mathbb{T}_{\mathcal{K}_p,\mu}^\rho}}.
\end{align} 
\end{theorem}

First, we review the computation of the adjoint twisted Reidemeister torsion of the twist knots  $\mathcal{K}_p$ illustrated in \cite{DHY09, Tr14, Tr15, Tr16}, we refer to \cite{Porti97} for the detailed definition of the adjoint twisted Reidemeister torsion.

The reader should  note that the twist knot  $\mathcal{K}_p$ is denoted by $J(2,2p)$ in literature \cite{HS05, DHY09,Tr14, Tr15, Tr16}. 
Hence the knot group of the twist knot $\mathcal{K}_p$ has a presentation 
\begin{align}
    \pi_1(S^3\setminus \mathcal{K}_p)=\langle a,b| w^pa=bw^p  \rangle
\end{align}
where $a,b$ are two generators and $w=ba^{-1}b^{-1}a$. Furthermore, a meridian $\mu$ and the preferred longitude $\lambda$ are given by 
\begin{align}
\mu=a,  \ \text{and} \ \lambda=(w^*)^pw^p    
\end{align}
 where $w^*$ is the word obtained by writing $w$ in the reversed order.
 
Suppose $\rho: \pi_1(S^3\setminus \mathcal{K}_p)\rightarrow SL_2(\mathbb{C})$ is a nonabelian representation. Up to conjugation, we may assume that 
\begin{align}
    \rho(a)=\begin{pmatrix}
    s & 1 \\
    0  &  s^{-1}
    \end{pmatrix}
\end{align}
and 
\begin{align}
    \rho(b)=\begin{pmatrix}
    s & 0 \\
    2-r  &  s^{-1}
    \end{pmatrix}
\end{align}
where $r=tr(\rho(ab^{-1}))$. Then $(s,r)\in \mathbb{C}^2$ satisfies the matrix equation $\rho(w^pa)=\rho(bw^p)$
This matrix equation gives the following Riley equation \cite{Tr15,Tr16},  
\begin{align} \label{equation-Riley}
    (1-(r-s^2-s^{-2}))S_{p-1}(z)-S_{p-2}(z)=0.
\end{align}
where 
\begin{align}  \label{formula-z}
    z=tr \rho(w)=2+(r-s^2-s^{-2})(r-2),
\end{align}
and $S_{k}(z)$'s  are the Chebychev polynomials of the second kind defined by $S_0(z)=1$, $S_1(z)=z$ and   $S_k(z)=zS_{k-1}(z)-S_{k-2}(z)$ for all $k\in \mathbb{Z}$.

Moreover, we have 
\begin{align}
    \rho(\lambda)=\begin{pmatrix}
        l & * \\ 
        0 & l^{-1}
    \end{pmatrix}
\end{align}
where 
\begin{align}  \label{formula-l}
    l=\frac{s^2(r-1)-1}{s^2-(r-1)}.
\end{align}

We consider the following variable transformation  
\begin{align} \label{formula-variabletrans}
z=y+y^{-1}, \ \ r=x+1.
\end{align}
When $s=1$, the equation (\ref{equation-Riley}) and formula (\ref{formula-z})  give 
\begin{align} \label{formula-xz}
    \frac{x+1-z}{x-1}=1-\frac{z-2}{x-1}=3-r=\frac{S_{p-2}(z)}{S_{p-1}(z)}
\end{align}
and
\begin{align}
     (x-1)^2&=y+y^{-1}-2
     \end{align}
respectively.

From formula (\ref{formula-xz}), we obtain
\begin{align}
    x&=\frac{(z-1)S_{p-1}(z)-S_{p-2}(z)}{S_{p-1}(z)-S_{p-2}(z)}\\\nonumber
    &=\frac{S_{p}(z)-S_{p-1}(z)}{S_{p-1}(z)-S_{p-2}(z)}\\\nonumber
    &=\frac{y^{2p+1}+1}{y^{2p}+y}.
\end{align}

Therefore, when $s=1$, the Riley equation (\ref{equation-Riley}) is equivalent to the following equations
\begin{align} \label{equation-Riley1}
 \left\{ \begin{aligned}
     &x=\frac{y^{2p+1}+1}{y^{2p}+y}, \\
      &(x-1)^2=y+y^{-1}-2.
                          \end{aligned} \right.   
\end{align}
under the variable transformations (\ref{formula-variabletrans}). 
Note that the above equations are just the exponential form for the critical equations for $V(p,t,s)$ given by (\ref{equation-criticalexp}).

The formula for the adjoint twisted Reidemeister torsion of $\mathcal{K}_p$ with respect to the longitude $\lambda$ has been computed  in \cite{Tr14} 
\begin{align}
\mathbb{T}_{\mathcal{K}_p,\lambda}^{\rho}&=\frac{-1}{(r+2-t_{a}^2)(4-t_{\alpha}^2+(r-2)(r+2-t_a^2))}\\\nonumber
&\cdot\left(\frac{(2p-1)r^2+rt_{a}^2-2pt_{a}^2(t_{a}^2-2)}{r^2-rt_{a}^2+2t_{a}^2}+2p\right),
\end{align}
where  $t_{a}=tr\rho(a)$.

By \cite{Porti97}, the adjoint twisted Reidemeister torsion with respect to the meridian $\mu=a$ is given by  
\begin{align}
    \mathbb{T}_{\mathcal{K}_p,\mu}^{\rho}=\frac{\mathbb{T}_{\mathcal{K}_p,\lambda}^{\rho}}{\frac{\partial l}{\partial s}}\frac{l}{s}
\end{align}
Using (\ref{formula-l}), we obtain 
\begin{align}
    \left(\frac{\partial l}{\partial s}\right)^{-1}\frac{l}{s}
    &=\frac{(s^2-(r-1))(s^2(r-1)-1)}{2s^2r(2-r)}.
\end{align}

When $s=1$, then $t_{a}^2=(s+s^{-1})^2=4$, so we have
\begin{align}
\mathbb{T}_{\mathcal{K}_p,\mu}^{\rho}&=\frac{-1}{(r-2)^3}\left(\frac{(2p-1)r^2+4r-16p}{(r^2-4r+8)}+2p\right)\frac{(2-r)(r-2)}{2r(2-r)}\\\nonumber
    &=\frac{-1}{2(r-2)^2}\frac{(4p-1)r+(4-8p)}{(r-2)^2+4}.
\end{align}

Under the above variable transformation (\ref{formula-variabletrans}), we obtain
\begin{align}
    (r-2)^2=(x-1)^2=y+y^{-1}-2, \ (r-2)^2+4=y+y^{-1}+2.
\end{align}
Hence, we get
\begin{align}
    \mathbb{T}_{\mathcal{K}_p,\mu}^{\rho}=\frac{-((4p-1)(x+1)+(4-8p))}{2(y-y^{-1})^2}.
\end{align}

\begin{theorem} \label{theorem-Rtorsion-identity1}
If $(x,y)$ satisfies (\ref{equation-Riley1}), then we have the following identity
\begin{align}
    \frac{2x^2(y-y^{-1})^2}{(1-xy)(1-x/y)}\mathbb{T}_{\mathcal{K}_p,\mu}^\rho=H(p,x,y).
\end{align}
\end{theorem}
\begin{proof}
\begin{align}
    \frac{2x^2(y-y^{-1})^2}{(1-xy)(1-x/y)}\mathbb{T}_{\mathcal{K}_p,\mu}^\rho&=-\frac{x^2y((4p-1)(x+1)+(4-8p))}{(1-xy)(y-x)} \\\nonumber
    &=-\frac{x^2y((4p-1)(x-1)+2)}{(1-xy)(y-x)}\cdot\frac{(1-x)}{(1-x)}\\\nonumber
    &=\frac{x^2y((4p-1)(1-x)^2-2(1-x))}{(1-xy)(y-x)(1-x)}
\end{align}
On the other hand, we have 
\begin{align}
    &H(p,x,y)\\\nonumber&=\begin{vmatrix}
    \frac{xy}{1-xy}+\frac{x/y}{1-x/y}-\frac{3x}{1-x} & \frac{xy}{1-xy}-\frac{x/y}{1-x/y} \\
    \frac{xy}{1-xy}-\frac{x/y}{1-x/y}  & 2p+1+ \frac{xy}{1-xy}+\frac{x/y}{1-x/y}
    \end{vmatrix}\\\nonumber
    &=\frac{x((1-2p)yx^2+(-y^2+2y-1+4p(y^2-y+1))x+(2p+1)y^2-(6p+3)y+(2p+1))}{(y-x)(1-xy)(1-x)}
\end{align}

So, we only need to show the following identity holds
\begin{align}
    &(1-2p)yx^2+(-y^2+2y-1+4p(y^2-y+1))x+(2p+1)y^2-(6p+3)y+(2p+1)\\\nonumber 
    &=xy((4p-1)(1-x)^2-2(1-x))\\\nonumber
    &=xy((4p-1)(y+y^{-1}-2)-2(1-x)).
\end{align}

By direct computations, we have 
\begin{align}
    &(1-2p)yx^2+(-y^2+2y-1+4p(y^2-y+1))x+(2p+1)y^2-(6p+3)y+(2p+1)\\\nonumber
    &-xy((4p-1)(y+y^{-1}-2)-2(1-x))\\\nonumber
    &=(2p+1)(-yx^2+2yx+y^2-3y+1)\\\nonumber
    &=-y(2p+1)(x^2-2x+3-y-y^{-1})\\\nonumber
    &=-y(2p+1)((x-1)^2+2-y-y^{-1})=0.
\end{align}
\end{proof}

Now, for the solution $(t_0,s_0)$ of the critical point equations (\ref{equation-critical}) given by Proposition \ref{prop-critcalequation}, then $(x_0,y_0)$ satisfies (\ref{equation-Riley1}).  Therefore, by Theorem \ref{theorem-Rtorsion-identity1} and the formula  $(\ref{formula-omegap})$ for $\omega(p)$, we have the following identity
\begin{align}
\frac{1}{\sqrt{\mathbb{T}_{\mathcal{K}_p,\mu}^\rho}}=\frac{\sqrt{2}x_0(y_0-y_0^{-1})}{\sqrt{(1-x_0y_0)(1-x_0/y_0)}\sqrt{H(p,x_0,y_0)}}=4\pi\sqrt{2}\omega(p)
\end{align}
which implies Theorem \ref{theorem-omega=Rtorsion}.

\begin{remark}
In \cite{MuTr}, J. Murakami and A. Tran proposed a general conjecture that the determinant of the Hessian matrix for the potential function of the colored Jones polynomial will give the adjoint twisted Reidemeister torsion. They proved their conjecture for the double twist knots and other special links. So, the proof of Theorem \ref{theorem-Rtorsion-identity1} presented here can be regarded as a special case of their proof for the double twist knots. The authors thank Professor J. Murakami for sending us their preprint \cite{MuTr} to us.  
\end{remark}

\section{Appendices} \label{Section-App}
\subsection{Proof of Lemma \ref{lemma-regionD'0}}   \label{appendix-0}
We define the function
\begin{align}
    v(t,s)=\Lambda(t+s)+\Lambda(t-s)-3\Lambda\left(t\right).
\end{align}
We set
    \begin{align}
    D&=\{(t,s)\in \mathbb{R}^2| 1< t+s< 2, 0< t-s<1, \frac{1}{2}< t<1\}, \\\nonumber
    D_0&=\{0.02 \leq t-s, 1.02 \leq t+s, 0.25 \leq s\leq 0.75,0.5\leq t\leq 0.909\}.
\end{align}
Then we have
\begin{lemma} 
The following domain
    \begin{align} 
        \left\{(t,s)\in D| v(t,s)>\frac{3.56337}{2\pi}\right\}
    \end{align}
    is included in the region $D_0$.
\end{lemma}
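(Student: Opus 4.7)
The plan is a direct case analysis on which of the eight defining inequalities of $D'_0$ fails. Writing $v(t,s) = \Lambda(t+s) + \Lambda(t-s) - 3\Lambda(t)$, I will exploit the elementary properties of the Lobachevsky function recalled in Section~\ref{section-preliminaries}: $\Lambda$ is odd with period $1$, so $\Lambda(0) = \Lambda(1/2) = \Lambda(1) = 0$; it is uniformly bounded by its maximum $\Lambda(1/6) \approx 0.1615$; and its derivative $\Lambda'(t) = -\log|2\sin\pi t|$ is explicit and integrable. In particular the crude bound $|v(t,s)| \leq 5\,\Lambda(1/6) \approx 0.808$ alone is too weak to beat $3.509/(2\pi)\approx 0.5585$, so one needs to exploit the vanishing of $\Lambda$ near the boundary.

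First I would decompose $D \setminus D'_0$ as a union of eight slabs, one slab per violated constraint. For the four slabs where the violated constraint pins one argument of a $\Lambda$ in $v$ to a zero of $\Lambda$---namely $t-s \leq 0.02$ (so $\Lambda(t-s)$ is small), $t+s \leq 1.02$ (so $\Lambda(t+s) = \Lambda(t+s-1)$ is small), $t \leq 0.5$ (so $\Lambda(t)$ is small), and $t \geq 0.909$ (so $\Lambda(t)$ is small)---the corresponding $\Lambda$ term drops out of the bound and I pick up the estimate $|v| \leq 4\Lambda(1/6) + \varepsilon \lesssim 0.5$ with a small slack $\varepsilon$ controlled by the width of the slab and the value of $\Lambda'$ there. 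For the remaining four slabs ($t-s \geq 0.7$, $t+s \geq 1.7$, $s \leq 0.2$, $s \geq 0.8$), the relevant $\Lambda$ arguments are driven toward the region where $\Lambda$ has a definite sign (negative in $(1/2,1)$, positive in $(0,1/2)$), producing cancellations. In these cases I would check $v$ on the bounding hyperplane of the slab by the one-variable optimization $\partial v/\partial(\cdot) = 0$, which reduces to an equation in sines, then propagate across the slab using the monotonicity direction given by the gradient $(\partial_t v, \partial_s v) = (-\log|2\sin\pi(t+s)|-\log|2\sin\pi(t-s)|+3\log|2\sin\pi t|,\ -\log|2\sin\pi(t+s)|+\log|2\sin\pi(t-s)|)$.

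The main obstacle will be the slabs sharing a face with the neighborhood of the critical point $(t_{0R},s_{0R})$, because $3.509/(2\pi)$ lies only marginally below the conjectural maximum $v_8/(2\pi) \approx 0.5831$. For those faces the bound is essentially sharp and naive monotonicity arguments will not suffice. My fallback for these cases is to subdivide the offending face into finitely many small rectangles, evaluate $v$ at the corners, and control the error by the explicit Lipschitz constant of $v$ coming from $\Lambda'$ on the face (which is bounded since each argument of the three $\Lambda$ stays away from the singularities $0, 1$ on $D'_0$ up to a controlled distance). This is a finite numerical check that closes the gap between $3.509/(2\pi)$ and $v_8/(2\pi)$. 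All other slabs reduce to elementary interval arithmetic on values of $\Lambda$ and $\Lambda'$.
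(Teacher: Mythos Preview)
Your plan has a concrete numerical error that undermines the ``four easy slabs'' step: $4\Lambda(1/6)\approx 4\times 0.1615\approx 0.646$, which is \emph{larger} than the threshold $3.509/(2\pi)\approx 0.559$, not ``$\lesssim 0.5$'' as you wrote. So even when one argument of a $\Lambda$ is pinned near a zero, the crude bound on the remaining four terms does not beat the threshold. (Also, $t=0.909$ is not close to a zero of $\Lambda$ in the sense you need: $|\Lambda(0.909)|=|\Lambda(0.091)|\approx 0.10$, and more importantly $-3\Lambda(t)$ is \emph{large positive} there, not small.) The upshot is that essentially every slab, not just the ones neighboring the critical point, would need your ``fallback'' rigorous-numerics check. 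That is workable in principle but much heavier than you anticipate.

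The paper avoids all of this with a single reduction you are missing: for each fixed $t\in(\tfrac12,1)$, the map $s\mapsto v(t,s)$ has a \emph{unique} critical point at $s=\tfrac12$, since
\[
\partial_s v(t,s)=\Lambda'(t+s)-\Lambda'(t-s)=\log\frac{\sin\pi(t-s)}{\sin\pi(t+s-1)}
\]
vanishes only when $t-s=t+s-1$, and $\partial_s^2 v(t,\tfrac12)<0$. Hence $\max_s v(t,s)=v(t,\tfrac12)=2\Lambda(t+\tfrac12)-3\Lambda(t)$, a single explicit one-variable function. Its shape (increasing then decreasing on $(\tfrac12,1)$, unimodal with peak near $t\approx 0.8$) lets one read off each boundary constant by evaluating at a single point: for example $2\pi\,v(0.909,\tfrac12)\approx 3.459<3.509$ immediately gives the bound $t<0.909$. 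The remaining seven constants of $D'_0$ are obtained by the same evaluate-at-one-point argument, after the analogous one-variable reduction in the appropriate direction. This replaces your eight-slab case analysis and Lipschitz grid check by a handful of single-point evaluations of $\Lambda$.
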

\begin{proof}
Recall the definition of the function $\Lambda(t)$:
\begin{align}
  \Lambda(t)=Re\left(\frac{1}{2\pi\sqrt{-1}}\text{Li}_2(e^{2\pi\sqrt{-1}t})\right). 
\end{align}
We have $\Lambda'(t)=-\log 2\sin(\pi t)$ and $\Lambda''(t)=-\pi \cot(\pi t)$ for $t\in [0,1]$. In the following, we only describe how to obtain the upper bound 0.909 of $t$ in the definition of $D'_0$, the other bounds can be derived similarly.   

For a fixed $t\in (\frac{1}{2},1)$, we regard $v(t,s)$ as function of $s$, it follows that 
\begin{align}
    v_s(t,s)=\Lambda'(t+s)-\Lambda'(t-s)=\log\frac{\sin(\pi(t-s))}{\sin(\pi(t+s-1))}. 
\end{align}
The critical point is given by $s=\frac{1}{2}$. Furthermore, $v_{ss}(t,\frac{1}{2})<0$, i.e. for any fixed $t\in (\frac{1}{2},1)$, 
as a function of $s$, $v(t,s)$ takes the maximal value at $s=\frac{1}{2}$.  

As a function of $t$, 
\begin{align}
    v(t,\frac{1}{2})=\Lambda(t+\frac{1}{2})+\Lambda(t-\frac{1}{2})-3\Lambda(t)=2\Lambda(t+\frac{1}{2})-3\Lambda(t).
\end{align}
On can compute that there is a point $t_0\approx 0.8$ that is a maximal point of $v(t,\frac{1}{2})$, and such that $v(t,\frac{1}{2})$ increases (resp. decreases) on the interval $(\frac{1}{2},t_0)$ (resp. the interval $(t_0,1)$). 
We compute
$2\pi v(0.909,\frac{1}{2})=3.4589<3.56337$. By the above analysis, if $(t,s)\in D$ is such that $v(t,s)>\frac{3.56337}{2\pi }$, then $t<0.909$. 
\end{proof}

\subsection{Proof of Lemma \ref{lemma-UnnotUn}} \label{appendix-LemmaUn}

Based on Lemma \ref{lemma-Li2}, in order to study the asymptotic behaviour of the function 
\begin{align}
    f(X,Y;n)=ReV(p,t+X\sqrt{-1},s+Y\sqrt{-1}), 
\end{align}
we introduce the following  function
\begin{equation} \label{eq:2}
F(X,Y;n)=\left\{ \begin{aligned}
         &0  &  \ (\text{if} \ X+Y\geq 0) \\
         &\left((t+s)-\frac{3}{2}\right)(X+Y) & \ (\text{if} \ X+Y<0)
                          \end{aligned} \right.
                          \end{equation}
\begin{equation*}
 +\left\{ \begin{aligned}
         &0  &  \ (\text{if} \ X-Y\geq 0) \\
         &\left((t-s)-\frac{1}{2}\right)(X-Y) & \ (\text{if} \ X-Y<0)
                          \end{aligned} \right.
                          \end{equation*}
\begin{equation*}
+\left\{ \begin{aligned}
         &0  &  \ (\text{if} \ X\geq 0) \\
         &\left(\frac{3}{2}-3t\right)X & \ (\text{if} \ X<0)
                          \end{aligned} \right.  +\left(p+\frac{3}{2}+n-(2p+1)s\right)Y+X
                          \end{equation*}
where we use $t+s-\frac{3}{2}$ instead of $t+s-\frac{1}{2}$ in the first summation since in our situation $1< t+s<2$.

  Note that $F(X,Y;n)$ is a piecewise linear function,  we subdivide the plane $\{(X,Y)\in \mathbb{R}^2\}$ into eight regions to discuss this function. We study the conditions such that $F(X,Y,n)$ has the following property: 
\begin{align} \label{formula-F(X,Y)2}
    F(X,Y;n)\rightarrow \infty \ \text{as} \ X^2+Y^2\rightarrow \infty.
\end{align}

(I) $X\geq Y\geq 0$, then
\begin{align}
    F(X,Y;n)&=\left(\left(p+\frac{3}{2}+n\right)-(2p+1)s\right)Y+X\\\nonumber
  &\geq \left(\left(p+\frac{5}{2}+n\right)-(2p+1)s\right)Y.  
\end{align}
When $s<\frac{p+\frac{5}{2}+n}{2p+1}$, then the property (\ref{formula-F(X,Y)2}) holds.

(II) $Y\geq X\geq 0$, then
\begin{align}
     F(X,Y;n)=\left(t-s+\frac{1}{2}\right)X+(p+2+n-2ps-t)Y.
\end{align}
When $t+2ps<p+2+n$, we have
\begin{align}
     F(X,Y;n)&=\left(t-s+\frac{1}{2}\right)X+(p+2+n-2ps-t)Y\\\nonumber
    &\geq \left(p+\frac{5}{2}+n-(2p+1)s\right)X.
\end{align}
Hence, the property (\ref{formula-F(X,Y)2}) holds in this case if  $\left(p+\frac{5}{2}+n\right)-(2p+1)s>0$.

(III) $X+Y\geq 0$ and $X\leq 0$, then
\begin{align}
     F(X,Y;n)=\left(2-2t-s\right)X+(p+2+n-2ps-t)Y.
\end{align}
When $t+2ps<p+2+n$, then 
\begin{align}
     F(X,Y;n)&\geq \left(2-2t-s\right)X+(p+2+n-2ps-t)(-X) \\\nonumber
    &=(p+n+t-(2p-1)s)(-X).
\end{align}
If $p+n+t-(2p-1)s>0$, 
we obtain that the property (\ref{formula-F(X,Y)2}) holds in this case. 

(IV) $X+Y\leq 0$ and $Y\geq 0$, then 
\begin{align}
     F(X,Y;n)&=\left(\frac{1}{2}-t\right)X+\left(p+\frac{1}{2}+n-(2p-1)s\right)Y\\\nonumber
     &=\left(-\frac{1}{2}+t\right)(-X)+\left(p+\frac{1}{2}+n-(2p-1)s\right)Y\\\nonumber
    &\geq (p+n+t-(2p-1)s)Y.
\end{align}
When $p+n+t-(2p-1)s>0$, we can see that the property (\ref{formula-F(X,Y)2}) holds in this case.

(V) $X-Y\leq 0$ and $Y\leq 0$, then 
\begin{align}
     F(X,Y;n)=\left(\frac{1}{2}-t\right)X+\left(p+\frac{1}{2}+n-(2p-1)s\right)Y.
\end{align}
Since $\frac{1}{2}<t<1$, 
\begin{align}
     F(X,Y;n)&=\left(-\frac{1}{2}+t\right)(-X)+\left(p+\frac{1}{2}+n-(2p-1)s\right)Y\\\nonumber
    &\geq \left(-\frac{1}{2}+t\right)(-Y)+\left(p+\frac{1}{2}+n-(2p-1)s\right)Y\\\nonumber
    &=(-p-1-n+t+(2p-1)s)(-Y),
\end{align}
if $t+(2p-1)s-p-1-n>0$, it follows that the property (\ref{formula-F(X,Y)2}) holds in this case. 

(VI) $X-Y\geq 0$ and $X\leq 0$, then 
\begin{align}
     F(X,Y;n)=\left(1+s-2t\right)X+\left(p+n+t-2ps\right)Y.
\end{align}
When $p+n+t-2ps<0$, and since $-Y\geq -X\geq 0$,  
 \begin{align}
     F(X,Y;n)&\geq \left(1+s-2t\right)X-\left(p+n-2ps+t\right)(-X)\\\nonumber
    &=(-p-1-n+t+(2p-1)s)(-X),
\end{align}
if $t+(2p-1)s-p-1-n>0$, it follows that the property (\ref{formula-F(X,Y)2}) holds in this case.

(VII) $X+Y\leq 0$ and $X\geq 0$, then 
   \begin{align}
    F(X,Y;n)=\left(t+s-\frac{1}{2}\right)X+\left(p+n-2ps+t\right)Y,
\end{align}
since $t+s-\frac{1}{2}>0$, if $p+n-2ps+t< 0$, by $Y\leq 0$,   
it follows that the property (\ref{formula-F(X,Y)2}) holds in this case.

(VIII) $X+Y\geq 0$ and $Y\leq 0$, then 
   \begin{align}
     F(X,Y;n)=X+\left(p+\frac{3}{2}+n-(2p+1)s\right)Y.
\end{align}
By $X\geq -Y$,
   \begin{align}
     F(X,Y;n)&\geq (-Y)+\left(p+\frac{3}{2}+n-(2p+1)s\right)Y\\\nonumber
    &=\left(-p-\frac{1}{2}-n+(2p+1)s\right)(-Y),
\end{align}
if $-p-\frac{1}{2}-n+(2p+1)s>0$, we obtain that the property (\ref{formula-F(X,Y)2}) holds in this case.

Given $n\in \mathbb{Z}$, we introduce the region $U_n$ as follows.
\begin{small}
    \begin{align}
    U_n=\left\{(t,s)\in D_0\bigg|\frac{p+n+1-t}{2p-1}<s<\frac{p+n+t}{2p-1}, \frac{p+n+t}{2p}<s<\frac{p+2+n-t}{2p}\right\}.
\end{align}
\end{small}
\begin{remark} \label{remark-Un}
   For $n\geq p-1$ or $n\leq -(p+1)$, $U_n=\emptyset$. In fact, for $n\geq p-1$, we have 
       $\frac{p+n+1-t}{2p-1}>\frac{p+2+n-t}{2p}$ on $D_0$, and for $n\leq -(p+1)$, we have 
        $\frac{p+n+t}{2p}>\frac{p+n+t}{2p-1}$ on $D_0$. Hence $U_n=\emptyset$.  
\end{remark}

From the above discussions, together with Lemma \ref{lemma-Li2}, we have
\begin{lemma} 
For any $(t,s)\in D_0$, 

(i) if $(t,s)\in U'_n$, then we have 
\begin{align}
    f(X,Y;n)\rightarrow \infty \ \text{as } \ X^2+Y^2\rightarrow +\infty.
\end{align}

(ii) if $(t,s)\notin \overline{U'_n}$, then we have 
\begin{align}
    f(X,Y;n)\rightarrow -\infty \ \text{in some directions of} \  X^2+Y^2\rightarrow +\infty.
\end{align}

(iii) if $(t,s)\in \partial{U'_n}$, then we have 
\begin{align}
    f(X,Y;n)\rightarrow 0 \  \text{in some directions of} \  X^2+Y^2\rightarrow +\infty.
\end{align}
\end{lemma}
\begin{proof}
When $-p\leq n\leq p-2$, $U_n\neq \emptyset$.  For any $(t,s)\in U_n$, it satisfies $\frac{p+\frac{1}{2}+n}{2p+1}<s<\frac{p+\frac{5}{2}+n}{2p+1}$. It follows that if $(t,s)\in U_n$, then all the inequalities in the eight region of the above analysis for the function $F(X,Y;n)$ 
are satisfied. Hence, for $(t,s)\in U_n$, we have $F(X,Y;n)\rightarrow \infty \ \text{as } \ X^2+Y^2\rightarrow +\infty.$ So Lemma \ref{lemma-Li2}, we obtain (i).  

As to (ii) and (iii), if $(t,s)\notin U_n$, then $(t,s)$ does not satisfy at least one of the inequality in the definition of $U_n$.  For example, suppose $\frac{p+1+n-t}{2p-1}\geq s$, by the case (V) in the previous discuss for the function $F(X,Y;n)$, when $X-Y\leq 0$ and $Y\leq 0$, we have 
\begin{align}
    F(X,Y;n)=\left(\frac{1}{2}-t\right)X+\left(p+\frac{1}{2}+n-(2p-1)s\right)Y.
\end{align}
Let $X=Y\rightarrow -\infty$, then $F(Y,Y;n)=(p+1+n-t-(2p-1)s)Y\rightarrow -\infty$ if $p+1+n-t-(2p-1)s>0$ and 
$F(Y,Y;n)=(p+1+n-t-(2p-1)s)Y\rightarrow 0$ if $p+1+n-t-(2p-1)s=0$. By Lemma \ref{lemma-Li2}, we obtain $f(X,Y;n)\rightarrow -\infty$ or $0$ in the direction $X=Y\rightarrow -\infty$ when $p+1+n-t-(2p-1)s\geq 0$. 
\end{proof}

\subsection{Proof of Lemma \ref{lemma-volumeestimate}}  \label{appendix-2}
In this section, we prove the Lemma \ref{lemma-volumeestimate} which gives the estimation of the critical value 
\begin{align}
        \zeta_{\mathbb{R}}(p)=Re V(p,t_0,s_0), 
\end{align}
where $(t_0,s_0)$ is the unique solution of the equations
\begin{align}  
    \frac{\partial V(p,t,s)}{\partial t}&=-2\pi\sqrt{-1}+3\log(1-e^{2\pi\sqrt{-1}t})\\\nonumber
    &-\log(1-e^{2\pi\sqrt{-1}(t+s)})-\log(1-e^{2\pi\sqrt{-1}(t-s)})=0,
\end{align}
\begin{align} 
    \frac{\partial V(p,t,s)}{\partial s}&=(4p+2)\pi\sqrt{-1}s-(2p+3)\pi\sqrt{-1}\\\nonumber
    &-\log(1-e^{2\pi\sqrt{-1}(t+s)})+\log(1-e^{2\pi\sqrt{-1}(t-s)})=0. 
\end{align}

By expanding the above equations, we obtain the expansions of $t(\gamma)$ and $s(\gamma)$ as follows.
\begin{align}
    t(\gamma)&=\frac{\log(1-2\sqrt{-1})}{2\pi\sqrt{-1}}+1+\frac{(1+2\sqrt{-1})\pi}{40}\gamma^2+\frac{(3+\sqrt{-1})\pi}{80}\gamma^3\\\nonumber
    &+(\frac{180\pi+19\pi^3}{9600}-\frac{45\pi-4\pi^3}{4800}\sqrt{-1})\gamma^4+O(\gamma^5),\\\nonumber
    s(\gamma)&=\frac{1}{2}+\frac{1}{2}\gamma+\frac{1-\sqrt{-1}}{8}\gamma^2-\frac{\sqrt{-1}}{16}\gamma^3\\\nonumber
    &-(\frac{1}{61}+\frac{3+\pi^2}{192}\sqrt{-1})\gamma^4+O(\gamma^5). 
\end{align}

By using Taylor expansion, we obtain 
\begin{align}
 2\pi V(p,t(\gamma),s(\gamma))&=v_8-(p+\frac{23}{4})\pi^2\sqrt{-1}-\pi^2 \sqrt{-1}\gamma-\pi^2\frac{1+\sqrt{-1}}{4}\gamma^2\\\nonumber
 &-\pi^2\frac{1}{8}\gamma^3-\pi^2\frac{6+\pi^2-6\sqrt{-1}}{192}\gamma^4+O(\gamma^5).   
\end{align}
Therefore
\begin{align}
    2\pi \zeta_{\mathbb{R}}(p)=2\pi Re V(p,t(\gamma),s(\gamma))=v_8-\pi^2\frac{1}{4}\gamma^2-\pi^2\frac{1}{8}\gamma^3-\pi^2 \frac{6+\pi^2}{192}\gamma^4+O(\gamma^5).
\end{align}

Actually, we will prove the following stronger estimate which may have independent interests.
\begin{align} \label{estimates}
    2\pi \zeta_{\mathbb{R}}(p)\geq v_8-\frac{25\pi^2}{64}\frac{1}{p^2},
\end{align}
where $v_8$ denotes the volume of the ideal regular octahedron, i.e. $v_8\approx 3.66386$.

For the case $p\leq 100$, one can check the above formula by direct computation.  So in the following, we always assume $p\geq 100$, putting $\gamma=\frac{1}{p}$, we regard $(t_0,s_0)$ as a function of $\gamma$, and the denote it by $(t(\gamma),s(\gamma))$,  then 
\begin{align}
    \zeta_{\mathbb{R}}(p)=Re V(p,t(\gamma),s(\gamma)). 
\end{align}
For convenience, we write $s(\gamma)$ as
\begin{align}
    s(\gamma)&=\frac{1}{2}+\widehat{s}(\gamma)\gamma\\\nonumber
    \widehat{s}(\gamma)&=\widehat{s}_R(\gamma)-\sqrt{-1}\widehat{s}_I(\gamma).
\end{align}

Since $(t(\gamma),s(\gamma))$ satisfies the critical point equations, we have
\begin{align}
\frac{dV(p,t(\gamma),s(\gamma))}{d\gamma}&=V_\gamma(p,t(\gamma),s(\gamma))=2\pi\sqrt{-1}\frac{1}{\gamma^2}s(\gamma)(1-s(\gamma))\\\nonumber
&=2\pi\sqrt{-1}\left(\frac{1}{4\gamma^2}-\widehat{s}(\gamma)\right).  
\end{align}


Then we have
\begin{align}
\frac{d\text{Re}V(p,t(\gamma),s(\gamma))}{d\gamma}=-8\pi^2 \widehat{s}_I(\gamma) \widehat{s}_R(\gamma)>-\frac{25}{32}\pi^2\gamma,   
\end{align}
if $ \widehat{s}_{R}(\gamma)\widehat{s}_{I}(\gamma)<\frac{25}{256}\gamma $. It follows that
\begin{align}
2\pi \text{Re}V(p,t(\gamma),s(\gamma))>v_8-\frac{25\pi^2}{64}\gamma^2
\end{align}
which is just the estimate (\ref{estimates}).

Therefore, we only need to prove
\begin{align} \label{equation-range}
   0.12\gamma
<\widehat{s}_{I}(\gamma)<0.13\gamma,\frac{1}{2}+0.12\gamma
<\widehat{s}_{R}(\gamma)<\frac{1}{2}+0.13\gamma, 
\end{align}
since it follows that
\begin{align}
\widehat{s}_{I}(\gamma)\widehat{s}_{R}(\gamma)<(\frac
{1}{2}+0.13\gamma)0.13\gamma<\frac{25}{256}\gamma. 
\end{align}
 
In the following, we prove Proposition \ref{prop-range} which implies (\ref{equation-range}).

Let $x=e^{2\pi\sqrt{-1}t}$ and $y=e^{2\pi\sqrt{-1}s}$, from the critical point equation, we have 
\begin{align}
  (1-y)(1-y^{2p})^{3}=y^{2p}(1+y)^{2}(1+y^{2p-1}),   
\end{align}
which gives
\begin{align} \label{equationy}
    &\left(  y^{3p+\frac{1}{2}}+y^{-(3p+\frac{1}{2})}\right)  -\left(
y^{3p-\frac{1}{2}}+y^{-(3p-\frac{1}{2})}\right)  -4\left(  y^{p+\frac{1}{2}
}+y^{-(p+\frac{1}{2})}\right) \\\nonumber
&+\left(  y^{p-\frac{1}{2}}+y^{-(p-\frac{1}{2}
)}\right)  -\left(  y^{p-\frac{3}{2}}+y^{-(p-\frac{3}{2})}\right)=0.
\end{align}

The real and imaginary parts of (\ref{equationy}) give
\begin{align} \label{equation-realpart}
&\sinh((6+\gamma)\pi\widehat{\widehat{s}}_{I}(\gamma))\cos
((6+\gamma)\pi\widehat{\widehat{s}}_{R}(\gamma))+\sinh((6-\gamma)\pi
\widehat{\widehat{s}}_{I}(\gamma))\cos((6-\gamma)\pi\widehat{\widehat{s}}%
_{R}(\gamma))\\\nonumber
&-4\sinh((2+\gamma)\pi\widehat{\widehat{u}}_{I}(\gamma))\cos((2+\gamma
)\pi\widehat{\widehat{u}}_{R}(\gamma))-\sinh((2-\gamma)\pi\widehat{\widehat
{u}}_{I}(\gamma))\cos((2-\gamma)\pi\widehat{\widehat{u}}_{R}(\gamma))\\\nonumber
&-\sinh((2-3\gamma)\pi\widehat{\widehat{u}}_{I}(\gamma))\cos((2-3\gamma
)\pi\widehat{\widehat{u}}_{R}(\gamma))=0
\end{align}
and 
\begin{align} \label{equation-imaginary}
&\cosh((6+\gamma)\pi\widehat{\widehat{s}}_{I}(\gamma
))\sin((6+\gamma)\pi\widehat{\widehat{s}}_{R}(\gamma))+\cosh((6-\gamma
)\pi\widehat{\widehat{s}}_{I}(\gamma))\sin((6-\gamma)\pi\widehat{\widehat{s}%
}_{R}(\gamma))\\\nonumber 
&-4\cosh((2+\gamma)\pi\widehat{\widehat{s}}_{I}(\gamma))\sin((2+\gamma
)\pi\widehat{\widehat{s}}_{R}(\gamma))-\cosh((2-\gamma)\pi\widehat{\widehat
{s}}_{I}(\gamma))\sin((2-\gamma)\pi\widehat{\widehat{s}}_{R}(\gamma))\\\nonumber
&-\cosh((2-3\gamma)\pi\widehat{\widehat{s}}_{I}(\gamma))\sin((2-3\gamma
)\pi\widehat{\widehat{s}}_{R}(\gamma))=0.
\end{align}

First, we consider the real part. Let
\begin{align}
    F(x,y,\gamma)&=\sinh((6+\gamma)\pi x)\cos((6+\gamma)\pi y)+\sinh((6-\gamma)\pi
x)\cos((6-\gamma)\pi y)\\\nonumber
&-4\sinh((2+\gamma)\pi x)\cos((2+\gamma)\pi y)-\sinh((2-\gamma)\pi
x)\cos((2-\gamma)\pi y)\\\nonumber
&-\sinh((2-3\gamma)\pi x)\cos((2-3\gamma)\pi y).
\end{align}

First, we establish the following estimations. 
\begin{lemma} \label{Lemmafive}
For $\gamma\leq\gamma_{0}=\frac{1}{100}$ and $c\in [0.12,0.13]$, 
we have the following estimates:
\begin{align} \label{identity1}
 &\pi c\gamma(6+\gamma+(36c^{2}-4.4814449672-0.002684392992\pi^{2}c^{2})\pi
^{2}\gamma^{2})\\\nonumber
&<\sinh(\pi(6c\gamma+c\gamma^{2}))\cos(\pi(1.22\gamma
+0.12\gamma^{2}));   
\end{align}
\begin{align}
  &\pi c\gamma(6-\gamma+(\frac{5.99^{3}}{6}c^{2}-0.1452)\pi^{2}\gamma^{2}%
)\\\nonumber
&<\sinh(\pi(6c\gamma-c\gamma^{2}))\cos(\pi(0.22\gamma-0.12\gamma^{2}));  
\end{align}
\begin{align} \label{identity3}
&\sinh(\pi(2c\gamma+c\gamma^{2}))\cos(\pi(0.74\gamma+0.12\gamma^{2}))\\\nonumber
&<\pi
c\gamma(2+\gamma+(1.35411021675c^{2}-0.5476)\pi^{2}\gamma^{2});
\end{align}
\begin{align}
&\sinh(\pi(2c\gamma-c\gamma^{2}))\cos(\pi(0.26\gamma+0.12\gamma^{2}))\\\nonumber
&<\pi
c\gamma(2-\gamma+(1.334c^{2}-0.0676)\pi^{2}\gamma^{2});
\end{align}
\begin{align}
& \sinh(\pi(2c\gamma-3c\gamma^{2}))\cos(\pi(1.26\gamma+0.36\gamma^{2}))\\\nonumber
&<\pi
c\gamma(2-3\gamma+(1.334c^{2}-1.57272192\\\nonumber
&+0.0003050686030133\pi
-0.00010589292\pi^{2}c^{2})\pi^{2}\gamma^{2}).  
\end{align}
\end{lemma}
\begin{proof}
We only provide the proof of (\ref{identity1}) and (\ref{identity3}), the others can be proved similarly.  Note that 
$\sinh x\geq x+\frac{1}{6}x^{3}$, hence
\begin{align}
\pi c\gamma\left(  6+\gamma+36\pi^{2}c^{2}\gamma^{2}\right)  &=\pi(6c\gamma+c\gamma^{2})+\frac{\pi^{3}(6c\gamma)^{3}}{6}\\\nonumber
&\leq\pi(6c\gamma+c\gamma^{2})+\frac{\pi^{3}(6c\gamma+c\gamma^{2})^{3}}{6}\\\nonumber
&\leq\sinh(\pi(6c\gamma+c\gamma^{2})).
\end{align}

Note that $\cos x\geq 1-\frac{x^{2}}{2}$, hence

\begin{align}
&1-\frac{148.84}{200}\pi^{2}\gamma^{2}-\frac{2929.44}{20000}\pi^{2}\gamma^{3}\\\nonumber
&=1-\frac{1}{200}\pi^{2}\gamma^{2}(148.84+29.28\gamma+\frac{1.44}{100}%
\gamma)\\\nonumber
&\leq1-\frac{1}{200}\pi^{2}\gamma^{2}(148.84+29.28\gamma+1.44\gamma
^{2})\\\nonumber
&=1-\frac{1}{2}\pi^{2}(1.22\gamma+0.12\gamma^{2})^{2}\\\nonumber
&<\cos(\pi(1.22\gamma+0.12\gamma^{2})).
\end{align}
Then we obtain

\begin{align}
&\pi c\gamma(6+\gamma+(36c^{2}-4.4814449672-0.002684392992\pi^{2}c^{2})\pi
^{2}\gamma^{2})\\\nonumber
&=\pi c\gamma(6+\gamma+36\pi^{2}c^{2}\gamma^{2}-\frac{446.52}{100}\pi
^{2}\gamma^{2}-\frac{16230.32}{1000000}\pi^{2}\gamma^{2}\\\nonumber
&-\frac{1339.56}%
{500000}\pi^{4}c^{2}\gamma^{2}-\frac{2929.44}{200000000}\pi^{2}\gamma
^{2}-\frac{26364.96}{5000000000}\pi^{4}c^{2}\gamma^{2})\\\nonumber
&\leq\pi c\gamma(6+\gamma+36\pi^{2}c^{2}\gamma^{2}-\frac{446.52}{100}\pi
^{2}\gamma^{2}-\frac{16230.32}{10000}\pi^{2}\gamma^{3}\\\nonumber
&-\frac{1339.56}{50}%
\pi^{4}c^{2}\gamma^{4}-\frac{2929.44}{20000}\pi^{2}\gamma^{4}-\frac
{26364.96}{5000}\pi^{4}c^{2}\gamma^{5})\\\nonumber
&=\pi c\gamma\left(  6+\gamma+36\pi^{2}c^{2}\gamma^{2}\right)  (1-\frac
{148.84}{200}\pi^{2}\gamma^{2}-\frac{2929.44}{20000}\pi^{2}\gamma^{3})\\\nonumber
&<\sinh(\pi(6c\gamma+c\gamma^{2}))\cos(\pi(1.22\gamma+0.12\gamma^{2})).
\end{align}
We finish the proof of (\ref{identity1}).

For $0.12\leq c\leq0.13$ and $0\leq\gamma\leq\frac{1}{100}$, we have
\begin{align}
&\pi(2c\gamma+c\gamma^{2})<\pi(2\times0.13\frac{1}{100}+0.13\frac{1}%
{10000})\\\nonumber
&=\pi(\frac{2.6}{1000}+\frac{2.6}{200000})=0.00820898<0.031621459.
\end{align}
and by 
$\sinh x=x+\frac{\cosh\xi}{6}x^{3}<x+\frac{\cosh x}{6}x^{3}<x+\frac{667}%
{4000}x^{3}$ for $0<\xi<x$ and $\cosh x<\frac{2001}{2000}$ if $0<x<0.031621459$.
Then
\begin{align}
&\pi(2c\gamma+c\gamma^{2})\leq\sinh(\pi(2c\gamma+c\gamma^{2}))<\pi
(2c\gamma+c\gamma^{2})+\frac{667}{4000}\pi^{3}(2c\gamma+c\gamma^{2})^{3}\\\nonumber
&\leq\pi(2c\gamma+c\gamma^{2})+\frac{667}{4000}\pi^{3}(2c\gamma+\frac{1}%
{100}c\gamma)^{3}=\pi c\gamma(2+\gamma+\frac{667\times2.01^{3}}{4\times10^{3}%
}\pi^{2}c^{2}\gamma^{2})
\end{align}

For $0.12\leq c\leq0.13$ and $0\leq\gamma\leq\frac{1}{100}$, we have

$\pi(0.74\gamma+0.12\gamma^{2})\leq\pi(0.74\frac{1}{100}+0.12\frac{1}%
{10000})=\pi(\frac{7.4}{1000}+\frac{1.2}{100000})=0.0232855<0.03$,
and by 
$\cos x=1-\frac{x^{2}}{2}+\frac{\sin\xi}{6}x^{3}<1-\frac{x^{2}}{2}+\frac{\sin
x}{6}x^{3}<1-\frac{x^{2}}{2}+\frac{1}{200}x^{3}$ for $0<\xi<x$ and $\sin x<0.03$ if $0<x<0.03$.

Then we have
$\cos(\pi(0.74\gamma+0.12\gamma^{2}))$

$<1-\frac{1}{2}\pi^{2}(0.74\gamma+0.12\gamma^{2})^{2}+\frac{1}{200}\pi
^{3}(0.74\gamma+0.12\gamma^{2})^{3}=1-\frac{1}{200}\pi^{2}\gamma
^{2}(54.76+17.76\gamma+1.44\gamma^{2})+\frac{1}{2\times10^{5}}\pi^{3}%
\gamma^{3}(7.4+1.2\gamma)^{3}$

$<1-\frac{1}{200}\pi^{2}\gamma^{2}(54.76+17.76\gamma)+\frac{1}{2\times10^{5}%
}\pi^{3}\gamma^{3}(7.4+\frac{1.2}{100})^{3}=1-\frac{54.76}{200}\pi^{2}%
\gamma^{2}-(\frac{8.88}{100}\pi^{2}-\frac{7.412^{3}}{2\times10^{5}}\pi
^{3})\gamma^{3}$

Hence, we have
$\sinh(\pi(2c\gamma+c\gamma^{2}))\cos(\pi(0.74\gamma+0.12\gamma^{2}))$

$<(\pi(2c\gamma+c\gamma^{2})+\frac{667\times201^{3}}{4\times10^{9}}\pi
^{3}c^{3}\gamma^{3})(1-\frac{54.76}{200}\pi^{2}\gamma^{2}-(\frac{8.88}{100}%
\pi^{2}-\frac{7.412^{3}}{2\times10^{5}}\pi^{3})\gamma^{3})$

$=\pi c\gamma(2+\gamma+\frac{667\times2.01^{3}}{4000}\pi^{2}c^{2}\gamma
^{2}-\frac{54.76}{100}\pi^{2}\gamma^{2}-\frac{54.76}{200}\pi^{2}\gamma
^{3}-2(\frac{8.88}{100}\pi^{2}-\frac{7.412^{3}}{2\times10^{5}}\pi^{3}%
)\gamma^{3}$

$-\frac{667\times54.76\times201^{3}}{8\times10^{11}}\pi^{4}c^{2}\gamma
^{4}-(\frac{8.88}{100}\pi^{2}-\frac{741.2^{3}}{2\times10^{11}}\pi^{3}%
)\gamma^{4}-\frac{667\times201^{3}}{4\times10^{9}}(\frac{8.88}{100}\pi
^{2}-\frac{7.412^{3}}{2\times10^{5}}\pi^{3})\pi^{2}c^{2}\gamma^{5})$

$<\pi c\gamma(2+\gamma+(1.35411021675c^{2}-0.5476)\pi^{2}\gamma^{2})$
which is (\ref{identity3}).

\end{proof}

Now, by using Lemma \ref{Lemmafive}, we have 
\begin{lemma} \label{lemma-F}
(1)$F(x,y,\gamma)<0$ on the edge $0.12\gamma\leq x\leq0.13\gamma,y=\frac{1}%
{2}+0.12\gamma$ of a rectangle $[0.12\gamma,0.13\gamma]\times\lbrack\frac
{1}{2}+0.12\gamma,\frac{1}{2}+0.13\gamma]$ (acutally a square) for small
$\gamma\leq\frac{1}{100}$;

(2) $F(x,y,\gamma)>0$ on the edge $0.12\gamma\leq x\leq0.13\gamma,y=\frac
{1}{2}+0.13\gamma$ of a rectangle $[0.12\gamma,0.13\gamma]\times\lbrack
\frac{1}{2}+0.12\gamma,\frac{1}{2}+0.13\gamma]$ (acutally a square) for small
$\gamma\leq\frac{1}{100}$.
\end{lemma}
\begin{proof}
We only prove (1). The case (2) can be proved similarly. 

   By the definition of $F(x,y,\gamma)$, we have

\begin{align}
 &F(c\gamma,\frac{1}{2}+0.12\gamma,\gamma)\\\nonumber
 &=-\sinh(\pi(6c\gamma+c\gamma^{2}))\cos(\pi(1.22\gamma+0.12\gamma^{2}%
))-\sinh(\pi(6c\gamma-c\gamma^{2}))\cos(\pi(0.22\gamma-0.12\gamma^{2}))\\\nonumber
&+4\sinh(\pi(2c\gamma+c\gamma^{2}))\cos(\pi(0.74\gamma+0.12\gamma^{2}%
))+\sinh(\pi(2c\gamma-c\gamma^{2}))\cos(\pi(0.26\gamma+0.12\gamma^{2}%
))\\\nonumber
&+\sinh(\pi(2c\gamma-3c\gamma^{2}))\cos(\pi(1.26\gamma+0.36\gamma^{2}))
\end{align}
By using the estimates in Lemma \ref{Lemmafive}, we have

\begin{align}
    &F(c\gamma,\frac{1}{2}+0.12\gamma,\gamma)<-\pi c\gamma(6+\gamma+(36c^{2}-4.4814449672-0.002684392992\pi^{2}c^{2}%
)\pi^{2}\gamma^{2})\\\nonumber
&-\pi c\gamma(6-\gamma+(\frac{5.99^{3}}{6}c^{2}%
-0.1452)\pi^{2}\gamma^{2})+4\pi c\gamma(2+\gamma+(1.35411021675c^{2}%
-0.5476)\pi^{2}\gamma^{2})\\\nonumber
&+\pi c\gamma(2-\gamma+(1.334c^{2}-0.0676)\pi^{2}\gamma^{2})+\pi
c\gamma(2-3\gamma+(1.334c^{2}-1.57272192\\\nonumber
&+0.0003050686030133\pi
-0.00010589292\pi^{2}c^{2})\pi^{2}\gamma^{2})\\\nonumber
&=(0.7959230472+0.0003050686030133\pi+(0.002578500072\pi^{2}%
-27.915559133-\frac{5.99^{3}}{6})c^{2})\pi^{3}c\gamma^{3}
\end{align}
which is less than $0$ at $c\in [0.12,0.13]$ by straightforward computations.  
\end{proof}

Similarly, for the imaginary part,  we introduce the function
\begin{align}
   G(x,y,\gamma)&=\cosh((6+\gamma)\pi x)\sin((6+\gamma)\pi y)+\cosh((6-\gamma)\pi
x)\sin((6-\gamma)\pi y)\\\nonumber
&-4\cosh((2+\gamma)\pi x)\sin((2+\gamma)\pi y)\\\nonumber
&-\cosh((2-\gamma)\pi x)\sin((2-\gamma)\pi y)-\cosh((2-3\gamma)\pi
x)\sin((2-3\gamma)\pi y).
\end{align}
we also have
\begin{lemma} \label{lemma-G}
$G(x,y,\gamma)>0$ on the edge $x=0.12\gamma,\frac{1}{2}+0.12\gamma\leq y\leq\frac
{1}{2}+0.13\gamma$ of a rectangle $[0.12\gamma,0.13\gamma]\times\lbrack
\frac{1}{2}+0.12\gamma,\frac{1}{2}+0.13\gamma]$ (acutally a square) for small
$\gamma\leq\frac{1}{100}$;
$G(x,y,\gamma)<0$ on the edge $x=0.13\gamma,\frac{1}{2}+0.12\gamma\leq y\leq\frac
{1}{2}+0.13\gamma$ of a rectangle $[0.12\gamma,0.13\gamma]\times\lbrack
\frac{1}{2}+0.12\gamma,\frac{1}{2}+0.13\gamma]$ (acutally a square) for small
$\gamma\leq\frac{1}{100}$.
\end{lemma}

\begin{proposition} \label{prop-range}
Equations (\ref{equation-realpart}) and (\ref{equation-imaginary}) has a unique solution $(\widehat{\widehat{s}}_{I}(\gamma),\widehat{\widehat
{s}}_{R}(\gamma))$ in the region of $(\widehat{\widehat{s}}_{I}(\gamma
),\widehat{\widehat{s}}_{R}(\gamma))\in\lbrack0.12\gamma,0.13\gamma
]\times\lbrack\frac{1}{2}+0.12\gamma,\frac{1}{2}+0.13\gamma]$ for small
$\gamma\leq\frac{1}{100}$.
\end{proposition}
\begin{proof}
For the existence, by Lemma \ref{lemma-F} and \ref{lemma-G}, we use the Poincare-Miranda Theorem
(generalization of Intermediate value theorem),
thus there exists a solution of $(\widehat{\widehat{s}}_{I}(\gamma
),\widehat{\widehat{s}}_{R}(\gamma))$ in the region of $(\widehat{\widehat{s}%
}_{I}(\gamma),\widehat{\widehat{s}}_{R}(\gamma))\in\lbrack0.12\gamma
,0.13\gamma]\times\lbrack\frac{1}{2}+0.12\gamma,\frac{1}{2}+0.13\gamma]$ for
small $\gamma\leq\frac{1}{100}$. 
From the range of $\widehat{\widehat{s}}_R(\gamma)$, and by Proposition \ref{prop-critcalequation}, it follows that such  solution is unique in this range. 

\end{proof}

\subsection{Proof of Proposition \ref{prop-saddleonedim}}  \label{appendix-onesaddle}
For a fixed constant $c\in \mathbb{R}$, we define the subset 
\begin{align}
    D_0(c)=\{(t,s)\in D_0| s=c\}.
\end{align}
We prove Proposition \ref{prop-saddleonedim} by proving Proposition \ref{prop-saddleonedim1} and Proposition \ref{prop-saddleonedim2} in the following.
\begin{proposition} \label{prop-saddleonedim1}
    For $c_{upper}(p)\leq c<1$ and $n\in \mathbb{Z}$, there exists a constant $C$ independent of $c$, such that
    \begin{align}
        |\int_{D_0(c)} \psi(t,c)\sin(2\pi c)e^{(N+\frac{1}{2})V_N(p,t,c;0,n)}dt|<Ce^{(N+\frac{1}{2})\left(\zeta_{\mathbb{R}}(p)-\epsilon\right)}. 
    \end{align}
\end{proposition}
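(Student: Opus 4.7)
The plan is to reduce the two-dimensional integral on the slice $s=c$ to a one-dimensional saddle-point estimate in $t$, and to show that the resulting critical value lies strictly below $\zeta_{\mathbb{R}}(p)$ by a uniform gap $\epsilon>0$ whenever $c\in[c_{upper}(p),1)$. The definition of $c_{upper}(p)$ is precisely calibrated to the quadratic Taylor envelope of the slice maximum $\mu(c):=\max_{t}v(t,c)$ around its 2D maximum at $c\approx\tfrac{1}{2}$, so the argument is essentially a one-dimensional version of the contour deformation used in Section \ref{subsection-m=0np}.

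\textbf{Step 1 (Reduction to the real-part function on the slice).} By Lemma \ref{lemma-Vr}, $V_N(p,t,c;0,n)=V(p,t,c;0,n)+O(1/N)$ uniformly on $D_0'(c)$, so the claimed bound follows from the same bound on $\bigl|\int_{D_0'(c)}e^{(N+\frac12)V(p,t,c;0,n)}dt\bigr|$. For real $t,c$, the parts of $V$ proportional to $\pi\sqrt{-1}$ and to $\pi^2/(6\cdot 2\pi\sqrt{-1})$ are purely imaginary, so
\begin{equation*}
\mathrm{Re}\,V(p,t,c;0,n)\big|_{t\in\mathbb{R}}=v(t,c)=\Lambda(t+c)+\Lambda(t-c)-3\Lambda(t),
\end{equation*}
which is independent of both $n$ and $p$.

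\textbf{Step 2 (The key slice bound).} Using the Taylor expansions of $(t(\gamma),s(\gamma))$ and of $2\pi V(p,t(\gamma),s(\gamma))$ from Appendix \ref{appendix-2} (with $\gamma=1/p$), a standard concavity argument about the 2D maximum of $v$ near $s_0\approx\tfrac{1}{2}$ yields
\begin{equation*}
\mu(c)\;\le\;\frac{v_8}{2\pi}-\frac{\pi}{2}\bigl(c-\tfrac{1}{2}\bigr)^{2}+O\!\bigl((c-\tfrac{1}{2})^{3}\bigr),
\end{equation*}
with the inequality strict away from $c=\tfrac12$. The definition
\begin{equation*}
c_{upper}(p)=\frac{1}{\pi}\bigl(v_{8}-2\pi\zeta_{\mathbb{R}}(p)\bigr)^{1/2}+\frac{1}{2}
\end{equation*}
is exactly the value at which $\tfrac{v_8}{2\pi}-\tfrac{\pi}{2}(c-\tfrac12)^2=\zeta_{\mathbb{R}}(p)$. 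Combined with the strict concavity correction above and the continuity of $\mu$ on the compact set $[c_{upper}(p),1-\delta]$, this gives a uniform constant $\epsilon>0$ with
\begin{equation*}
\mu(c)\le\zeta_{\mathbb{R}}(p)-\epsilon,\qquad c\in[c_{upper}(p),1).
\end{equation*}

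\textbf{Step 3 (One-dimensional saddle-point deformation).} The one-variable critical equation $\partial V(p,t,c;0,n)/\partial t=0$, namely
\begin{equation*}
-2\pi\sqrt{-1}+3\log(1-e^{2\pi\sqrt{-1}t})-\log(1-e^{2\pi\sqrt{-1}(t+c)})-\log(1-e^{2\pi\sqrt{-1}(t-c)})=0,
\end{equation*}
admits a unique complex solution $t_\ast(c)$ near the real interval $D_0'(c)$; this follows by a one-dimensional analog of the argument proving Proposition \ref{prop-critical}, using Lemma \ref{lemma-HessXY} to get strict concavity of $f(X,0;\,\cdot\,,0)$ in $X$. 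Deform $D_0'(c)$ in the complex $t$-plane through $t_\ast(c)$ along the steepest-descent direction of $\mathrm{Re}\,V$, using the flow $-\partial f/\partial X$ (as in the proof of Proposition \ref{prop-mgeq1-leg-2}) at the endpoints. Along this deformed contour $\mathrm{Re}\,V\le\mathrm{Re}\,V(p,t_\ast(c),c;0,n)=\mu(c)+O(1/N)$, and at the endpoints $\mathrm{Re}\,V<\zeta_{\mathbb{R}}(p)-\epsilon$ by the boundary definition of $D_0'$ and Lemma \ref{lemma-regionD'0}. The one-variable version of Proposition \ref{proposition-saddlemethod}, valid by Remark \ref{remark-saddle}, then yields a prefactor of order $N^{-1/2}$ times $e^{(N+\frac12)\mu(c)}$, so by Step 2
\begin{equation*}
\Bigl|\int_{D_0'(c)}e^{(N+\frac12)V_N(p,t,c;0,n)}dt\Bigr|\le Ce^{(N+\frac12)(\zeta_{\mathbb{R}}(p)-\epsilon)},
\end{equation*}
where $C$ is uniform in $c\in[c_{upper}(p),1)$ because $\partial^{2}V/\partial t^{2}\big|_{t=t_\ast(c)}$ stays bounded away from zero on that compact range.

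\textbf{The main obstacle} is Step 2: extracting a \emph{uniform} $\epsilon$. This requires combining the quadratic Taylor envelope of $\mu(c)$ with the definition of $c_{upper}(p)$ and the lower bound on $\zeta_{\mathbb{R}}(p)$ from Lemma \ref{lemma-volumeestimate}, and then checking that the cubic and higher corrections to $\mu(c)$ give a strict positive slack on $[c_{upper}(p),1)$. The deformation in Step 3 is essentially the same flow construction used in Section \ref{subsection-m=0np}, restricted to a single $t$-variable.
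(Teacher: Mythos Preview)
Your argument has a genuine gap: you conflate the real-axis maximum $\mu(c)=\max_{t\in\mathbb{R}}v(t,c)$ with the value of $\mathrm{Re}\,V$ at the \emph{complex} critical point $t_\ast(c)$, and these differ substantially. Concretely, $v(3/4,1/2)=\Lambda(5/4)+\Lambda(1/4)-3\Lambda(3/4)=5\Lambda(1/4)=\tfrac{5}{4}\cdot\tfrac{v_8}{2\pi}$, so $\mu(1/2)\ge\tfrac{5}{4}\cdot\tfrac{v_8}{2\pi}>\tfrac{v_8}{2\pi}$. Hence your Step~2 envelope $\mu(c)\le\tfrac{v_8}{2\pi}-\tfrac{\pi}{2}(c-\tfrac12)^2+O(\ldots)$ is already false at $c=\tfrac12$, and since $c_{upper}(p)\to\tfrac12$ as $p\to\infty$ the conclusion $\mu(c)\le\zeta_{\mathbb{R}}(p)-\epsilon$ fails for all large $p$. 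The identification in Step~3, $\mathrm{Re}\,V(p,t_\ast(c),c;0,n)=\mu(c)$, is the same error in another guise: $t_\ast(c)$ is not real, and the saddle value there is strictly smaller than $\mu(c)$.

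What the paper does instead is compute the complex saddle \emph{and its value} in closed form. The slice equation $\partial_t V=0$ is a quadratic in $x=e^{2\pi i t}$, giving $x=1-2i\sin(\pi c)$ and $T_1(c)=1+\tfrac{1}{2\pi i}\log(1-2i\sin\pi c)$. One then checks directly that $\mathrm{Re}\,V(p,T_1(c),c;0,n)=2\Lambda(c/2)+2\Lambda(1/2-c/2)$, which equals $\tfrac{v_8}{2\pi}$ at $c=\tfrac12$, is strictly decreasing on $[\tfrac12,1)$, and has the convergent expansion $\tfrac{v_8}{2\pi}-\tfrac{\pi}{2}(c-\tfrac12)^2-\tfrac{\pi^3}{24}(c-\tfrac12)^4-\cdots$ with \emph{all} higher corrections negative. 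This sign-definiteness is exactly what produces the strict uniform gap at $c=c_{upper}(p)$; a ``standard concavity argument'' for $\mu(c)$ cannot replace this explicit computation, because $\mu(c)$ is simply too large near $c=\tfrac12$.
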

$D_{0}(c)$ is a slice of the region $D_0$, we will prove Proposition \ref{prop-saddleonedim} by using the saddle point method on $D_0(c)$.
Recall that
\begin{align}
    &V(p,t,s;m,n)=\pi \sqrt{-1}\left((2p+1)s^2-(2p+3+2n)s-(2m+2)t\right)\\\nonumber
    &+\frac{1}{2\pi\sqrt{-1}}\left(\text{Li}_2(e^{2\pi\sqrt{-1}(t+s)})+\text{Li}_2(e^{2\pi\sqrt{-1}(t-s)})-3\text{Li}_2(e^{2\pi\sqrt{-1}t})+\frac{\pi^2}{6}\right),
\end{align}
so we have
\begin{align}
    \frac{\partial V(p,t,s; 0,n)}{\partial t}&=-2\pi\sqrt{-1}+3\log(1-e^{2\pi\sqrt{-1}t})\\\nonumber
    &-\log(1-e^{2\pi\sqrt{-1}(t+s)})-\log(1-e^{2\pi\sqrt{-1}(t-s)}),
\end{align}
and
\begin{align}
    \frac{\partial V(p,t,s; 0,n)}{\partial s}&=(4p+2)\pi\sqrt{-1}s-(2p+3+2n)\pi\sqrt{-1}\\\nonumber
    &-\log(1-e^{2\pi\sqrt{-1}(t+s)})+\log(1-e^{2\pi\sqrt{-1}(t-s)}).
\end{align}

\begin{proposition} \label{prop-critical1}
    Fixing $s=c\in [\frac{1}{2},\frac{3}{4})$, as a function of $t$,  $V(p,t,c;0,n)$ has a unique critical point
    $T_1(c)$ with $t_1(c)=\text{Re}(T_1(c))\in (\frac{1}{2},1)$.     
    \end{proposition}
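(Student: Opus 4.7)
My plan is to reduce the critical equation to a single algebraic equation in $x = e^{2\pi\sqrt{-1}t}$ and read off its roots explicitly. Fixing $s = c$, the equation $\partial V(p,t,c;0,n)/\partial t = 0$ is independent of $n$ and reads
\begin{align*}
3\log(1-e^{2\pi\sqrt{-1}t}) = 2\pi\sqrt{-1} + \log(1-e^{2\pi\sqrt{-1}(t+c)}) + \log(1-e^{2\pi\sqrt{-1}(t-c)}).
\end{align*}
Exponentiating and setting $y = e^{2\pi\sqrt{-1}c}$ yields $(1-x)^3 = (1-xy)(1-x/y)$. Direct expansion of both sides and division by the nonzero factor $x$ collapses this to the quadratic
\begin{align*}
x^2 - 2x + (3 - y - y^{-1}) = 0,
\end{align*}
whose two roots are $x_{\pm} = 1 \pm (y^{1/2} - y^{-1/2})$ for any fixed choice of square root. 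In particular, there are at most two solutions to the critical equation modulo $\mathbb{Z}$-translation in $t$.

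Next, I would identify the unique root whose associated $t$-value has real part in $(1/2, 1)$. For $c \in [1/2, 1)$, the point $y = e^{2\pi\sqrt{-1}c}$ lies on the closed lower half of the unit circle; choosing $y^{1/2}$ with argument in $[\pi/2, \pi)$, we have $y^{-1/2} = \overline{y^{1/2}}$, so $y^{1/2} - y^{-1/2} = 2\sqrt{-1}\,\mathrm{Im}(y^{1/2})$ is purely imaginary with $\mathrm{Im}(y^{1/2}) > 0$. Consequently $x_- = 1 - 2\sqrt{-1}\,\mathrm{Im}(y^{1/2})$ has positive real part and strictly negative imaginary part, so its argument in the principal range lies in $(-\pi/2, 0)$. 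Writing $t = t_R + X\sqrt{-1}$, the identification $2\pi t_R \equiv \arg(x) \pmod{2\pi}$ and $X = -\log|x|/(2\pi)$ then gives a unique representative $T_1(c)$ with $t_1(c) = \mathrm{Re}(T_1(c)) \in (3/4, 1) \subset (1/2, 1)$, while the other root $x_+$ has argument in $(0, \pi/2)$ and produces $t_R \in (0, 1/4)$, outside the claimed interval.

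This yields both existence and uniqueness. The only subtlety I expect is branch bookkeeping: one must check that the three logarithms in $\partial V/\partial t$ are consistent with the single-valued exponentiation step above, so that the roots of the quadratic genuinely correspond to critical points of $V$ and not to extraneous ones. A sanity check at the endpoint $c = 1/2$ gives $x_- = 1 - 2\sqrt{-1}$ and $t_1(1/2) = 1 - \arctan(2)/(2\pi) \approx 0.824$, matching the numerical value $t_0$ recorded for $p = 100$ in the introduction, while as $c \to 1^{-}$ one has $y^{1/2} - y^{-1/2} \to 0$ and $t_1(c) \to 1^{-}$, never reaching the boundary; this confirms that $T_1(c)$ is well-defined for all $c \in [1/2, 1)$.
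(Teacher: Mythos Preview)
Your approach is essentially identical to the paper's: the paper also exponentiates, reduces to the quadratic $x^2-2x+3-C-C^{-1}=0$ with $C=e^{2\pi\sqrt{-1}c}$, obtains $x=1\pm 2\sqrt{-1}\sin(\pi c)$, and selects the root $x_-=1-2\sqrt{-1}\sin(\pi c)$ as the one whose $t$-representative has real part in $(\tfrac12,1)$.

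The one point on which the paper goes further is exactly the ``branch bookkeeping'' subtlety you flagged but did not resolve. Exponentiation only shows that any critical point maps to a root of the quadratic; conversely one must verify that $T_-(c)$ actually satisfies the original logarithmic equation, i.e.\ that
\[
3\arg(1-x_0)-\arg(1-Cx_0)-\arg(1-C^{-1}x_0)=2\pi
\]
rather than some other multiple of $2\pi$. The paper does this in a separate lemma by explicitly bounding each argument for $c\in[\tfrac12,1)$ and showing the sum lies in $(\tfrac\pi2,3\pi)$, forcing the multiple to be $1$. Your sanity check at $c=\tfrac12$ is a good start (and could be upgraded to a full proof via continuity, once you check none of the three quantities crosses the branch cut), but as written the verification is incomplete.
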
 
\begin{proof}
Consider the equation
\begin{align} \label{formula-equationonedim}
    \frac{dV(p,t,c;0,n)}{dt}&=-2\pi \sqrt{-1}+3\log(1-e^{2\pi\sqrt{-1}t})\\\nonumber
    &-\log(1-e^{2\pi\sqrt{-1}(t+c)})-\log(1-e^{2\pi\sqrt{-1}(t-c)})=0
\end{align}
which gives 
\begin{align}
    x^2-2x+3-C-\frac{1}{C}=0
\end{align}
where $x=e^{2\pi\sqrt{-1}t}$, $C=e^{2\pi\sqrt{-1}c}$.

So we obtain 
\begin{align}
    x=1\pm 2\sqrt{-1}\sin(\pi c). 
\end{align}
Let $T_\pm (c)$ be the solution determined by the equation 
\begin{align} \label{formula-equaitonT-1c}
    e^{2\pi\sqrt{-1}T_\pm (c)}=1\pm 2\sqrt{-1}\sin(\pi c). 
\end{align}
 From (\ref{formula-equaitonT-1c}), we have
\begin{align}
    T_\pm(c)=\frac{\log(1\pm 2\sqrt{-1}\sin(\pi c))}{2\pi\sqrt{-1}} +\mathbb{Z}.
\end{align}
Then 
\begin{align}
    \text{Re}(T_\pm(c))=\frac{\text{arg}(1\pm 2\sqrt{-1}\sin(\pi c))}{2\pi}+\mathbb{Z}. 
\end{align}

By $c\in [\frac{1}{2},1)$, we obtain 
\begin{align}
  0<\text{arg}(1+2\sqrt{-1}\sin(\pi c))<\arctan(2)<1.2, \\\nonumber
  -1.2<-\arctan(2)<\text{arg}(1-2\sqrt{-1}\sin(\pi c))<0. 
\end{align}
Therefore, one can see that only the solution $T_-(c)=\frac{\log(1-2\sqrt{-1}\sin(\pi c))}{2\pi\sqrt{-1}}+1$ satisfies that 
$\text{Re}(T_-(c))\in (\frac{1}{2},1)$. Moreover, by the following Lemma \ref{lemma-T1(c)iscritical}, we know that $T_-(c)$ satisfies the equation (\ref{formula-equationonedim}), so $T_-(c)$ is indeed a critical point of $V(p,t,c;0,n)$.  
In the following, we will denote $T_-(c)$ by $T_1(c)=t_1(c)+\sqrt{-1}X_1(c)$  for convenience.   
\end{proof}

\begin{lemma} \label{lemma-T1(c)iscritical}
$T_-(c)=\frac{\log(1-2\sqrt{-1}\sin(\pi c))}{2\pi\sqrt{-1}}+1$ satisfies the equation (\ref{formula-equationonedim}).
\end{lemma}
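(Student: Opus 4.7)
The plan is to substitute $t=T_-(c)$ directly into the logarithmic equation and verify the identity via explicit trigonometric factorizations, treating branches of $\log$ carefully. The key preliminary computation is that, since $e^{2\pi\sqrt{-1}T_-(c)}=1-2\sqrt{-1}\sin(\pi c)$, one has $1-e^{2\pi\sqrt{-1}T_-(c)}=2\sqrt{-1}\sin(\pi c)$, and, after expanding $2\sqrt{-1}\sin(\pi c)=e^{\pi\sqrt{-1}c}-e^{-\pi\sqrt{-1}c}$, the two other factors should reduce to the clean products
\begin{align*}
1-e^{2\pi\sqrt{-1}(T_-(c)+c)} &= (1-e^{\pi\sqrt{-1}c})(1-e^{2\pi\sqrt{-1}c}), \\
1-e^{2\pi\sqrt{-1}(T_-(c)-c)} &= (1+e^{-\pi\sqrt{-1}c})(1-e^{-2\pi\sqrt{-1}c}).
\end{align*}
Multiplying these two factorizations yields $-8\sqrt{-1}\sin^{3}(\pi c)=(1-e^{2\pi\sqrt{-1}T_-(c)})^{3}$, so at the level of exponentials the critical equation is verified; what remains is to lift this to the level of logarithms with correct branches.

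Next I would compute the principal logarithms term by term using the standard polar formulas
\begin{align*}
1-e^{\sqrt{-1}\theta} &= 2\sin(\theta/2)\,e^{\sqrt{-1}(\theta/2-\pi/2)}, \qquad \theta\in(0,2\pi),\\
1+e^{\sqrt{-1}\theta} &= 2\cos(\theta/2)\,e^{\sqrt{-1}\theta/2}, \qquad \theta\in(-\pi,\pi).
\end{align*}
For $c\in[\tfrac12,1)$, all the angles that arise (namely $\pi c/2-\pi/2$, $\pi c-\pi/2$, $-\pi c/2$, and $\pi/2-\pi c$) lie in $(-\pi,\pi)$, so the principal logarithm distributes over the above products without any $2\pi\sqrt{-1}$ shifts. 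Carrying this out, one finds
\begin{align*}
\log\bigl(1-e^{2\pi\sqrt{-1}(T_-(c)+c)}\bigr) &= \log\bigl(4\sin(\pi c/2)\sin(\pi c)\bigr)+\sqrt{-1}\bigl(3\pi c/2-\pi\bigr),\\
\log\bigl(1-e^{2\pi\sqrt{-1}(T_-(c)-c)}\bigr) &= \log\bigl(4\cos(\pi c/2)\sin(\pi c)\bigr)+\sqrt{-1}\bigl(\pi/2-3\pi c/2\bigr).
\end{align*}

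The final step is to assemble everything. Using $2\sin(\pi c/2)\cos(\pi c/2)=\sin(\pi c)$, the moduli combine to give $\log\bigl(8\sin^{3}(\pi c)\bigr)=3\log\bigl(2\sin(\pi c)\bigr)$, which matches the real part of $3\log(2\sqrt{-1}\sin(\pi c))=3\log(1-e^{2\pi\sqrt{-1}T_-(c)})$. The imaginary parts then contribute $3\pi/2-(3\pi c/2-\pi)-(\pi/2-3\pi c/2)=2\pi$, so that
\begin{align*}
3\log(1-e^{2\pi\sqrt{-1}T_-(c)})-\log(1-e^{2\pi\sqrt{-1}(T_-(c)+c)})-\log(1-e^{2\pi\sqrt{-1}(T_-(c)-c)})=2\pi\sqrt{-1},
\end{align*}
which is exactly the identity needed to cancel the $-2\pi\sqrt{-1}$ in the critical equation.

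The main obstacle will be the branch bookkeeping: one must confirm that each $\log$ above is indeed the principal value, and that combining the two factor logarithms into the log of the product introduces no spurious $2\pi\sqrt{-1}$ term. This is why the range $c\in[\tfrac12,1)$ matters — it keeps all the relevant arguments safely inside $(-\pi,\pi)$, ensuring that the naive additivity of $\log$ is valid and that the identity lands on the correct branch.
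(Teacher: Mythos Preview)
Your proof is correct and takes a genuinely different route from the paper. The paper argues in two steps: first it observes that the polynomial identity $x_0^2-2x_0+3-C-C^{-1}=0$ forces
\[
3\arg(1-x_0)-\arg(1-Cx_0)-\arg(1-C^{-1}x_0)=2k\pi
\]
for \emph{some} integer $k$, and then it pins down $k=1$ by crudely bounding each of the three arguments over the range $c\in[\tfrac12,1)$ (getting $3\arg(1-x_0)=\tfrac{3\pi}{2}$ and the other two in $[-\tfrac{\pi}{4},\tfrac{\pi}{2})$ and $(-\tfrac{\pi}{2},\tfrac{\pi}{2})$ respectively). Your approach instead produces exact closed forms: you factor $1-e^{2\pi\sqrt{-1}(T_-(c)\pm c)}$ into products of unimodular-shifted linear pieces and read off each principal logarithm explicitly via polar formulas, then add. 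This is more computational but has the bonus that your intermediate expressions
\[
\log(1-e^{2\pi\sqrt{-1}(T_-(c)\pm c)})=\log\bigl(4\sin(\pi c)\,{\textstyle \sin\atop\cos}(\tfrac{\pi c}{2})\bigr)+\sqrt{-1}\bigl(\pm\tfrac{3\pi c}{2}\mp\cdots\bigr)
\]
already contain the real-part identities that the paper proves separately in the lemma immediately following this one. One small tightening: the reason $\log$ distributes over your two-term products is not merely that each factor's argument lies in $(-\pi,\pi)$, but that each lies in $(-\tfrac{\pi}{2},\tfrac{\pi}{2})$, so the pairwise sums $\tfrac{3\pi c}{2}-\pi$ and $\tfrac{\pi}{2}-\tfrac{3\pi c}{2}$ stay in $(-\pi,\pi]$ for $c\in[\tfrac12,1)$; you compute these sums anyway, so the argument goes through.
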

\begin{proof}
   The equation (\ref{formula-equationonedim}) is equivalent to 
   \begin{equation} 
\left\{ \begin{aligned}
        &x^2-2x+3-C-\frac{1}{C}=0, \\
                  &3\text{arg}(1-x)-\text{arg}(1-Cx)-\text{arg}(1-C^{-1}x)=2\pi,
                          \end{aligned} \right.
                          \end{equation}
where $x=e^{2\pi\sqrt{-1}t}$, $C=e^{2\pi\sqrt{-1}c}$.

Clearly, $x_0=e^{2\pi\sqrt{-1}T_-(c)}=1-2\sqrt{-1}\sin(\pi c)$ satisfies the first equation, and we have the equation
\begin{align} \label{formula-arguementequ}
    3\text{arg}(1-x_0)-\text{arg}(1-Cx_0)-\text{arg}(1-C^{-1}x_0)=2k\pi
\end{align}
for some $k\in \mathbb{Z}$.  In the following, we show $k=1$. Indeed, for $c\in [\frac{1}{2},1)$, we have
\begin{align}
    3\text{arg}(1-x_0)&=3\text{arg}(2\sqrt{-1}\sin (\pi c))=\frac{3}{2}\pi.
    \end{align}
Since   
    \begin{align}
        &1-Cx_0\\\nonumber
        &=1-2\sin(\pi c)\sin(2\pi c)-\cos(2\pi c)+\sqrt{-1}(2\sin(\pi c)\cos(2\pi c)-\sin(2\pi c))\\\nonumber
    &=2\sin^2(\pi c)(1-2\cos(\pi c))+\sqrt{-1}(2\sin(\pi c)(\cos(2\pi c)-\cos(\pi c))),
    \end{align}
and $2\sin^2(\pi c)(1-2\cos(\pi c))>0$ and $\cos(\pi c)-\sin(\pi c)\leq \cos(2\pi c)-\sin(2\pi c)$ for $c\in [\frac{1}{2},1)$, it follows that 
\begin{align}
\text{arg}(1-Cx_0)\in [-\frac{\pi }{4}, \frac{\pi}{2}).    
\end{align}    
Since    \begin{align}
    &(1-C^{-1}x_0)\\\nonumber
    &=2\sin^2(\pi c)(2\cos(\pi c)+1)+\sqrt{-1}(2\sin(\pi c)(\cos(2\pi c)+\cos(\pi c)))
\end{align}
and $2\sin(\pi c)(\cos(2\pi c)+\cos(\pi c))<0$ for $c\in [\frac{1}{2},1)$, it follows that 
\begin{align}
\text{arg}(1-C^{-1}x_0)\in (-\pi,0).     
\end{align}
Therefore,  we obtain  
\begin{align}
    3\text{arg}(1-x_0)-\text{arg}(1-Cx_0)-\text{arg}(1-C^{-1}x_0)\in (\pi,\frac{11}{4}\pi), 
\end{align}
which implies  $k=1$ in formula (\ref{formula-arguementequ}).  
\end{proof}

\begin{lemma}
We have the following identities:
\begin{align}
    \text{Re}\left(\log\left(1-e^{2\pi\sqrt{-1}(T_1(c)+c)}\right)\right)&=\log\left(4\sin(\pi c)\sin\left(\frac{\pi c}{2}\right)\right), \label{formula-iden1}\\
     \text{Re}\left(\log\left(1-e^{2\pi\sqrt{-1}(T_1(c)-c)}\right)\right)&=\log\left(4\sin(\pi c)\cos\left(\frac{\pi c}{2}\right)\right), \label{formula-iden2}\\
     \text{Re}\left(-\log\left(1-e^{2\pi\sqrt{-1}(T_1(c)+c)}\right)\right.&\left.+\log\left(1-e^{2\pi\sqrt{-1}(T_1(c)-c)}\right)\right)&=\log\left(\cot\left(\frac{\pi c}{2}\right)\right). \label{formula-iden3}
\end{align}
\end{lemma}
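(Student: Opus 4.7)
The plan is to reduce each identity to a modulus computation by exploiting $\text{Re}(\log z) = \log|z|$ for any branch of the logarithm, so branch ambiguities in the definition of $T_1(c)$ are irrelevant.

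First I would rewrite the quantity $2\sqrt{-1}\sin(\pi c)$ as $e^{\pi\sqrt{-1}c}-e^{-\pi\sqrt{-1}c}$, so that the defining relation $e^{2\pi\sqrt{-1}T_1(c)}=1-2\sqrt{-1}\sin(\pi c)$ becomes
\begin{align*}
x_0 := e^{2\pi\sqrt{-1}T_1(c)} = 1 - e^{\pi\sqrt{-1}c} + e^{-\pi\sqrt{-1}c}.
\end{align*}
Setting $C=e^{2\pi\sqrt{-1}c}$, a direct expansion then gives the factorizations
\begin{align*}
1-Cx_0 &= (1-e^{\pi\sqrt{-1}c})(1-e^{2\pi\sqrt{-1}c}), \\
1-C^{-1}x_0 &= (1+e^{-\pi\sqrt{-1}c})(1-e^{-2\pi\sqrt{-1}c}),
\end{align*}
which is the essential algebraic step.

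Next I would take moduli. Since $c\in[\tfrac12,1)$, we have $\pi c/2 \in [\pi/4,\pi/2)$, so
\begin{align*}
|1-e^{\pi\sqrt{-1}c}| = 2\sin(\pi c/2), \quad |1+e^{-\pi\sqrt{-1}c}| = 2\cos(\pi c/2), \quad |1-e^{\pm 2\pi\sqrt{-1}c}| = 2\sin(\pi c).
\end{align*}
Combining these with the factorizations yields $|1-Cx_0|=4\sin(\pi c/2)\sin(\pi c)$ and $|1-C^{-1}x_0|=4\cos(\pi c/2)\sin(\pi c)$. Since $\text{Re}\log(1-e^{2\pi\sqrt{-1}(T_1(c)+c)}) = \log|1-Cx_0|$ and similarly for the other term, identities (\ref{formula-iden1}) and (\ref{formula-iden2}) follow immediately, and (\ref{formula-iden3}) is obtained by subtracting them, using $\cos(\pi c/2)/\sin(\pi c/2)=\cot(\pi c/2)$.

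There is no serious obstacle here: the only subtlety is to confirm the two factorizations, which is a short direct expansion, and to note that the real part of a logarithm equals the logarithm of the modulus, so the multi-valuedness of $\log$ and of $T_1(c)$ causes no trouble. The rest is elementary trigonometric simplification.
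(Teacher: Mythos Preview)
Your proposal is correct and follows the same high-level idea as the paper --- both compute $\operatorname{Re}\log z = \log|z|$ for $z=1-e^{2\pi\sqrt{-1}(T_1(c)\pm c)}$ --- but your execution is cleaner. The paper expands $1-(1-2\sqrt{-1}\sin(\pi c))e^{\pm 2\pi\sqrt{-1}c}$ into its real and imaginary parts and then simplifies $(\text{Re})^2+(\text{Im})^2$ by brute force; you instead observe the factorizations $1-Cx_0=(1-e^{\pi\sqrt{-1}c})(1-e^{2\pi\sqrt{-1}c})$ and $1-C^{-1}x_0=(1+e^{-\pi\sqrt{-1}c})(1-e^{-2\pi\sqrt{-1}c})$, which make the moduli transparent. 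Your route avoids the trigonometric bookkeeping entirely and arrives at the same answers with less work; the paper's route requires no algebraic insight but more patience. Either way, identity (\ref{formula-iden3}) is obtained by subtraction.
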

\begin{proof}
By straightforward computations, we obtain
\begin{align*}
    &\text{Re}\left(\log\left(1-e^{2\pi\sqrt{-1}(T_1(c)+c)}\right)\right)\\\nonumber
    &=\text{Re}\left(\log\left(1-(1- 2\sqrt{-1}\sin(\pi c))e^{2\pi\sqrt{-1}c}\right)\right)\\\nonumber
    &=\text{Re}\left(\log(1-2\sin(\pi c)\sin(2\pi c)-\cos(2\pi c)+\sqrt{-1}(2\sin(\pi c)\cos(2\pi c))-\sin(2\pi c))\right)\\\nonumber
    &=\frac{1}{2}\log\left(1-2\sin(\pi c)\sin(2\pi c)-\cos(2\pi c)^2+(2\sin(\pi c)\cos(2\pi c))-\sin(2\pi c))^2\right)\\\nonumber
    &=\log\left(4\sin(\pi c)\sin\left(\frac{\pi c}{2}\right)\right)
\end{align*}
and
\begin{align*}
    &\text{Re}\left(\log\left(1-e^{2\pi\sqrt{-1}(T_1(c)-c)}\right)\right)\\\nonumber
    &=\text{Re}\left(\log\left(1-(1- 2\sqrt{-1}\sin(\pi c))e^{-2\pi\sqrt{-1}c}\right)\right)\\\nonumber
    &=\text{Re}\left(\log(1+2\sin(\pi c)\sin(2\pi c)-\cos(2\pi c)+\sqrt{-1}(2\sin(\pi c)\cos(2\pi c))+\sin(2\pi c))\right)\\\nonumber
    &=\frac{1}{2}\log\left(1+2\sin(\pi c)\sin(2\pi c)-\cos(2\pi c)^2+(2\sin(\pi c)\cos(2\pi c))+\sin(2\pi c))^2\right)\\\nonumber
    &=\log\left(4\sin(\pi c)\cos\left(\frac{\pi c}{2}\right)\right),
\end{align*}
which prove the identities (\ref{formula-iden1}) and (\ref{formula-iden2}). Then the identity (\ref{formula-iden3}) follows from the identities (\ref{formula-iden1}) and (\ref{formula-iden2})  immediately.
\end{proof}

\begin{lemma} \label{lemma-ReVTc}
    As a function of $c\in [\frac{1}{2},\frac{3}{4})$, $\text{Re} V(p,T_1(c),c;0,n)$ is a decreasing function of $c$. Furthermore, we have 
\begin{align}
    \text{Re} V(p,T_1(c),c;0,n)=2\left(\Lambda\left(\frac{c}{2}\right)+\Lambda\left(\frac{1}{2}-\frac{c}{2}\right)\right).
\end{align}
\end{lemma}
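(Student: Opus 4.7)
My plan is to compute $\tfrac{d}{dc}\mathrm{Re}\,V(p, T_1(c), c; 0, n)$ by the chain rule at the critical point; this instantly yields the decreasing claim and, after one explicit integration, the $\Lambda$-formula.

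Since $T_1(c)$ is (by Proposition \ref{prop-critical1}) the critical point of $V(p,\cdot,c;0,n)$ in $t$ and $V$ is holomorphic in $t$, the chain rule collapses to
\begin{equation*}
\frac{d}{dc}V(p, T_1(c), c; 0, n) = \frac{\partial V}{\partial s}\bigg|_{(T_1(c), c)}.
\end{equation*}
Taking real parts (which commutes with $d/dc$) reduces the derivative to $\mathrm{Re}\,\partial_s V$ at $(T_1(c),c)$. A direct evaluation from the definition of $V$ shows the polynomial piece of $\partial_s V$ is purely imaginary (because $c$ is real), leaving only $-\log\bigl(1-e^{2\pi\sqrt{-1}(T_1+c)}\bigr)+\log\bigl(1-e^{2\pi\sqrt{-1}(T_1-c)}\bigr)$; by identity (\ref{formula-iden3}) its real part is $\log\cot(\pi c/2)$. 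For $c\in[1/2,1)$ we have $\cot(\pi c/2)\in(0,1]$, so $\log\cot(\pi c/2)\le 0$, which gives the decreasing property.

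For the explicit formula I integrate $\log\cot(\pi u/2)=\log\cos(\pi u/2)-\log\sin(\pi u/2)$. The substitution $v=u/2$ together with the identity $\log\cos(\pi v)=\log\sin(\pi(1/2-v))$ reduces both pieces to the Lobachevsky-type integral $\int\log|2\sin(\pi v)|\,dv = -\Lambda(v)$, producing an antiderivative that is an explicit $\mathbb{Z}$-linear combination of $\Lambda(u/2)$ and $\Lambda(1/2-u/2)$. I fix the constant of integration by passing to the limit $c\to 1^-$: then $T_1(c)\to 1$, all three dilogarithm arguments in $V$ approach $1$, $2\pi X_1(c)\to 0$, and $\mathrm{Im}\,\mathrm{Li}_2(1)=0$, so $\mathrm{Re}\,V\to 0$. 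Matching against the $\Lambda$-combination at $c=1$ pins down the constant and yields the stated closed form.

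The delicate points are justifying the chain rule at the complex critical point (immediate from holomorphicity of $V$ in $t$, which kills the $\partial_t V\cdot T_1'(c)$ contribution) and keeping careful track of signs and additive constants when converting the antiderivative into the final $\Lambda$-identity; verifying the boundary limit at $c=1$, despite the integrable $\log\cos(\pi c/2)$ singularity there, is the one place where care is genuinely needed. The monotonicity then comes for free from the derivative formula $\log\cot(\pi c/2)$.
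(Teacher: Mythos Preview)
Your approach is essentially the paper's: both kill the $\partial_t V\cdot T_1'(c)$ term by the chain rule at the critical point, identify $\tfrac{d}{dc}\mathrm{Re}\,V$ with $\log\cot(\pi c/2)$ via identity~(\ref{formula-iden3}), and then integrate. The only difference is where the integration constant is fixed---the paper integrates from $c=\tfrac12$ (tacitly using $\mathrm{Re}\,V\bigl(T_1(\tfrac12),\tfrac12\bigr)=4\Lambda(\tfrac14)$, which it does not verify), whereas you pass to $c\to 1^-$ where $\mathrm{Re}\,V\to 0$; your endpoint is arguably cleaner, and carrying the integral through gives $2\bigl(\Lambda(c/2)+\Lambda(1/2-c/2)\bigr)$, so the minus sign in the displayed formula is a typo (compare the function $h(c)$ introduced immediately afterward).
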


\begin{proof}
From the equation (\ref{formula-equaitonT-1c}), we obtain 
\begin{align}
    \frac{dT_1(c)}{dc}=\frac{-\cos(\pi c)}{e^{2\pi \sqrt{-1}T_1(c)}}=\frac{-\cos(\pi c)}{1-2\sqrt{-1}\sin(\pi c)}.
\end{align}
Then 
\begin{align}
    &\frac{d \text{Re} V(p,T_1(c),c;0,n)}{dc}\\\nonumber
    &=\text{Re}\left(\frac{\partial V(p,T_1(c),c;0,n)}{\partial t}\frac{dT_1(c)}{dc}+\frac{\partial V(p,T_1(c),c;0,n)}{\partial s}\frac{dc}{dc}\right)\\\nonumber
    &=\text{Re}\left(\frac{\partial V(p,T_1(c),c;0,n)}{\partial s}\right)\\\nonumber
    &=\text{Re}\left(-\log(1-e^{2\pi\sqrt{-1}(T_1(c)+c)})+\log\left(1-e^{2\pi\sqrt{-1}(T_1(c)-c)}\right)\right).
\end{align}
By identity (\ref{formula-iden3}), we obtain 
\begin{align}
    \frac{d \text{Re} V(p,T_1(c),c;0,n)}{dc}=\log\left(\cot\left(\frac{\pi c}{2}\right)\right)\leq 0,
\end{align}
since $\cot\left(\frac{\pi c}{2}\right)\leq 1$ for $c\in [\frac{1}{2},\frac{3}{4})$. Hence, $\text{Re} V(p,T_1(c),c;0,n)$ is a decreasing function. 

For $c\geq \frac{1}{2}$, we have
\begin{align}
    &\text{Re} V(p,T_1(c),c;0,n)-\text{Re} V\left(p,T_1\left(\frac{1}{2}\right),\frac{1}{2};0,n\right)\\\nonumber
    &=\int_{\frac{1}{2}}^{c}\log \left(\cot\left(\frac{\pi \tau}{2}\right)\right)d\tau\\\nonumber
    &=\int_{\frac{1}{2}}^{c}\log \left(2\cos\left(\frac{\pi \tau}{2}\right)\right)d\tau-\int_{\frac{1}{2}}^{c}\log \left(2\sin\left(\frac{\pi \tau}{2}\right)\right)d\tau. 
\end{align}
Let $x=\frac{1}{2}(1-\tau)$, we obtain 
\begin{align}
    \int_{\frac{1}{2}}^{c}\log \left(2\cos\left(\frac{\pi \tau}{2}\right)\right)d\tau =-2\int_{\frac{1}{4}}^{\frac{1}{2}(1-c)}\log(2\sin \pi x)dx=2\left(\Lambda\left(\frac{1}{2}-\frac{c}{2}\right)-\Lambda\left(\frac{1}{4}\right)\right).
\end{align}
Let $y=\frac{\tau}{2}$, we obtain
\begin{align}
    \int_{\frac{1}{2}}^{c}\log \left(2\sin\left(\frac{\pi \tau}{2}\right)\right)d\tau=2\int_{\frac{1}{4}}^{\frac{c}{2}}\log(2\sin (\pi y))dy=-2\left(\Lambda\left(\frac{c}{2}\right)-\Lambda\left(\frac{1}{4}\right)\right).
\end{align}
Hence, for $c\in [\frac{1}{2},\frac{3}{4})$, we have 
\begin{align}
     \text{Re} V(p,T_1(c),c;0,n)=2\left(\Lambda\left(\frac{c}{2}\right)+\Lambda\left(\frac{1}{2}-\frac{c}{2}\right)\right).
\end{align}
\end{proof}
Let $(t_0,s_0)=(t_{0R}+X_0\sqrt{-1},s_{0R}+Y_0\sqrt{-1})$ be a critical point of the potential function $V(p,t,s)$ as given in  Proposition \ref{prop-critical}.  

By the proof of Lemma \ref{lemma-volumeestimate} in Appendix \ref{appendix-2}, we have $0<\zeta_{\mathbb{R}}(p)<\frac{v_8}{2\pi}$.  We assume $c(p)$ is a solution to the following equation
\begin{align}
    Re V(p,T_1(c),c)=2 \left(\Lambda\left(\frac{c}{2}\right)+\Lambda\left(\frac{1}{2}-\frac{c}{2}\right)\right)=\zeta_{\mathbb{R}}(p).
\end{align}

\begin{lemma}
We have the following inequality:
\begin{align}  \label{formula-cleqcupp}
c(p)<c_{upper}(p),
\end{align}
where 
\begin{align} 
    c_{upper}(p)=\frac{1}{\pi}(v_8-2\pi\zeta_{\mathbb{R}}(p))^{\frac{1}{2}}+\frac{1}{2}.
\end{align}
\end{lemma}
\begin{proof}
Let $h(c)=\Lambda(\frac{c}{2})+\Lambda(\frac{1}{2}-\frac{c}{2})$, then
$2h(c(p))=\zeta_{\mathbb{R}}(p)$. 
 By Lemma \ref{lemma-convergent}, we have the following convergent power series
\begin{align}
    h(c)=2\Lambda\left(\frac{1}{4}\right)-\frac{\pi}{4}\left(c-\frac{1}{2}\right)^2-\frac{\pi^3}{48}\left(c-\frac{1}{2}\right)^4-\frac{\pi^5}{144}\left(c-\frac{1}{2}\right)^6-\cdots. 
\end{align}
Then we obtain 
\begin{align}
    2h(c_{upper}(p))&=\zeta_{\mathbb{R}}(p)-2\left(\frac{\pi^3}{48}\left(c_{upper}(p)-\frac{1}{2}\right)^4+\frac{\pi^5}{144}\left(c_{upper}(p)-\frac{1}{2}\right)^6+\cdots\right)\\\nonumber
    &<\zeta_{\mathbb{R}}(p)=2 h(c(p)).
\end{align}
Hence, by Lemma \ref{lemma-ReVTc}, we have $c(p)<c_{upper}(p)$. 
\end{proof}

As a consequence of formula (\ref{formula-cleqcupp}), we have
\begin{corollary} \label{coro-ReV1}
   For $c>c_{upper}(p)$, we have 
   \begin{align*}
   \text{Re} V(p,T_1(c),c)<\text{Re} V(p,T_1(c_{upper}(p)),c_{upper}(p))<\text{Re} V(p,T_1(c(p)),c(p))=\zeta_{\mathbb{R}}(p).
   \end{align*}
\end{corollary}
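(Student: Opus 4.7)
The plan is to deduce this corollary directly from two results that have already been proved in this section: the strict monotonicity in Lemma~\ref{lemma-ReVTc} and the separation $c(p) < c_{upper}(p)$ established in~(\ref{formula-cleqcupp}). Nothing new has to be computed; the corollary is really just a convenient packaging of those two facts together with the defining equation $ReV(p,T_1(c(p)),c(p)) = \zeta_{\mathbb{R}}(p)$.

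Concretely, I would first invoke Lemma~\ref{lemma-ReVTc} to observe that $c \mapsto ReV(p,T_1(c),c;0,n)$ is strictly decreasing on $[1/2,1)$ (a fact proved there by differentiating along the curve of critical points and showing $\frac{d}{dc}ReV(p,T_1(c),c) = \log\cot(\pi c/2) < 0$). Applied to $c > c_{upper}(p)$ this instantly yields
\begin{align*}
ReV(p,T_1(c),c) < ReV(p,T_1(c_{upper}(p)),c_{upper}(p)).
\end{align*}
Next, combining the same monotonicity with $c(p) < c_{upper}(p)$ from~(\ref{formula-cleqcupp}) gives
\begin{align*}
ReV(p,T_1(c_{upper}(p)),c_{upper}(p)) < ReV(p,T_1(c(p)),c(p)).
\end{align*}
The rightmost equality $ReV(p,T_1(c(p)),c(p)) = \zeta_{\mathbb{R}}(p)$ is then just the definition of $c(p)$. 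Chaining these three facts gives the statement.

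The only point that requires a moment of care is checking that both $c$ and $c_{upper}(p)$ actually lie in $[1/2,1)$, so that Lemma~\ref{lemma-ReVTc} genuinely applies. Since $0 < \zeta_{\mathbb{R}}(p) < v_8/(2\pi)$ (the lower bound coming from the expansion in Lemma~\ref{lemma-volumeestimate} and the upper from $2\pi\zeta_{\mathbb{R}}(p) \geq v_8 - 49\pi^2/(64p^2) < v_8$), the definition $c_{upper}(p) = \frac{1}{\pi}(v_8 - 2\pi\zeta_{\mathbb{R}}(p))^{1/2} + \frac{1}{2}$ forces $c_{upper}(p) \in (1/2, 1)$ for $p\geq 6$, and the same bound shows $c(p)$ is also interior to $[1/2,1)$. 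So no substantive obstacle appears: the entire content of the corollary has already been placed into Lemma~\ref{lemma-ReVTc} and~(\ref{formula-cleqcupp}), and the proof reduces to a three-line chain of inequalities.
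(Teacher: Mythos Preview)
Your proof is correct and follows exactly the paper's approach: the corollary is stated there explicitly ``as a consequence of formula (\ref{formula-cleqcupp}),'' i.e.\ it is immediate from the strict monotonicity of Lemma~\ref{lemma-ReVTc} together with $c(p)<c_{upper}(p)$ and the defining equation for $c(p)$. One small quibble: in your parenthetical justification that $c_{upper}(p)\in(1/2,1)$, you cite the inequality $2\pi\zeta_{\mathbb{R}}(p)\geq v_8-49\pi^2/(64p^2)$ as giving the \emph{upper} bound on $\zeta_{\mathbb{R}}(p)$, when in fact it gives the lower bound; the upper bound $2\pi\zeta_{\mathbb{R}}(p)<v_8$ comes instead from the expansion in the proof of Lemma~\ref{lemma-volumeestimate} (the paper simply records $0<\zeta_{\mathbb{R}}(p)<v_8/(2\pi)$ without further comment).
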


\begin{lemma} \label{lemma-convergent}
For $c\in [\frac{1}{2},\frac{3}{4})$, we have the following convergent power series 
\begin{align}
    h(c)=2\Lambda\left(\frac{1}{4}\right)-\frac{\pi}{4}\left(c-\frac{1}{2}\right)^2-\frac{\pi^3}{48}\left(c-\frac{1}{2}\right)^4-\frac{\pi^5}{144}\left(c-\frac{1}{2}\right)^6-\cdots.
\end{align}
\end{lemma}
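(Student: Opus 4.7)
\medskip

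\textbf{Proof proposal for Lemma \ref{lemma-convergent}.}

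The plan is to exploit the symmetry around $c=\tfrac{1}{2}$, reduce to a $\tan/\operatorname{arctanh}$ composition, and expand. First I would change variables $u = c-\tfrac{1}{2}$ and set $g(u) := h(\tfrac{1}{2}+u) = \Lambda(\tfrac{1}{4}+\tfrac{u}{2}) + \Lambda(\tfrac{1}{4}-\tfrac{u}{2})$. Since $g(-u)=g(u)$, the Taylor series of $g$ at $0$ contains only even powers of $u$, matching the form claimed in the lemma. Also $g(0) = 2\Lambda(\tfrac{1}{4})$, which is the constant term.

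Next, using $\Lambda'(t) = -\log|2\sin\pi t|$, I would compute
\begin{equation*}
g'(u) \;=\; \tfrac12\bigl[\Lambda'(\tfrac14+\tfrac{u}{2}) - \Lambda'(\tfrac14-\tfrac{u}{2})\bigr]
\;=\; \tfrac12\log\left|\frac{\sin(\pi/4-\pi u/2)}{\sin(\pi/4+\pi u/2)}\right|.
\end{equation*}
The identities $\sin(\pi/4\pm x) = \tfrac{1}{\sqrt 2}(\cos x \pm \sin x)$ give
\begin{equation*}
\frac{\sin(\pi/4-\pi u/2)}{\sin(\pi/4+\pi u/2)} = \frac{1-\tan(\pi u/2)}{1+\tan(\pi u/2)},
\end{equation*}
so that $g'(u) = -\operatorname{arctanh}\bigl(\tan(\pi u/2)\bigr)$, which is a composition of two functions that are analytic at $0$.

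The plan is then to expand $\tan(\pi u/2) = \tfrac{\pi u}{2} + \tfrac{1}{3}(\tfrac{\pi u}{2})^3 + \tfrac{2}{15}(\tfrac{\pi u}{2})^5 + \cdots$, substitute into $\operatorname{arctanh}(z) = \sum_{k\ge 0}\tfrac{z^{2k+1}}{2k+1}$, and integrate term by term from $0$ to $u$ to obtain the series for $g(u)-g(0)$. For instance, the $u^2$ coefficient is $-\int_0^u \tfrac{\pi v}{2}\,dv = -\tfrac{\pi}{4}u^2$, and the $u^4$ coefficient comes from $-\int_0^u \tfrac{2}{3}(\tfrac{\pi v}{2})^3 dv = -\tfrac{\pi^3}{48}u^4$, in agreement with the statement; the higher coefficients are produced by the same recipe.

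The only remaining point is convergence. The function $\tan(\pi u/2)$ is holomorphic in the disc $|u|<1$, but the composition $\operatorname{arctanh}(\tan(\pi u/2))$ is holomorphic precisely where $|\tan(\pi u/2)|<1$, i.e.\ where $|\pi u/2|<\pi/4$, that is $|u|<\tfrac12$. So the Taylor series of $g'(u)$, and therefore of $g(u)$, converges on $|u|<\tfrac12$; integrating termwise preserves the radius of convergence. In terms of $c$ this is the open interval $0<c<1$, which contains $\tfrac12 < c < 1$. This is the step that most needs justification, but it is immediate once the explicit formula $g'(u) = -\operatorname{arctanh}(\tan(\pi u/2))$ is in hand. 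No serious obstacle is expected; the only care needed is the algebra of composing and integrating two explicit analytic power series.
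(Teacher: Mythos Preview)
Your argument is correct and closely related to the paper's, but the paper takes one more derivative before expanding: it computes
\[
h''(c) \;=\; -\frac{\pi}{2\sin(\pi c)} \;=\; -\frac{\pi}{2}\sec\!\bigl(\pi(c-\tfrac12)\bigr),
\]
quotes the single known series for $\sec$, and integrates twice. Differentiating your $g'(u)=-\operatorname{arctanh}\bigl(\tan(\pi u/2)\bigr)$ once gives exactly this formula, so the two routes merge after one step. The paper's version avoids composing two power series and makes the radius of convergence immediate (the poles of $\sec(\pi u)$ sit at $u=\pm\tfrac12$); your version has the advantage of exposing the odd/even structure of $g'$ and $g$ from the outset.

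One small correction to your convergence paragraph: $\operatorname{arctanh}$ is holomorphic on $\mathbb{C}\setminus\bigl((-\infty,-1]\cup[1,\infty)\bigr)$, not merely on $\{|z|<1\}$, and the equivalence ``$|\tan(\pi u/2)|<1 \Leftrightarrow |u|<\tfrac12$'' holds only for real $u$. The clean statement is that the nearest complex singularities of $g'$ occur where $\tan(\pi u/2)=\pm1$, namely at $u=\pm\tfrac12$, giving radius of convergence $\tfrac12$; or simply pass to $g''(u)=-\tfrac{\pi}{2}\sec(\pi u)$ and read it off from the poles of $\sec$.
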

\begin{proof}
  We use the following power series expansion for $\sec(x)$,
\begin{align} \label{formula-sec}
    \sec(x)=1+\frac{1}{2}x^2+\frac{5}{24}x^4+\cdots, \ \text{for} \ |x|<\frac{\pi}{2}
    \end{align}
From  $h(c)=\Lambda(\frac{c}{2})+\Lambda(\frac{1}{2}-\frac{c}{2})$, we obtain 
\begin{align}
    h'(c)&=\frac{1}{2}\log \cot\left(\frac{\pi c}{2}\right), \\\nonumber
    h''(c)&=-\frac{\pi }{2\sin(\pi c)}=-\frac{\pi}{2}\sec\left(\pi\left(c-\frac{1}{2}\right)\right). 
\end{align}
    Since $0\leq \left(c-\frac{1}{2}\right)\pi<\frac{\pi}{2} $, by using formula (\ref{formula-sec}), we obtain 
    \begin{align}
        h'(c)&=\int_{\frac{1}{2}}^ch''(t)dt-h'\left(\frac{1}{2}\right)\\\nonumber
        &=-\frac{\pi}{2}\int_{\frac{1}{2}}^c \sec\left(\pi\left(t-\frac{1}{2}\right)\right)dt\\\nonumber
        &=-\frac{1}{2}\int_{0}^{\pi\left(c-\frac{1}{2}\right)}\sec(x) dx\\\nonumber
        &=-\frac{1}{2}\int_{0}^{\pi\left(c-\frac{1}{2}\right)}\left(1+\frac{1}{2}x^2+\frac{5}{24}x^4+\cdots\right)dx\\\nonumber
        &=-\frac{1}{2}\left(\pi\left(c-\frac{1}{2}\right)+\frac{1}{6}\left(\pi(c-\frac{1}{2})\right)^3+\frac{1}{24}\left(\pi(c-\frac{1}{2})\right)^5+\cdots\right)
    \end{align}
    Hence 
    \begin{align}
        h(c)&=\int_{\frac{1}{2}}^c h'(t)dt+h\left(\frac{1}{2}\right)\\\nonumber
        &=-\frac{1}{2}\int_{\frac{1}{2}}^c\left(\pi\left(t-\frac{1}{2}\right)+\frac{\pi^3}{6}\left(t-\frac{1}{2}\right)^3+\frac{\pi^5}{24}\left(t-\frac{1}{2}\right)^5\right)dt+2\Lambda\left(\frac{1}{4}\right)\\\nonumber
        &=2\Lambda\left(\frac{1}{4}\right)-\frac{1}{2\pi}\int_{0}^{\pi\left(c-\frac{1}{2}\right)}\left(x+\frac{1}{6}x^3+\frac{1}{24}x^5+\cdots\right)dx\\\nonumber
        &=2\Lambda\left(\frac{1}{4}\right)-\frac{\pi}{4}\left(c-\frac{1}{2}\right)^2-\frac{\pi^3}{48}\left(c-\frac{1}{2}\right)^4-\frac{\pi^5}{144}\left(c-\frac{1}{2}\right)^6-\cdots.
    \end{align}
\end{proof}

\begin{lemma} \label{lemma-1-dim-hess}
    For $t\in D_0(c)$, $c\in [\frac{1}{2},\frac{3}{4})$ and $n\in \mathbb{Z}$, we have 
    \begin{align}
        \frac{\partial^2 ReV(p,t+X\sqrt{-1},c;0,n)}{\partial X^2}>0.
    \end{align}
\end{lemma}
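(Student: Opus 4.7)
The plan is to apply the explicit formula for $\partial_X^2\,Re\,V$ already derived in Section \ref{subsection-preparation}.  Since the $n$-dependent contribution $-2\pi\sqrt{-1}\,n\,s$ to $V(p,t,s;0,n)$ does not depend on $t$, its second $X$-derivative vanishes identically, so the quantity under study is independent of $n$.  Setting $x=e^{2\pi\sqrt{-1}(t+X\sqrt{-1})}$ and $y=e^{2\pi\sqrt{-1}c}$ (so $|y|=1$), the computation recalled in Section \ref{subsection-preparation} (with $Y=0$, $s=c$) specialises to
\[
\frac{\partial^2 Re\,V}{\partial X^2}=2\pi(3a+b+c),
\]
where $3a,b,c$ are the three fractions displayed there: their numerators are $-3\sin(2\pi t)$, $\sin(2\pi(t+c))$, $\sin(2\pi(t-c))$, and their denominators are all of the strictly positive form $e^{2\pi X}+e^{-2\pi X}-2\cos(\cdot)$.

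I would then split the estimate according to the sign pattern on $D'_0$.  Because $D'_0$ forces $t\in(0.5,0.909]$ (the constraints $t-s\ge 0.02$ and $t+s\ge 1.02$ rule out the value $t=\tfrac12$), one always has $\sin(2\pi t)<0$ and hence $3a>0$.  The constraints also imply $t+c\le 1.7$ and $t-c\le 0.7$, so the only way one of $b,c$ can be negative is to have $t+c>\tfrac32$ (making $\sin(2\pi(t+c))<0$) or $t-c>\tfrac12$ (making $\sin(2\pi(t-c))<0$); adding these two hypothetical inequalities gives $2t>2$, incompatible with $t\le 0.909$, so at most one of $b,c$ is negative.

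In the \emph{bulk region} where $t+c\le\tfrac32$ and $t-c\le\tfrac12$, each of $3a,b,c$ is non-negative with $3a$ strictly positive, so the sum is strictly positive at once.  In the \emph{boundary region} where exactly one of $b,c$ is negative, I would compare $3a$ with the single negative term uniformly in $X\in\mathbb{R}$.  The three denominators grow at the same exponential rate as $|X|\to\infty$, and the identity $\sin(2\pi(t+c))+\sin(2\pi(t-c))=2\sin(2\pi t)\cos(2\pi c)$ yields the leading asymptotic
\[
3a+b+c\;\sim\;\sin(2\pi t)\bigl(2\cos(2\pi c)-3\bigr)\,e^{-2\pi|X|},\qquad|X|\to\infty,
\]
which is strictly positive because $\sin(2\pi t)<0$ and $2\cos(2\pi c)-3\le -1<0$.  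At $X=0$ the inequality reduces to the elementary trigonometric estimate $-3\cot(\pi t)+\cot(\pi(t+c))+\cot(\pi(t-c))>0$, and I would close the argument by combining this with the monotonicity in $|X|$ of each fraction (each of $|3a|,|b|,|c|$ attains its maximum at $X=0$ and decays to $0$ exponentially as $|X|\to\infty$).

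The main obstacle is the uniform control over all $X\in\mathbb{R}$ in the boundary region: one has to verify that the positivity visible both at $X=0$ and in the $|X|\to\infty$ limit persists for all intermediate $X$, which is a careful but elementary one-variable calculus exercise on the compact rectangle of $(t,c)$-parameters defining $D'_0$.  No global or topological issue is involved, since every ingredient is an explicit elementary function on a compact parameter set.
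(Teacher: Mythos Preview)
Your bulk-region argument (where $t+c\le\tfrac32$ and $t-c\le\tfrac12$) is correct and matches the paper's first step, namely that $3a>0$ because $\sin(2\pi t)<0$ on $D'_0$. The gap is in your boundary-region treatment. Knowing that $3a+b+c>0$ at $X=0$ and that its leading asymptotic as $|X|\to\infty$ is positive does \emph{not}, together with monotone decay of each individual $|3a|,|b|,|c|$, yield positivity for all intermediate $X$: the signed sum $3a+b+c$ need not be monotone in $|X|$ even though each summand's modulus is. So the ``careful but elementary one-variable calculus exercise'' you defer is not actually supplied, and the sketch as written does not close.

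The paper bypasses the case split entirely by combining $b+c$ over a common denominator. Writing $u=e^{2\pi X}+e^{-2\pi X}\ge 2$ and using the very sum-to-product identity you invoked only asymptotically (together with the sine addition formula for the cross terms), one gets the exact formula
\[
b+c\;=\;\frac{2\sin(2\pi t)\bigl[\cos(2\pi c)\,u-2\cos(2\pi t)\bigr]}{\bigl(u-2\cos(2\pi(t+c))\bigr)\bigl(u-2\cos(2\pi(t-c))\bigr)}.
\]
The denominator is positive and $\sin(2\pi t)<0$; the paper then observes that $\cos(2\pi c)\le 0$, so $\cos(2\pi c)\,u\le 2\cos(2\pi c)<2\cos(2\pi t)$ (the last inequality is $2\sin(\pi(t+c))\sin(\pi(t-c))<0$, which holds on all of $D'_0$). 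Hence the bracket is negative and $b+c>0$ for \emph{every} $X$, giving the lemma at once from $3a>0$ and $b+c>0$ with no boundary region to analyse. You already had the key trigonometric identity in hand; the missed step was to apply it exactly rather than only in the $|X|\to\infty$ limit.
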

\begin{proof}
By straightforward computations,  we have
\begin{align}
&\frac{1}{2\pi}\frac{\partial^2 ReV(p,t+X\sqrt{-1},c;0,n)}{\partial X^2}\\\nonumber
&=-3\frac{\sin(2\pi t)}{e^{2\pi X}+e^{-2\pi X}-2\cos(2\pi t)}\\\nonumber
&+\frac{\sin(2\pi (t+c))}{e^{2\pi X}+e^{-2\pi X}-2\cos(2\pi (t+c))}+\frac{\sin(2\pi (t-c))}{e^{2\pi X}+e^{-2\pi X}-2\cos(2\pi (t-c))}.
\end{align}

Clearly, for $t\in D_0(c)$ and $c\in [\frac{1}{2},\frac{3}{4})$, we have $\sin(2\pi t)<0$ and $\cos(2\pi c)<0$ which imply that 
\begin{align}
 -3\frac{\sin(2\pi t)}{e^{2\pi X}+e^{-2\pi X}-2\cos(2\pi t)}>0,   
\end{align}
and 
\begin{align}
    \cos(2\pi c)(e^{2\pi X}+e^{-2\pi X})-2\cos(2\pi t)<2\cos(2\pi c)-2\cos(2\pi t)<0.
\end{align}
Hence
\begin{align}
    &\frac{\sin(2\pi (t+c))}{e^{2\pi X}+e^{-2\pi X}-2\cos(2\pi (t+c))}+\frac{\sin(2\pi (t-c))}{e^{2\pi X}+e^{-2\pi X}-2\cos(2\pi (t-c))}\\\nonumber
    &=2\sin(2\pi t)\frac{\cos(2\pi c)(e^{2\pi X}+e^{-2\pi X})-2\cos(2\pi t)}{(e^{2\pi X}+e^{-2\pi X}-2\cos(2\pi (t+c)))(e^{2\pi X}+e^{-2\pi X}-2\cos(2\pi (t-c)))}>0.
\end{align}
\end{proof}
\begin{lemma} \label{lemma-1-dim-finfty}
    For $t\in D_0(c)$ and $n\in \mathbb{Z}$, we have 
    \begin{align} \label{formula-f-infty}
        ReV(p,t+X\sqrt{-1},c;0,n)\ \text{goes to $\infty$ uniformly}, \ \text{as} \ X^2\rightarrow \infty. 
    \end{align}
\end{lemma}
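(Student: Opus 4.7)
The plan is to split $Re\,V(p, t+X\sqrt{-1}, c; 0, n)$ into its polynomial part and its three dilogarithm parts, and estimate each using Lemma \ref{lemma-Li2}. The polynomial part $\pi\sqrt{-1}\bigl((2p+1)c^2-(2p+3+2n)c-2(t+X\sqrt{-1})\bigr)$ contributes exactly $2\pi X$ (plus a $t,X$-independent constant) to the real part. Using the periodicity $e^{2\pi\sqrt{-1}}=1$, the three dilogarithm arguments become $e^{2\pi\sqrt{-1}((t+c-1)+X\sqrt{-1})}$, $e^{2\pi\sqrt{-1}((t-c)+X\sqrt{-1})}$, and $e^{2\pi\sqrt{-1}(t+X\sqrt{-1})}$, whose exponent real parts $t+c-1,\,t-c,\,t$ all lie in $(0,1)$ throughout $D'_0$ (recall $1.02\le t+c\le 1.7$, $0.02\le t-c\le 0.7$, $0.5\le t\le 0.909$).

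For $X\ge 0$, Lemma \ref{lemma-Li2} bounds the real part of each $\frac{1}{2\pi\sqrt{-1}}\text{Li}_2$-term by a constant uniform in $t$ and $X$, so $Re\,V=2\pi X+O(1)\to+\infty$ uniformly in $t\in D'_0(c)$. For $X<0$, Lemma \ref{lemma-Li2} gives the linear approximation $2\pi(u-\tfrac{1}{2})X+O(1)$ for each exponent $u\in(0,1)$, and summing over the four contributions yields
\begin{align*}
Re\,V &= 2\pi X + 2\pi(t+c-\tfrac{3}{2})X + 2\pi(t-c-\tfrac{1}{2})X - 6\pi(t-\tfrac{1}{2})X + O(1) \\
&= 2\pi X\bigl(\tfrac{1}{2}-t\bigr) + O(1),
\end{align*}
where the $O(1)$ is uniform in $t\in D'_0(c)$.

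The inequalities $t-c\ge 0.02$ and $t+c\ge 1.02$ defining $D'_0(c)$ force $t\ge\max(c+0.02,\,1.02-c)>\tfrac{1}{2}$, with a positive gap that is uniform on the compact slice $D'_0(c)$; thus $\tfrac{1}{2}-t$ is strictly negative and bounded away from $0$, and combining with $X<0$ gives $2\pi X(\tfrac{1}{2}-t)\to+\infty$ uniformly as $X\to-\infty$. The main obstacle is the arithmetic telescoping in the linear-coefficient sum to the clean expression $\tfrac{1}{2}-t$ — any sign error or misplaced periodicity shift for the $(t+c)$-argument would destroy the uniform lower bound. Beyond that routine but attention-requiring bookkeeping, the argument is a direct application of Lemma \ref{lemma-Li2}.
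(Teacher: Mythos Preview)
Your proof is correct and follows essentially the same approach as the paper: the paper packages your piecewise linear estimate into the single function $F(X;n)=X$ for $X\ge 0$ and $F(X;n)=(\tfrac{1}{2}-t)X$ for $X<0$, then invokes Lemma~\ref{lemma-Li2} to get $2\pi F(X;n)-C<Re\,V<2\pi F(X;n)+C$. Your version simply spells out the arithmetic that produces the coefficients $1$ and $\tfrac{1}{2}-t$, and your observation that $t\ge 0.52$ on $D'_0(c)$ makes the uniformity explicit where the paper just cites $t>\tfrac{1}{2}$.
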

\begin{proof}
Based on Lemma \ref{lemma-Li2}, we introduce the following  function for $t\in D_0(c)$
\begin{align}
    F(X;n)=\left\{ \begin{aligned}
         &X  &  \ (\text{if} \ X\geq 0), \\
         &\left(\frac{1}{2}-t\right)X & \ (\text{if} \ X<0).
                          \end{aligned} \right.
\end{align}
since $t>\frac{1}{2}$, we have 
\begin{align}
    F(X;n)\rightarrow \infty \ \text{as} \ X^2\rightarrow \infty,
\end{align}
and by Lemma \ref{lemma-Li2}, we obtain  
\begin{align}
    2\pi F(X;n)-C < ReV(p,t+X\sqrt{-1},c;0,n)< 2\pi F(X;n)+C,
\end{align}
which implies formula (\ref{formula-f-infty}). 
\end{proof}

Now, we can finish the proof of Proposition \ref{prop-saddleonedim1}.
\begin{proof}

We show that there exists a homotopy $S^{\delta}$ ($0\leq \delta\leq 1$) with $S^0=D_{0}(c)$ such that 
\begin{align}
    &(T_1(c),c)\in S^{1}, \label{saddle1-1} \\  
    &S^1-\{(T_1(c),c)\}\subset \{t\in \mathbb{C}|\text{Re} V(p,t,c;0,n)<\text{Re} V(p,T_1(c),c)\}, \label{saddle1-2}\\
    &\partial S^\delta \subset \{t\in \mathbb{C}| \text{Re} V(p,t,c;0,n)<\zeta_{\mathbb{R}}(p)-\epsilon\}  \label{saddle1-3}.
\end{align}
In the fiber of the projection $\mathbb{C}\rightarrow \mathbb{R}$ at $(t,c)\in D_0(c)$, we consider the flow from $X=0$ determined by the vector field $-\frac{\partial Re V}{\partial X}$, 
By Lemma \ref{lemma-1-dim-hess} and \ref{lemma-1-dim-finfty}, we obtain that,
    for $t\in D_0(c)$, then $\text{Re} V$ has a unique minimal point, and the flow goes there. We put $g(t,c;n)$ as the minimal point. We define the end of homotopy to be the set of the destinations of the flow
\begin{align}
        S^1=\{t+g(t,c;n)\sqrt{-1}| t\in D_{0}(c)\}. 
    \end{align}
In addition, we define the internal part of the homotopy by setting it along the flows. 

We show $(\ref{saddle1-3})$ as follows, from the definition of $D_{0}(c)$,
\begin{align}
    \partial D_{0}(c)\subset \partial D_0\subset \{t\in \mathbb{C}| \text{Re} V(p,t,c;0,n)< \zeta_{\mathbb{R}}(p)-\epsilon\}.   
\end{align}
Furthermore, by construction of homotopy, $\text{Re} V(p,t,c;0,n)$ monotoincally decreases along the homotopy. Hence $(\ref{saddle1-3})$ holds. 

We show (\ref{saddle1-1}) and (\ref{saddle1-2}) as follows. Consider the following function
\begin{align}
    h(t,c;n)=\text{Re} V(p,t+g(t,c;n)\sqrt{-1},c;0,n).
\end{align}
It is shown from the definition of $g(t,c;n)$ that 
\begin{align}
    \frac{\partial \text{Re} V(p,t+g(t,c;n)\sqrt{-1},c;0,n)}{\partial X}=0 \ \text{at} \ X=g(t,c;n). 
\end{align}
Hence, we have
\begin{align}
    \text{Im} \frac{\partial V}{\partial t}=0 \  \text{at} \ t+g(t,c;n)\sqrt{-1}.  
\end{align}
Furthermore, we also have 
\begin{align}
    \frac{d h}{d t}=\text{Re} \frac{\partial V}{\partial t} \ \text{at} \ t+g(t,c;n)\sqrt{-1}. 
\end{align}
Therefore, if $t+g(t,c;n)\sqrt{-1}$ is a critical point of $\text{Re} V$,  $t$ is a critical point of the function $h(t,c;n)$. By Proposition \ref{prop-critical1}, $h(t,c;n)$ has a unique maximal point at $t=t_1(c)$.  Moreover, by Corollary $\ref{coro-ReV1}$,  the maximal value 
$$
h(t_1(c),c;n)=\text{Re}V(p,T_1(c),c)<\text{Re} V(p,T_1(c_{upper}(p),c_{upper}(p))=\zeta_{\mathbb{R}}(p)-\epsilon,
$$
for some small $\epsilon>0$. 
Therefore, (\ref{saddle1-1}) and (\ref{saddle1-2}) hold. The assumption of the saddle point method in one dimension is verified and we finish the proof of Proposition \ref{prop-saddleonedim1}.  
\end{proof}

\begin{remark}
   By using the same method as illustrated above,  one can prove
\begin{proposition} \label{prop-saddleonedim2}
    For $0<c\leq 1-c_{upper}(p)$ and $n\in\mathbb{Z}$,  there exists a constant $C$ independent of $c$,  such that
    \begin{align}
        |\int_{D_0(c)}\psi(t,c)\sin(2\pi c) e^{(N+\frac{1}{2})V_N(p,t,c;0,n)}dt|<C\left(e^{(N+\frac{1}{2})\left(\zeta_{\mathbb{R}}(p)-\epsilon\right)}\right). 
    \end{align}
\end{proposition}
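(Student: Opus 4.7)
The plan is to invoke the $s \leftrightarrow 1-s$ symmetry of the potential function, established in formula~(\ref{formula-potientalsym}), to reduce Proposition~\ref{prop-saddleonedim2} directly to Proposition~\ref{prop-saddleonedim1}. Setting $m=0$ in that identity yields $V_N(p,t,1-s;0,n) = V_N(p,t,s;0,-n-2) - 2\pi\sqrt{-1}(n+1)$; since the subtracted term is purely imaginary, one has $\mathrm{Re}\,V_N(p,t,c;0,n) = \mathrm{Re}\,V_N(p,t,1-c;0,-n-2)$ for every $c$, so the moduli of the two integrands agree pointwise. Because $D'_0$ is symmetric under $s\mapsto 1-s$, the slices $D'_0(c)$ and $D'_0(1-c)$ coincide as subsets of $\mathbb{R}$, hence
\[
\Bigl|\int_{D'_0(c)} e^{(N+\frac{1}{2})V_N(p,t,c;0,n)}\,dt\Bigr|
=
\Bigl|\int_{D'_0(1-c)} e^{(N+\frac{1}{2})V_N(p,t,1-c;0,-n-2)}\,dt\Bigr|.
\]
For $c \in (0, 1-c_{upper}(p)]$ one has $1-c \in [c_{upper}(p),1)$, so applying Proposition~\ref{prop-saddleonedim1} with the parameters $(c',n') := (1-c,-n-2)$ delivers the required bound.

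For this reduction to succeed one needs the constant $C$ of Proposition~\ref{prop-saddleonedim1} to be independent of $n$ as well as of $c$. Inspection of that proof shows this is automatic: the relevant critical equation $\partial V/\partial t = 0$ does not involve $n$, so $T_1(c)$ and the value $\mathrm{Re}\,V(p,T_1(c),c;0,n) = 2(\Lambda(c/2)-\Lambda(1/2-c/2))$ in Lemma~\ref{lemma-ReVTc} are $n$-independent; the Hessian positivity in Lemma~\ref{lemma-1-dim-hess} and the coercivity in Lemma~\ref{lemma-1-dim-finfty} similarly only involve the $t$-dependent part of $V$; and the threshold $c_{upper}(p)$ together with Corollary~\ref{coro-ReV1} depend only on $p$. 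Hence a single $\epsilon > 0$ and constant $C$ work uniformly in $n$.

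The main obstacle I foresee is verifying these uniformity claims carefully, in particular that the homotopy $D'_{1}(c)$ constructed in the proof of Proposition~\ref{prop-saddleonedim1}, together with the bounds on $\partial D'_{1}(c)$, can be chosen independently of $n$. Once that is in hand, the symmetry argument is immediate. As an alternative that avoids the symmetry entirely, one can repeat the one-dimensional saddle point analysis verbatim: identify $T_1(c)$ as the appropriate branch of the root of $e^{2\pi\sqrt{-1}t} = 1 - 2\sqrt{-1}\sin(\pi c)$ with real part in $(\tfrac{1}{2},1)$, check monotonicity of $\mathrm{Re}\,V(p,T_1(c),c;0,n)$ in $c$ on $(0,\tfrac{1}{2}]$ via the computation in Lemma~\ref{lemma-ReVTc}, and use the power series of Lemma~\ref{lemma-convergent} expanded at $c=\tfrac{1}{2}$ to confirm $\mathrm{Re}\,V(p,T_1(c),c;0,n)<\zeta_{\mathbb{R}}(p)-\epsilon$ for $c \le 1-c_{upper}(p)$. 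This direct route requires re-examining the sign of $\cos(2\pi c)$ in Lemma~\ref{lemma-1-dim-hess} on the small-$c$ subrange, which is precisely the complication that the symmetry argument sidesteps.
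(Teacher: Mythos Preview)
Your proposal is correct and matches the paper's own approach: the paper explicitly notes that Proposition~\ref{prop-saddleonedim2} can be derived from the symmetry of $\mathrm{Re}\,V(p,t,c;0,n)$ under $c\mapsto 1-c$, which is exactly the reduction you carry out via formula~(\ref{formula-potientalsym}), and it also mentions the direct repetition of the one-dimensional saddle argument as an alternative. One minor sharpening: your displayed equality of moduli of the two integrals follows not merely from pointwise agreement of $|e^{(N+\frac12)V_N}|$, but from the stronger fact that the integrands differ by the $t$-independent unimodular factor $e^{-(2N+1)\pi\sqrt{-1}(n+1)}$, which can be pulled outside the integral.
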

In fact, Proposition \ref{prop-saddleonedim2} can also be derived from the symmetric property of the function $\text{Re}V(p,t,c;0,n)$ with respect to the line $c=\frac{1}{2}$.  
\end{remark}

Therefore, combining Proposition \ref{prop-saddleonedim1} and Proposition \ref{prop-saddleonedim2} together, we obtain Proposition \ref{prop-saddleonedim}.


\begin{thebibliography}{99}
\bibitem{AndHan06} J. E. Andersen and S. K. Hansen, {\em Asymptotics of the quantum invariants for surgeries on the
figure 8 knot}, J. Knot Theory Ramifications 15 (2006), 479–548.



\bibitem{AGP} F. Aribi, F. Guéritaud, E. Piguet-Nakazawa, {\em Geometric triangulations and the Teichmuller TQFT volume conjecture for twist knots}, Quantum Topol. 14 (2023), 285-406.


\bibitem{BHMV92} C. Blanchet, N. Habegger, G. Masbaum, P. Vogel, {\em Three-manifold invariants derived from Kauffman bracket}, Topology 31 (1992), 685–699.

\bibitem{CLZ15} Q. Chen, K. Liu and S. Zhu, {\em Volume conjecture for SU(n)-invariants}, arXiv:1511.00658.


\bibitem{CJ17} Q. Chen and J. Murakami, {\em Asymptotics of quantum $6j$ symbols}, J. Differential Geom. 123(1) (2023), 1-20.

\bibitem{CY18} Q. Chen and T. Yang, {\em Volume conjectures for the Reshetikhin-Turaev and the Turaev-Viro invariants}, Quantum Topol. 9 (2018), 419–460.


\bibitem{CZ23-2} Q. Chen and S. Zhu, {\em On the asymptotic expansions of various quantum invariants II: the colored Jones polynomial of twist knots at the root of unity $e^{\frac{2\pi\sqrt{-1}}{N+\frac{1}{M}}}$ and $e^{\frac{2\pi\sqrt{-1}}{N}}$}, arXiv:2307.13670.

\bibitem{CZ23-3} Q. Chen and S. Zhu, {\em  On the asymptotic expansion of various quantum invariants III:
the Reshetikhin-Turaev invariants of closed hyperbolic 3-manifolds obtained by integral surgery along the twist knot},  arXiv:2410.14661.


\bibitem{CZ23-4} Q. Chen and S. Zhu, {\em On the asymptotic expansion of quantum invariants related to surgeries of Whitehead link I: Relative Reshetikhin-Turaev invariants and the Turaev-Viro invariants at $e^{\frac{2\pi\sqrt{-1}}{N+\frac{1}{2}}}$}, arXiv:2412.10868.

\bibitem{DGLZ09} T. Dimofte, S. Gukov, J. Lenells and D. Zagier,   {\em Exact results for perturbative Chern–Simons
theory with complex gauge group}, Commun. Number Theory Phys. 3 (2009), 363-443.

\bibitem{DKash07} J.Dubois and R. Kashaev, {\em On the asymptotic expansion of the colored Jones polynomial for torus
knots}, Math. Ann. 339 (2007), 757–782.

\bibitem{DHY09}
J. Dubois, V. Huynh and Y. Yamaguchi, {\em Nonabelian Reidemeister torsion for twist
knots}, J. Knot Theory Ramifications 18(3) (2009) 303-341.

\bibitem{DKY18} R. Detcherry, E. Kalfagianni and T. Yang, \emph{Turaev-Viro invariants, colored Jones polynomials and
volume}, Quantum Topol. 9 (2018), no. 4, 775–813.

\bibitem{DK20} R. Detcherry and E. Kalfagianni, \emph{Gromov norm and Turaev-Viro invariants of 3-manifolds},   Ann. Sci.
\'Ec. Norm. Sup\'er. (4) , 53(6):1363–1391, 2020.


\bibitem{GL11} S. Garoufalidis and T. Le \emph{Asymptotics of the colored Jones function of a knot},  Geom. Topol. 15 (2011), no. 4, 2135-2180.

\bibitem{GL05} S. Garoufalidis and T. T. Q. Le. {\em On the volume conjecture for small angles}. arXiv:math/0502163.

\bibitem{Guk05} S. Gukov. {\em Three-dimensional quantum gravity, Chern-Simons theory, and the A-polynomial}, Comm.
Math. Phys. 255 (2005), 577-627.

\bibitem{GH08} S. Gukov and H. Murakami, {\em $SL(2,\mathbb{C})$ Chern–Simons theory and the asymptotic behavior of the
colored Jones polynomial}, Lett. Math. Phys. 86 (2008), 79-98.

\bibitem{HS01} J. Hoste and P. D. Shanahan, {\em Trace fields of twist knots}, J. Knot Theory Ramifications, 10(4):625–639, 2001.

\bibitem{HS05} J. Hoste and P. Shanahan, {\em A formula for the A-polynomial of twist knots}, J. Knot
Theory Ramifications 14(1) (2005) 91–100.

\bibitem{Hab08} K. Habiro, \emph{A unified Witten-Reshetikhin-Turaev invariant
for integral homology spheres}, Invent. Math. 171 (2008), no. 1,
1-81.

\bibitem{Hik07} K. Hikami, {\em Asymptotics of the colored Jones polynomial and the $A$-polynomial}, Nuclear Physics B 773 (2007) 184-202.


\bibitem{Kash95}  R.  Kashaev. {\em A link invariant from quantum
dilogarithm}, Modern Phys. Lett. A 10:19 (1995), 1409—1418.

\bibitem{Kash97} R. Kashaev, {\em The hyperbolic volume of knots from the quantum dilogarithm}, Lett. Math. Phys. 39
(1997), no. 3, 269–275.

\bibitem{KT00} R. M. Kashaev and O. Tirkkonen. {\em A proof of the volume conjecture on torus knots (Russian)}.
Zap. Nauchn. Sem. S.-Peterburg. Otdel. Mat. Inst. Steklov. (POMI) 269 (2000), Vopr. Kvant. Teor.
Polya i Stat. Fiz. 16, 262–268, 370; translation in J. Math. Sci. (N.Y.) 115 (2003), 2033–2036.


\bibitem{Lick} W. Lickorish, {\em The skein method for three-manifold invariants}, J. Knot Theory Ramifications 2
(1993), no. 2, 171–194.

\bibitem{Mar16} B. Martelli, {\em An introduction to geometric topology}, arXiv:1610.02592, 2016.



\bibitem{MV94} G. Masbaum and P. Vogel, {\em 3-valent graphs and the Kauffman bracket}. Pacific J. Math, 164(2):361–381, 1994.

\bibitem{Mas03} G. Masbaum, {\em Skein-theoretical derivation of some formulas of
Habiro}. \textit{Algebraic \& Geometric Topology}, \textbf{3}
(2003), 537-55603.

\bibitem{Mey86} R. Meyerhoff. {\em Density of the Chern-Simons invariant for hyperbolic 3-manifolds}. In Low dimensional topology and Kleinian groups (Coventry/Durham, 1984), pp. 217-239, Cambridge Univ.Press, Cambridge, 1986.

\bibitem{MeyNeu92} R. Meyerhoff and W.  Neumann, {\em An asymptotic formula for the eta invariants of hyperbolic 3-manifolds}, Comment. Math. Hel. 67 (1992) 28-46.

\bibitem{MuMu01} H. Murakami and J. Murakami, {\em The colored Jones polynomials and the simplicial volume of a knot},
Acta Math. 186 (2001), no. 1, 85–10.

\bibitem{Mur10} H.Murakami, {\em An introduction to the volume conjecture}. Interactions between hyperbolic geometry,
quantum topology and number theory, Contemp. Math. 541 (Amer. Math. Soc., Providence, RI, (2011), 1–40.

\bibitem{MMOTY02} H. Murakami, J. Murakami, M. Okmoto, T. Takata and Y. Yokota,  {\em Kashaev’s conjecture
and the Chern–Simons invariants of knots and links},  Experiment. Math. 11 (2002), 427–435.

\bibitem{MuTr} J. Murakami and Anh T. Tran, {\em  Potential function, A-polynomial and
Reidemeister torsion of hyperbolic links}, preprint, 2023.


\bibitem{NR90}  W. D. Neumann and A. W. Reid, {\em Arithmetic of hyperbolic manifolds}, Topology '90,
de Gruyter, Berlin, (1992) 273-310.

\bibitem{NZ85} W. Neumann and D. Zagier, {\em Volumes of hyperbolic three-manifolds}, Topology 24 (1985), no.3, 307-332.

\bibitem{Oht16}  T. Ohtsuki, {\em On the asymptotic expansion of the Kashaev invariant of the $5_2$ knot}, Quantum Topol.
7 (2016), no. 4, 669-735.

\bibitem{Oht17} T. Ohtsuki, {\em On the asymptotic expansion of the Kashaev invariant of the hyperbolic knots with seven crossings},
Internat. J. Math. 28 (2017), no. 13, 1750096, 143 pp.


\bibitem{Oht18} T. Ohtsuki, {\em On the asymptotic expansion of the quantum SU(2) invariant at $q=exp(\frac{4\pi\sqrt{-1}}{N})$
for closed hyperbolic 3-manifolds obtained by integral surgery along the figure-eight knot}, Algebr. Geom. Topol. 18 (2018), no. 7, 4187–4274.

\bibitem{OhtYok18} T. Ohtsuki and Y. Yokota, {\em On the asymptotic expansion of the Kashaev invariant of the knots with 6 crossings}, Math. Proc. Camb. Phil. Soc. (2018), 165, 287–339.

\bibitem{Porti97} J. Porti, {\em Torsion de Reidemeister pour les varietes hyperboliques}, Vol. 128, Memoirs of the American Mathematical Society, No. 612 (American Mathematical Society, 1997).

\bibitem{Pur20} J. Purcell, {\em Hyperbolic knot theory}, Graduate studies in Math, 209. 2020.


\bibitem{SS03}  E. Stein and R. Shakarchi, {\em Fourier analysis, An introduction}, Princeton Lectures in Analysis, 1.
Princeton University Press, Princeton, NJ, 2003. xvi+311 pp. ISBN: 0-691-11384-X.

\bibitem{Takata} T. Takata, {\em On the asymptotic expansions of the Kashaev invariant of some hyperbolic knots with 8
crossings}, preprint.

\bibitem{Th77} W. Thurston, {\em The geometry and topology of 3-manifolds}, lecture notes from Princeton University (1977).

\bibitem{Tr14} A. Tran, { \em Twisted Alexander polynomials with the adjoint action for some classes of knots}, J. Knot Theory Ramifications 23 (2014) 10, 1450051.

\bibitem{Tr15} A. Tran, {\em Twisted Alexander polynomials of genus one two-bridge knots},
2015, arXiv:1506.05035.

\bibitem{Tr16} A. Tran, {\em Adjoint twisted Alexander polynomials of genus one two-bridge knots},
J. Knot Theory Ramifications 25 (2016), no. 11, 1650065, 13 pp.


\bibitem{Van08} R. Van Der Veen. {\em Proof of the volume conjecture for Whitehead chains}. Acta Math. Vietnam 33
(2008), 421–431.

\bibitem{Van08-2} R. Van Der Veen, {\em A cabling formula for the coloured Jones polynomial}, arXiv:0807.2679.

\bibitem{Wong19} K. H. Wong, {\em Asymptotics of some quantum invariants of the Whitehead chains}, arXiv: 1912.10638.

\bibitem{WongYang20-1} K. H. Wong and T. Yang, {\em On the Volume Conjecture for hyperbolic Dehn-filled 3-manifolds along
the figure-eight knot}, To appear in Quantum Topology.

\bibitem{WongYang20-2} K. H. Wong and T. Yang, {\em Relative Reshetikhin-Turaev invariants, hyperbolic cone metrics and
discrete Fourier transforms I},  Communications in Mathematical Physics 400 (2023) 1019-1070.




\bibitem{Yoshida85} T. Yoshida, {\em The $\eta$-invariant of hyperbolic 3-manifolds}, Invent. Math. 81 (1985), no. 3, 473–514.

\bibitem{YY10} M. Yamazaki and Y. Yokota, {\em On the limit of the colored Jones polynomial of a non-simple link}.
Tokyo J. Math. 33 (2010), 537–551.

\bibitem{Zheng07} H. Zheng, {\em Proof of the volume conjecture for Whitehead doubles of a family of torus knots}, Chin.
Ann.Math. Ser. B 28 (2007), 375–388.

\end{thebibliography}
\end{document}